\pgfplotsset{compat=newest} 
\pgfplotsset{plot coordinates/math parser=false} 
\newlength\figureheight 
\newlength\figurewidth
\title{Stable approximation of Helmholtz solutions\\
in the 3D ball using evanescent plane waves}
\author[1,*]{Nicola Galante}
\author[2]{Andrea Moiola}
\author[3,*]{Emile Parolin}
\affil[1]{Alpines, Inria, Paris, France, \texttt{nicola.galante@inria.fr}}
\affil[2]{University of Pavia, Pavia, Italy, \texttt{andrea.moiola@unipv.it}}
\affil[3]{Alpines, Inria, Paris, France, \texttt{emile.parolin@inria.fr}}
\affil[*]{Laboratoire Jacques-Louis Lions, Sorbonne Université, Paris, France}
\newcommand\numberthis{\addtocounter{equation}{1}\tag{\theequation}}
\newcommand{\rightleftarrowss}[2]{
  \mathrel{\mathop{
    \vcenter{\offinterlineskip\m@th
      \ialign{\hfil##\hfil\cr
        \hphantom{$\scriptstyle\mspace{8mu}{#1}\mspace{8mu}$}\cr
        \rightarrowfill\cr
        \vrule height0pt width 2em\cr
        \leftarrowfill\cr
        \hphantom{$\scriptstyle\mspace{8mu}{#2}\mspace{8mu}$}\cr
        \noalign{\kern-0.3ex}
      }
    }
  }\limits^{#1}_{#2}}
}
\newcommand{\Tdisc}[2]{\mathrm{T}_{#1}^{\scaleto{\textup{#2}\mathstrut}{5pt}}}
\newcommand{\Tcont}[2]{\boldsymbol{\mathrm{T}}_{#1}^{\scaleto{\textup{#2}\mathstrut}{5pt}}}
\theoremstyle{plain}
\newtheorem{theorem}{Theorem}[section]
\newtheorem{proposition}[theorem]{Proposition}
\newtheorem{lemma}[theorem]{Lemma}
\newtheorem{corollary}[theorem]{Corollary}
\newtheorem{definition}[theorem]{Definition}
\newtheorem{remark}[theorem]{Remark}
\numberwithin{equation}{section}
\begin{document}

\maketitle

\begin{abstract}
The goal of this paper is to show that evanescent plane waves are much better at numerically approximating Helmholtz solutions than classical propagative plane waves.
By generalizing the Jacobi--Anger identity to complex-valued directions, we first prove that any solution of the Helmholtz equation on a three-dimensional ball can be written as a continuous superposition of evanescent plane waves in a stable way.
We then propose a practical numerical recipe to select discrete approximation sets of evanescent plane waves, which exhibits considerable improvements over standard propagative plane wave schemes in numerical experiments.
We show that all this is not possible for propagative plane waves: they cannot
stably represent general Helmholtz solutions, and any approximation based on discrete sets of propagative plane waves is doomed to have exponentially large coefficients and thus to be numerically unstable.
This paper is motivated by applications to Trefftz-type Galerkin schemes and extends the recent results in [Parolin, Huybrechs and Moiola, M2AN, 2023] from two to three space dimensions.
\end{abstract}

\medskip\textbf{Keywords:}
Helmholtz equation, Plane wave, Evanescent plane wave, Trefftz method, Stable approximation, Sampling, Herglotz representation, Jacobi--Anger identity

\medskip\textbf{AMS subject classification:}
35J05, 
41A30, 
42C15, 
44A15 

\clearpage
\vspace*{-1.9cm}
\tableofcontents

\section{Introduction}
\label{s:intro}

The homogeneous Helmholtz equation
\begin{equation}
-\Delta u - \kappa^2 u=0,
\label{Helmholtz equation}
\end{equation}
where $\kappa>0$ is a real parameter called \emph{wavenumber}, finds extensive
application in diverse scientific and engineering fields, including acoustics,
electromagnetics, elasticity, and quantum mechanics.
Linked to the scalar wave equation, it characterizes the spatial dependence of
time-harmonic solutions.

In high-frequency settings where the wavelength $\lambda=2\pi/\kappa$ is much
smaller than the domain scale, approximating Helmholtz solutions is 
complex and computationally expensive, as their oscillatory nature demands
numerous degrees of freedom (DOFs) for accuracy and leads to dispersion errors.
Trefftz methods~\cite{hiptmair-moiola-perugia1} tackle these issues by using particular solutions of the PDE as spanning elements, thereby reducing both the number of DOFs required and dispersion compared to polynomial-based schemes~\cite{Deraemaeker1999,Gittelson2014}.
These properties make Trefftz methods effective for a wide range of real-world problems, such as inverse scattering \cite{colton-kress}, invisibility cloaking \cite{Greenleaf2009}, large-scale electromagnetic simulations \cite{Sirdey2022}, and sound field reconstruction from sparse acoustic measurements \cite{Chardon20214}.
Examples of Trefftz formulations include the Wave-Based Method (WBM) \cite{Desmet1998,Deckers2014}, the Variational Theory of Complex Rays (VTCR) \cite{Riou2011}, the Discontinuous Enrichment Method (DEM) \cite{Massimi2008}, the Trefftz Discontinuous Galerkin (TDG) methods \cite{Gittelson2009,Gittelson2014}, and the Ultra Weak Variational Formulation (UWVF) \cite{Cessenat1998}; these and others are systematically reviewed and compared in \cite{hiptmair-moiola-perugia1}.

Propagative plane waves (PPWs) $\mathbf{x} \mapsto e^{i\kappa
\mathbf{d}\cdot \mathbf{x}}$, where $\mathbf{d} \in \mathbb{R}^n$ with
$\mathbf{d} \cdot \mathbf{d}=1$, form an appealing family of Trefftz basis functions as they offer
efficient implementation due to the possibility for 
closed-form integration on
flat sub-manifolds \cite[sect.~4.1]{hiptmair-moiola-perugia1}.
However, ill-conditioning emerges in linear systems for high-resolution Trefftz spaces, leading to strong numerical instability and stalled convergence
when using floating-point arithmetic \cite{Barucq2021,huybrechs3,Luostari2013}.
As a result, the convergence predicted by the approximation theory \cite[sect.~4.3]{hiptmair-moiola-perugia1} cannot be achieved in practice.

\paragraph{Recent results in 2D}
The study in~\cite{parolin-huybrechs-moiola} makes advances in the analysis of PPW instability.
Using recent progress in frame approximation theory~\cite{huybrechs1,huybrechs2}, this work argues that, in floating-point arithmetic and in presence of ill-conditioning, 
to obtain accurate approximations it is not enough to prove the existence of a discrete function with small approximation error, but a representation with bounded coefficients is needed.
It turns out that large coefficients are unavoidable when considering approximations in the form of linear combinations of PPWs if the Helmholtz solution contains high-frequency Fourier modes~\cite[Th.~4.3]{parolin-huybrechs-moiola}.
This highlights a key trade-off: while PPWs are highly effective in low-accuracy regimes -- where a limited number of directions already yields useful results in practice -- their performance deteriorates as one targets higher precision.
In such settings, numerical instabilities arising from ill-conditioning and large coefficient norms can lead to convergence stagnation, even when regularization techniques are employed. These insights motivate alternative strategies to achieve accurate and stable numerical representations.

The work~\cite{parolin-huybrechs-moiola} then proposes a remedy.
For accurate, bounded-coefficient approximations, the key idea is to enrich the
approximation sets with \emph{evanescent plane waves} (EPWs).
These Helmholtz solutions allow for simple and cost-effective implementations,
maintaining the form
$\mathbf{x} \mapsto e^{i\kappa \mathbf{d}\cdot \mathbf{x}}$ with a
complex-valued direction $\mathbf{d} \in \mathbb{C}^n$ satisfying
$\mathbf{d}\cdot\mathbf{d}=1$.
Such a wave oscillates with period shorter than the Helmholtz wavelength $\lambda$
in the propagation direction $\Re(\mathbf{d})$, and decays exponentially in the orthogonal evanescent direction $\Im(\mathbf{d})$. 
Modal analysis reveals that EPWs effectively approximate high Fourier modes, filling the gap left by PPWs.

This is backed by \cite[Th.~6.7]{parolin-huybrechs-moiola}, which establishes
that any Helmholtz solution on the unit disk can be uniquely expressed as a
continuous superposition of EPWs.
The operator that maps the associated density to the Helmholtz solution is
called \emph{Herglotz transform} \cite[Def.~6.6]{parolin-huybrechs-moiola}
and admits a continuous inverse, so that the density is bounded in a weighted
$L^{2}$ norm, indicating a form of stability at the continuous level.
For applications, the difficulty then lies in identifying effective EPW sets with moderate size that retain both accuracy and stability.
The construction in \cite[sect.~7]{parolin-huybrechs-moiola}, based on
\cite{Cohen_Migliorati,Hampton,Migliorati_Nobile}, proposes a simple recipe
that exhibits a significant improvement over conventional PPW methods in numerical experiments.

EPWs also feature in the Wave Based Method~\cite{Deckers2014} and have shown particular effectiveness for interface problems in Trefftz schemes~\cite{Luostari2013,Massimi2008}.
They have further been employed in~\cite{Chapman2024} to approximate particular 3D Helmholtz solutions, written in cylindrical and spherical coordinates, in selected regions of space -- using locally a single EPW.

\paragraph{Extension to 3D}
This paper presents the challenging extension of \cite{parolin-huybrechs-moiola} to the 3D setting and is mainly based on the Master thesis of the first author~\cite{galante}.
It focuses on spherical domains in order to yield explicit theoretical results via modal analysis.
Up to rescaling the wavenumber $\kappa$, we consider the Helmholtz equation posed on the open unit ball 
$B_1:=\{\mathbf{x} \in \mathbb{R}^3: |\mathbf{x}| <1\}$.

In section \ref{sec:evanescent plane waves}, we define and study 3D EPWs.
A first non-trivial challenge is the parametrization of the complex direction set
$\{\mathbf{d} \in \mathbb{C}^3 : \mathbf{d} \cdot \mathbf{d}=1\}$.
Our approach involves defining a complex-valued reference direction and then consider its rigid-body rotations via Euler angles.
We then prove a new generalized Jacobi--Anger identity for complex-valued directions in Theorem~\ref{Theorem 2.10}, i.e.\ the Fourier expansion of EPWs on the spherical wave basis.
This requires extending the Ferrers functions (appearing in the definition of spherical harmonics) to arguments outside the usual domain $[-1,1]$, and the use of Wigner D-matrices.
We discuss EPW modal analysis revealing that EPWs effectively encompass high
Fourier regimes, unlike the propagative case.

Section \ref{sec:herglotz transform} introduces a notion of ``stable continuous approximation'', which essentially entails approximating Helmholtz solutions by continuous superpositions of the elements of a given Bessel family (indexed by a continuous parameter); stability follows from the boundedness of the associated density. 
Analogously to what was done in 2D, we then prove in Theorem \ref{Theorem 3.9} that the EPW family provides such a stable continuous approximation.
We call ``Herglotz transform'' the isomorphism mapping densities to Helmholtz solutions.
In fact, in the parlance of frame theory, EPWs are shown to form a continuous frame for the Helmholtz solution space.
In contrast, PPWs cannot provide such stable continuous approximations, as proved in Theorem~\ref{Thm:PPW-SCA}.

Section \ref{sec:stable numerical approximation} presents the corresponding notion of ``stable discrete approximation'' with finite 
expansions associated to bounded coefficients.
A sampling-based scheme relying on regularized Singular Value Decomposition and oversampling is then presented.
We prove in Corollary~\ref{Corollary 4.3} that this scheme yields accurate numerical solutions in finite-precision arithmetic, provided the approximation set enjoys the stable discrete approximation property and
suitable sampling points are chosen.
Theorem~\ref{Theorem 4.5} shows that PPWs are unstable: some Helmholtz solutions can be approximated by linear combinations of PPWs only if exponentially large coefficients are present.

Section \ref{sec:numerical recipe} presents a numerical recipe that mirrors \cite[sect.~7]{parolin-huybrechs-moiola}, drawing inspiration from optimal sampling techniques \cite{Cohen_Migliorati}.
In practice, it selects an EPW basis by sampling the parametric domain according to an explicit probability density function.
While such a construction exhibits experimentally the desired properties, a
full proof that it satisfies the stable discrete approximation requirements is
yet to be established.

Section \ref{sec:numerical results} showcases several numerical experiments supporting the choice of using EPWs for approximating Helmholtz solution in
3D\footnote{The MATLAB code used to generate the numerical results of this paper is available at\\
\url{https://github.com/Nicola-Galante/evanescent-plane-wave-approximation}.}.
Our EPW sets significantly outperform standard PPW schemes, and also behave well on different geometries, despite being grounded in unit ball analysis.
Additionally, they appear to maintain near-optimality: the DOF budget required to approximate the first $N$ modes scales linearly with $N$, for a fixed level of accuracy.
These results provide strong evidence of 
the potential of the proposed numerical approach for EPW approximations and Trefftz methods.

Table~\ref{t:notation} summarizes the symbols used throughout the paper.

\begin{table}[htbp]
\begin{tabular}{|l|l|l|}\hline
$\kappa,\lambda$ & wavenumber and wavelength & \S\ref{s:intro}\\
$B_1,\mathbb S^2$ & unit ball and sphere in $\mathbb R^3$& \S\ref{s:intro}, \eqref{propagative direction}\\
$\Theta,Y$ & parameter domains & Def.~\ref{def:EPW}\\
$\boldsymbol{\theta},\psi,\zeta,z,\mathbf y$ & EPW parameters & Def.~\ref{def:EPW}\\
$R_{\boldsymbol\theta,\psi},R_y(\theta),R_z(\theta)$ & rotation matrices & Def.~\ref{def:EPW}\\
$\textup{EW}_{\mathbf{y}}$ & evanescent plane wave &\eqref{evanescent wave}\\
$\mathbf d_\uparrow(z),\mathbf d(\mathbf y)$ & EPW direction vectors & \eqref{complex direction}\\
$\textup{PW}_{\boldsymbol{\theta}},\mathbf d(\boldsymbol\theta)$ & propagative plane wave and direction & \eqref{eq:PPW_definition}, \eqref{propagative direction} 
\\
$\mathcal{I}$ & spherical wave index set & \S\ref{ss:Spherical}\\
$\mathsf{P}_{\ell}^m$ & Ferrers functions, Legendre polynomials &\eqref{legendre polynomials}\\
$Y_\ell^m,\gamma_\ell^m$ &  spherical harmonics &\eqref{spherical harmonic}\\
$J_\ell,j_\ell$ & Bessel and spherical Bessel functions &\S\ref{ss:Spherical}\\
$\tilde b_\ell^m,b_\ell^m,\beta_\ell^m$ & spherical waves and normalization &\eqref{b tilde definizione}\\
$\mathcal B,\|\cdot\|_{\mathcal B},(\cdot,\cdot)_{\mathcal B}$ & Helmholtz solution space & \eqref{def:B}\\
$P_\ell^m$&associated Legendre functions&\eqref{legendre2 polynomials}\\
$D_\ell(\boldsymbol\theta,\psi),d_\ell(\theta)$ & Wigner D- and d-matrices & \eqref{DD matrix}, \eqref{d matrix}\\
$\mathbf D_\ell^m(\boldsymbol\theta,\psi),\mathbf P_\ell(\zeta)$
& Wigner matrix columns, Jacobi--Anger coefficients &\S\ref{ss:ModalAnalysis}\\
$\widehat{\textup{EW}}_{\ell}^m
,b_\ell[\mathbf y],\tilde b_\ell[\mathbf y],\widehat{\textup{EW}}_{\ell}
$& modal expansion coefficients &\S\ref{ss:ModalAnalysis}\\
$(X,\mu),\boldsymbol\Phi_X,\Tcont{\!X}{\,}$ & measure space, Bessel family, synthesis operator & \S\ref{ss:StableContApprox}\\
$C_{\textup{cs}},\eta$ & stable continuous approx.\ bound and tolerance  & Def.~\ref{def:SCA}\\
$\sigma,\nu,w$ & measures and density on $\mathbb S^2$ and  
$Y$ & \eqref{weight}\\
$a_\ell^m,\tilde a_\ell^m,\alpha_\ell,\mathcal A,\|\cdot\|_{\mathcal A}$ & Herglotz densities, normalization and space &Def.~\ref{definition herglotz densities}\\
$\tau_\ell,\tau_\pm$ & Jacobi--Anger coefficients and bounds &\eqref{tau jacobi-anger}, \eqref{uniform bounds tau}\\
$\Tcont{Y}{EW}$ & Herglotz transform & \eqref{Herglotz transform}\\
$\Tcont{\Theta}{PW}$ & PPW continuous synthesis operator 
& \eqref{herglotz}\\
$\Phi_P,\Tdisc{\!P\,}{\,}$ & discrete approximation set, synthesis operator & \eqref{eq:discrete_synthesis_op}\\
$C_{\textup{ds}},\eta,s_{\textup{ds}}$ & stable discrete approx.\ bound, tolerance, exponent & 
Def.~\ref{def:SDA}\\
$\gamma,\mathbf x_s,w_s,S$&  Dirichlet trace, sampling nodes, weights, number &\S\ref{subsec:regularized boundary sampling method}\\
$A,\mathbf b,\boldsymbol\xi$ & sampling matrix, load and solution vectors & \S\ref{subsec:regularized boundary sampling method}\\
$\Sigma,\sigma_p,\sigma_{\max}$ & sampling matrix singular values & \S\ref{subsec:regularized boundary sampling method}\\
$\epsilon$ & regularization parameter & \S\ref{subsec:regularized boundary sampling method}\\
$\Sigma_\epsilon,A_{S,\epsilon},\boldsymbol \xi_{S,\epsilon}, ^\dagger$ & regularized matrices and vector, pseudoinverse 
&\S\ref{subsec:regularized boundary sampling method}\\
$\mathcal E$& relative residual & \eqref{relative residual}\\
$\Tdisc{\!P\,}{PW}$ & PPW discrete synthesis operator & \eqref{plane waves approximation set}\\
$K, K_{\mathbf y}$ & reproducing kernel, sampling functionals & Prop.~\ref{Proposition 5.1}\\
$L,\mathcal A_L,\mathcal B_L, N(L)$ & truncation parameter, truncated spaces, dimension & Def.~\ref{def:ALBL}\\
$\mu_N,\rho_N,\widehat{\rho}_N,\Upsilon_N$ & Christoffel, probability density, cumulative funct.s 
&(\ref{rho density}--\ref{rho zeta density}--\ref{cumulative distribution})\\
$\mathbf z_p, \mathbf y_p,(\widehat\theta_{p,1},\widehat\theta_{p,2})$ & sampling point in hypercube, in $Y$, in $\Theta$ & \S\ref{s:ITS}\\
$P$ & cardinality of the approximation set  &\eqref{isomorphism approximation sets}\\
$\Psi_{L,P},\Phi_{L,P}$ & sampling-functional and EPW approximating sets & \eqref{isomorphism approximation sets}\\
$\widetilde{\Upsilon}_N$ & approximated cumulative function & \eqref{approx zeta cumulative}\\
$Q, \Gamma(\cdot,\cdot)$ & (normalized) upper incomplete Gamma function & \eqref{approx zeta cumulative},\eqref{eq:AlphaApprox}\\
$\widehat u_\ell^m, Q_1$ & random Fourier coefficients, unit cube & \S\ref{subsec:approximation of random-expansion solution}--\ref{ss:cube}
\\\hline
\end{tabular}
\caption{List of the symbols used in the paper.}
\label{t:notation}
\end{table}

\section{Evanescent plane waves}\label{sec:evanescent plane waves}

\begin{figure}
\centering
\includegraphics[trim=45 40 0 0,clip,width=0.7\textwidth]{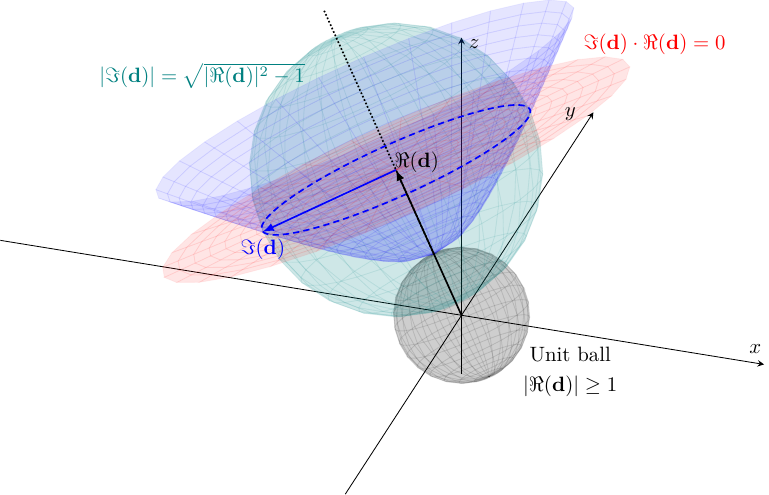}
\caption{Geometrical representation of the constraints in (\ref{complex direction conditions}).
The black dotted line (which originates on the boundary of the unit ball)
and the blue hyperboloid depict respectively the real and imaginary parts of
  elements in the set
$\{\mathbf{d} = R_{\boldsymbol{\theta},\psi}\,
\mathbf{d}_{\uparrow}\left(1+\zeta/2\kappa \right), (\psi,\zeta) \in [0,2\pi) \times [0,+\infty)\}$
for fixed $\boldsymbol{\theta} \in \Theta$, see \eqref{complex direction}.}
\label{figure 2.0}
\end{figure}

We start by introducing and studying evanescent plane waves (EPWs) in 3D. 
These waves satisfy the Helmholtz equation~\eqref{Helmholtz
equation} and generalize the well-known propagative plane waves (PPWs) while
preserving their simple exponential form. 
This section also presents the modal analysis tools that are later used to analyse
approximation properties of both types of plane waves.
In particular, we extend the classical Jacobi--Anger expansion to complex propagation directions, and use this to compute the coefficients of the spherical-wave expansion of any EPW.

\subsection{Definition}\label{ss:EPWdef}

A plane wave $\mathbf{x} \mapsto e^{i\kappa \mathbf{d}\cdot \mathbf{x}}$
satisfies the homogeneous Helmholtz equation (\ref{Helmholtz equation}) if and
only if the direction vector
$\mathbf{d}=(\textup{d}_1,\textup{d}_2,\textup{d}_3) \in \mathbb{C}^3$ fulfills the
constraint $\mathbf{d} \cdot \mathbf{d}=\sum_{i=1}^3\textup{d}_i^2=1$, or
equivalently
\begin{equation}\label{complex direction conditions}
    \left|\Re\left(\mathbf{d}\right)\right|^2-\left|\Im\left(\mathbf{d}\right)\right|^2=1,
    \quad\text{(a)}
    \qquad \qquad \qquad
    \Re\left(\mathbf{d}\right) \cdot \Im\left(\mathbf{d}\right)=0.
    \quad\text{(b)}
\end{equation}
Hence, $\Re\left(\mathbf{d}\right)$ is required only to have a modulus larger
than $1$, and $\Im\left(\mathbf{d}\right)$ must lie on the circle of radius of
$(\left|\Re\left(\mathbf{d}\right)\right|^2-1)^{1/2}$ in the plane orthogonal
to $\Re\left(\mathbf{d}\right)$, see Figure~\ref{figure 2.0}.
We parametrize the set $\{\mathbf{d}\in \mathbb{C}^3:\mathbf{d}\cdot \mathbf{d}=1\}$  by fixing a reference complex
direction vector $\mathbf d_\uparrow$ that meets conditions~\eqref{complex direction conditions}, and
then considering all its possible rigid-body rotations in space. 
For instance, if we let $\Re\left(\mathbf{d}_{\uparrow}\right)$ be aligned with the $z$-axis,
we can pick $\Im\left(\mathbf{d}_{\uparrow}\right)$ aligned with the $x$-axis
so that~(\ref{complex direction conditions}b) is satisfied,
and then~(\ref{complex direction conditions}a) simplifies to
$\Re\left(\textup{d}_{\uparrow,3}\right)^2-\Im\left(\textup{d}_{\uparrow,1}\right)^2=1$. Assuming
$\textup{d}_{\uparrow,1}\geq 0$ and $\textup{d}_{\uparrow,3}\geq 0$, and  
setting $z:=\Re\left(\textup{d}_{\uparrow,3}\right) \geq 1$, we get
$\Im\left(\textup{d}_{\uparrow,1}\right)=(z^2-1)^{1/2}$.
This prompts us to propose the following definition and parametrization of an
\emph{evanescent plane wave}.

\begin{definition}[Evanescent plane wave]\label{def:EPW}
Let $\boldsymbol{\theta}:=(\theta_1,\theta_2) \in \Theta:=[0,\pi] \times [0,2\pi)$, $\psi \in [0,2\pi)$ be the Euler angles and $R_{\boldsymbol{\theta},\psi}:=R_{z}(\theta_2)R_{y}(\theta_1)R_{z}(\psi)$ the associated rotation matrix, where
\begin{equation*}
R_{y}(\theta):=
\begin{bmatrix}
    \cos{(\theta)}       & 0 & \sin{(\theta)}\\
    0       & 1 & 0\\
    -\sin{(\theta)}       & 0 & \cos{(\theta)}
\end{bmatrix},\qquad
R_{z}(\theta):=
\begin{bmatrix}
    \cos{(\theta)}       & -\sin{(\theta)} & 0\\
    \sin{(\theta)}       & \cos{(\theta)} & 0\\
    0       & 0 & 1
\end{bmatrix}.
\end{equation*}
For any $\mathbf{y}:=(\boldsymbol{\theta},\psi,\zeta) \in Y := \Theta \times [0,2\pi) \times [0,+\infty)$, we let
\begin{equation}
\textup{EW}_{\mathbf{y}}(\mathbf{x}):=e^{i\kappa \mathbf{d}(\mathbf{y})\cdot \mathbf{x}}\qquad \forall \mathbf{x} \in \mathbb{R}^3,
\label{evanescent wave}
\end{equation}
where the wave complex direction is given by
\begin{equation}
\mathbf{d}(\mathbf{y}):=R_{\boldsymbol{\theta},\psi}\,\mathbf{d}_{\uparrow}\left(1+\zeta/2\kappa \right) \in \mathbb{C}^3, \qquad \text{and} \qquad 
\mathbf{d}_{\uparrow}(z):=\left(i\sqrt{z^2-1},0,z\right) \in \mathbb{C}^3 \quad \forall z \geq 1.
\label{complex direction}
\end{equation}
\end{definition}

\begin{remark}
We parametrize $\mathbf{d}(\mathbf{y})$ in \textup{(\ref{complex direction})} with three angles $\theta_1,\theta_2,\psi$ and with $\zeta\ge0$, 
while $\mathbf d_\uparrow$ is parametrized by $z\ge1$, related to $\zeta$ by 
$z=1+\zeta/2\kappa$.
This choice, although not immediately apparent, leads to simpler results in the subsequent analysis.
\end{remark}

Assuming $\zeta=0$ in (\ref{evanescent wave}), for any $(\boldsymbol{\theta},\psi) \in \Theta \times [0,2\pi)$, we recover the standard definition of a \emph{propagative plane wave}. For any $\boldsymbol{\theta} \in \Theta$, we let
\begin{equation} \label{eq:PPW_definition} 
\textup{PW}_{\boldsymbol{\theta}}(\mathbf{x}):=
\textup{EW}_{(\boldsymbol{\theta},0,0)}(\mathbf{x})=
e^{i\kappa \mathbf{d}(\boldsymbol{\theta}) \cdot \mathbf{x}} \qquad \forall \mathbf{x} \in \mathbb{R}^3,
\end{equation}
where the wave propagation direction is given by
\begin{equation}
\mathbf{d}(\boldsymbol{\theta}):=\left(\sin \theta_1 \cos \theta_2, \: \sin \theta_1 \sin \theta_2, \: \cos \theta_1\right) \in \mathbb{S}^2 
:=\{\mathbf{x} \in \mathbb{R}^3:|\mathbf{x}| =1\} \subset \mathbb{R}^3.
\label{propagative direction}
\end{equation}
Here, $\mathbf{d}(\boldsymbol{\theta})$ does not depend on 
$\psi$, as $\mathbf{d}_{\uparrow}(1)=(0,0,1)$ is invariant under the rotation $R_z(\psi)$.

\begin{figure}
     \centering
     \begin{subfigure}[b]{0.24\textwidth}
         \centering
         \includegraphics[trim=125 125 5 5,clip,width=3.5cm,height=3.5cm]{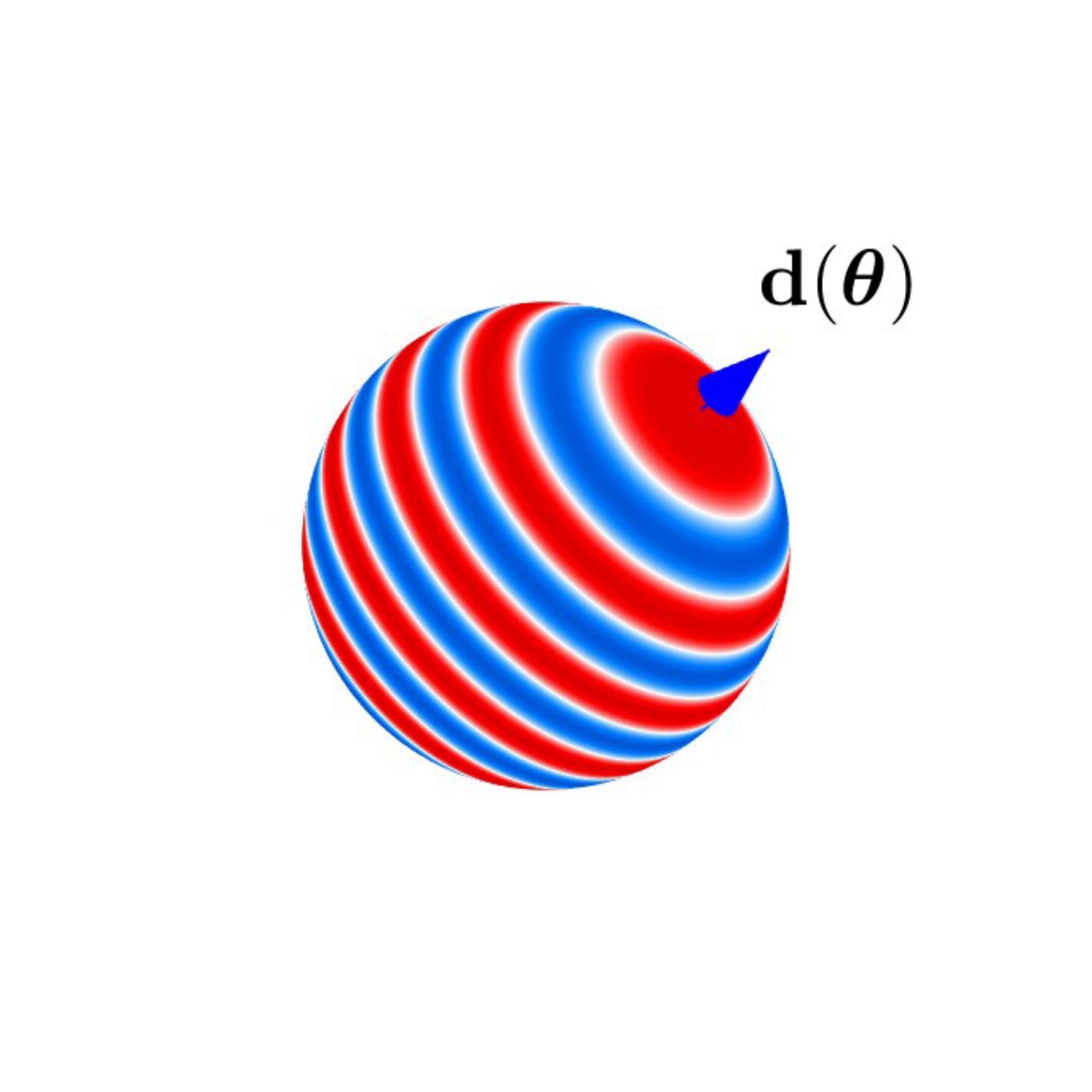}
         \captionsetup{labelformat=empty}   \caption{\!\!\!\!\!\!\!\!\!\!\!\!\!$\psi=\cdot\,$, $\zeta=0$}
     \end{subfigure}
     \begin{subfigure}[b]{0.24\textwidth}
         \centering
         \includegraphics[trim=125 125 5 5,clip,width=3.5cm,height=3.5cm]{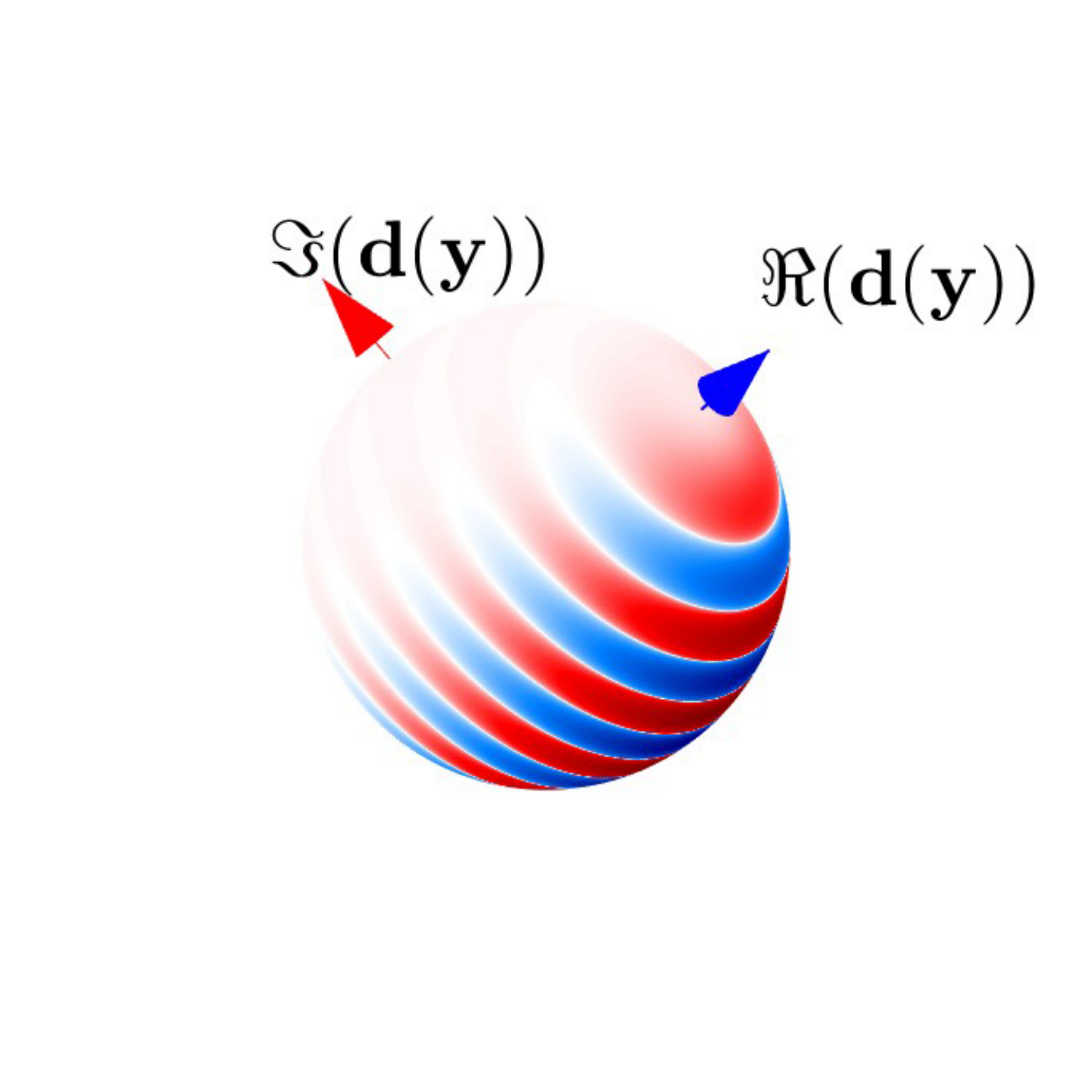}
         \captionsetup{labelformat=empty}   \caption{\!\!\!\!\!\!\!\!\!\!\!\!\!$\psi=\pi$, $\zeta=2$}
     \end{subfigure}
     \begin{subfigure}[b]{0.24\textwidth}
         \centering
         \includegraphics[trim=125 125 5 5,clip,width=3.5cm,height=3.5cm]{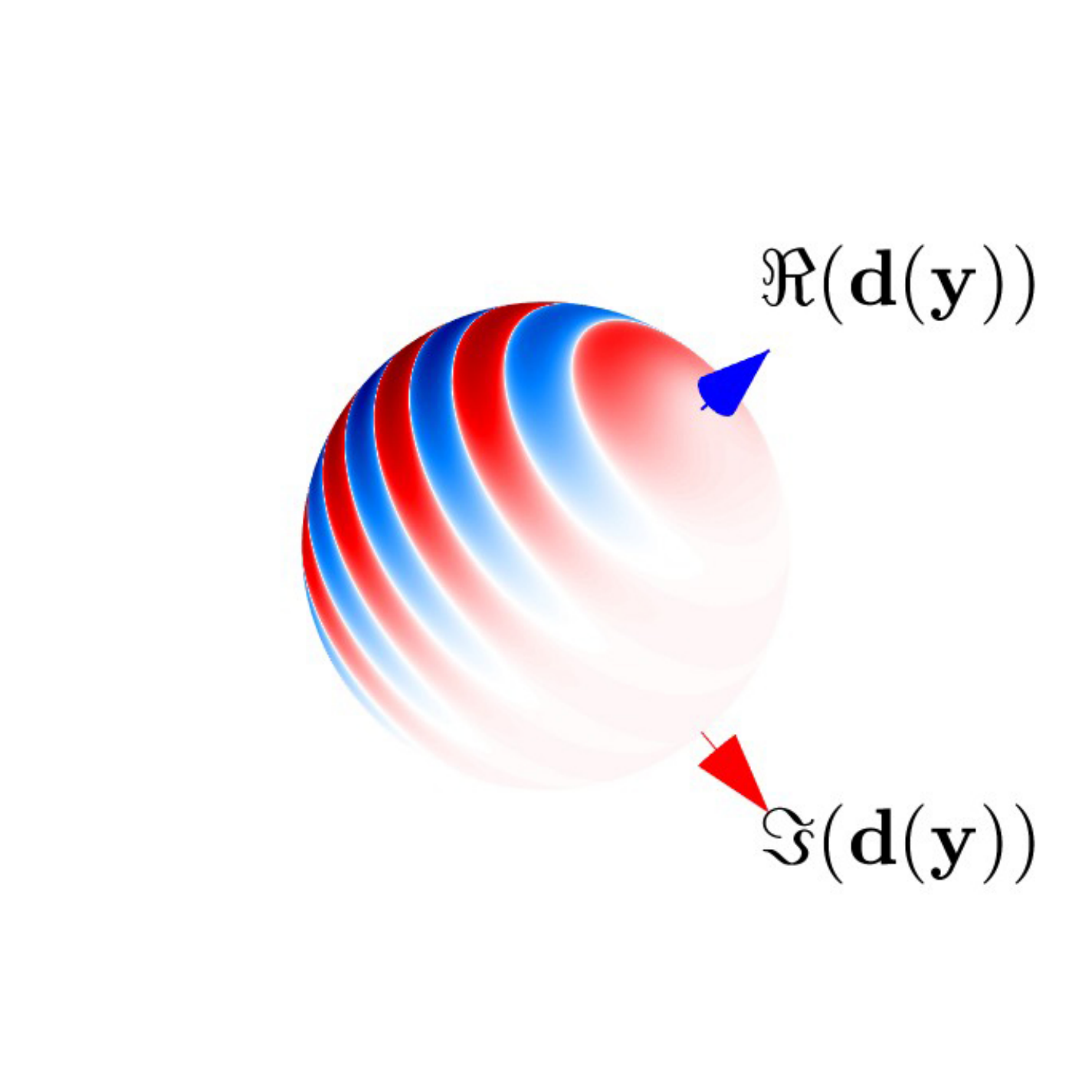}
         \captionsetup{labelformat=empty} \caption{\!\!\!\!\!\!\!\!\!\!\!\!\!$\psi=0$, $\zeta=2$}
     \end{subfigure}
     \begin{subfigure}[b]{0.24\textwidth}
         \centering
         \includegraphics[trim=125 125 5 5,clip,width=3.5cm,height=3.5cm]{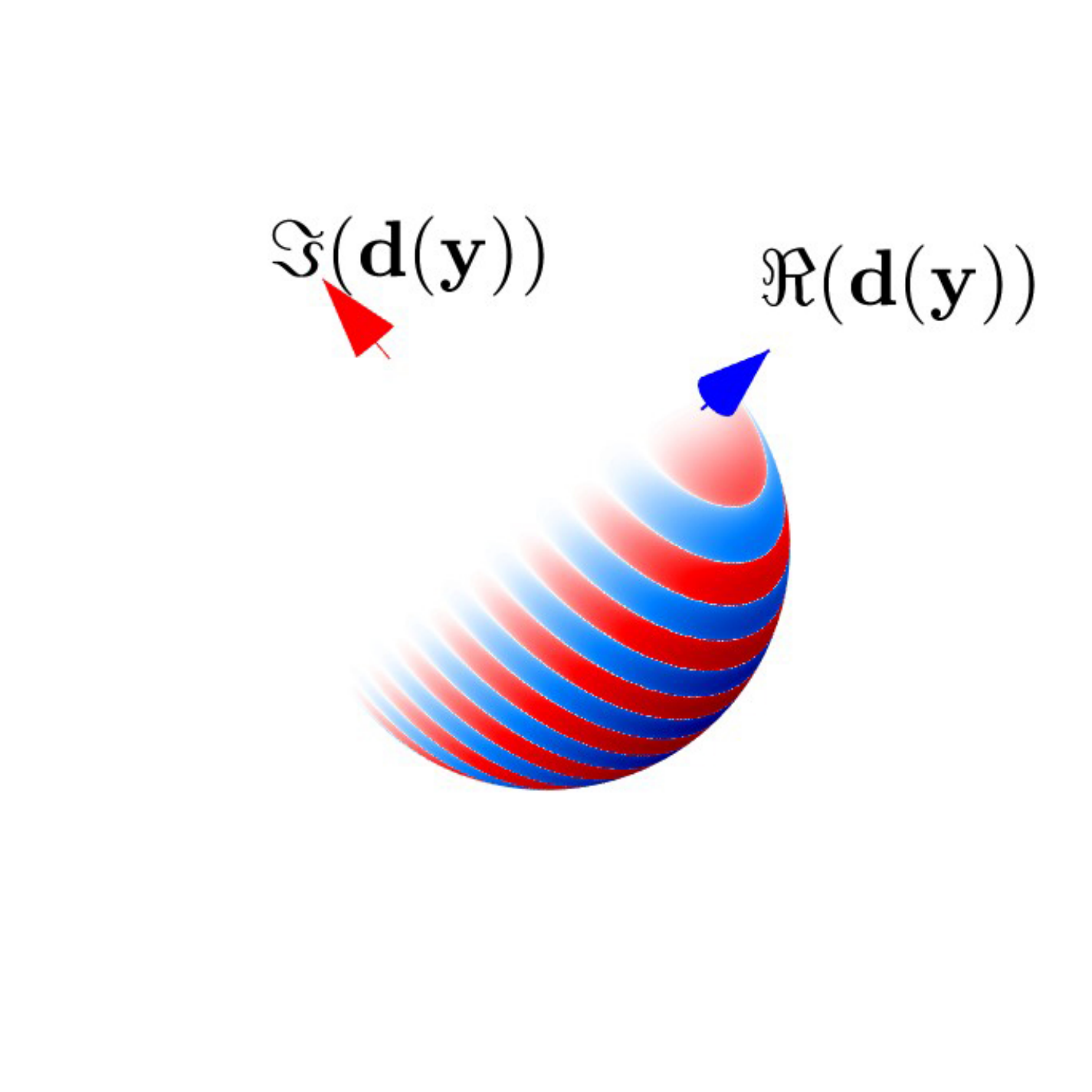}
         \captionsetup{labelformat=empty}  \caption{\!\!\!\!\!\!\!\!\!\!\!$\psi=\pi$, $\zeta=20$}
     \end{subfigure}
\caption{Real part of some EPWs in (\ref{evanescent wave}) restricted to $\partial B_1$.
The blue arrow denotes $\Re\left(\mathbf{d}(\mathbf{y})\right)$ direction, i.e.\ $\mathbf{d}(\boldsymbol{\theta})$ in (\ref{propagative direction}), while the red arrow represents $\Im\left(\mathbf{d}(\mathbf{y})\right)$ direction, i.e.\ $\mathbf{d}^{\bot}(\boldsymbol{\theta},\psi)$, that is the first column of the rotation matrix $R_{\boldsymbol{\theta},\psi}$.
We set $\theta_1=\theta_2=\pi/4$ and wavenumber $\kappa=16$.}
\label{figure 2.1}
\end{figure}

Since the direction vector $\mathbf{d}(\mathbf{y})$ in (\ref{complex direction}) is complex valued, the wave behavior can become unclear.
A more explicit expression of the EPW \eqref{evanescent wave} is
\begin{equation}
\textup{EW}_{\mathbf{y}}(\mathbf{x})=e^{i\kappa\Re(\mathbf{d}(\mathbf{y}))\cdot\mathbf{x}}e^{-\kappa\Im(\mathbf{d}(\mathbf{y}))\cdot\mathbf{x}}=e^{i \left(\frac{\zeta}{2}+\kappa \right) \mathbf{d}(\boldsymbol{\theta}) \cdot \mathbf{x}}e^{- \left( \zeta \left(\frac{\zeta}{4}+\kappa \right) \right)^{1/2} \mathbf{d}^{\bot}(\boldsymbol{\theta},\psi) \cdot \mathbf{x}},
\label{clearer behavior}
\end{equation}
where $\mathbf{d}(\boldsymbol{\theta})$ is defined in (\ref{propagative direction}) and we denote with $\mathbf{d}^{\bot}(\boldsymbol{\theta},\psi)$ the first column of the matrix $R_{\boldsymbol{\theta},\psi}$.
The wave oscillates with apparent wavenumber $\zeta/2+\kappa \geq \kappa$ in the propagation direction $\mathbf{d}(\boldsymbol{\theta})$, parallel to $\Re\left(\mathbf{d}(\mathbf{y})\right)$. Additionally, the wave decays exponentially in the direction $\mathbf{d}^{\bot}(\boldsymbol{\theta},\psi)$, which is orthogonal to $\mathbf{d}(\boldsymbol{\theta})$ and parallel to $\Im\left(\mathbf{d}(\mathbf{y})\right)$.
This justifies naming the new parameters $(\psi,\zeta) \in [0,2\pi) \times [0,+\infty)$, which control the imaginary part of the complex direction $\mathbf{d}(\mathbf{y})$ in (\ref{complex direction}), \emph{evanescence parameters}.
Some EPWs are represented in Figure~\ref{figure 2.1}.

\subsection{Spherical waves}\label{ss:Spherical}

For spherical domains, an explicit orthonormal basis for the Helmholtz solution
space is given by acoustic Fourier modes, the so-called \emph{spherical waves}.
To define them, we briefly review some special functions.

For conciseness, we introduce the index set $\mathcal{I}:=\{(\ell,m) \in \mathbb{Z}^2:  0 \leq |m| \leq \ell\}$. Following \cite[eqs.~(14.7.10) and (14.9.3)]{nist}, the \emph{Ferrers functions} are defined, for all $(\ell,m) \in \mathcal{I}$ and $|x|\leq 1$, as
\begin{equation}
\mathsf{P}_{\ell}^m(x):=\frac{(-1)^m}{2^{\ell}\ell!}(1-x^2)^{\frac{m}{2}}\frac{\textup{d}^{\ell+m}}{\textup{d}x^{\ell+m}}(x^2-1)^{\ell}, \!\!\qquad\!\! \text{so that} \!\!\qquad\!\! \mathsf{P}_{\ell}^{-m}(x)=(-1)^m\frac{(\ell-m)!}{(\ell+m)!}\mathsf{P}_{\ell}^{m}(x).
\label{legendre polynomials}
\end{equation}
In particular, $\mathsf{P}_{\ell}:=\mathsf{P}_{\ell}^{0}$ are simply called \emph{Legendre polynomials of degree $\ell$}.
Following \cite[eq.~(14.30.1)]{nist}, for every $(\theta,\varphi) \in \Theta$ and $(\ell,m) \in \mathcal{I}$, the \emph{spherical harmonics} are defined as
\begin{equation}
Y_{\ell}^m(\theta,\varphi):=\gamma_{\ell}^m e^{im\varphi}\mathsf{P}_{\ell}^m(\cos{\theta}), \qquad \text{where} \qquad \gamma_{\ell}^m:=\left[\frac{2\ell+1}{4\pi} \frac{(\ell-m)!}{(\ell+m)!} \right]^{1/2}
\label{spherical harmonic}
\end{equation}
is a normalization constant, such that
$\|Y_{\ell}^m\|_{L^2(\mathbb{S}^2)}=1$.
With a little abuse of notation, we also write $Y_{\ell}^m(\mathbf{x})$ in
place of $Y_{\ell}^m(\theta,\varphi)$, for $\mathbf{x}=(\sin \theta \cos
\varphi, \sin \theta \sin \varphi,\cos \theta) \in \mathbb{S}^2$.
These functions constitute an orthonormal basis of $L^2(\mathbb{S}^2)$.
The \emph{Condon--Shortley convention} is used, i.e.\ the phase factor
of $(-1)^m$ is included in~\eqref{legendre polynomials} rather than
in $\gamma_{\ell}^m$. Finally, for every $r >0 $, we denote with
$j_{\ell}(r):=\sqrt{\pi/2r}J_{\ell+1/2}(r)$ the \emph{spherical Bessel
functions} \cite[eq.~(10.47.3)]{nist}, where $J_{\ell}(r)$ are the usual
Bessel functions \cite[eq.~(10.2.2)]{nist}.

We are now ready to define the spherical waves. For normalization purposes, let us introduce the following  $\kappa$-dependent Hermitian product and associated norm:
\begin{equation} \label{eq:B_norm}
(u,v)_{\mathcal{B}}:=(u,v)_{L^2(B_1)}+\kappa^{-2}(\nabla u, \nabla v)_{L^2(B_1)^3}, \qquad \|u\|^2_{\mathcal{B}}:=(u,u)_{\mathcal{B}} \qquad \forall u,v \in H^1(B_1).
\end{equation}

\begin{figure}
\begin{subfigure}{.31\textwidth}
\centering
\includegraphics[trim=100 100 100 100,clip,width=3.5cm,height=3.5cm]{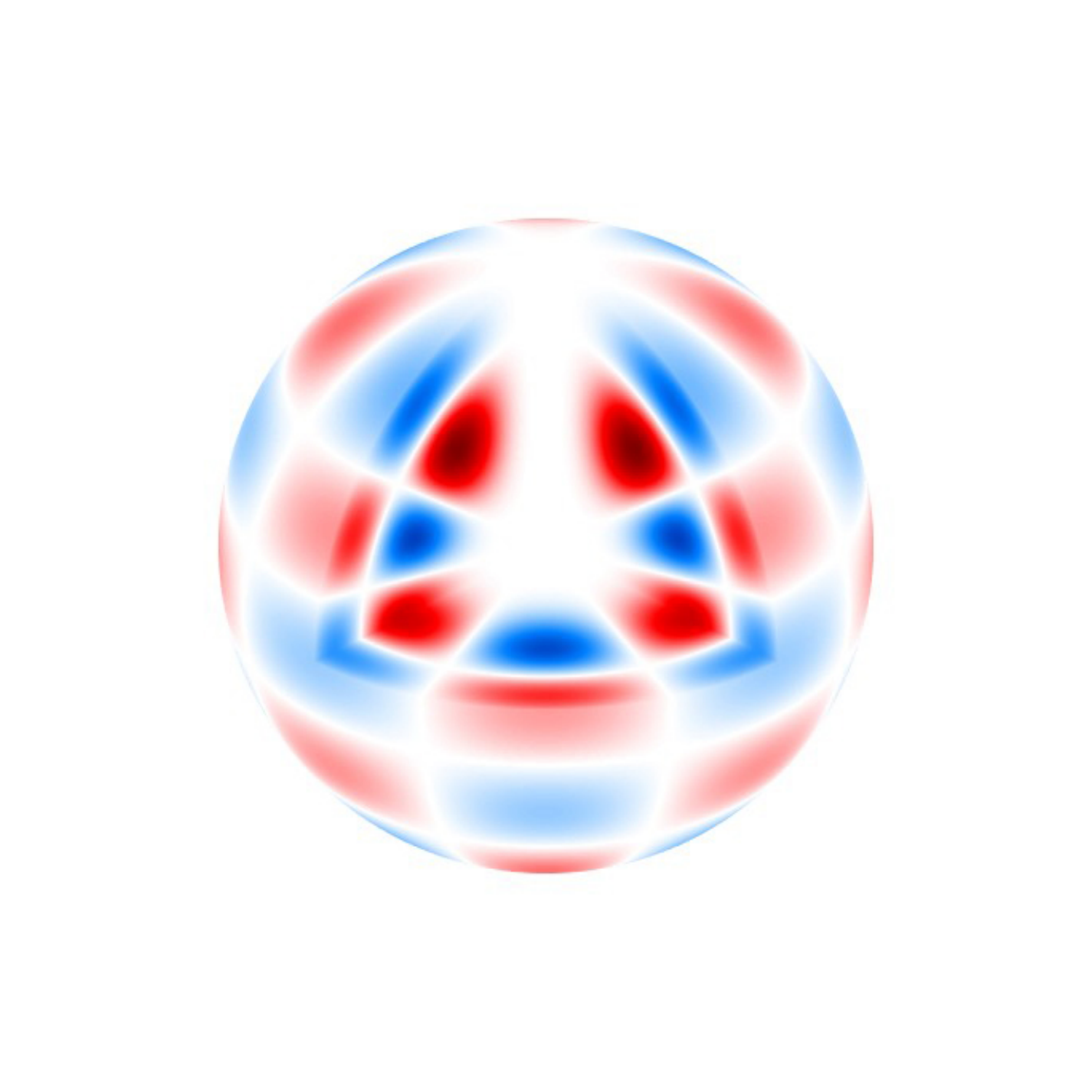}
\captionsetup{labelformat=empty}
\caption{Propagative: $\ell=2m=\kappa/2=8$}
\end{subfigure}\hfill
\begin{subfigure}{.31\textwidth}
\centering
\includegraphics[trim=100 100 100 100,clip,width=3.5cm,height=3.5cm]{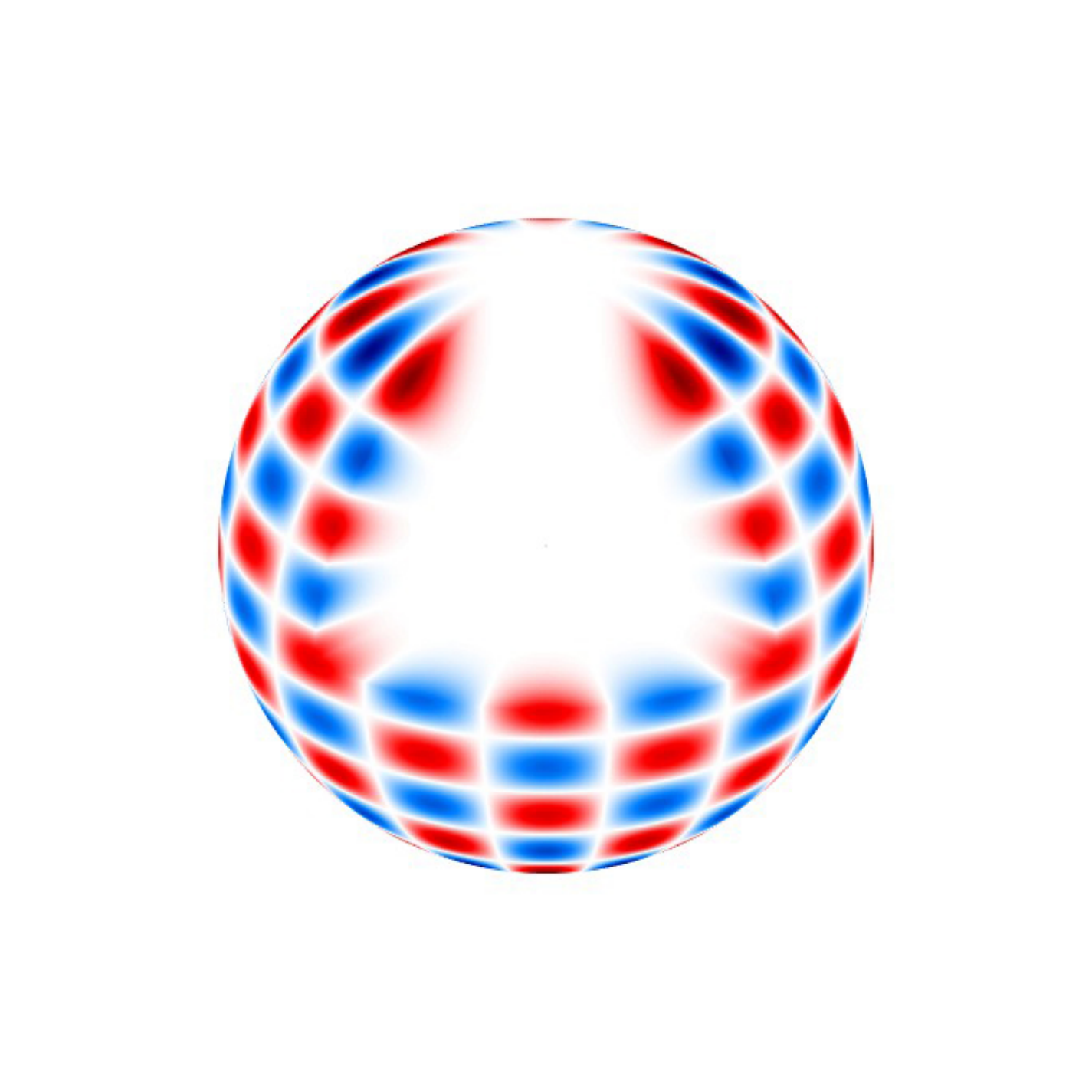}
\captionsetup{labelformat=empty}
\caption{Grazing: $\ell=2m=\kappa=16$}
\end{subfigure}\hfill
\begin{subfigure}{.31\textwidth}
\centering
\includegraphics[trim=100 100 100 100,clip,width=3.5cm,height=3.5cm]{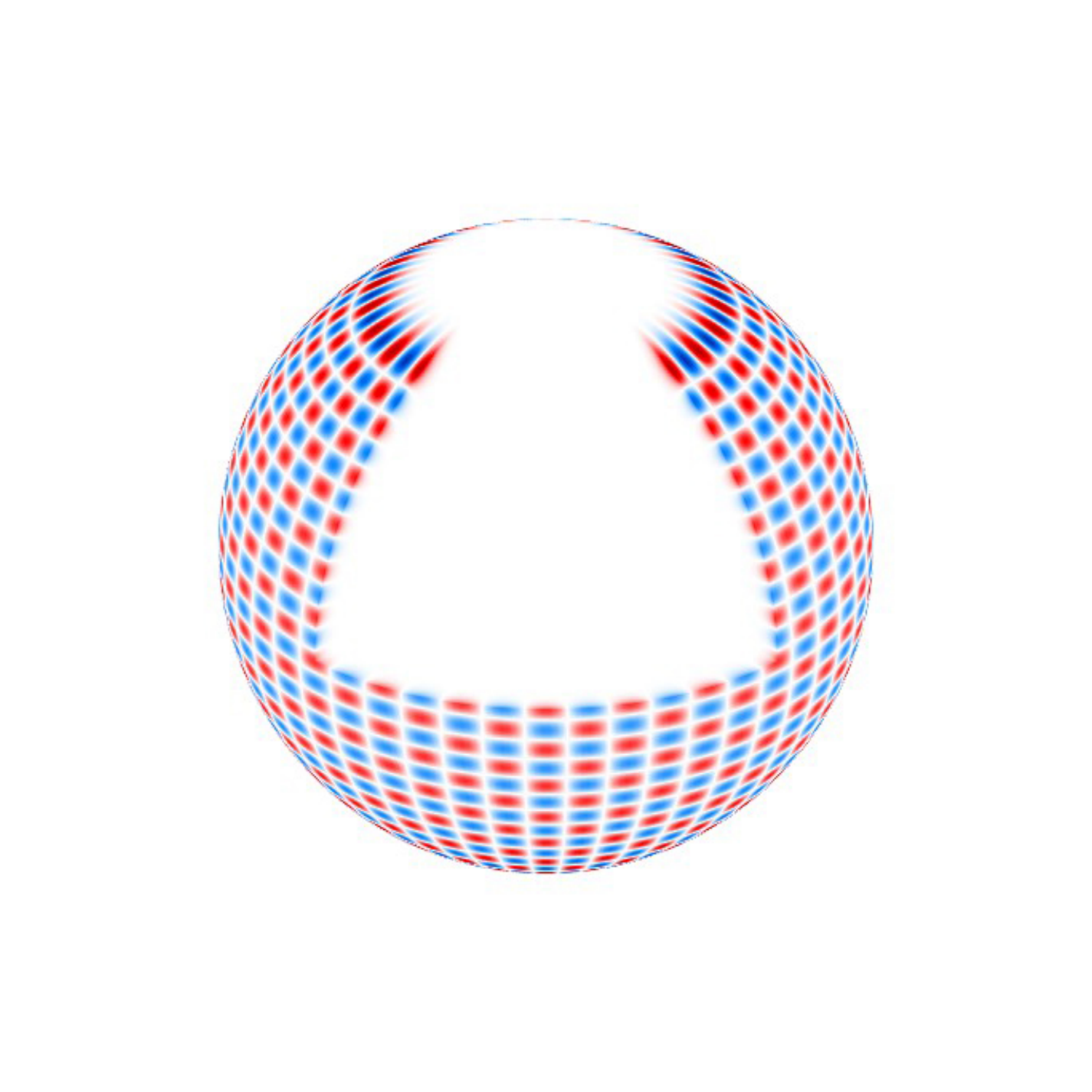}
\captionsetup{labelformat=empty}
\caption{Evanescent: $\ell=2m=3\kappa=48$}
\end{subfigure}
\caption{Real part of some spherical waves $b_{\ell}^m$ on $\partial \{B_1\setminus\{\mathbf{x} \in \mathbb{R}^3 : \mathbf{x}>0\}\}$. With increasing order $\ell$, these functions concentrates progressively closer to the boundary $\partial B_1$.}
\label{figure 2.2}
\end{figure}

\begin{definition}[Spherical waves]\label{def:B}
We define, for any $(\ell,m) \in \mathcal{I}$
\begin{equation}
b_\ell^m:=\beta_{\ell}\tilde{b}_\ell^m, \qquad \text{where} \qquad \tilde{b}_\ell^m(\mathbf{x}):=j_\ell(\kappa |\mathbf{x}|)Y_\ell^m(\mathbf{x}/|\mathbf{x}|) \quad  \forall \mathbf{x} \in B_1, \qquad \beta_\ell:=\|\tilde{b}_\ell^m\|^{-1}_\mathcal{B}. 
\label{b tilde definizione}
\end{equation}
Furthermore, we introduce the space $\mathcal{B}:=\overline{\textup{span}\{b_\ell^m\}_{(\ell,m) \in \mathcal{I}}}^{\|\cdot\|_{\mathcal{B}}} \subsetneq H^{1}(B_1)$.
\end{definition}

Thanks to \cite[eq.~(2.4.23)]{nedelec} and \cite[eq.~(10.47.1)]{nist} it is
clear that the spherical waves satisfy the Helmholtz
equation (\ref{Helmholtz equation}).
Although $b_{\ell}^m$ depends on the two indices $(\ell,m) \in \mathcal{I}$,
the normalization factor $\beta_{\ell}$ is independent of $m$, as shown later
in Lemma \ref{Lemma 2.6}.

Following common terminology, we refer to spherical waves with mode number
$\ell < \kappa$ (resp.\ $\ell \gg \kappa$) as \emph{propagative} (resp.\
\emph{evanescent}) modes;
their `energy' is distributed throughout the unit ball (resp.\ concentrated
near the unit sphere).
Lastly, waves with $\ell \approx \kappa$ are called \emph{grazing} modes.
Figure~\ref{figure 2.2} illustrates the behavior of several functions
$b_{\ell}^m$ on the boundary of the unit ball without the first octant.
We recall below a standard result on spherical waves, derived via separation of variables; see \cite[Lem.~1.2 and~1.3]{galante} for a proof.

\begin{lemma}
The space $(\mathcal{B},\!\|\cdot\|_{\mathcal{B}})$ is a Hilbert space and the family $\{b_{\ell}^m\}_{(\ell,m) \in \mathcal{I}}$ is a Hilbert basis:
\begin{equation*}
(b_{\ell}^m,b_q^n)_{\mathcal{B}}=\delta_{\ell, q}\delta_{m,n} \quad \forall(\ell,m),(q,n) \in \mathcal{I}, \qquad \text{and} \qquad u=\sum_{(\ell,m) \in \mathcal{I}}(u,b_{\ell}^m)_{\mathcal{B}}\,b_{\ell}^m
\quad \forall u \in \mathcal{B}.
\end{equation*}
Moreover,
$u \in H^1(B_1)$ satisfies the Helmholtz equation \textup{(\ref{Helmholtz equation})} if and only if $u \in \mathcal{B}$.
\end{lemma}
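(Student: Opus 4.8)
The plan is to establish the three assertions of the lemma in turn: completeness of $\mathcal{B}$, orthonormality and the basis property of $\{b_\ell^m\}$, and the characterization of Helmholtz solutions. Completeness is immediate: $\mathcal{B}$ is defined as the closure of a span in the norm $\|\cdot\|_{\mathcal{B}}$, and since $(u,v)_{\mathcal{B}}=(u,v)_{L^2(B_1)}+\kappa^{-2}(\nabla u,\nabla v)_{L^2(B_1)^3}$ is, up to the positive constant $\kappa^{-2}$, the standard $H^1(B_1)$ inner product, $\|\cdot\|_{\mathcal{B}}$ is a norm equivalent to $\|\cdot\|_{H^1(B_1)}$; hence $\mathcal{B}$ is a closed subspace of the Hilbert space $H^1(B_1)$ and therefore itself a Hilbert space with inner product $(\cdot,\cdot)_{\mathcal{B}}$.

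For the orthonormality and basis claims, I would first record that each $\tilde b_\ell^m(\mathbf{x})=j_\ell(\kappa|\mathbf{x}|)Y_\ell^m(\mathbf{x}/|\mathbf{x}|)$ solves the Helmholtz equation on $B_1$, as noted in the excerpt via the spherical Bessel ODE and the eigenfunction property of $Y_\ell^m$ for the Laplace--Beltrami operator. The key computational step is to evaluate $(\tilde b_\ell^m,\tilde b_q^n)_{\mathcal{B}}$: write the integral over $B_1$ in spherical coordinates, use the $L^2(\mathbb{S}^2)$-orthonormality $\int_{\mathbb{S}^2}Y_\ell^m\overline{Y_q^n}=\delta_{\ell,q}\delta_{m,n}$ for the angular part, and for the gradient term decompose $\nabla=\partial_r\,\hat{\mathbf{r}}+r^{-1}\nabla_{\mathbb{S}^2}$ so that $|\nabla \tilde b_\ell^m|^2$ separates into a radial derivative piece weighted by $|Y_\ell^m|^2$ and a spherical-gradient piece weighted by $|j_\ell(\kappa r)|^2$, again using $\int_{\mathbb{S}^2}\nabla_{\mathbb{S}^2}Y_\ell^m\cdot\overline{\nabla_{\mathbb{S}^2}Y_q^n}=\ell(\ell+1)\delta_{\ell,q}\delta_{m,n}$. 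One sees the cross terms vanish unless $(\ell,m)=(q,n)$, so the family is orthogonal, and the radial integral $\int_0^1\big(\kappa^2 j_\ell(\kappa r)^2+j_\ell'(\kappa r)^2\kappa^2+r^{-2}\ell(\ell+1)j_\ell(\kappa r)^2\big)r^2\,dr$ is a finite positive number depending only on $\ell$ (not $m$) — this is precisely $\beta_\ell^{-2}$, establishing both that $\beta_\ell$ is $m$-independent (as promised for Lemma~2.6) and that $b_\ell^m=\beta_\ell\tilde b_\ell^m$ is $\mathcal{B}$-orthonormal. The expansion formula $u=\sum (u,b_\ell^m)_{\mathcal{B}}b_\ell^m$ for $u\in\mathcal{B}$ is then the standard Hilbert-space projection identity, valid because $\{b_\ell^m\}$ is by definition total in $\mathcal{B}$.

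The last assertion — that $u\in H^1(B_1)$ solves the Helmholtz equation iff $u\in\mathcal{B}$ — is the part requiring genuine separation-of-variables argument in both directions. The ``if'' direction follows since each $b_\ell^m$ is a Helmholtz solution, finite $\mathcal{B}$-linear combinations are too, and the Helmholtz operator $u\mapsto\Delta u+\kappa^2 u$ is continuous from $H^1(B_1)$ to $H^{-1}(B_1)$, so its kernel is $\|\cdot\|_{\mathcal{B}}$-closed and hence contains $\mathcal{B}$. For the converse, given a Helmholtz solution $u\in H^1(B_1)$, one expands its restriction to each sphere $r\mathbb{S}^2$ in spherical harmonics, $u(r\omega)=\sum_{(\ell,m)}c_\ell^m(r)Y_\ell^m(\omega)$ with $c_\ell^m(r)=\int_{\mathbb{S}^2}u(r\omega)\overline{Y_\ell^m(\omega)}\,d\omega$; projecting the Helmholtz PDE onto $Y_\ell^m$ shows each $c_\ell^m$ satisfies the spherical Bessel equation of order $\ell$, whose only solution regular at the origin is proportional to $j_\ell(\kappa r)$, so $c_\ell^m(r)=\lambda_\ell^m j_\ell(\kappa r)$ and thus $u=\sum\lambda_\ell^m\tilde b_\ell^m$ in the appropriate sense; checking that this series converges in $\|\cdot\|_{\mathcal{B}}$ (so that $u\in\mathcal{B}$) uses $u\in H^1$ together with Parseval on each sphere. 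I expect the main obstacle to be the rigorous justification of this radial-mode expansion for a merely $H^1$ solution: one must ensure the coefficient functions $c_\ell^m$ are regular enough to satisfy the ODE classically (e.g. by interior elliptic regularity, Helmholtz solutions are smooth inside $B_1$), handle the singular point $r=0$ to discard the $y_\ell$ branch, and control the tail of the series in the $\mathcal{B}$-norm. Since the excerpt explicitly cites \cite[Lem.~1.2 and~1.3]{galante} for the proof, I would cite that reference for these convergence details and present only the structure above.
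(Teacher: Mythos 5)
Your proposal is correct and follows essentially the route the paper itself points to: the paper gives no proof, labelling the lemma a standard separation-of-variables result and citing \cite[Lem.~1.2 and~1.3]{galante}, and your argument (norm equivalence with $H^1(B_1)$ for completeness; spherical-coordinate splitting of the $\mathcal{B}$-inner product, $L^2(\mathbb{S}^2)$-orthonormality of $Y_\ell^m$ and the surface-gradient identity for orthonormality and the $m$-independence of $\beta_\ell$; interior regularity, projection onto spherical harmonics, and exclusion of the singular Bessel branch at the origin for the characterization of $H^1$ Helmholtz solutions, with the $\mathcal{B}$-norm tail convergence deferred to the same reference) is precisely that standard proof. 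One cosmetic slip: your displayed radial integral equals $\kappa^2\beta_\ell^{-2}$ rather than $\beta_\ell^{-2}$, since in $\|\cdot\|_{\mathcal{B}}^2$ the $L^2$ term carries no factor $\kappa^2$ and the angular-gradient contribution carries the weight $\kappa^{-2}r^{-2}\ell(\ell+1)$; this does not affect the orthogonality or $m$-independence conclusions.
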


The upcoming analysis uses the asymptotics of the normalization coefficient
$\beta_{\ell}$, which grows super-exponentially with $\ell$
after a pre-asymptotic regime up to $\ell \approx \kappa$.

\begin{lemma} \label{Lemma 2.6}
We have for all $\ell \geq 0$
\begin{equation}
  \beta_{\ell}=
  \sqrt{\frac{2 \kappa}{\pi}}\left[(1+{\ell}/{\kappa^2})J^2_{\ell+\frac{1}{2}}(\kappa)-(J_{\ell-\frac{1}{2}}(\kappa)+J_{\ell+\frac{1}{2}}(\kappa)/\kappa )J_{\ell+\frac{3}{2}}(\kappa) \right]^{-\frac{1}{2}}
  \underset{\ell\rightarrow \infty}{\sim} 
  2^{\frac{3}{2}}\kappa\left(\frac{2}{e \kappa}\right)^{\ell}\ell^{\ell+\frac{1}{2}}.
\label{beta_l asymptotic}
\end{equation}
\end{lemma}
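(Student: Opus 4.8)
\section*{Proof proposal}

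The plan is to compute $\|\tilde b_\ell^m\|_{\mathcal B}^2$ in closed form by separating the radial and angular variables, and then to read off $\beta_\ell=\|\tilde b_\ell^m\|_{\mathcal B}^{-1}$. Writing $\mathbf x=r\hat{\mathbf x}$ with $r=|\mathbf x|$ and $\hat{\mathbf x}\in\mathbb S^2$, one has $\nabla\bigl(j_\ell(\kappa r)Y_\ell^m(\hat{\mathbf x})\bigr)=\kappa\, j_\ell'(\kappa r)Y_\ell^m(\hat{\mathbf x})\,\hat{\mathbf x}+r^{-1}j_\ell(\kappa r)\nabla_{\mathbb S^2}Y_\ell^m(\hat{\mathbf x})$, and the radial and tangential parts are pointwise orthogonal. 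Using $\|Y_\ell^m\|_{L^2(\mathbb S^2)}=1$ together with the fact that $Y_\ell^m$ is an eigenfunction of $-\Delta_{\mathbb S^2}$ with eigenvalue $\ell(\ell+1)$, so that $\|\nabla_{\mathbb S^2}Y_\ell^m\|_{L^2(\mathbb S^2)}^2=\ell(\ell+1)$, the dependence on $m$ disappears and
\[
\|\tilde b_\ell^m\|_{\mathcal B}^2=\int_0^1\bigl(j_\ell(\kappa r)^2+j_\ell'(\kappa r)^2\bigr)r^2\,\mathrm dr+\frac{\ell(\ell+1)}{\kappa^2}\int_0^1 j_\ell(\kappa r)^2\,\mathrm dr .
\]
This already settles the remark after Definition~\ref{def:B} that $\beta_\ell$ does not depend on $m$. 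After the substitution $t=\kappa r$ the right-hand side becomes $\kappa^{-3}\int_0^\kappa\bigl[t^2\bigl(j_\ell(t)^2+j_\ell'(t)^2\bigr)+\ell(\ell+1)j_\ell(t)^2\bigr]\,\mathrm dt$.

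To evaluate this integral I would use the spherical Bessel equation in the form $(t^2 j_\ell')'=(\ell(\ell+1)-t^2)j_\ell$ to rewrite the integrand as $\tfrac{\mathrm d}{\mathrm dt}\bigl(t^2 j_\ell j_\ell'\bigr)+2t^2 j_\ell^2$. The total-derivative piece contributes only the boundary term $\kappa^2 j_\ell(\kappa)j_\ell'(\kappa)$ (the endpoint $t=0$ vanishing since $j_\ell(t)=O(t^\ell)$), while $\int_0^\kappa t^2 j_\ell(t)^2\,\mathrm dt=\tfrac{\kappa^3}{2}\bigl(j_\ell(\kappa)^2-j_{\ell-1}(\kappa)j_{\ell+1}(\kappa)\bigr)$ is the standard Lommel integral for spherical Bessel functions. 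Hence $\|\tilde b_\ell^m\|_{\mathcal B}^2=\kappa^{-1}j_\ell(\kappa)j_\ell'(\kappa)+j_\ell(\kappa)^2-j_{\ell-1}(\kappa)j_{\ell+1}(\kappa)$. It then remains to pass to ordinary Bessel functions via $j_\ell(t)=\sqrt{\pi/(2t)}\,J_{\ell+1/2}(t)$ (and the induced expression for $j_\ell'$), and to apply the recurrences $J_\nu'(t)=J_{\nu-1}(t)-\tfrac{\nu}{t}J_\nu(t)$ and $J_{\nu-1}(t)+J_{\nu+1}(t)=\tfrac{2\nu}{t}J_\nu(t)$ with $\nu=\ell+\tfrac12$, $t=\kappa$, to reorganise the products $J_{\ell+1/2}^2$, $J_{\ell+1/2}J_{\ell+3/2}$, $J_{\ell+3/2}^2$ into exactly the bracket displayed in \eqref{beta_l asymptotic}; taking the reciprocal square root gives the closed form for $\beta_\ell$. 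I expect this algebraic bookkeeping — keeping the factors $\pi/2$ straight and picking which recurrence to apply so that the output matches the stated combination rather than one of its many equivalent rewritings — to be the fussiest (though entirely elementary) step.

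For the asymptotics I would invoke the fixed-argument, large-order behaviour $J_\nu(\kappa)=\tfrac{(\kappa/2)^\nu}{\Gamma(\nu+1)}\bigl(1+O(1/\nu)\bigr)\sim(2\pi\nu)^{-1/2}\bigl(\tfrac{e\kappa}{2\nu}\bigr)^\nu$. This gives $J_{\ell+3/2}(\kappa)/J_{\ell+1/2}(\kappa)=O(\kappa/\ell)$, so inside the bracket of \eqref{beta_l asymptotic} the $J_{\ell+3/2}$-terms contribute only $O(1)$ relative to $J_{\ell+1/2}(\kappa)^2$, whereas the coefficient $1+\ell/\kappa^2$ of $J_{\ell+1/2}(\kappa)^2$ diverges; hence the bracket is $\sim\tfrac{\ell}{\kappa^2}J_{\ell+1/2}(\kappa)^2$. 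Substituting the asymptotics of $J_{\ell+1/2}(\kappa)$ and simplifying powers of $\ell+\tfrac12$ against powers of $\ell$ (which produces the factors $\tfrac{\kappa}{2\ell}$ and $e^{-1}$, the latter from $(1-\tfrac{1}{2\ell+1})^{2\ell+1}$) yields bracket $\sim\tfrac{1}{4\pi\kappa\ell}\bigl(\tfrac{e\kappa}{2\ell}\bigr)^{2\ell}$, so $\beta_\ell=\sqrt{2\kappa/\pi}\,[\text{bracket}]^{-1/2}\sim 2^{3/2}\kappa\,(2/e\kappa)^{\ell}\,\ell^{\ell+1/2}$, as claimed. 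The only point requiring care is that the $O(1/\nu)$ corrections in the Bessel asymptotics be controlled uniformly enough not to interfere with the growing factor; since only the leading term of $\beta_\ell$ is needed, a crude bound on these corrections suffices.
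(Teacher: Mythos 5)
Your proposal is correct, and its first half is essentially the paper's computation in a different guise: the paper uses the fact that $\tilde b_\ell^m$ solves the Helmholtz equation to rewrite $\|\tilde b_\ell^m\|_{\mathcal B}^2=2\|\tilde b_\ell^m\|_{L^2(B_1)}^2+\kappa^{-2}(\partial_{\mathbf n}\tilde b_\ell^m,\tilde b_\ell^m)_{L^2(\partial B_1)}$ via Green's identity and then evaluates the two pieces with the NIST integral identities, while you compute the $\mathcal B$-norm directly in spherical coordinates and use the radial Bessel ODE $(t^2j_\ell')'=(\ell(\ell+1)-t^2)j_\ell$ plus the Lommel integral --- which is the same integration by parts performed one dimension down, and it lands on the identical closed form, including the $m$-independence of $\beta_\ell$. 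Where you genuinely diverge is the asymptotics. The paper expands the $L^2$ term and the boundary term separately; the $L^2$ term involves the near-cancellation $J_{\ell+1/2}^2-J_{\ell-1/2}J_{\ell+3/2}$, whose size (a factor $\ell^{-1}$ below either product) must be extracted using the second-order term of the expansion $(\ell+x)^{y\ell+z}\sim\ell^{y\ell+z}e^{xy}(1+\tfrac{x(2z-xy)}{2\ell})$, and one then checks that the boundary contribution dominates. You instead work on the assembled bracket and observe that $J_{\ell-1/2}(\kappa)J_{\ell+3/2}(\kappa)=O\bigl(J_{\ell+1/2}(\kappa)^2\bigr)$ and $J_{\ell+1/2}J_{\ell+3/2}=o\bigl(J_{\ell+1/2}^2\bigr)$, so the divergent coefficient $1+\ell/\kappa^2$ makes the bracket $\sim\tfrac{\ell}{\kappa^2}J_{\ell+1/2}(\kappa)^2$ without ever quantifying the cancellation; this is a slightly cleaner route to the same constant $2^{3/2}\kappa$, resting only on the leading-order large-degree asymptotics \cite[eq.~(10.19.1)]{nist} that the paper also uses. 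The price is only that you must note the $O(1)$ bound on the ratio is uniform (it converges to $1$), which you do; the algebraic reorganization into the stated bracket via the recurrences for $J_\nu$ is routine and matches the paper's intermediate identities.
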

\begin{proof}
Since $b_{\ell}^m$ solves the Helmholtz equation (\ref{Helmholtz equation}), the expansion in (\ref{beta_l asymptotic}) stems from
\begin{equation}
\beta_{\ell}^{-2}=\|\tilde{b}_{\ell}^m\|^2_{\mathcal{B}}=2\|\tilde{b}_{\ell}^m\|^2_{L^2(B_1)}+\kappa^{-2}(\partial_{\mathbf{n}}\tilde{b}_{\ell}^m,\tilde{b}_{\ell}^m)_{L^2(\partial B_1)},
\label{B norm 2}
\end{equation}
and, using \cite[eqs.~(10.22.5) and (10.51.2)]{nist},
\begin{equation}
\|\tilde{b}_{\ell}^m\|^2_{L^2(B_1)}=\int_0^1\!\!j^2_{\ell}(\kappa r)r^2\textup{d}r=\frac{\pi}{2\kappa}\int_0^1\!\!J^2_{\ell+\frac{1}{2}}(\kappa r)r\textup{d}r=\frac{\pi}{4\kappa}\left(J^2_{\ell+\frac{1}{2}}(\kappa)-J_{\ell-\frac{1}{2}}(\kappa)J_{\ell+\frac{3}{2}}(\kappa) \right), \numberthis \label{L2}
\end{equation}
\begin{equation}
(\partial_{\mathbf{n}}\tilde{b}_{\ell}^m,\tilde{b}_{\ell}^m)_{L^2(\partial B_1)}=\kappa j'_{\ell}(\kappa)j_{\ell}(\kappa)=\ell j^2_{\ell}(\kappa)-\kappa j_{\ell}(\kappa)j_{\ell+1}(\kappa)=\frac{\pi}{2 \kappa} \left( \ell J^2_{\ell+\frac{1}{2}}(\kappa)-\kappa J_{\ell+\frac{1}{2}}(\kappa)J_{\ell+\frac{3}{2}}(\kappa)\right).
\label{partial}
\end{equation}

The proof of the asymptotic behavior consists in showing that we have as $\ell \rightarrow \infty$
\begin{equation*}
\|\tilde{b}_{\ell}^m\|^2_{L^2( B_1)} \sim \frac{1}{16}\left(\frac{e \kappa}{2}\right)^{2\ell} \ell^{-2\left(\ell+\frac{3}{2} \right)}, \qquad \text{and} \qquad (\partial_{\mathbf{n}}\tilde{b}_{\ell}^m,\tilde{b}_{\ell}^m)_{L^2(\partial B_1)} \sim \frac{1}{8}\left(\frac{e \kappa}{2}\right)^{2\ell}\ell^{-2\left(\ell+\frac{1}{2} \right)}.
\end{equation*}
Hence, thanks to (\ref{B norm 2}), the dominant term in $\|\tilde{b}_{\ell}^m\|_{\mathcal{B}}$ in the limit $\ell \rightarrow \infty$ is the boundary one.

Let us consider the $L^2(B_1)$ norm. From (\ref{L2}) and since \cite[eq.~(10.19.1)]{nist} holds, namely
\begin{equation}
J_{\ell}(r) \sim \frac{1}{\sqrt{2\pi \ell}}\left(\frac{er}{2 \ell} \right)^{\ell} \qquad \text{as}\,\,\,\ell \rightarrow \infty,
\label{bessel asymptotics}
\end{equation}
we get as $\ell \rightarrow \infty$
\begin{equation}
\|\tilde{b}_{\ell}^m\|^2_{L^2( B_1)} \sim \frac{1}{8 \kappa}\left(\frac{e \kappa}{2} \right)^{2\ell+1}\left(\ell+\frac{1}{2} \right)^{-2(\ell+1)}\Bigg[ 1- \frac{\left(\ell+\frac{1}{2} \right)^{2(\ell+1)}}{\left(\ell-\frac{1}{2} \right)^{\ell}\left(\ell+\frac{3}{2} \right)^{\ell+2}} \Bigg].
\label{www}
\end{equation}
Moreover, since for every $x,y,z \in \mathbb{R}$
\begin{equation}
(\ell+x)^{y\ell+z}\sim \ell^{y\ell+z}\exp\bigg\{{xy+\frac{x(2z-xy)}{2\ell}}\bigg\}\sim \ell^{y\ell+z}e^{xy} \qquad \text{as}\,\,\,\ell \rightarrow \infty,
\label{ell asymptotics}
\end{equation}
the term inside the square brackets in (\ref{www}) is equivalent to $\ell^{-1}$ at infinity.

Consider now the term $(\partial_{\mathbf{n}}\tilde{b}_{\ell}^m,\tilde{b}_{\ell}^m)_{L^2(\partial B_1)}$ in (\ref{B norm 2}). From (\ref{partial}) and (\ref{bessel asymptotics}), we get as $\ell \rightarrow \infty$
\begin{equation*}
(\partial_{\mathbf{n}}\tilde{b}_{\ell}^m,\tilde{b}_{\ell}^m)_{L^2(\partial B_1)} \sim \frac{1}{4 \kappa}\left(\frac{e \kappa}{2} \right)^{2\ell+1}\left(\ell+\frac{1}{2} \right)^{-2(\ell+1)}\left[\ell- \frac{e \kappa^2}{2} \frac{\left(\ell+ \frac{1}{2} \right)^{\ell+1}}{\left(\ell+ \frac{3}{2} \right)^{\ell+2}}\right],
\end{equation*}
and, thanks to (\ref{ell asymptotics}), it is readily checked that the second term inside the square brackets is dominated by the first one, since it is equivalent to $\kappa^2/2\ell$ at infinity.
\end{proof}
\vspace{-.1cm}

\subsection{Complex-direction Jacobi--Anger identity}

The explicit series expansion of PPWs in the spherical wave basis is given by
the Jacobi--Anger identity~\cite[eq.~(14)]{hiptmair-moiola-perugia4}, namely
\begin{equation}
  \textup{PW}_{\boldsymbol{\theta}}(\mathbf{x})=
  4 \pi \sum_{\ell=0}^{\infty}i^{\ell}\sum_{m=-\ell}^{\ell} \overline{Y_{\ell}^m\left(\boldsymbol{\theta}\right)}
  j_{\ell}(\kappa |\mathbf{x}|)
  Y_{\ell}^m(\mathbf{x}/|\mathbf{x}|)
  \qquad \forall \mathbf{x} \in B_1,\,\forall \boldsymbol{\theta} \in \Theta.
\label{jacobi-anger}
\end{equation}
The goal of this section is to obtain a similar expansion for EPWs, i.e.\ 
for complex-valued directions $\mathbf{d}(\mathbf{y})$, which to the best of
our knowledge is not available in the literature.
This generalization is not trivial and requires additional definitions
and lemmas.

\paragraph{Associated Legendre functions}
Following \cite[sect.~3.2, eq.~(6)]{erdelyi}, we adopt the convention
\begin{equation}
(w^2-1)^{m/2}:=\mathcal{P}\left[(w+1)^{m/2}\right]\,\mathcal{P}\left[(w-1)^{m/2}\right] \qquad \forall m \in \mathbb{Z},\,\forall w \in \mathbb{C},
\label{convention}
\end{equation}
where $\mathcal{P}[\,\,\cdot\,\,]$ indicates the standard principal branch.
For odd $m$, (\ref{convention}) allows to eliminate the branch cut along the imaginary axis simply by mirroring the function values from the right-half of the complex plane to the left-half (see \cite[sect.~4.2]{galante}).
Following \cite[eqs.~(14.7.14) and (14.9.13)]{nist}, the \emph{associated Legendre functions} are defined, for every $(\ell,m) \in \mathcal{I}$ and $w \in \mathbb{C}$, as
\begin{equation}
P_{\ell}^m(w):=\frac{1}{2^{\ell}\ell!}(w^2-1)^{\frac{m}{2}}\frac{\textup{d}^{\ell+m}}{\textup{d}w^{\ell+m}}(w^2-1)^{\ell}, \!\!\qquad\!\! \text{so that} \!\!\qquad\!\! P_{\ell}^{-m}(w)=\frac{(\ell-m)!}{(\ell+m)!}P_{\ell}^{m}(w).
\label{legendre2 polynomials}
\end{equation}
For every odd $m$, $P_{\ell}^m$ is a single-valued function on the complex plane with a branch cut along the interval $(-1,1)$, where it is continuous from above; otherwise, if $m$ is even, $P_{\ell}^m$ is a polynomial of degree $\ell$. Notably, $P_{\ell}(w):=P_{\ell}^0(w)=\mathsf{P}_{\ell}(w)$ for all $w \in \mathbb{C}$.
Moreover, \cite[eq.~(14.23.1)]{nist} explicitly provides:
\begin{equation}
\lim_{\epsilon 
\searrow 0}P_{\ell}^m(x\pm i\epsilon)=i^{\mp m}\mathsf{P}_{\ell}^m(x) \qquad \forall x \in (-1,1).
\label{on the cut}
\end{equation}
The next lemma extends the identity \cite[eq.~(2.46)]{colton-kress} to complex values of $t$:
\begin{equation}
e^{irt}=\sum_{\ell=0}^{\infty}i^{\ell}(2\ell+1)j_{\ell}(r)\mathsf{P}_{\ell}(t) \qquad \forall r \geq 0,\,\forall t \in [-1,1].
\label{pre jacobi-anger}
\end{equation}

\begin{lemma}
Let $\ell \geq 0$. We have for every $0 \leq m \leq \ell$ and $w \in \mathbb{C}$
\begin{equation}
P_{\ell}^m(w)=\frac{(\ell+m)!}{2^{\ell}\ell!}\sum_{k=0}^{\ell-m}\binom{\ell}{k}\binom{\ell}{m+k}\left(w-1\right)^{\ell-\left(m/2+k\right)}\left(w+1\right)^{m/2+k}.
\label{sum legendre expansion}
\end{equation}
In particular, due to \textup{(\ref{legendre2 polynomials})}, $P_{\ell}^m(z)\geq 0$ for every real $z\geq 1$ and $(\ell,m) \in \mathcal{I}$.
Moreover,
\begin{equation}\label{pre jacobi anger 2}
  e^{irw}=\sum_{\ell=0}^{\infty}i^{\ell}(2\ell+1)j_{\ell}(r)P_{\ell}(w)
  \qquad \forall r \geq 0, \,\forall w \in \mathbb{C}.
\end{equation}
\end{lemma}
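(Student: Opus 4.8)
The plan is to prove the closed-form expansion \eqref{sum legendre expansion} first, and then derive \eqref{pre jacobi anger 2} from it by analytic continuation combined with the real identity \eqref{pre jacobi-anger}. For \eqref{sum legendre expansion}, I would start from the definition \eqref{legendre2 polynomials} and apply the Leibniz rule to the $(\ell+m)$-th derivative of $(w^2-1)^\ell = (w-1)^\ell(w+1)^\ell$. This gives $\frac{\mathrm d^{\ell+m}}{\mathrm dw^{\ell+m}}(w-1)^\ell(w+1)^\ell = \sum_{j}\binom{\ell+m}{j}\frac{\mathrm d^{j}}{\mathrm dw^{j}}(w-1)^\ell \cdot \frac{\mathrm d^{\ell+m-j}}{\mathrm dw^{\ell+m-j}}(w+1)^\ell$; only the terms with $j\le\ell$ and $\ell+m-j\le\ell$ survive, i.e.\ $m\le j\le\ell$, so reindexing $j=m+k$ with $0\le k\le\ell-m$ and computing the two elementary derivatives $\frac{\mathrm d^{m+k}}{\mathrm dw^{m+k}}(w-1)^\ell = \frac{\ell!}{(\ell-m-k)!}(w-1)^{\ell-m-k}$ and $\frac{\mathrm d^{\ell-k}}{\mathrm dw^{\ell-k}}(w+1)^\ell = \frac{\ell!}{k!}(w+1)^{k}$ yields, after multiplying by the prefactor $\frac{1}{2^\ell\ell!}(w^2-1)^{m/2}$ and using the convention \eqref{convention} to split $(w^2-1)^{m/2}=(w-1)^{m/2}(w+1)^{m/2}$, exactly the stated sum once the binomial coefficients are reassembled as $\binom{\ell+m}{m+k}\frac{\ell!}{(\ell-m-k)!}\frac{\ell!}{k!} = (\ell+m)!\binom{\ell}{k}\binom{\ell}{m+k}/(\ell!)$. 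The nonnegativity claim for real $z\ge1$ is then immediate: every factor $(z-1)^{\ell-m/2-k}$, $(z+1)^{m/2+k}$ and every binomial coefficient is nonnegative, and the case $m<0$ follows from the symmetry relation in \eqref{legendre2 polynomials} since $(\ell-m)!/(\ell+m)!>0$.

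For \eqref{pre jacobi anger 2}, I would fix $r\ge0$ and regard both sides as functions of $w\in\mathbb C$. The left-hand side $w\mapsto e^{irw}$ is entire. For the right-hand side, each $P_\ell(w)=\mathsf P_\ell(w)$ is a polynomial (the $m=0$ case of \eqref{legendre2 polynomials}, as noted after it), hence entire, so I must check the series converges locally uniformly on $\mathbb C$ to define an entire function. Using the Bessel asymptotics \eqref{bessel asymptotics} — or rather the uniform bound $|j_\ell(r)|\le C r^\ell/(2\ell+1)!!$ type estimate — together with the standard polynomial bound $|\mathsf P_\ell(w)|\le (1+|w|)^\ell$ or $|P_\ell(w)|\le$ (something growing like $\ell^{1/2}(2|w|)^\ell$ at worst, which one can read off from \eqref{sum legendre expansion}), one sees that $|i^\ell(2\ell+1)j_\ell(r)P_\ell(w)|$ decays super-exponentially in $\ell$ for every fixed $r$ and $w$, uniformly on compact sets; this is because $j_\ell(r)\sim (r/2)^\ell/\ell!\cdot(\text{const})$ beats any fixed geometric growth $(2|w|)^\ell$. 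Hence the right-hand side is an entire function of $w$. Since \eqref{pre jacobi-anger} asserts that the two entire functions agree for all $w=t\in[-1,1]$ — an interval with an accumulation point — the identity theorem forces them to coincide on all of $\mathbb C$, which is \eqref{pre jacobi anger 2}.

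I expect the main obstacle to be the convergence argument for the series in \eqref{pre jacobi anger 2}: one must be careful that the estimate on $j_\ell(r)$ is valid uniformly for $r$ in a bounded set including the behaviour near $r=0$, and that the polynomial growth of $P_\ell$ extracted from \eqref{sum legendre expansion} (there are $\ell-m+1$ terms, each bounded by $\frac{(\ell+m)!}{2^\ell\ell!}\binom{\ell}{k}\binom{\ell}{m+k}(|w|+1)^\ell$) does not overwhelm the factorial decay of $j_\ell$ — it does not, but this needs a clean one-line factorial comparison. A minor subtlety worth flagging is that \eqref{sum legendre expansion} is stated for $0\le m\le\ell$ and invokes the convention \eqref{convention} for the factor $(w^2-1)^{m/2}$; since in \eqref{pre jacobi anger 2} only $m=0$ appears, no branch issues arise there, and the Leibniz computation above is purely algebraic so it holds verbatim for all $w\in\mathbb C$ with that convention in force. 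Everything else is routine, so the write-up should be short.
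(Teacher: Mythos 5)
Your proposal follows essentially the same route as the paper's proof: Leibniz's rule applied to $(w-1)^{\ell}(w+1)^{\ell}$ together with the branch convention \eqref{convention} to obtain \eqref{sum legendre expansion} and the nonnegativity claim, and then, for fixed $r$, a domination argument (the factorial decay of $j_{\ell}(r)$ beating the geometric growth of $P_{\ell}$ on compact sets, which the paper gets via a Vandermonde bound and the ratio test) combined with the identity theorem applied to \eqref{pre jacobi-anger} on $[-1,1]$ to deduce \eqref{pre jacobi anger 2}. The only blemish is a stray factor in your bookkeeping identity: the correct reassembly is $\binom{\ell+m}{m+k}\frac{\ell!}{(\ell-m-k)!}\frac{\ell!}{k!}=(\ell+m)!\binom{\ell}{k}\binom{\ell}{m+k}$, without the extra division by $\ell!$, which is exactly what is needed once the prefactor $\frac{1}{2^{\ell}\ell!}(w^2-1)^{m/2}$ is included.
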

\begin{proof}
It can be readily seen that
\begin{align*}
\frac{\textup{d}^{\ell+m}}{\textup{d}w^{\ell+m}}(w^2-1)^{\ell}&=\sum_{k=0}^{\ell+m}\binom{\ell+m}{k}\left(\frac{\textup{d}^k}{\textup{d}w^k}(w-1)^{\ell}\right)\left(\frac{\textup{d}^{\ell+m-k}}{\textup{d}w^{\ell+m-k}}(w+1)^{\ell}\right)\\
&=\sum_{k=m}^{\ell}\binom{\ell+m}{k}\left(\frac{\ell!}{(\ell-k)!}(w-1)^{\ell-k}\right)\left(\frac{\ell!}{(k-m)!}(w+1)^{k-m}\right)\\
&=\frac{(w-1)^{\ell}}{(w+1)^m}\sum_{k=m}^{\ell}\frac{(\ell+m)!}{k!(\ell+m-k)!}\frac{\ell!}{(\ell-k)!}\frac{\ell!}{(k-m)!}\left(\frac{w+1}{w-1}\right)^k\\
&=(\ell+m)!(w-1)^{\ell-m}\sum_{k=0}^{\ell-m}\binom{\ell}{k}\binom{\ell}{m+k}\left(\frac{w+1}{w-1}\right)^k.
\end{align*}
Therefore, thanks to the definitions (\ref{convention}) and (\ref{legendre2 polynomials}), the expansion (\ref{sum legendre expansion}) follows.

Let $R > 1$ and $B_R:=\{w \in \mathbb{C}: |w| <R\}$. We want to check that the right-hand side in (\ref{pre jacobi anger 2}) is well-defined for every $r \geq 0$ and $w \in B_R$. Due to (\ref{bessel asymptotics}) and (\ref{ell asymptotics}),
it is enough to prove
\begin{equation}
\sum_{\ell=0}^{\infty}\left(\frac{er}{2\ell+1}\right)^{\ell}\left| P_{\ell}(w)\right|<\infty \qquad \forall r \geq 0,\,\forall w \in B_R.
\label{other series}
\end{equation}
Thanks to (\ref{sum legendre expansion}), $P_\ell=P_\ell^0$, and the Vandermonde identity \cite[eq.~(1)]{Sokal}, it follows
\begin{equation*}
|P_{\ell}(w)|\leq \frac{1}{2^{\ell}}\sum_{k=0}^{\ell}\binom{\ell}{k}^2|w-1|^{\ell-k}|w+1|^k\leq\sum_{k=0}^{\ell}\binom{\ell}{k}^2\left(\frac{R+1}{2}\right)^{\ell}=\binom{2\ell}{\ell}\left(\frac{R+1}{2}\right)^{\ell},
\end{equation*}
and therefore, for every $r \geq 0$ and $w \in B_R$, the series (\ref{other series}) is dominated by
\begin{equation}
\sum_{\ell=0}^{\infty}c_{\ell}, \qquad \text{where} \qquad c_{\ell}:=\binom{2\ell}{\ell}\left[\frac{er(R+1)}{4\ell+2}\right]^{\ell}.
\label{other other series}
\end{equation}
The series (\ref{other other series}) is convergent, as confirmed by the ratio test: in fact, from (\ref{ell asymptotics}), we have
\begin{equation*}
\frac{c_{\ell+1}}{c_{\ell}}\sim \left(\frac{\ell+1/2}{\ell+3/2}\right)^{\ell+1}\frac{er(R+1)}{\ell+1}\sim\frac{r(R+1)}{\ell} \qquad \text{as }\ell \rightarrow \infty.
\end{equation*}
Thus, the right-hand side of (\ref{pre jacobi anger 2}) is well-defined for every $r \geq 0$ and $w \in B_R$. The functions $w \mapsto e^{irw}$ and $w \mapsto P_{\ell}(w)$ are analytic on $B_R$ and, since identity (\ref{pre jacobi-anger}) holds, that is (\ref{pre jacobi anger 2}) with $w \in [-1,1]$, it follows that (\ref{pre jacobi anger 2}) also holds for every $r \geq 0$ and $w \in B_R$ due to \cite[Th. 3.2.6]{ablowitz}. As $R >1$ is arbitrary, (\ref{pre jacobi anger 2}) is valid for every $w \in \mathbb{C}$.
\end{proof}

\paragraph{Wigner matrices, rotations of spherical harmonics and addition theorem}

The next definition aligns with \cite[eq.~(34)]{pendleton} and \cite[eq.~(1)]{feng}, albeit with a distinction: we invert the angle signs to ensure consistency with the notation of PPW directions in (\ref{propagative direction}).

\begin{definition}[Wigner matrices]
Let $(\boldsymbol{\theta},\psi) \in \Theta \times [0,2\pi)$ be the Euler angles and $\ell \geq 0$. The \textup{Wigner D-matrix} is the unitary matrix $D_{\ell}(\boldsymbol{\theta},\psi)=(D_{\ell}^{m,m'}\!(\boldsymbol{\theta},\psi))_{m,m'} \!\in \mathbb{C}^{(2\ell+1) \times (2\ell +1)}$, where
\begin{equation}
D_{\ell}^{m,m'}(\boldsymbol{\theta},\psi):=e^{im'\theta_2}d_{\ell}^{\,m,m'}(\theta_1)e^{im\psi}
\qquad \forall\,|m|,|m'|\leq \ell.
\label{DD matrix}
\end{equation}
In turn, $d_{\ell}(\theta):=(d_{\ell}^{\,m,m'}\!(\theta))_{m,m'} \!\in \mathbb{R}^{(2\ell+1) \times (2\ell +1)}$ is called \textup{Wigner d-matrix} and its entries are:
\begin{equation}
d_{\ell}^{\,m,m'}(\theta):=\sum_{k=k_{\textup{min}}}^{k_{\textup{max}}}w_{\ell,k}^{m,m'}\left(\cos{\frac{\theta}{2}}\right)^{2(\ell-k)+m'-m}\left(\sin{\frac{\theta}{2}}\right)^{2k+m-m'} \qquad \forall\,|m|,|m'|\leq \ell,
\label{d matrix}
\end{equation}
where
\begin{equation*}
w_{\ell,k}^{m,m'}:=\frac{(-1)^k\left[(\ell+m)!(\ell-m)!(\ell+m')!(\ell-m')!\right]^{1/2}}{(\ell-m-k)!(\ell+m'-k)!(k+m-m')!\,k!},
\end{equation*}
with $k_{\textup{min}}:=\max\{0,m'-m\}$ and $k_{\textup{max}}:=\max\{\ell-m,\ell+m'\}$.
\end{definition}
The Wigner D-matrix $D_{\ell}(\boldsymbol{\theta},\psi)$ is used to express the image of any spherical harmonic of degree $\ell$ under the rotation $R_{\boldsymbol{\theta},\psi}$ as a 
linear combination of spherical harmonics of the same degree. In fact the expansion formula \cite[sect.~4.1, eq.~(5)]{quantumtheory} holds, namely 
\begin{equation}
Y_{\ell}^{m}(\mathbf{x})=\sum_{m'=-\ell}^{\ell}\overline{D_{\ell}^{m,m'}(\boldsymbol{\theta},\psi)}Y_{\ell}^{m'}(R_{\boldsymbol{\theta},\psi}\mathbf{x}) \qquad \forall \mathbf{x} \in \mathbb{S}^2,\,\forall (\ell,m) \in \mathcal{I}.
\label{wigner property}
\end{equation}

We finally establish a generalized Legendre
addition theorem, extending  
e.g.~\cite[eq.~(2.30)]{colton-kress}.

\begin{lemma}
  For any
  $\ell \geq 0$, $\mathbf{x} \in \mathbb{S}^2$ and 
  \(\mathbf{y} = (\boldsymbol{\theta}, \psi, \zeta) \in Y\)
  we have
  \begin{equation}
    \sum_{{m}=-\ell}^{\ell} \sum_{{m'}=-\ell}^{\ell}
    \overline{D_{\ell}^{m',m}(\boldsymbol{\theta},\psi)}
    \gamma_{\ell}^{m'}i^{-m'} P_{\ell}^{m'}\left(1+{\zeta}/{2\kappa}\right)
    Y_\ell^m(\mathbf{x})
    = \frac{2\ell+1}{4\pi}
    P_{\ell}(\mathbf{d}(\mathbf{y})\cdot \mathbf{x}),
  \label{addition theorem 2}
  \end{equation}
\end{lemma}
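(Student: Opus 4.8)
\noindent\emph{Proof plan.}
The plan is to first reduce \eqref{addition theorem 2} to the rotation-free case, i.e.\ the case in which $R_{\boldsymbol\theta,\psi}$ is the identity, by a rigid-motion argument, and then to settle that case by analytically continuing the classical Legendre addition theorem in an angular variable. For the reduction, I would show that it suffices to prove that, for every $z\ge 1$ and every $\mathbf x\in\mathbb S^2$,
\begin{equation*}
\sum_{m=-\ell}^{\ell}\gamma_\ell^{m}\,i^{-m}\,P_\ell^{m}(z)\,Y_\ell^{m}(\mathbf x)=\frac{2\ell+1}{4\pi}\,P_\ell\!\left(\mathbf d_{\uparrow}(z)\cdot\mathbf x\right).
\tag{$\ast$}
\end{equation*}
Indeed, set $z:=1+\zeta/2\kappa$ and $R:=R_{\boldsymbol\theta,\psi}\in\mathrm{SO}(3)$. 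Since the (bilinear) dot product obeys $(R\mathbf a)\cdot\mathbf b=\mathbf a\cdot(R^{-1}\mathbf b)$ for all $\mathbf a\in\mathbb C^{3}$, $\mathbf b\in\mathbb R^{3}$ (because $R^{\mathsf T}=R^{-1}$), one has $\mathbf d(\mathbf y)\cdot\mathbf x=\big(R\,\mathbf d_{\uparrow}(z)\big)\cdot\mathbf x=\mathbf d_{\uparrow}(z)\cdot(R^{-1}\mathbf x)$, with $R^{-1}\mathbf x\in\mathbb S^2$. Applying $(\ast)$ with $\mathbf x$ replaced by $R^{-1}\mathbf x$, then inserting $Y_\ell^{m'}(R^{-1}\mathbf x)=\sum_{m=-\ell}^{\ell}\overline{D_\ell^{m',m}(\boldsymbol\theta,\psi)}\,Y_\ell^{m}(\mathbf x)$ — which is \eqref{wigner property} evaluated at $R^{-1}\mathbf x$ — and exchanging the two (finite) sums yields exactly \eqref{addition theorem 2}.

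To prove $(\ast)$, I would start from the classical Legendre addition theorem \cite[eq.~(2.30)]{colton-kress}, specialized to the real unit vector $\mathbf d_\alpha:=(\sin\alpha,0,\cos\alpha)$ with $\alpha\in[0,\pi]$, which has polar angle $\alpha$ and zero azimuth so that $Y_\ell^{m}(\mathbf d_\alpha)=\gamma_\ell^{m}\mathsf P_\ell^{m}(\cos\alpha)\in\mathbb R$: for every $\mathbf x=(x_1,x_2,x_3)\in\mathbb S^2$,
\begin{equation*}
P_\ell\!\left(\sin\alpha\,x_1+\cos\alpha\,x_3\right)=\frac{4\pi}{2\ell+1}\sum_{m=-\ell}^{\ell}\gamma_\ell^{m}\,\mathsf P_\ell^{m}(\cos\alpha)\,Y_\ell^{m}(\mathbf x).
\tag{$\star$}
\end{equation*}
The left-hand side of $(\star)$ is entire in $\alpha$, $P_\ell$ being a polynomial. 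For the right-hand side I would note that, for $0\le m\le\ell$, the prefactor split in \eqref{legendre polynomials} reads $\mathsf P_\ell^{m}(\cos\alpha)=\tfrac{(-1)^{m}}{2^{\ell}\ell!}(\sin\alpha)^{m}Q_{\ell,m}(\cos\alpha)$ with $Q_{\ell,m}(y):=\tfrac{\mathrm d^{\ell+m}}{\mathrm dy^{\ell+m}}(y^{2}-1)^{\ell}$; since $\sin\alpha$, $\cos\alpha$ and the polynomial $Q_{\ell,m}$ are entire, this exhibits $\alpha\mapsto\mathsf P_\ell^{m}(\cos\alpha)$ as an entire function, and the symmetry relation in \eqref{legendre polynomials} handles $-\ell\le m<0$. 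Hence both sides of $(\star)$ are entire in $\alpha$ and agree on $[0,\pi]$, so they agree on all of $\mathbb C$ by the identity theorem. Evaluating at $\alpha=it$, $t\ge 0$, and using $\sin(it)=i\sinh t$, $\cos(it)=\cosh t$, the left-hand side becomes $P_\ell(\mathbf d_{\uparrow}(z)\cdot\mathbf x)$ with $z:=\cosh t$, which ranges over $[1,\infty)$. For the right-hand side, comparing the split above with the definition \eqref{legendre2 polynomials} of $P_\ell^{m}$ and the branch convention \eqref{convention}, under which $(w^{2}-1)^{m/2}=(\sqrt{z^{2}-1})^{m}\ge 0$ for real $z\ge 1$, gives for $0\le m\le\ell$
\begin{equation*}
\left.\mathsf P_\ell^{m}(\cos\alpha)\right|_{\alpha=it}=\frac{(-1)^{m}i^{m}}{2^{\ell}\ell!}\,(\sqrt{z^{2}-1})^{m}\,Q_{\ell,m}(z)=(-1)^{m}i^{m}\,P_\ell^{m}(z)=i^{-m}P_\ell^{m}(z),
\end{equation*}
and the symmetry relations in \eqref{legendre polynomials} and \eqref{legendre2 polynomials} extend this to $-\ell\le m<0$. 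Substituting into the (now complex-$\alpha$) identity $(\star)$ and dividing by $4\pi/(2\ell+1)$ yields $(\ast)$.

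Every sum above is finite, so there is no convergence issue, and both reductions are essentially formal. The one genuinely delicate point — the reason the apparatus of this section (the convention \eqref{convention}, the associated Legendre functions, and the jump relation \eqref{on the cut}) is required — is the phase identity $\mathsf P_\ell^{m}(\cos(it))=i^{-m}P_\ell^{m}(\cosh t)$: one must carefully follow the Condon--Shortley factor $(-1)^{m}$, which \eqref{legendre polynomials} carries but \eqref{legendre2 polynomials} does not, together with the half-integer power convention \eqref{convention}, to be sure that the analytic continuation of the Ferrers function off $[-1,1]$ lands on the intended branch of $P_\ell^{m}$ at the real point $z=\cosh t\ge 1$.
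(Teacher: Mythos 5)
Your proof is correct, and the core step is handled by a genuinely different argument from the paper's. The reduction to the unrotated identity $(\ast)$ (which is exactly the paper's intermediate identity \eqref{eq:Pl(dx)}) via the Wigner rotation formula \eqref{wigner property} and the orthogonality of $R_{\boldsymbol\theta,\psi}$ is the same in both proofs. Where you diverge is in establishing $(\ast)$: the paper fixes the direction on the real axis of the continuation, writes $\mathbf x$ in spherical coordinates, and analytically continues in the polar angle $\theta$ of $\mathbf x$, invoking the NIST addition theorem \cite[eq.~(14.28.1)]{nist} for associated Legendre functions of complex arguments together with the boundary-value relation \eqref{on the cut} to land back on the Ferrers functions; this forces the restriction $\theta\in[0,\pi/2)$ and an extra continuation argument, which the paper relegates to the subsequent Remark. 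You instead start from the classical real Legendre addition theorem applied to the real direction $(\sin\alpha,0,\cos\alpha)$ and continue analytically in the direction's polar angle $\alpha$ to $\alpha=it$, which turns $(\sin\alpha,0,\cos\alpha)$ into $\mathbf d_\uparrow(\cosh t)$. Your phase bookkeeping $\mathsf P_\ell^m(\cos(it))=(-1)^m i^m P_\ell^m(\cosh t)=i^{-m}P_\ell^m(\cosh t)$, including the Condon--Shortley factor and the branch convention \eqref{convention} at real $z\ge1$, is accurate, and the symmetry relations correctly extend it to negative $m$. Your route is more self-contained: it needs only \cite[eq.~(2.30)]{colton-kress}, the identity theorem, and elementary entirety of $\alpha\mapsto(\sin\alpha)^m Q_{\ell,m}(\cos\alpha)$, and it sidesteps both the domain caveat of \cite[eq.~(14.28.1)]{nist} and the limit relation \eqref{on the cut} entirely (your closing paragraph lists \eqref{on the cut} among the needed ingredients, but your own argument never actually uses it). The paper's approach, on the other hand, stays closer to the machinery it has already set up for complex-argument Legendre functions and reuses \eqref{on the cut}, which appears elsewhere in the section.
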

\begin{proof}
  Let
  $\mathbf{x}=(\sin{\theta}\cos{\varphi},\sin{\theta}\sin{\varphi},\cos{\theta})\in
  \mathbb{S}^2$ with $(\theta,\varphi) \in \Theta$
  and let
  \(\mathbf{y} = (\boldsymbol{\theta}, \psi, \zeta) \in Y\)
  with \(z = 1 + \zeta/2\kappa\).
  We need to establish that
  \begin{equation}\label{eq:Pl(dx)}
    \sum_{m=-\ell}^{\ell}
    \gamma_{\ell}^m i^{-m}P_{\ell}^m(z)
    Y_{\ell}^m(\mathbf{x})
    = \frac{2\ell+1}{4\pi}
    P_{\ell}(\mathbf{d}_{\uparrow}(z)\cdot \mathbf{x}),
  \end{equation}
  from which the result follows using~\eqref{wigner property} 
  and $\mathbf d(\mathbf y)=R_{\boldsymbol{\theta},\psi}\mathbf{d_\uparrow}(z)$ from \eqref{complex direction}.
  On the one hand,
  \begin{equation*}
  P_{\ell}\left(z\cos\theta+i\sqrt{z^2-1}\sin \theta \cos \varphi\right)=P_{\ell}\left(\mathbf{d}_{\uparrow}(z)\cdot \mathbf{x}\right).
  \end{equation*}
  On the other hand, thanks to (\ref{legendre polynomials}), (\ref{spherical harmonic}), and (\ref{legendre2 polynomials}),
  \begin{align*}
  &\frac{4\pi}{2\ell+1}\!\sum_{m=-\ell}^{\ell}(\gamma_{\ell}^m)^2i^{-m}P_{\ell}^m(z)\mathsf{P}_{\ell}^m(\cos \theta)e^{im\varphi}=\!\!\sum_{m=-\ell}^{\ell}\frac{(\ell-m)!}{(\ell+m)!}i^{-m}P_{\ell}^m(z)\mathsf{P}_{\ell}^m(\cos \theta)e^{im\varphi}\\
  &=\sum_{m=0}^{\ell}\frac{(\ell-m)!}{(\ell+m)!}i^{-m}P_{\ell}^m(z)\mathsf{P}_{\ell}^m(\cos \theta)e^{im\varphi}+\sum_{m=1}^{\ell}\frac{(\ell-m)!}{(\ell+m)!}i^{-m}P_{\ell}^{m}(z)\mathsf{P}_{\ell}^{m}(\cos \theta)e^{-im\varphi}\\
  &=P_{\ell}(z)\mathsf{P}_{\ell}(\cos \theta)+2\sum_{m=1}^{\ell}\frac{(\ell-m)!}{(\ell+m)!}
  i^{-m}P_{\ell}^m(z)\mathsf{P}_{\ell}^m(\cos \theta)\cos(m\varphi).
  \end{align*}
  From (\ref{on the cut}) and the branch cut convention (\ref{convention}) we get
  \begin{align*}
  \lim_{\epsilon\searrow0}&\bigg[
  P_{\ell}(z)P_{\ell}(\cos \theta-i\epsilon)+2\sum_{m=1}^{\ell}\frac{(\ell-m)!}{(\ell+m)!}
  (-1)^mP_{\ell}^m(z)P_{\ell}^m(\cos \theta-i\epsilon)\cos(m\varphi)\bigg]
  \\
  &=P_{\ell}(z)\mathsf{P}_{\ell}(\cos \theta)+2\sum_{m=1}^{\ell}\frac{(\ell-m)!}{(\ell+m)!}
  i^{-m}P_{\ell}^m(z)\mathsf{P}_{\ell}^m(\cos \theta)\cos(m\varphi),\,\,\,
  \\
  \lim_{\epsilon\searrow0}&\bigg[
  P_{\ell}\left(z(\cos\theta-i\epsilon)-\sqrt{z^2-1}\sqrt{(\cos \theta-i\epsilon)^2-1}\cos \varphi\right)
  \bigg]
  \!
  =\!  P_{\ell}\left(z\cos\theta+i\sqrt{z^2-1}\sin \theta \cos \varphi\right).
  \end{align*}
  The arguments of the two limits coincide according to~\cite[eq.~(14.28.1)]{nist}, with $z_1 = z$ and $z_2 = \cos \theta - i \epsilon$ in the notation of the reference.
  Thanks to the previous computations made in this proof, the equality of the limits leads to \eqref{eq:Pl(dx)}.
\end{proof}

\begin{remark}
To clarify, \textup{\cite[eq.~(14.28.1)]{nist}} only states that the equality between the limit arguments
holds when $\theta \in [0,\pi/2)$. 
Consequently, identities \textup{(\ref{addition theorem 2})} are established solely in this case.
The limitation likely arises because \textup{\cite{nist}} does not adopt the convention \textup{(\ref{convention})} in the definition of the associated Legendre functions. Thus, \textup{\cite[eq.~(14.28.1)]{nist}} is applicable only to values with positive real part.
Nevertheless, since all terms in \textup{(\ref{addition theorem 2})} are analytic in $(0,\pi)$ as functions of~$\theta$ (making explicit the dependence of $\mathbf{x}$ on $\theta$), these identities can be easily extended to this interval due to \textup{\cite[Th.\ 3.2.6]{ablowitz}}. Furthermore, they hold if $\mathbf{x}=(0,0,1)$, namely $\theta=\pi$: in fact $\mathsf{P}_{\ell}^m(-1)=(-1)^{\ell}\delta_{0,m}$ and, due to \textup{\cite[eq.~(14.7.17)]{nist}}, $P_{\ell}(-z)=(-1)^{\ell}P_{\ell}(z)$ for every $z\geq 1$.
\end{remark}

\paragraph{Generalized Jacobi--Anger identity}

\begin{theorem} \label{Theorem 2.10}
EPWs admit the following modal expansion:
for any $\mathbf{x} \in B_{1}$,
$\mathbf{y} = (\boldsymbol{\theta}, \psi, \zeta) \in Y$,
\begin{equation}\label{complex expansion}
  \textup{EW}_{\mathbf{y}}(\mathbf{x})
  = 4\pi \sum_{\ell=0}^{\infty} i^{\ell}\!\! \sum_{{m}=-\ell}^{\ell}
  \left[
    \sum_{{m'}=-\ell}^{\ell} \overline{D_{\ell}^{m',m}(\boldsymbol{\theta},\psi)}
    \gamma_{\ell}^{m'}i^{-m'}P_{\ell}^{m'}\!\!\left(1+\frac{\zeta}{2\kappa}\right)
  \right] j_\ell(\kappa |\mathbf{x}|) Y_\ell^m(\mathbf{x}/|\mathbf{x}|).
\end{equation}
\end{theorem}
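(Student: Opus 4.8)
The plan is to obtain \eqref{complex expansion} by assembling the two preceding lemmas: the complex-argument scalar identity \eqref{pre jacobi anger 2}, which expands $e^{irw}$ in spherical Bessel functions and Legendre polynomials for \emph{any} $w\in\mathbb{C}$, and the generalized addition theorem \eqref{addition theorem 2}, which resolves $P_\ell\bigl(\mathbf{d}(\mathbf{y})\cdot\mathbf{x}\bigr)$ into a finite combination of spherical harmonics weighted by Wigner matrix entries and associated Legendre functions. The essential observation is that $\textup{EW}_{\mathbf{y}}$ depends on $\mathbf{x}$ only through the scalar $\kappa\,\mathbf{d}(\mathbf{y})\cdot\mathbf{x}$, which factorizes as (the radius $|\mathbf{x}|$) times (a complex number depending only on the direction $\mathbf{x}/|\mathbf{x}|$); this is precisely the structure that \eqref{pre jacobi anger 2} is designed to exploit.

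Concretely, I would first treat $\mathbf{x}\in B_1\setminus\{0\}$ and set $\widehat{\mathbf{x}}:=\mathbf{x}/|\mathbf{x}|\in\mathbb{S}^2$, $r:=\kappa|\mathbf{x}|\ge0$, and $w:=\mathbf{d}(\mathbf{y})\cdot\widehat{\mathbf{x}}\in\mathbb{C}$, so that $\kappa\,\mathbf{d}(\mathbf{y})\cdot\mathbf{x}=rw$. Then \eqref{pre jacobi anger 2} gives
\begin{equation*}
  \textup{EW}_{\mathbf{y}}(\mathbf{x})=e^{irw}=\sum_{\ell=0}^{\infty} i^{\ell}(2\ell+1)\,j_{\ell}(\kappa|\mathbf{x}|)\,P_{\ell}\bigl(\mathbf{d}(\mathbf{y})\cdot\widehat{\mathbf{x}}\bigr).
\end{equation*}
Next, for each $\ell$ I would replace the factor $\tfrac{2\ell+1}{4\pi}P_{\ell}\bigl(\mathbf{d}(\mathbf{y})\cdot\widehat{\mathbf{x}}\bigr)$ by the finite double sum $\sum_{m=-\ell}^{\ell}\sum_{m'=-\ell}^{\ell}\overline{D_{\ell}^{m',m}(\boldsymbol{\theta},\psi)}\,\gamma_{\ell}^{m'}i^{-m'}P_{\ell}^{m'}\!\bigl(1+\zeta/2\kappa\bigr)\,Y_{\ell}^{m}(\widehat{\mathbf{x}})$ supplied by \eqref{addition theorem 2} (applied with $\widehat{\mathbf{x}}$ in the role of $\mathbf{x}$), recognize $j_{\ell}(\kappa|\mathbf{x}|)Y_{\ell}^{m}(\mathbf{x}/|\mathbf{x}|)$, and pull the constant $4\pi\,i^{\ell}$ to the front, which yields exactly \eqref{complex expansion}.

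Convergence and rearrangement cost essentially nothing: for fixed $\mathbf{x}$ the number $w$ is fixed, so the series above converges by the preceding lemma, and substituting a \emph{finite} sum over $|m|,|m'|\le\ell$ for each summand is a term-by-term equality that does not disturb the outer convergence, so no interchange of infinite summations occurs. The two points that do need a word are minor. First, \eqref{addition theorem 2} must be used for every $\widehat{\mathbf{x}}\in\mathbb{S}^2$, including the poles $\pm(0,0,1)$; this is already covered, since both sides are continuous in $\widehat{\mathbf{x}}$ and the identity extends from the dense set $\{\theta\in(0,\pi)\}$ by continuity, as noted in the remark following that lemma. Second, the point $\mathbf{x}=0$ is recovered either by a direct check — only the $\ell=0$ term survives because $j_\ell(0)=\delta_{\ell,0}$, and it evaluates to $1=\textup{EW}_{\mathbf{y}}(0)$ — or by observing that $\mathbf{x}\mapsto j_\ell(\kappa|\mathbf{x}|)Y_\ell^m(\mathbf{x}/|\mathbf{x}|)=\tilde{b}_\ell^m(\mathbf{x})$ extends smoothly across the origin. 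Thus the proof is in effect an assembly step: all the genuine analytic difficulty (extending the associated Legendre functions and the Jacobi--Anger and addition identities to complex arguments) has been absorbed into the earlier lemmas, and the only thing requiring care here is the index bookkeeping in $m,m'$ and the fact that the rotation $R_{\boldsymbol{\theta},\psi}$ links $\mathbf{d}(\mathbf{y})$ to $\mathbf{d}_\uparrow(z)$, which is exactly the point at which \eqref{wigner property} enters through \eqref{addition theorem 2}.
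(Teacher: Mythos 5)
Your proposal is correct and follows exactly the paper's route: apply the complex-argument identity \eqref{pre jacobi anger 2} with $r=\kappa|\mathbf{x}|$ and $w=\mathbf{d}(\mathbf{y})\cdot\mathbf{x}/|\mathbf{x}|$, then substitute the generalized addition theorem \eqref{addition theorem 2} term by term. The extra remarks on the poles, the origin, and the finiteness of the inner sums are sound but merely make explicit what the paper leaves implicit.
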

\begin{proof}
  The result follows from applying~\eqref{addition theorem 2}
  after using~\eqref{pre jacobi anger 2} to obtain
  \begin{equation*}
    \textup{EW}_{\mathbf{y}}(\mathbf{x})
    := e^{i\kappa \mathbf{d}(\mathbf{y})\cdot \mathbf{x}}
     = \sum_{\ell=0}^{\infty} i^{\ell} (2\ell+1) j_{\ell}(\kappa |\mathbf{x}|)
     P_{\ell}\big(\mathbf{d}(\mathbf{y}) \cdot \mathbf{x}/|\mathbf{x}|\big)
     \qquad\forall \mathbf{x} \in B_{1},\ \forall\mathbf{y} \in Y.
  \end{equation*}  
\end{proof}

\begin{remark}
Thanks to \cite[eq.~(35)]{pendleton} and \eqref{spherical harmonic}, for any \((\boldsymbol{\theta},\psi) \in \Theta \times [0,2\pi)\) and \((\ell,m) \in \mathcal{I}\) it holds
  \begin{equation}
  D_{\ell}^{0,m}(\boldsymbol{\theta},\psi)=\sqrt{\frac{4\pi}{2\ell+1}}Y_\ell^m(\boldsymbol{\theta})=\sqrt{\frac{(\ell-m)!}{(\ell+m)!}}\,e^{im\theta_2}\mathsf{P}_{\ell}^m(\cos \theta_1),
  \label{D-matrix0}
  \end{equation}
  and moreover $P_{\ell}^{m}(1)=\delta_{0,m}$ due to \textup{(\ref{legendre2 polynomials})}. Hence, assuming $\zeta=0$ in \textup{(\ref{complex expansion})}, we recover the Jacobi--Anger expansion \textup{(\ref{jacobi-anger})} for PPWs for any $(\boldsymbol{\theta},\psi) \in \Theta \times [0,2\pi)$.
\end{remark}

\subsection{Modal analysis of plane waves}\label{ss:ModalAnalysis}
The Jacobi--Anger identity~\eqref{complex expansion} plays a crucial
role in the upcoming analysis.
As we develop below, this modal expansion with respect to the $b_\ell^m$ basis also offers direct insights on the
approximation properties of EPWs, hinting at why such waves are better suited
for approximating less regular Helmholtz solutions compared to PPWs.

\begin{figure}
\centering
\scalebox{0.72}{
\begin{tabular}{cccccc}
$\widehat{\textup{EW}}_{\ell}^m\!\!\left(\frac{\pi}{4},\cdot,0\right)$ & $\widehat{\textup{EW}}_{\ell}^m\!\!\left(\frac{\pi}{4},\frac{\pi}{2},60\right)$ & $\widehat{\textup{EW}}_{\ell}^m\!\!\left(\frac{\pi}{4},\frac{3\pi}{2},60\right)$ &
$\widehat{\textup{EW}}_{\ell}^m\!\!\left(\frac{\pi}{4},\frac{\pi}{4},120\right)$ & $\widehat{\textup{EW}}_{\ell}^m\!\!\left(\frac{\pi}{4},\frac{7\pi}{4},120\right)$ & $\widehat{\textup{EW}}_{\ell}^m\!\!\left(\frac{\pi}{4},0,180\right)$\\
\includegraphics[width=.2\linewidth]{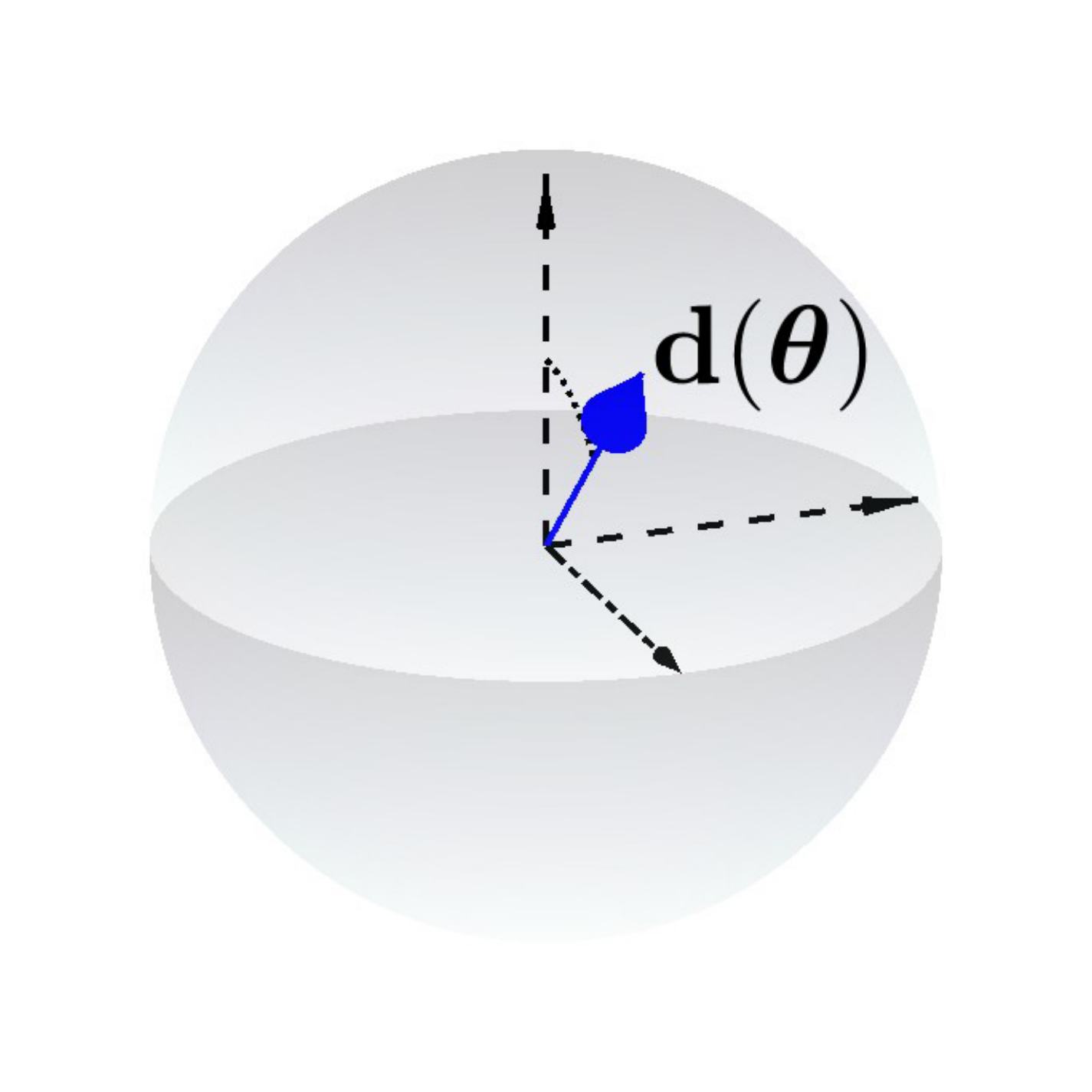} & \includegraphics[width=.2\linewidth]{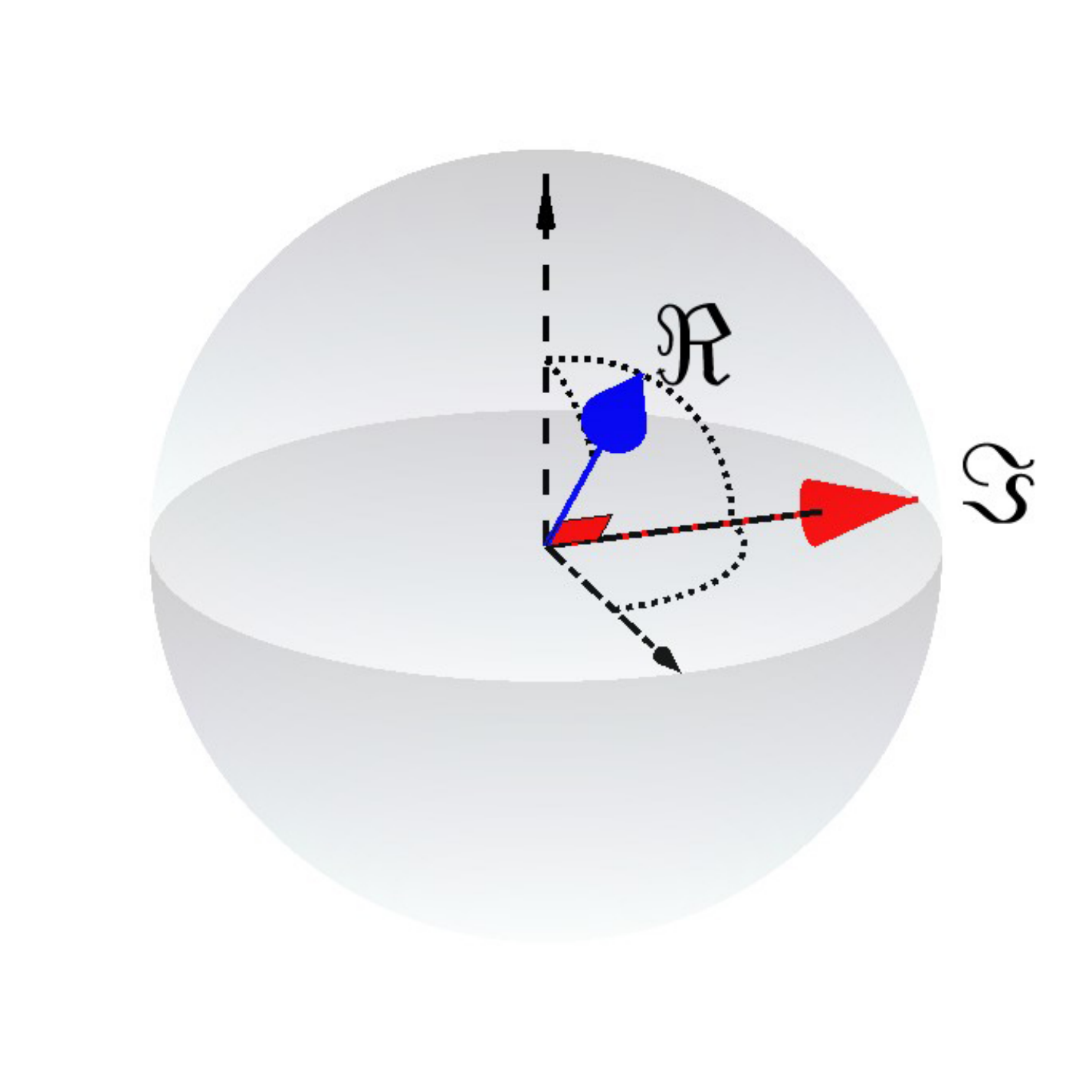} & \includegraphics[width=.2\linewidth]{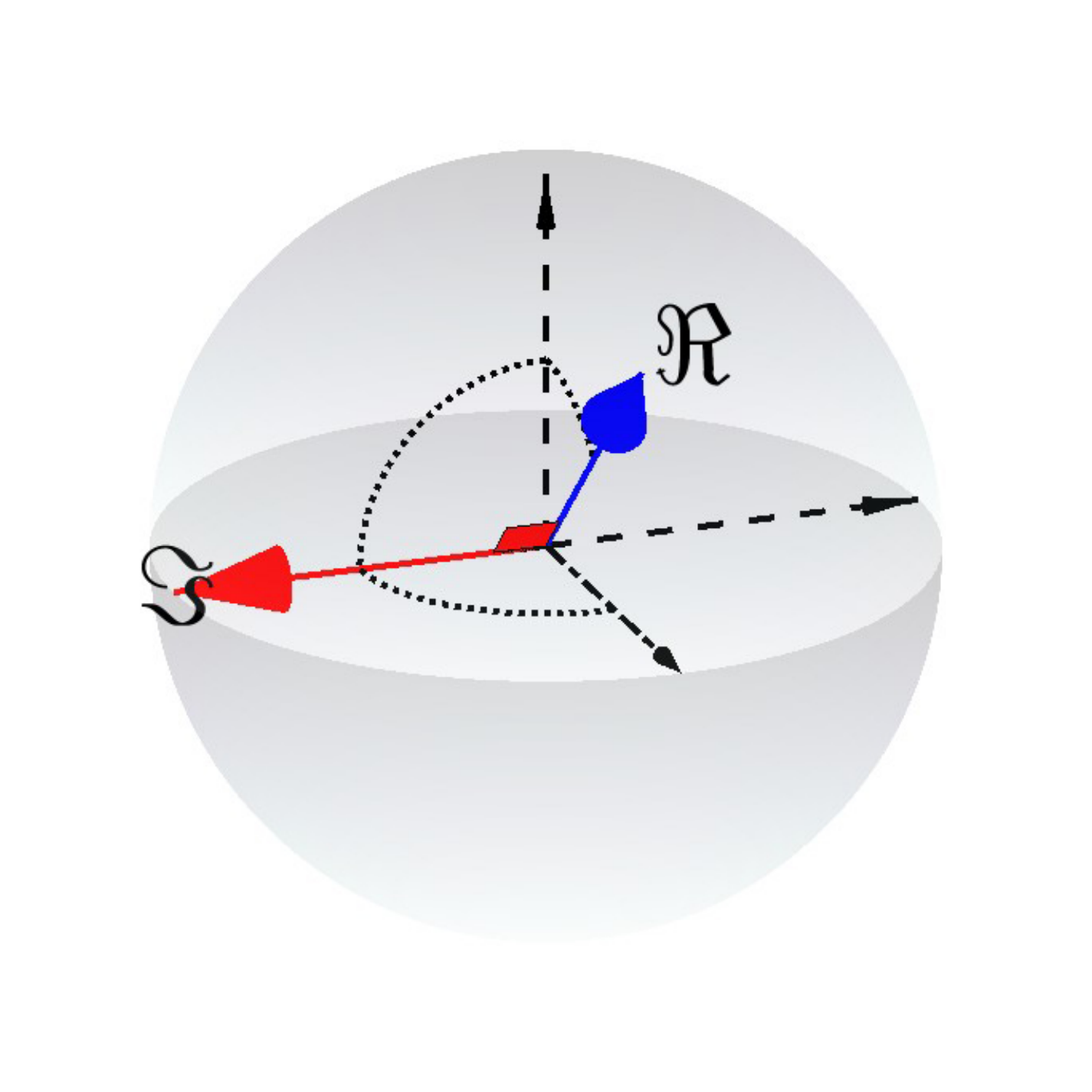} & 
\includegraphics[width=.2\linewidth]{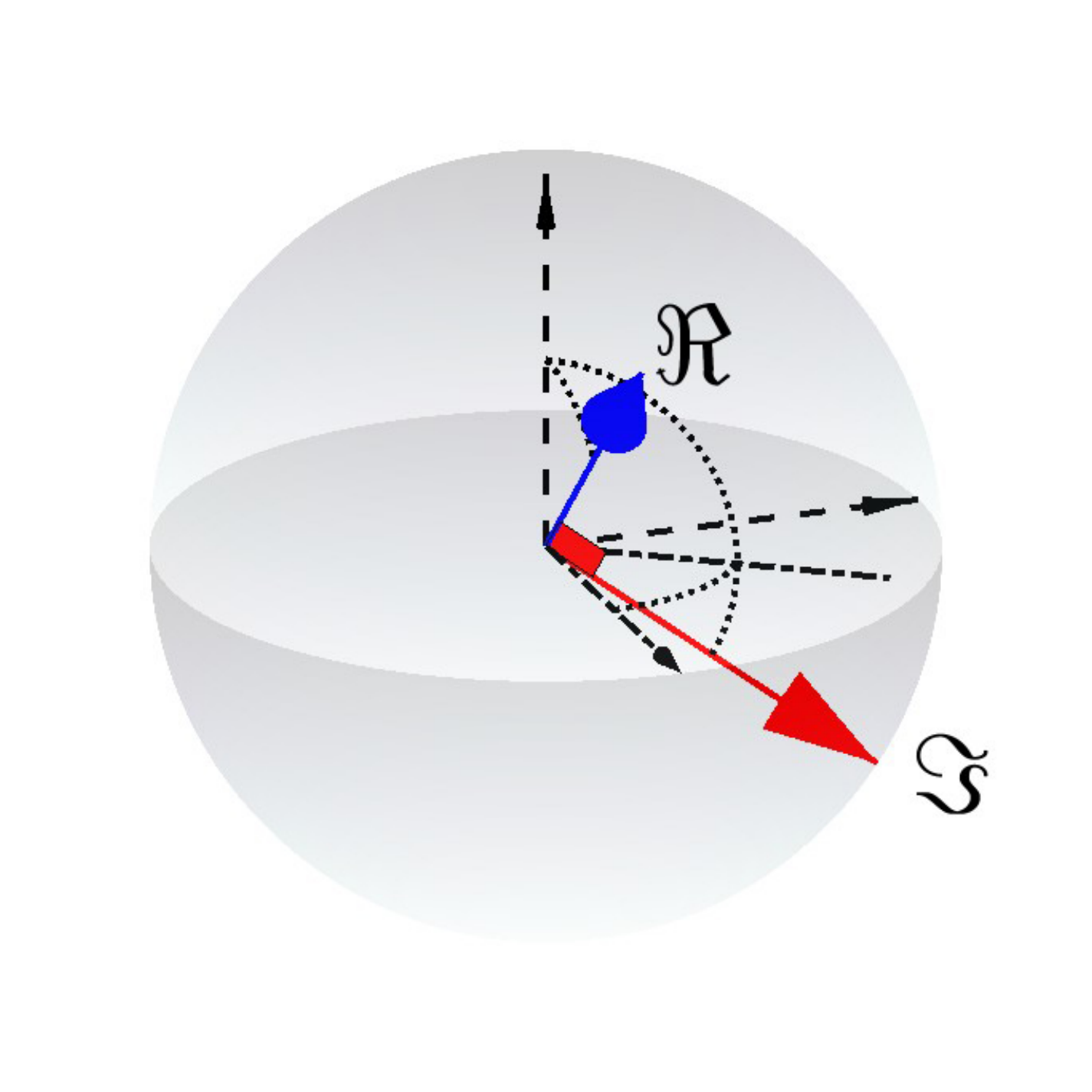} & \includegraphics[width=.2\linewidth]{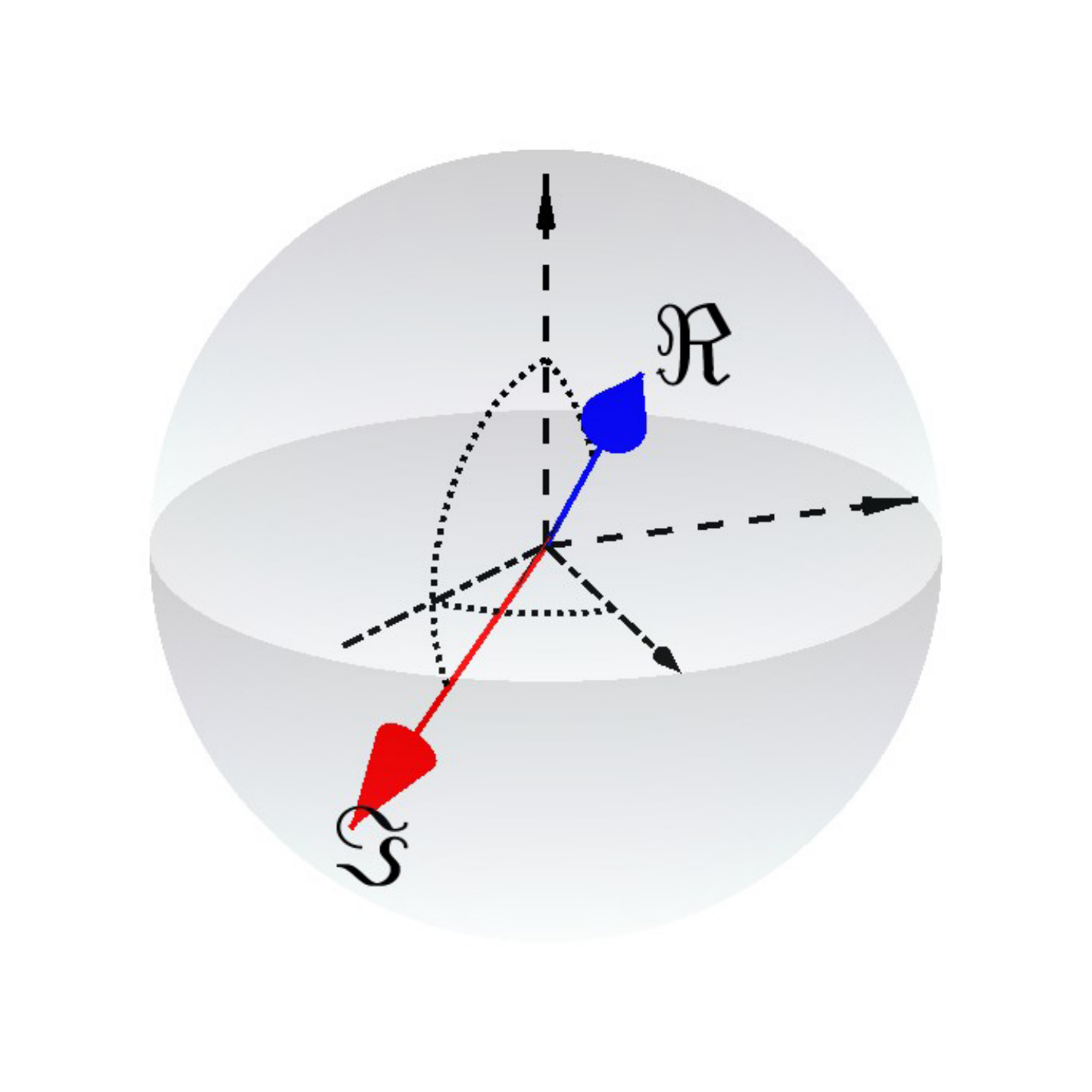} & \includegraphics[width=.2\linewidth]{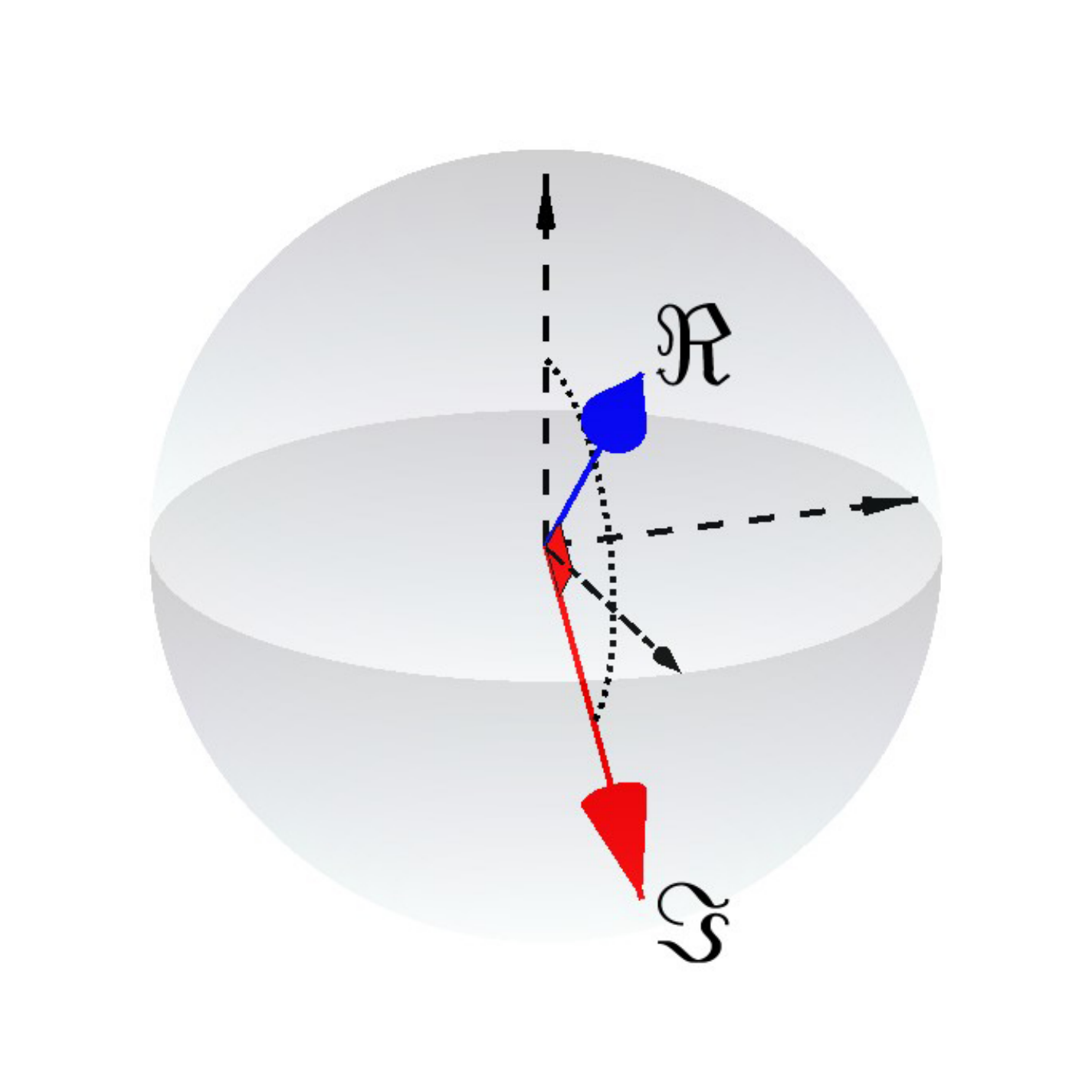}\\
\includegraphics[width=.2\linewidth]{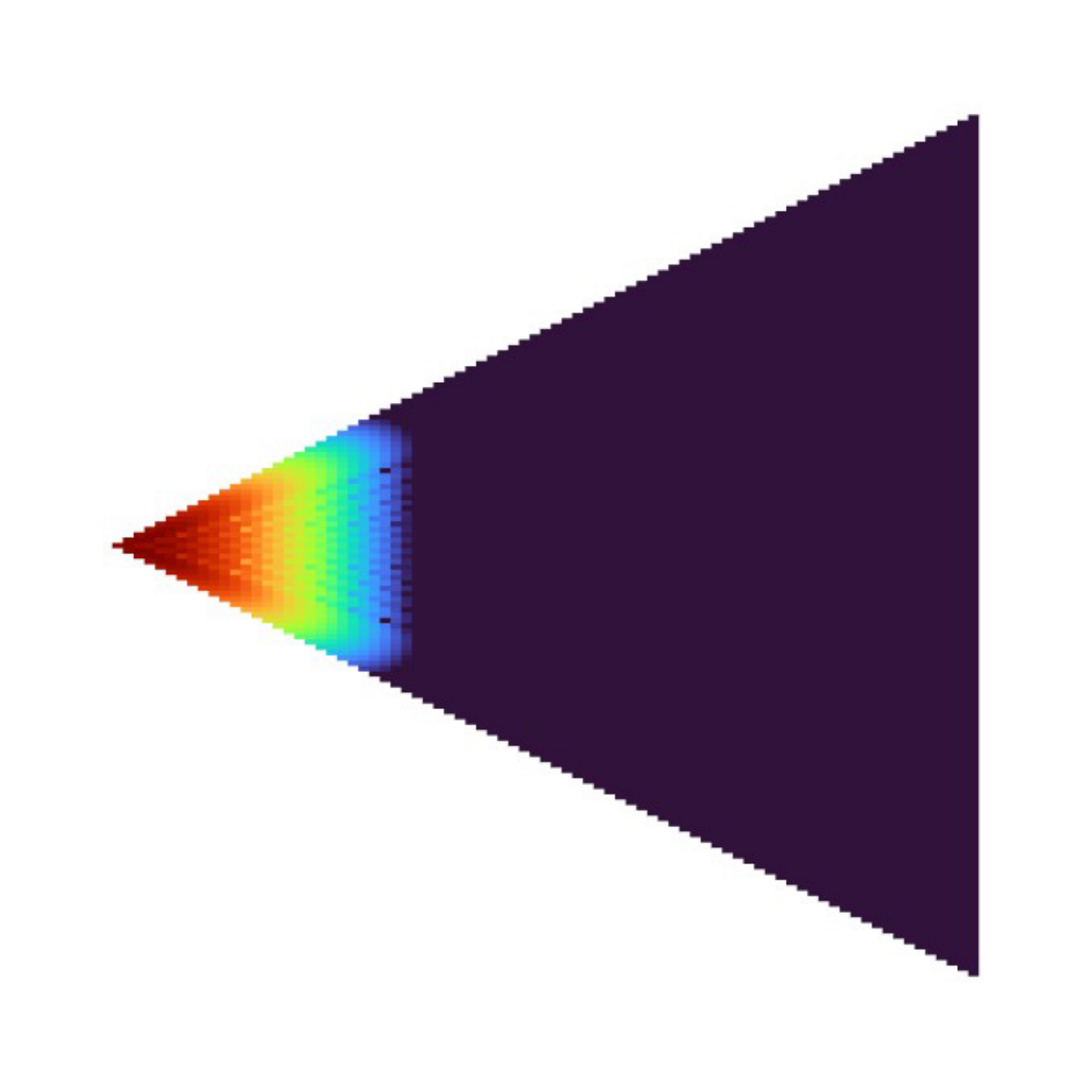} & \includegraphics[width=.2\linewidth]{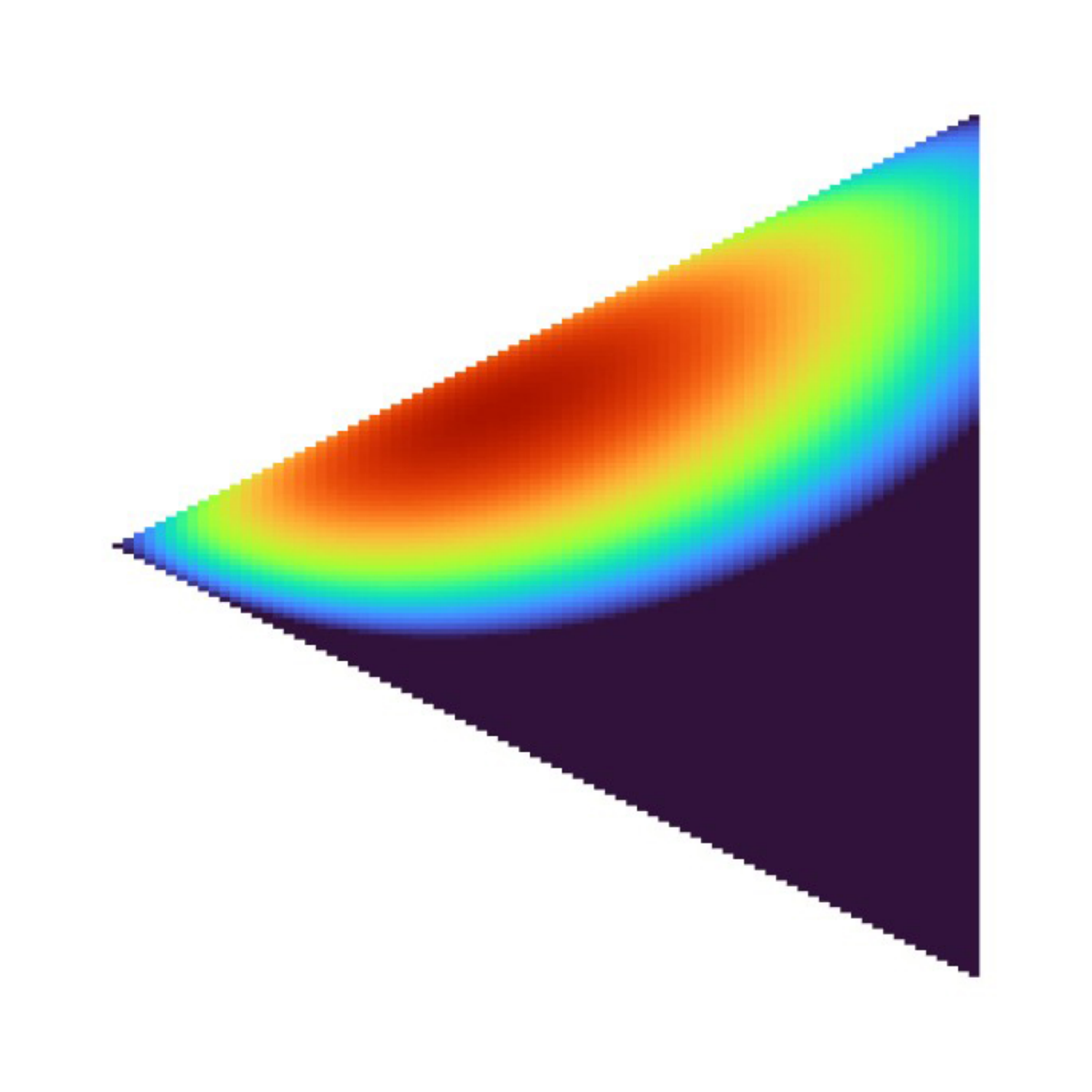} & \includegraphics[width=.2\linewidth]{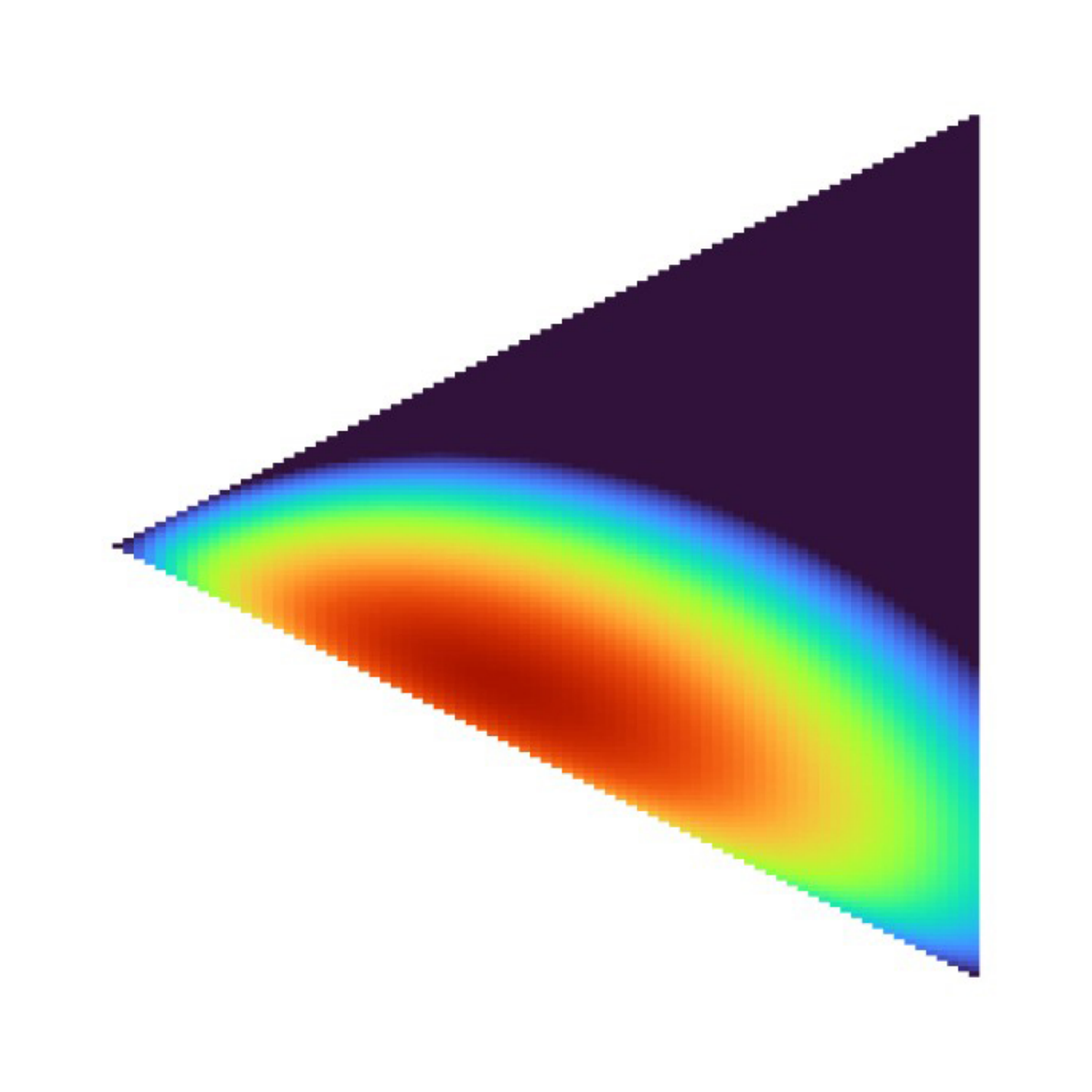} & 
\includegraphics[width=.2\linewidth]{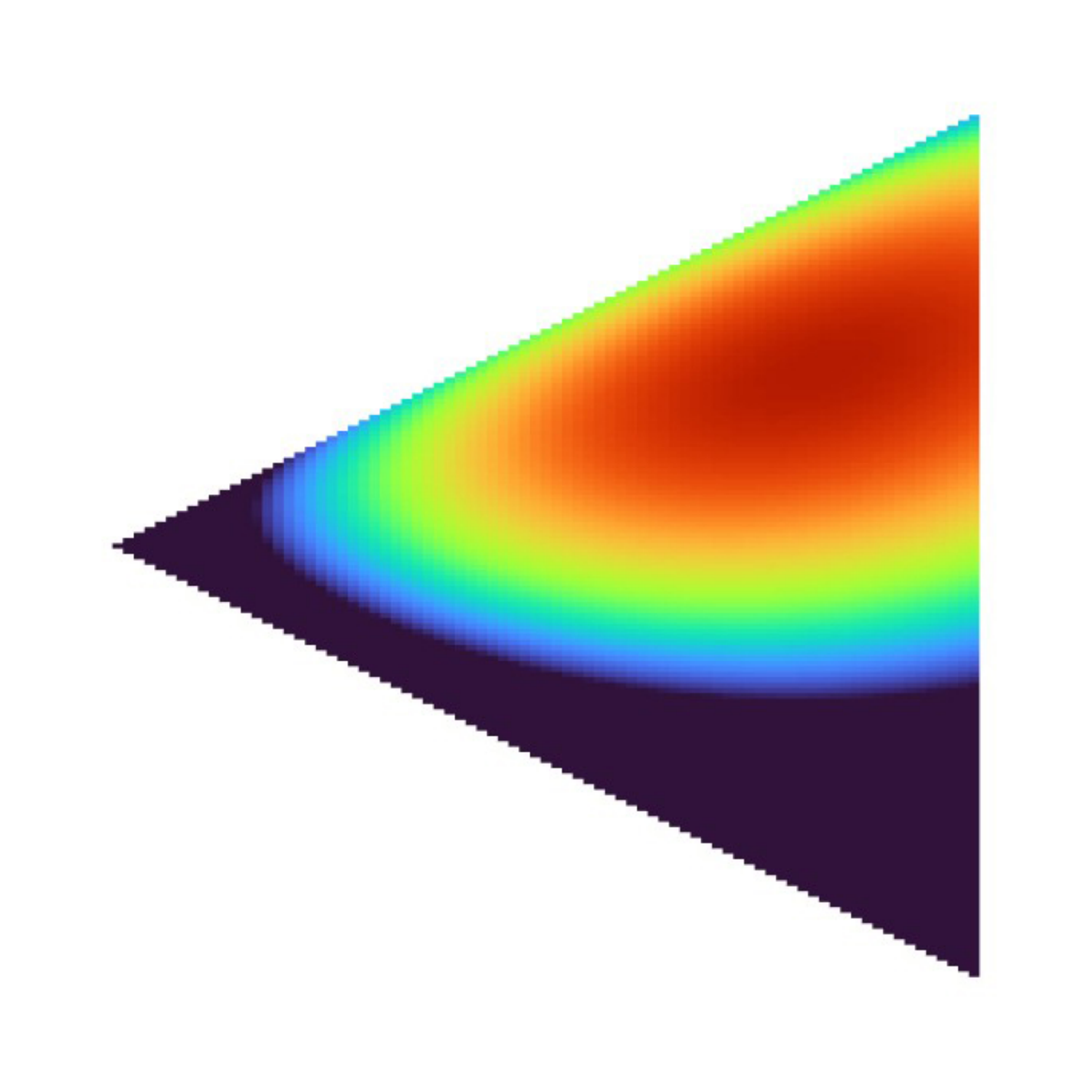} & \includegraphics[width=.2\linewidth]{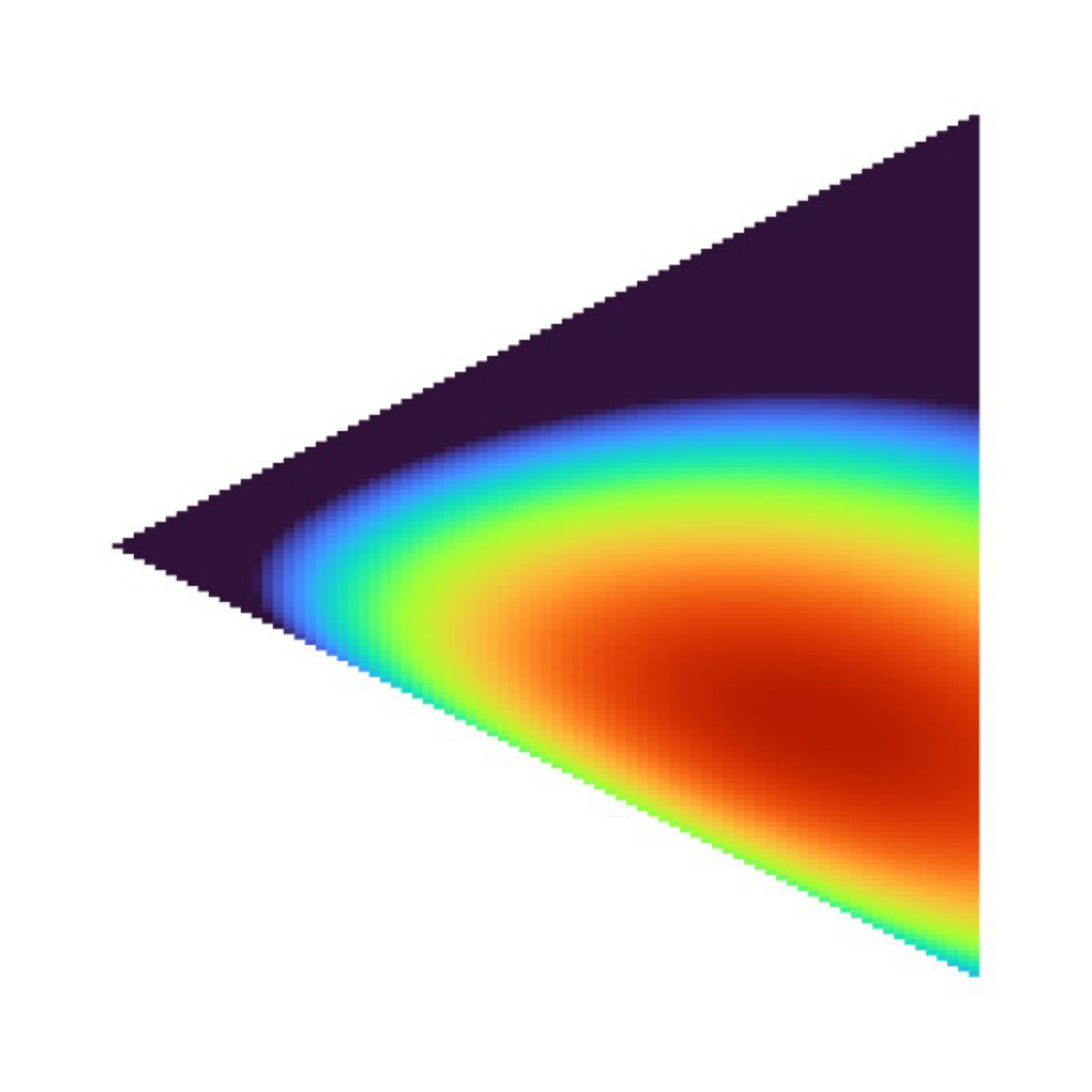} & \includegraphics[width=.2\linewidth]{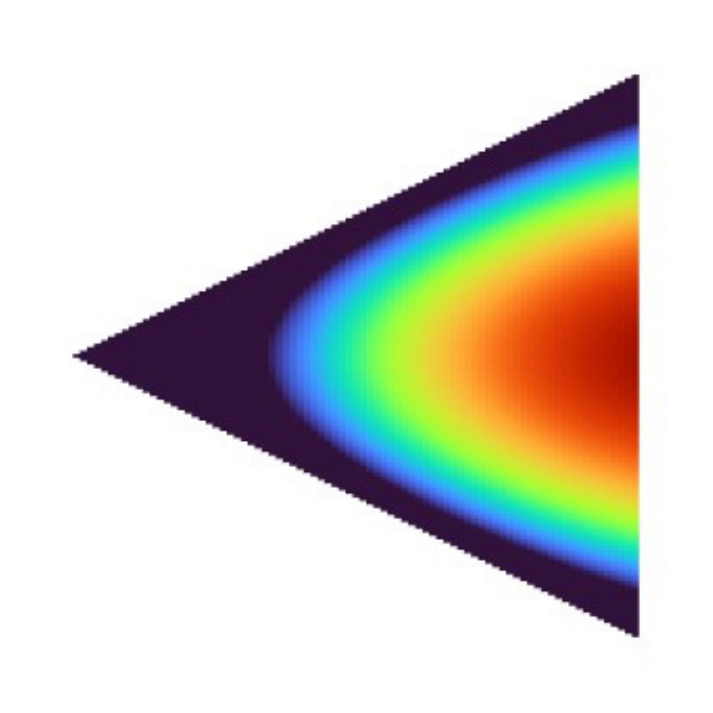}\\
\vspace{1mm}\\
\multicolumn{6}{c}{\includegraphics[width=1.3\linewidth]{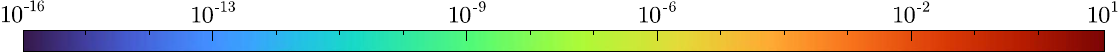}}\\
\end{tabular}
}
\caption{Modal analysis of EPWs. For each wave: (top) representations of both real and imaginary components of direction vectors $\mathbf{d}(\mathbf{y})$ with fixed angle $\theta_2=0$; (bottom) related distributions of the coefficients $\widehat{\textup{EW}}_{\ell}^m\!(\theta_1,\psi,\zeta)$ in (\ref{evanescent coefficients}). To mitigate the numerical cancellation issue of Wigner's formula \textup{(\ref{d matrix})}, we rely on \textup{\cite{feng}} using \textup{\cite[eq.~(4.26)]{galante}}. The index $\ell$ varies on the abscissa within the range $0 \leq \ell \leq 80$, while the index $m$ varies on the ordinate within the range $0 \leq |m| \leq \ell$, forming a triangle. The coefficients have been suitably normalized using a normalization factor described in the subsequent sections, see (\ref{isomorphism approximation sets}), which depends solely on $\zeta$. Wavenumber $\kappa=6$.
More plots of this kind can be seen in \textup{\cite[sect.~4.4]{galante}}.}
\label{figure 2.3}
\end{figure}

For conciseness, we use the notation $\mathbf{D}^{m}_{\ell}(\boldsymbol{\theta},\psi)$, for $0 \leq |m| \leq \ell$, to represent the columns of the Wigner D-matrix $D_{\ell}(\boldsymbol{\theta},\psi)$ in \eqref{DD matrix} and we let
\begin{equation}
\mathbf{P}_{\ell}(\zeta):=
\big(\gamma_{\ell}^{m}i^{m}P_{\ell}^{m}(1+\zeta/2\kappa)\big)_{m=-\ell}^{\ell} \in \mathbb{C}^{2\ell+1} \qquad \forall \ell \geq 0.
\label{vector P}
\end{equation}
Recalling~\eqref{b tilde definizione}, the Jacobi--Anger expansion~\eqref{complex expansion} can be written as
\begin{equation}\label{eq:jacobi-anger-final}
\textup{EW}_{\mathbf{y}}(\mathbf{x})=\sum_{\ell=0}^{\infty}\sum_{{m}=-\ell}^{\ell}\left[4 \pi i^{\ell}\beta_{\ell}^{-1}\overline{\mathbf{D}^{m}_{\ell}(\boldsymbol{\theta},\psi) \cdot \mathbf{P}_{\ell}(\zeta)} \right] b_{\ell}^m(\mathbf{x})
\qquad\forall \mathbf{x} \in B_{1},\forall\mathbf{y} \in Y.
\end{equation}
The moduli of the coefficients in the above modal expansion, namely
\begin{equation}
\widehat{\textup{EW}}_{\ell}^m\!(\theta_1,\psi,\zeta)\!:=\!\left|\left(\textup{EW}_{\mathbf{y}},b_{\ell}^m\right)_{\mathcal{B}}\right|\!=\!\frac{4\pi}{\beta_{\ell}}\left|\sum_{m'=-\ell}^{\ell}\!\!\gamma_{\ell}^{m'}i^{-m'}\!d_{\ell}^{\,m',m}(\theta_1)e^{-im'\psi}P_{\ell}^{m'}\!\!\left(1+\frac{\zeta}{2\kappa}\right)\right|,
\label{evanescent coefficients}
\end{equation}
are depicted in Figure~\ref{figure 2.3}.
We also define, for any $\ell \geq 0$ and
$\mathbf{y}=(\boldsymbol{\theta},\psi,\zeta) \in Y$,
\begin{equation}
b_\ell[\mathbf{y}]:=\widehat{\textup{EW}}_{\ell}^{-1}(\zeta)\,\tilde{b}_\ell[\mathbf{y}], \quad \text{where} \quad \tilde{b}_\ell[\mathbf{y}]:=\sum_{m=-\ell}^{\ell}\left(\textup{EW}_{\mathbf{y}},b_{\ell}^m\right)_{\mathcal{B}}b_{\ell}^m, \quad \widehat{\textup{EW}}_{\ell}(\zeta):=\big\|\tilde{b}_\ell[\mathbf{y}]\big\|_{\mathcal{B}}. 
\label{yuy}
\end{equation}
In fact, thanks to (\ref{yuy}), we can expand the EPWs as
\begin{equation*}
\textup{EW}_{\mathbf{y}}
=\sum_{\ell=0}^{\infty}\sum_{m=-\ell}^{\ell}\left(\textup{EW}_{\mathbf{y}},b_{\ell}^m\right)_{\mathcal{B}}b_{\ell}^m
=\sum_{\ell=0}^{\infty}\tilde b_{\ell}[\mathbf{y}]
=\sum_{\ell=0}^{\infty}\widehat{\textup{EW}}_{\ell}(\zeta)\,b_{\ell}[\mathbf{y}],
\end{equation*}
where $b_{\ell}[\mathbf{y}] \in \text{span}\{b_{\ell}^m\}_{m=-\ell}^{\ell}$ are orthonormal and
\begin{equation}
\widehat{\textup{EW}}_{\ell}(\zeta)=\,\left(\sum_{m=-\ell}^{\ell}\left|\left(\textup{EW}_{\mathbf{y}},b_{\ell}^m\right)_{\mathcal{B}}\right|^2\right)^{1/2}=\left(\sum_{m=-\ell}^{\ell}\left[\widehat{\textup{EW}}_{\ell}^m\!(\theta_1,\psi,\zeta)\right]^2\right)^{1/2}=\,\frac{4\pi}{\beta_{\ell}}\left|\mathbf{P}_{\ell}(\zeta)\right|.
\label{evanescent l2 coefficients}
\end{equation}
The last equality in (\ref{evanescent l2 coefficients}) holds due to (\ref{evanescent coefficients}) and the unitarity condition \cite[sect.~4.1, eq.~(6)]{quantumtheory}.
Figure~\ref{figure 2.4} shows the coefficient distribution (\ref{evanescent l2 coefficients}) for various values of $\zeta$.

\begin{figure}
\centering
\includegraphics[trim=5 60 5 0,clip,width=6.9cm]{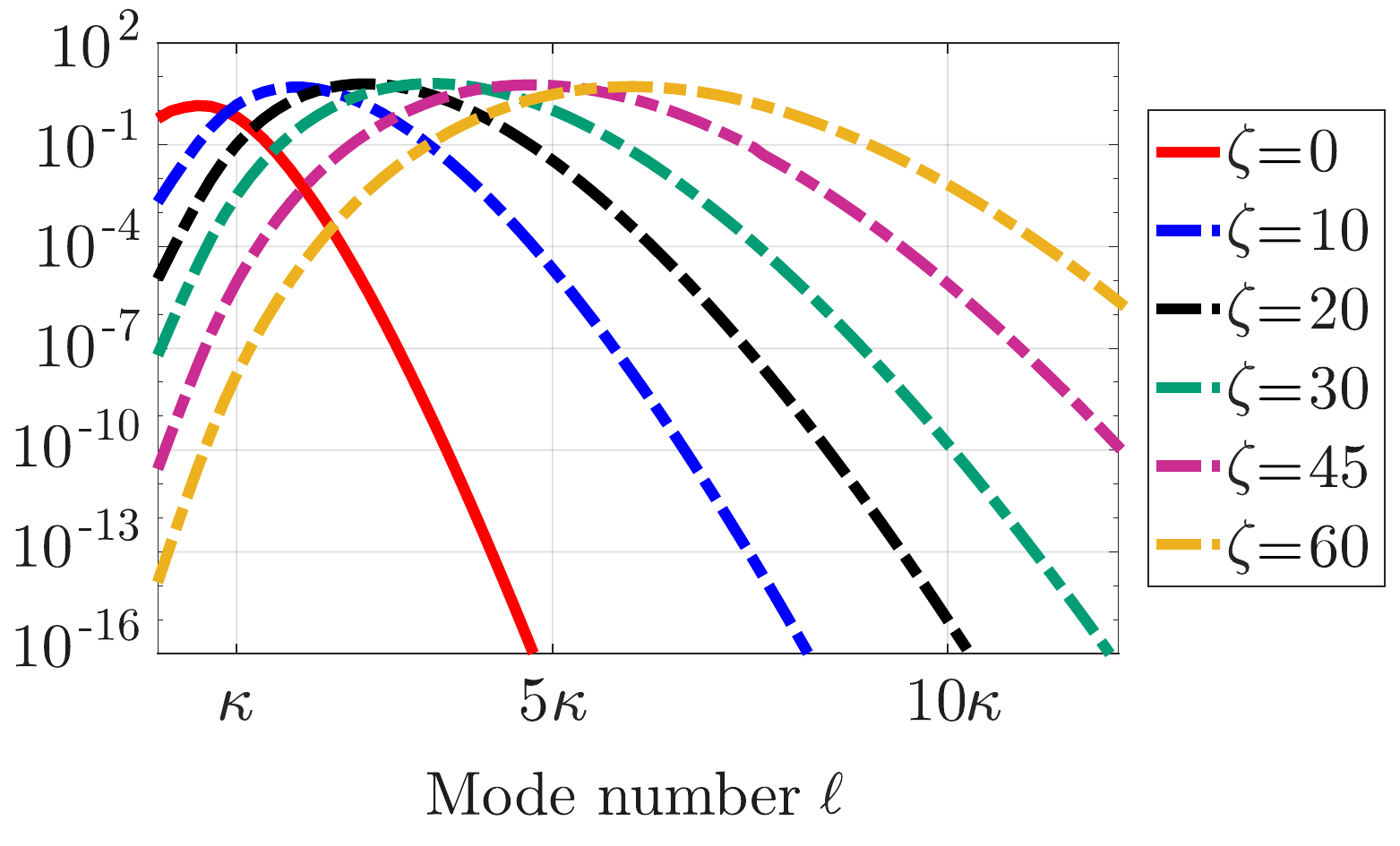}
\caption{Modal analysis of EPWs: distributions of the coefficients (\ref{evanescent l2 coefficients}) for various values of the evanescence parameter $\zeta$.
For each $\zeta$, this involves computing the $\ell^2$-norms along the vertical segments of the coefficient distribution triangles, such as those in Figure~\ref{figure 2.3}.
As $\zeta$ increases, higher-$\ell$ Fourier modes can be encompassed, transitioning away from the propagative case represented by $\zeta=0$.
The coefficients have been suitably normalized using a normalization factor described in the subsequent sections, see (\ref{isomorphism approximation sets}). Wavenumber $\kappa=6$.}
\label{figure 2.4}
\end{figure}

\begin{remark}
Assuming $\zeta=0$, the functions $b_{\ell}\left[\mathbf{y}\right]$ coincide, up to the multiplicative constant $i^{\ell}$, with the spherical waves $b_{\ell}^0$ rotated according to the \textup{PPW} angles $\boldsymbol{\theta} \in \Theta$. This is readily checked thanks to \textup{(\ref{spherical harmonic})} and \textup{(\ref{b tilde definizione})}, along with the identities \textup{(\ref{wigner property})} and \textup{(\ref{D-matrix0})}.
\end{remark}

If we consider PPWs and thus assume $\zeta=0$, the coefficients
(\ref{evanescent coefficients}) are independent of $\psi$.
For any $\boldsymbol{\theta} \in \Theta$, the PPW coefficients decay
super-exponentially fast in the evanescent-mode regime $\ell \gg \kappa$. 
This is visible in the leftmost  
triangle of Figure~\ref{figure 2.3} and in the continuous line in Figure~\ref{figure 2.4}.
Consequently, any PPW approximation of Helmholtz solutions with a high-$\ell$
Fourier modal content will require exponentially large coefficients and
cancellation to capture these modes, resulting in numerical instability.
This assertion is made precise later in
Lemma~\ref{lem:ppw-lack-continuous-stability} and Lemma~\ref{Lemma 4.4}.

On the contrary, by tuning the evanescence parameters $\psi$ and $\zeta$, the
Fourier modal content of the EPWs can be shifted to higher Fourier regimes.
Specifically, raising $\zeta$ enables us to reach higher degrees (larger values
of $\ell$), while varying $\psi$ allows us to cover different orders $m$.
As a consequence we expect EPWs with large \(\zeta\) to be able to approximate
high Fourier modes with relatively small coefficients, curing the numerical
instability experienced with PPWs.
However, accurately selecting the evanescence parameters $\psi$ and $\zeta$ to
build reasonably sized approximation spaces remains a significant challenge.
We will tackle this issue in the following sections.

\section{Stable continuous approximation}\label{sec:herglotz transform}

PPWs and EPWs families are naturally indexed by continuous sets, the
parametric domains \(\Theta\) and \(Y\) in Definition~\ref{def:EPW}.
Although in applications a finite discrete subset is selected, it is fruitful
to first analyse the properties of the continuous set of plane waves.
This is the purpose of this section which first introduces the notion of
\emph{stable continuous approximation}.
We then present the \emph{Herglotz density space}, showing its close link with
the Helmholtz solution space through the Jacobi--Anger
identity~\eqref{eq:jacobi-anger-final}.
This connection leads to the definition of the \emph{Herglotz transform}, an
integral operator enabling the representation of any Helmholtz solution in the
unit ball as a continuous superposition of EPWs.
This continuous representation is proven to be stable, as opposed to PPWs,
which fail to produce such a result due to their inability to stably represent
evanescent spherical modes, i.e.\ solutions dominated by high-order Fourier modes.

\subsection{The concept of stable continuous approximation}\label{ss:StableContApprox}

Let $(X,\mu)$ be a \(\sigma\)-finite measure space and denote by $L^2_\mu(X)$ the corresponding Lebesgue space.
Following \cite[sect.~5.6]{christensen}, we define a \emph{Bessel family} in the Hilbert space $\mathcal B$ as a set
\begin{equation*} 
\mathbf{\Phi}_X:=\{\phi_{x}\}_{x \in X} \subset \mathcal{B}, \qquad \text{such that}  
\qquad \int_X|\left(u,\phi_{x}\right)_{\mathcal{B}}|^2\textup{d}\mu(x)\leq B\|u\|^2_{\mathcal{B}} \qquad \forall u \in \mathcal{B},
\end{equation*}
for some $B>0$.
For any such $\mathbf{\Phi}_X$, the  
\emph{synthesis} operator can be defined as:
\begin{equation*}
\Tcont{\!X}{\,}:L_{\mu}^2(X) \rightarrow \mathcal{B}, \quad v \mapsto \int_X v(x)\phi_{x}\textup{d}\mu(x).
\end{equation*}

\begin{definition}[Stable continuous approximation]\label{def:SCA}
  The Bessel family $\mathbf{\Phi}_X$ is said to be a stable continuous
  approximation for $\mathcal{B}$ if, for any tolerance $\eta >0$, there
  exists a stability constant $C_{\textup{cs}} \geq 0$ such that
  \begin{equation}\label{condition stable continuous representation}
    \forall u \in \mathcal{B},\ \exists v \in L_{\mu}^2(X):\quad
    \left\|u-\Tcont{\!X}{\,}v\right\|_{\mathcal{B}} \leq \eta \|u\|_{\mathcal{B}},
    \quad \text{and} \quad
    \|v\|_{L^2_{\mu}(X)}\leq C_{\textup{cs}} \|u\|_{\mathcal{B}}.
  \end{equation}
\end{definition}

A stable continuous approximation allows approximating any Helmholtz solution
to a given accuracy as an expansion $\Tcont{\!X}{\,}v$, where the
density $v$ has a bounded norm in $L_{\mu}^2(X)$.

\subsection{Herglotz density space}

We consider the space $L_{\nu}^2(Y)$ on the EPW parametric domain $Y$, with
the positive measure $\nu$ given by
\begin{equation}
\textup{d}\nu(\mathbf{y}):=\textup{d}\sigma(\boldsymbol{\theta})\,\textup{d}\psi\,w(\zeta)\textup{d}\zeta, \qquad \text{where} \qquad w(\zeta):=\zeta^{1/2}e^{-\zeta} \quad \forall \zeta \in [0,+\infty),
\label{weight}
\end{equation}
and $\sigma$ is the standard measure on $\mathbb{S}^2$. The $L_{\nu}^2$
Hermitian product and the associated norm are
\begin{equation*}
(v,u)_{\mathcal{A}}:=\int_{Y}v(\mathbf{y})\overline{u(\mathbf{y})}\textup{d}\nu(\mathbf{y}), \qquad
\|v\|^2_{\mathcal{A}}:=(v,v)_{\mathcal{A}} \qquad \qquad \forall v,u \in L_{\nu}^2(Y).
\end{equation*}
Let us define a proper subspace of $L_{\nu}^2(Y)$, denoted by $\mathcal{A}$ and named \emph{space of Herglotz densities}.

\begin{definition}[Herglotz densities] \label{definition herglotz densities}
We define, for any $(\ell,m) \in \mathcal{I}$
\begin{equation}
a_\ell^m:=\alpha_{\ell}\tilde{a}_\ell^m, \quad \text{where} \quad \tilde{a}_\ell^m(\mathbf{y}):=\mathbf{D}^{m}_{\ell}(\boldsymbol{\theta},\psi) \cdot \mathbf{P}_{\ell}(\zeta)\quad \forall \mathbf{y}=(\boldsymbol{\theta},\psi,\zeta) \in Y, \quad \alpha_{\ell}:=\|\tilde{a}_\ell^m\|^{-1}_\mathcal{A},
\label{a tilde definizione}
\end{equation}
where $\mathbf{D}^{m}_{\ell}(\boldsymbol{\theta},\psi)$ and $\mathbf{P}_{\ell}(\zeta)$ are defined in \textup{(\ref{vector P})}, and $\mathcal{A}:=\overline{\textup{span}\{a_\ell^m\}_{(\ell,m) \in \mathcal{I}}}^{\|\cdot\|_{\mathcal{A}}} \subsetneq L_{\nu}^2(Y)$.
\end{definition}

Just like the spherical waves (\ref{b tilde definizione}), the Herglotz
densities also depend on $(\ell,m) \in \mathcal{I}$, while the normalization
coefficient $\alpha_{\ell}$ is independent of $m$, as will be clarified later
(see Lemma \ref{Lemma 3.4}).
The wavenumber $\kappa$ appears explicitly in the definition (\ref{vector P})
of $\mathbf{P}_{\ell}(\zeta)$, hence each $a_{\ell}^m$ depends on it.
Some densities $a_\ell^m$, weighted by $w^{1/2}$, are shown in Figure \ref{figure alm}; additional plots are available in \cite[Fig.~5.1]{galante}.

\begin{figure}[t]
\centering
\begin{tabular}{c|cccccc}
\, & \, $\zeta=10^{\text{-}3}$ & \, $\zeta=10^{\text{-}2}$ & \, $\zeta=10^{\text{-}1}$ & \, $\zeta=10^{0}$ & \, $\zeta=10^{1}$ & \, $\zeta=10^{2}$\\
\hline
\raisebox{5.15ex}{$m=0$} & \includegraphics[trim=30 30 30 30,clip,width=.118\linewidth]{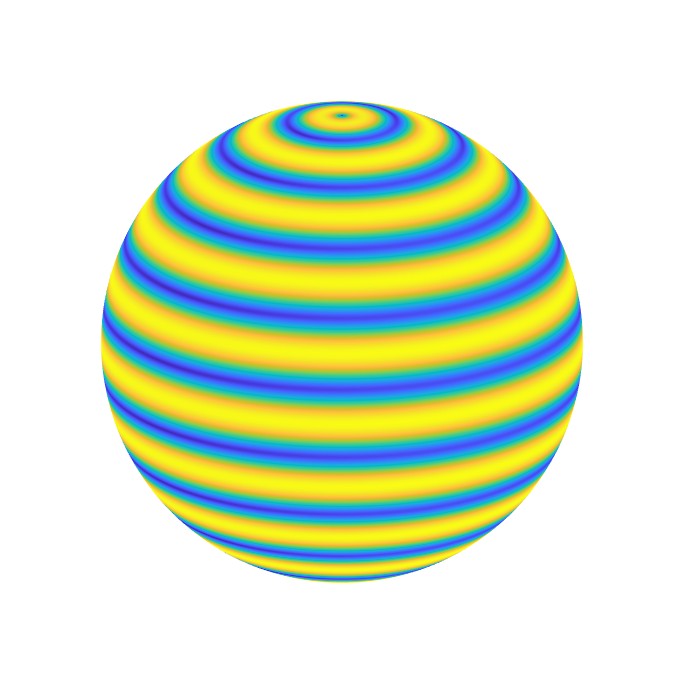} & \includegraphics[trim=30 30 30 30,clip,width=.118\linewidth]{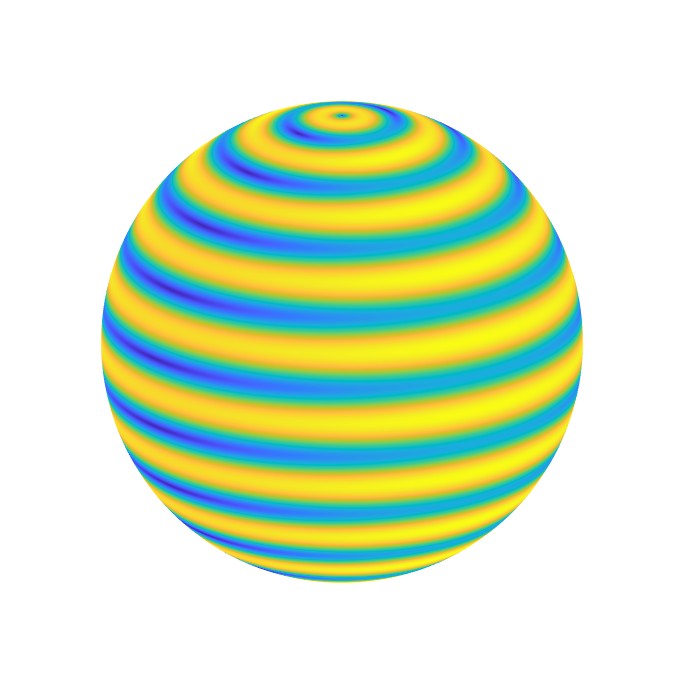} & \includegraphics[trim=30 30 30 30,clip,width=.118\linewidth]{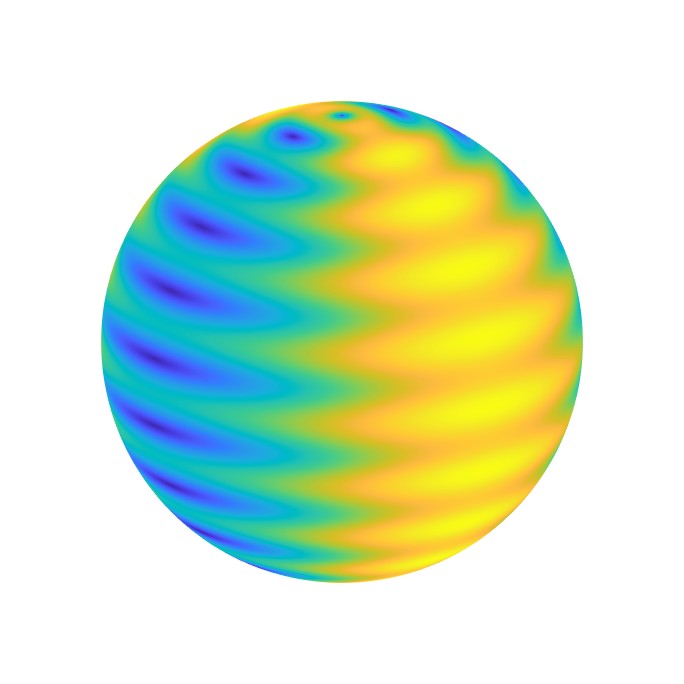} & \includegraphics[trim=30 30 30 30,clip,width=.118\linewidth]{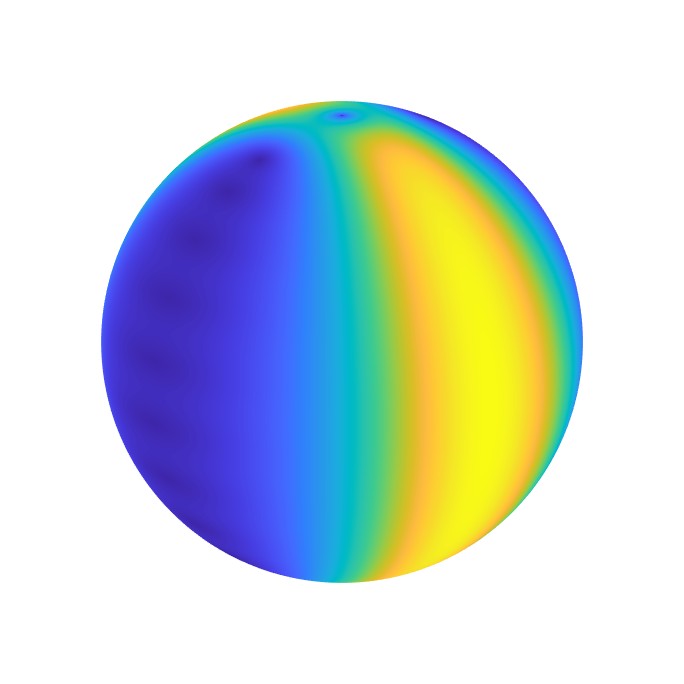} & \includegraphics[trim=30 30 30 30,clip,width=.118\linewidth]{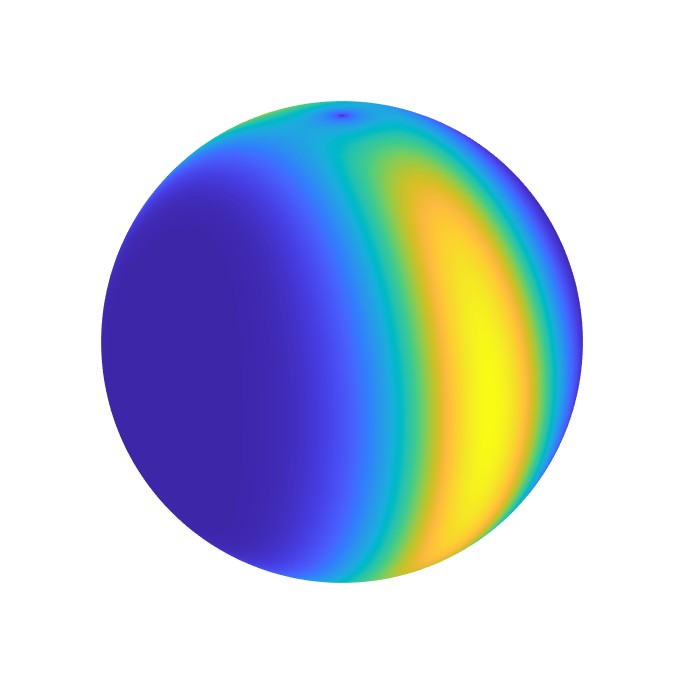} & \includegraphics[trim=30 30 30 30,clip,width=.118\linewidth]{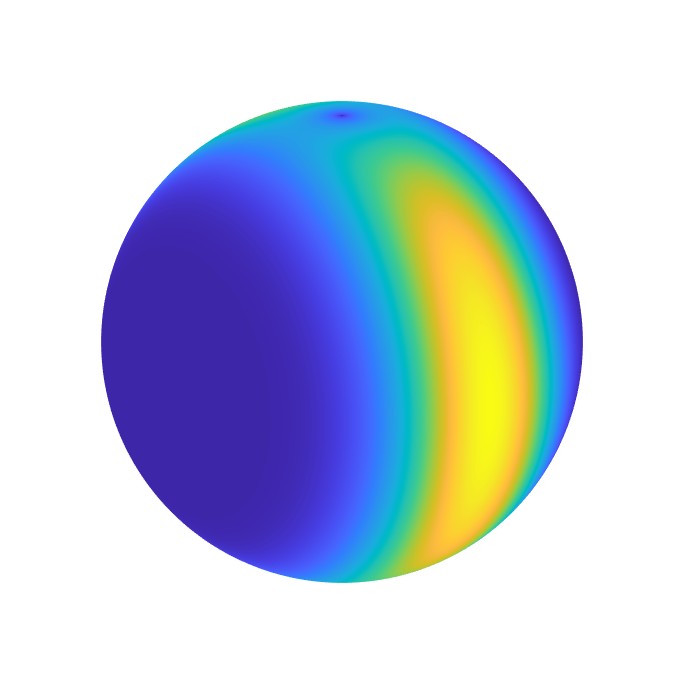}\\
\raisebox{5.15ex}{$m=10$} & \includegraphics[trim=30 30 30 30,clip,width=.118\linewidth]{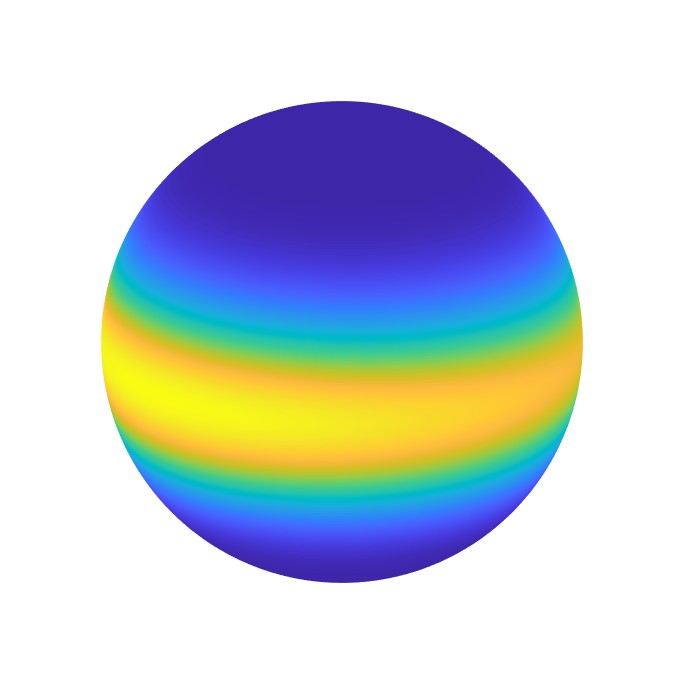} & \includegraphics[trim=30 30 30 30,clip,width=.118\linewidth]{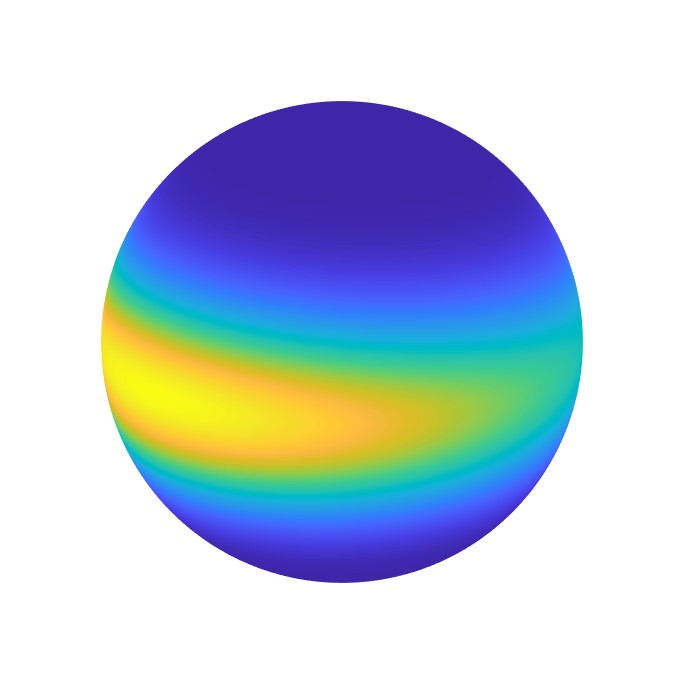} & \includegraphics[trim=30 30 30 30,clip,width=.118\linewidth]{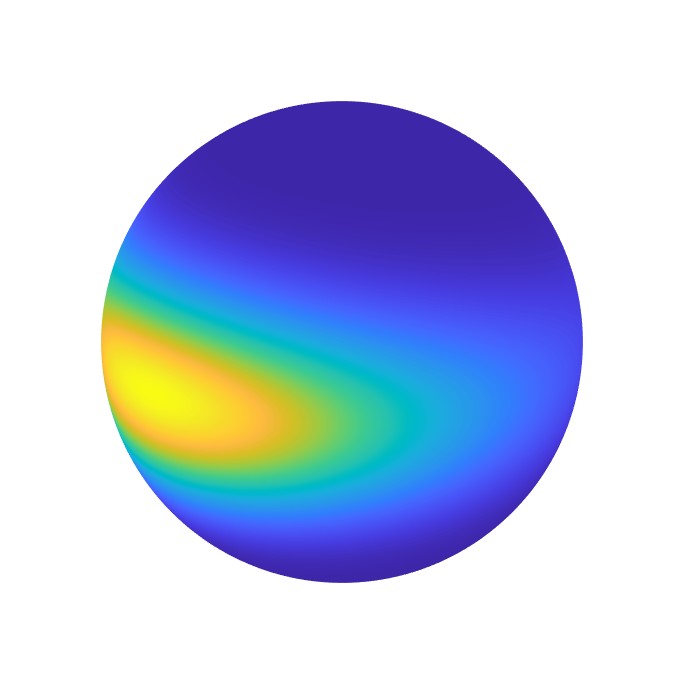} & \includegraphics[trim=30 30 30 30,clip,width=.118\linewidth]{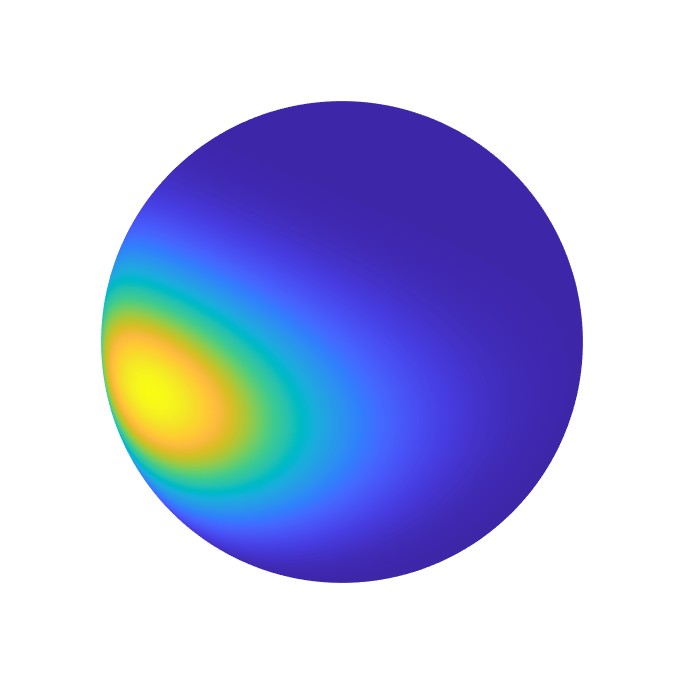} & \includegraphics[trim=30 30 30 30,clip,width=.118\linewidth]{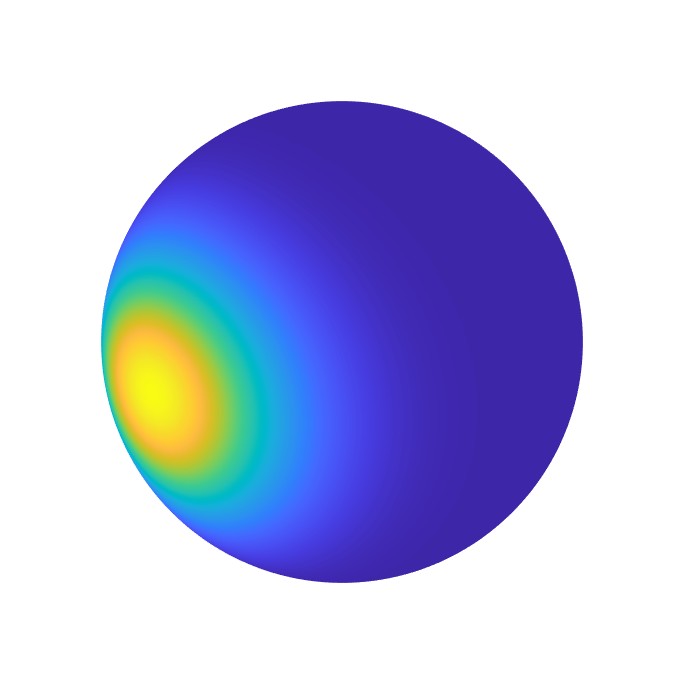} & \includegraphics[trim=30 30 30 30,clip,width=.118\linewidth]{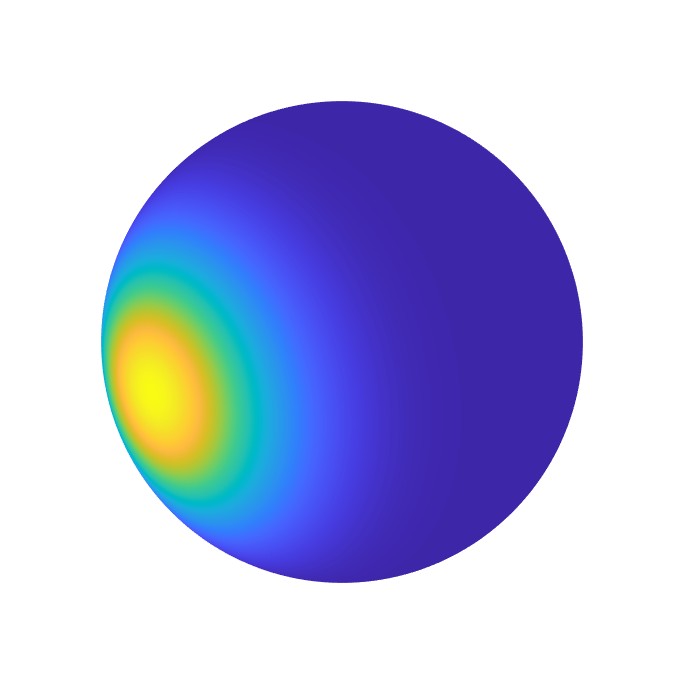}\\
$\max$ & \footnotesize{$3\times10^{\text{-}6}$} & \footnotesize{$7.1\times10^{\text{-}6}$} & \footnotesize{$3\times10^{\text{-}5}$} & \footnotesize{$3.6\times10^{\text{-}4}$} & \footnotesize{$4.1\times10^{\text{-}2}$} & \footnotesize{$5\times10^{\text{-}14}$}\\
\hline
\end{tabular}
\caption{
Plots of $|w^{1/2} a_\ell^m|$, with $\ell = 10$, two values of $m$, and varying $\zeta$. The function depends on $(\theta_1, \theta_3, \zeta)$ and is evaluated on a sphere. Each column shares a color scale; maxima are shown in the last row.
Wavenumber $\kappa=6$.}
\label{figure alm}
\end{figure}

\begin{lemma} \label{Lemma 3.5}
\!The space $(\mathcal{A},\!\|\cdot\|_{\mathcal{A}})$ is a Hilbert space and the family $\{a_{\ell}^m\}_{\!(\ell,m) \in \mathcal{I}}$ is a Hilbert basis:
\begin{equation*}
  (a_{\ell}^m,a_q^n)_{\mathcal{A}}=\delta_{\ell, q}\delta_{m,n}\quad
  \forall (\ell,m),(q,n) \in \mathcal{I},\qquad \text{and} \qquad
  v=\sum_{(\ell,m) \in \mathcal{I}}(v,a_{\ell}^m)_{\mathcal{A}}\,a_{\ell}^m
  \quad \forall v \in \mathcal{A}.
\end{equation*}
\end{lemma}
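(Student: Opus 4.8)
The plan is to transfer the Hilbert-basis structure from the spherical waves $\{b_\ell^m\}$ to the Herglotz densities $\{a_\ell^m\}$ via the Jacobi--Anger identity~\eqref{eq:jacobi-anger-final}, which expresses each EPW as a $\mathcal B$-convergent series with coefficients built precisely from the $\tilde a_\ell^m$. Concretely, I would first observe that by~\eqref{a tilde definizione} and~\eqref{eq:jacobi-anger-final} the map sending $b_\ell^m \mapsto 4\pi i^\ell \beta_\ell^{-1}\,\overline{\tilde a_\ell^m}$ (or rather its conjugate-linear adjoint) is the natural candidate isometry between $\mathcal B$ and $\mathcal A$. The key computation is to show the Gram identity $(\tilde a_\ell^m,\tilde a_q^n)_{\mathcal A}=c_\ell\,\delta_{\ell,q}\delta_{m,n}$ for a constant $c_\ell>0$ depending only on $\ell$; then setting $\alpha_\ell=c_\ell^{-1/2}$ gives orthonormality of $\{a_\ell^m\}$ and simultaneously proves that $\alpha_\ell$ is $m$-independent (the claim anticipated after the definition and stated as Lemma~\ref{Lemma 3.4}).

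For the Gram computation I would split the measure as in~\eqref{weight}: the $\mathrm d\psi$ integral over $[0,2\pi)$ together with the structure $D_\ell^{m',m}(\boldsymbol\theta,\psi)=e^{im'\theta_2}d_\ell^{m',m}(\theta_1)e^{im\psi}$ from~\eqref{DD matrix} forces the two $\psi$-phases $e^{im\psi}$ and $e^{-in\psi}$ to match, yielding $\delta_{m,n}$; the remaining $\theta$-integral is $\int_{\mathbb S^2}\mathbf D_\ell^m(\boldsymbol\theta,\psi)\otimes\overline{\mathbf D_\ell^m(\boldsymbol\theta,\psi)}\,\mathrm d\sigma(\boldsymbol\theta)$, which by the orthogonality relations of Wigner D-matrices (the unitarity/Schur-orthogonality already invoked after~\eqref{evanescent l2 coefficients}, \cite[sect.~4.1, eq.~(6)]{quantumtheory}) collapses to a multiple of the identity matrix acting on $\mathbf P_\ell(\zeta)$. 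What survives is $\big(\text{const}\big)\int_0^\infty |\mathbf P_\ell(\zeta)|^2 w(\zeta)\,\mathrm d\zeta$, i.e.\ a purely $\ell$-dependent, $m$-independent positive number — this is $c_\ell$, manifestly finite thanks to the exponential weight $w(\zeta)=\zeta^{1/2}e^{-\zeta}$ taming the polynomial growth of $P_\ell^{m'}(1+\zeta/2\kappa)$ (recall from~\eqref{sum legendre expansion} that $P_\ell^{m'}$ is a polynomial up to the $(w\pm1)^{\pm m'/2}$ factors, hence of controlled growth). Crucially the $\theta_2$-integral over $[0,2\pi)$ kills cross terms with different $m'$ as well, so no delicate cancellation in the Wigner $d$-matrix entries is needed.

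Once orthonormality is in hand, $\mathcal A=\overline{\operatorname{span}\{a_\ell^m\}}^{\|\cdot\|_{\mathcal A}}$ is a Hilbert space (closed subspace of $L^2_\nu(Y)$) and $\{a_\ell^m\}$ is by construction a complete orthonormal system in it, so the expansion $v=\sum_{(\ell,m)}(v,a_\ell^m)_{\mathcal A}\,a_\ell^m$ is just the abstract Hilbert-space Parseval expansion — no further work. The main obstacle I anticipate is purely bookkeeping: carefully justifying the interchange of the $\sigma$- and $\psi$-integrations with the (finite) $m'$-sums, and verifying that the Wigner orthogonality relation I cite is stated in the normalization consistent with Definition~2.7 (the paper already flags a sign/angle convention change relative to \cite{pendleton,feng}), so that the constant $c_\ell$ comes out correctly and positive; the integrability in $\zeta$ is routine given $w$. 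No genuine analytic difficulty is expected beyond this.
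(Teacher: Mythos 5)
Your proposal is correct and follows essentially the route underlying the paper: the paper states Lemma~\ref{Lemma 3.5} without a written proof (deferring to the thesis), but the very same computation you outline --- splitting $\textup{d}\nu=\textup{d}\sigma(\boldsymbol\theta)\,\textup{d}\psi\,w(\zeta)\textup{d}\zeta$ as in \eqref{weight}, integrating the Wigner D-matrix entries over what is exactly the Haar measure of $SO(3)$, and reducing to the purely $\zeta$-dependent factor $\int_0^\infty|\mathbf P_\ell(\zeta)|^2w(\zeta)\,\textup{d}\zeta$ --- is precisely what appears in the proof of Lemma~\ref{Lemma 3.4}, and completeness is indeed automatic since $\mathcal A$ is defined as a closed span. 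One small caution: the unitarity relation \cite[sect.~4.1, eq.~(6)]{quantumtheory} invoked after \eqref{evanescent l2 coefficients} is a pointwise identity and does not by itself give orthogonality across degrees $\ell\neq q$; for that you need, as you correctly name, the full Schur orthogonality
\begin{equation*}
\int_{\Theta}\int_0^{2\pi} D_{\ell}^{m',m}(\boldsymbol\theta,\psi)\,\overline{D_{q}^{n',n}(\boldsymbol\theta,\psi)}\,\textup{d}\sigma(\boldsymbol\theta)\,\textup{d}\psi=\frac{8\pi^2}{2\ell+1}\,\delta_{\ell,q}\,\delta_{m',n'}\,\delta_{m,n},
\end{equation*}
equivalently the $\theta_1$-orthogonality of the Wigner d-functions at fixed $(m',m)$, which is the only nontrivial ingredient beyond the two phase integrals in $\psi$ and $\theta_2$.
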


Using Definition~\ref{definition herglotz densities}, the Jacobi–Anger expansion~\eqref{eq:jacobi-anger-final} takes the simple form
\begin{equation}
\textup{EW}_{\mathbf{y}}(\mathbf{x})=\sum_{(\ell,m) \in \mathcal{I}}\tau_{\ell}\,\overline{a_{\ell}^m(\mathbf{y})}\,b_{\ell}^m(\mathbf{x}), \qquad \text{where} \qquad \tau_{\ell}:=4\pi i^{\ell}(\alpha_{\ell}\beta_{\ell})^{-1} \qquad \forall \ell \geq 0.
\label{tau jacobi-anger}
\end{equation}
The formula (\ref{tau jacobi-anger}) holds a crucial role as it establishes a link between the spherical wave basis (\ref{b tilde definizione}) of the space $\mathcal{B}$ and the Herglotz-density basis (\ref{a tilde definizione}) of the space $\mathcal{A}$ through EPWs in (\ref{evanescent wave}).

The behavior of \(\tau_{\ell}\) for large \(\ell\) will intervene in the
upcoming analysis.
To study this, we start with a lemma useful to analyze the asymptotic
behavior of the normalization coefficients $\alpha_{\ell}$.

\begin{lemma}
We have for all $(\ell,m) \in \mathcal{I}$ and $z \geq 1$
\begin{equation}
(z-1)^{\ell}\leq \frac{\sqrt{\pi}(\ell-m)!P_{\ell}^m(z)}{2^{\ell}\Gamma\left(\ell+1/2\right)}\leq (z+1)^{\ell}.
\label{disuguaglianza lemma}
\end{equation}
\end{lemma}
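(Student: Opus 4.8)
The plan is to start from the explicit finite-sum representation~\eqref{sum legendre expansion} of $P_\ell^m(z)$, which for $z \geq 1$ and $0 \leq m \leq \ell$ expresses $P_\ell^m(z)$ as a nonnegative combination (all binomial coefficients and all factors $(z-1)^{\bullet}$, $(z+1)^{\bullet}$ are nonnegative) of monomials in $(z-1)$ and $(z+1)$. Concretely, \eqref{sum legendre expansion} gives
\begin{equation*}
P_{\ell}^m(z)=\frac{(\ell+m)!}{2^{\ell}\ell!}\sum_{k=0}^{\ell-m}\binom{\ell}{k}\binom{\ell}{m+k}\left(z-1\right)^{\ell-m/2-k}\left(z+1\right)^{m/2+k}.
\end{equation*}
Since each term has $(z-1)$ raised to the power $\ell-m/2-k$ and $(z+1)$ raised to $m/2+k$, with exponents summing to $\ell$, and since $0 \le z-1 \le z+1$, each term is squeezed between $(z-1)^{\ell}$ times its coefficient and $(z+1)^{\ell}$ times its coefficient. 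Hence
\begin{equation*}
\frac{(\ell+m)!}{2^{\ell}\ell!}\left(\sum_{k=0}^{\ell-m}\binom{\ell}{k}\binom{\ell}{m+k}\right)(z-1)^{\ell}
\;\le\; P_\ell^m(z)\;\le\;
\frac{(\ell+m)!}{2^{\ell}\ell!}\left(\sum_{k=0}^{\ell-m}\binom{\ell}{k}\binom{\ell}{m+k}\right)(z+1)^{\ell}.
\end{equation*}

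Next I would evaluate the combinatorial prefactor in closed form. The sum $\sum_{k=0}^{\ell-m}\binom{\ell}{k}\binom{\ell}{m+k}$ is a Vandermonde-type convolution: reindexing and using $\binom{\ell}{k}=\binom{\ell}{\ell-k}$, it equals $\sum_j \binom{\ell}{j}\binom{\ell}{(\ell-m)-j}=\binom{2\ell}{\ell-m}=\binom{2\ell}{\ell+m}$. Therefore the prefactor is $\dfrac{(\ell+m)!}{2^{\ell}\ell!}\cdot\dfrac{(2\ell)!}{(\ell+m)!\,(\ell-m)!}=\dfrac{(2\ell)!}{2^{\ell}\,\ell!\,(\ell-m)!}$. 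It remains only to check that this equals $\dfrac{\sqrt\pi\,2^{\ell}\,\Gamma(\ell+1/2)}{(\ell-m)!}\cdot\dfrac{1}{2^{\ell}}$, i.e.\ that $\dfrac{(2\ell)!}{2^{\ell}\ell!}=2^{\ell}\sqrt\pi^{-1}\Gamma(\ell+1/2)\cdot\sqrt\pi$… more cleanly: the duplication formula $\Gamma(\ell+1/2)=\dfrac{(2\ell)!\sqrt\pi}{4^{\ell}\ell!}$ gives $\dfrac{(2\ell)!}{2^{\ell}\ell!}=\dfrac{2^{\ell}\Gamma(\ell+1/2)}{\sqrt\pi}$, so the prefactor is exactly $\dfrac{2^{\ell}\Gamma(\ell+1/2)}{\sqrt\pi\,(\ell-m)!}$, which is precisely $\dfrac{2^{\ell}\Gamma(\ell+1/2)}{\sqrt\pi\,(\ell-m)!}$ — matching the claimed normalizing factor $\dfrac{2^{\ell}\Gamma(\ell+1/2)}{\sqrt\pi(\ell-m)!}$ in~\eqref{disuguaglianza lemma} after rearrangement. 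Substituting back yields $(z-1)^\ell \le \dfrac{\sqrt\pi(\ell-m)!P_\ell^m(z)}{2^\ell\Gamma(\ell+1/2)}\le (z+1)^\ell$.

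The only genuine obstacles are bookkeeping: getting the Vandermonde reindexing right (making sure the two binomials combine to $\binom{2\ell}{\ell+m}$, which I would verify by interpreting $\binom{\ell}{k}\binom{\ell}{m+k}$ as choosing subsets whose sizes differ by $m$ from a doubled ground set) and applying the Legendre duplication formula to match the exact constants in~\eqref{disuguaglianza lemma}. The nonnegativity of every summand — crucial for the squeeze to be valid — follows immediately from $z \ge 1$, so no case distinction is needed. I would also note that the half-integer exponents $\pm m/2$ appearing on $(z-1)$ and $(z+1)$ cause no trouble since $z\pm1\ge 0$, and that the convention~\eqref{convention} reduces to the ordinary nonnegative real root here.
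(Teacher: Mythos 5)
Your proposal is correct and follows essentially the same route as the paper's proof: both start from the finite-sum expansion \eqref{sum legendre expansion}, squeeze the nonnegative terms between powers of $(z-1)$ and $(z+1)$, evaluate the binomial sum via the Vandermonde identity as $\binom{2\ell}{\ell+m}$, and match the constant using $\Gamma(\ell+1/2)=\sqrt{\pi}\,(2\ell)!/(4^{\ell}\ell!)$. The only detail you leave implicit is the case $m<0$, which, as in the paper, reduces to $m\ge 0$ via $(\ell+m)!\,P_{\ell}^{-m}(z)=(\ell-m)!\,P_{\ell}^{m}(z)$ from \eqref{legendre2 polynomials}, since the quantity in \eqref{disuguaglianza lemma} is invariant under $m\mapsto -m$.
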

\begin{proof}
Due to (\ref{legendre2 polynomials}), $(\ell+m)!P_{\ell}^{-m}(z)=(\ell-m)!P_{\ell}^{m}(z)$ for every $(\ell,m) \in \mathcal{I}$, allowing us to assume $m \geq 0$ from here on.
From \cite[eq.~(5.5.5)]{nist}, we have
\begin{equation*}
    \Gamma\left(\ell+\frac{1}{2}\right)=\frac{\sqrt{\pi}(2\ell)!}{2^{2\ell}\ell!} \qquad \forall \ell \geq 0.
\end{equation*}
Using this identity together with equation \eqref{sum legendre expansion}, it follows
\begin{align*}
A_{\ell}^m(z)&:=\frac{\sqrt{\pi}(\ell-m)!P_{\ell}^m(z)}{2^{\ell}\Gamma\left(\ell+1/2\right)}=\frac{2^{\ell}\ell!(\ell-m)!P_{\ell}^m(z)}{(2\ell)!}\\
&=\binom{2\ell}{\ell+m}^{-1}\left(z-1\right)^{\ell}\sum_{k=0}^{\ell-m}\binom{\ell}{k}\binom{\ell}{m+k}\left(\frac{z+1}{z-1}\right)^{m/2+k}.
\end{align*}
Thanks to the Vandermonde identity \cite[eq.~(1)]{Sokal} and $\binom ab=\binom a{a-b}$, we derive:
\begin{align*}
A_{\ell}^m(z) &\leq \binom{2\ell}{\ell+m}^{-1}\!\left(\frac{z+1}{z-1}\right)^{\ell-m/2}\!\!\!(z-1)^{\ell}\sum_{k=0}^{\ell-m}\binom{\ell}{k}\binom{\ell}{m+k}=\left(\frac{z-1}{z+1}\right)^{m/2}\!\!\!(z+1)^{\ell}\leq (z+1)^{\ell},\\
A_{\ell}^m(z) &\geq \binom{2\ell}{\ell+m}^{-1}(z-1)^{\ell}\sum_{k=0}^{\ell-m}\binom{\ell}{k}\binom{\ell}{m+k}=(z-1)^{\ell}. \qedhere
\end{align*}
\end{proof}

\begin{remark}
Numerical evidence suggests that a sharper upper bound in \textup{(\ref{disuguaglianza lemma})} is $z^{\ell}$.
\end{remark}

After a pre-asymptotic regime up to $\ell \approx \kappa$, the coefficients $\alpha_{\ell}$ exhibit super-exponential decay with respect to $\ell$. The specific asymptotic behavior is detailed in the next  
lemma.

\begin{lemma} \label{Lemma 3.4}
For a constant $c(\kappa)$ only depending on $\kappa$,
we have
\begin{equation}
\alpha_{\ell} \sim c(\kappa)\left(\frac{e\kappa}{2} \right)^{\ell}\ell^{-\left(\ell+\frac{1}{2}\right)} \qquad \text{as}\ \ell \rightarrow \infty.
\label{behaviour alpha_lm}
\end{equation}
\end{lemma}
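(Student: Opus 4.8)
The plan is to compute $\alpha_\ell = \|\tilde a_\ell^m\|_{\mathcal A}^{-1}$ explicitly and then extract its large-$\ell$ asymptotics. First I would evaluate the norm $\|\tilde a_\ell^m\|_{\mathcal A}^2 = \int_Y |\mathbf D_\ell^m(\boldsymbol\theta,\psi)\cdot\mathbf P_\ell(\zeta)|^2\,\mathrm d\nu(\mathbf y)$ using the product structure of the measure $\mathrm d\nu(\mathbf y) = \mathrm d\sigma(\boldsymbol\theta)\,\mathrm d\psi\,w(\zeta)\,\mathrm d\zeta$ from~\eqref{weight}. Integrating first over the angular variables $(\boldsymbol\theta,\psi)$ and exploiting the unitarity of the Wigner D-matrix (the same orthogonality relation \cite[sect.~4.1, eq.~(6)]{quantumtheory} already invoked after~\eqref{evanescent l2 coefficients}), the columns $\mathbf D_\ell^m$ of $D_\ell$ are orthonormal in $L^2(\Theta\times[0,2\pi))$, so the angular integral of $|\mathbf D_\ell^m(\boldsymbol\theta,\psi)\cdot\mathbf P_\ell(\zeta)|^2$ collapses to $|\mathbf P_\ell(\zeta)|^2$ up to a fixed constant (the $m$-independence of $\alpha_\ell$, namely Lemma~\ref{Lemma 3.4}'s implicit claim, falls out of this step since the answer no longer depends on $m$). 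This reduces the problem to
\begin{equation*}
\alpha_\ell^{-2} = C\int_0^{+\infty}|\mathbf P_\ell(\zeta)|^2\,w(\zeta)\,\mathrm d\zeta
= C\int_0^{+\infty}\Bigl(\sum_{m=-\ell}^\ell (\gamma_\ell^m)^2 P_\ell^m(1+\zeta/2\kappa)^2\Bigr)\zeta^{1/2}e^{-\zeta}\,\mathrm d\zeta,
\end{equation*}
using the definition~\eqref{vector P} of $\mathbf P_\ell(\zeta)$ and $|i^m|=1$.

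Next I would estimate the inner sum $\Sigma_\ell(z):=\sum_{m=-\ell}^\ell (\gamma_\ell^m)^2 P_\ell^m(z)^2$ for $z=1+\zeta/2\kappa\ge1$. By~\eqref{spherical harmonic} we have $(\gamma_\ell^m)^2 = \frac{2\ell+1}{4\pi}\frac{(\ell-m)!}{(\ell+m)!}$, and from the two-sided bound~\eqref{disuguaglianza lemma} of the penultimate lemma, $\frac{(\ell-m)!}{(\ell+m)!}P_\ell^m(z)^2$ is squeezed between $\bigl(\frac{2^\ell\Gamma(\ell+1/2)}{\sqrt\pi}\bigr)^2(z-1)^{2\ell}\frac{(\ell+m)!}{(\ell-m)!}\cdot\frac{1}{((\ell+m)!/(\ell-m)!)^2}$-type expressions — more precisely, writing $A_\ell^m(z) = \frac{\sqrt\pi(\ell-m)!P_\ell^m(z)}{2^\ell\Gamma(\ell+1/2)}\in[(z-1)^\ell,(z+1)^\ell]$, one gets $\frac{(\ell-m)!}{(\ell+m)!}P_\ell^m(z)^2 = \frac{(\ell-m)!}{(\ell+m)!}\cdot\bigl(\frac{2^\ell\Gamma(\ell+1/2)}{\sqrt\pi(\ell-m)!}\bigr)^2 A_\ell^m(z)^2$, which is awkward because the factorial prefactor still depends on $m$. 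The clean way around this is to recognize that $\Sigma_\ell(z)$ is, up to the constant $\frac{2\ell+1}{4\pi}$, exactly the value of the Legendre addition theorem at coincident points: by~\eqref{addition theorem 2} (or directly the classical addition theorem for which that lemma is the analytic continuation), $\sum_{m=-\ell}^\ell (\gamma_\ell^m)^2 P_\ell^m(z)^2$ relates to $P_\ell$ evaluated at an argument built from $z$. Actually the cleanest route: specialize the addition-theorem identity~\eqref{eq:Pl(dx)} appropriately, or use the known closed form $\sum_{m=-\ell}^\ell \frac{(\ell-m)!}{(\ell+m)!}P_\ell^m(z)^2 = P_\ell(2z^2-1)$ (the "Legendre addition theorem at angle zero between two copies of the tilted direction"), giving $\Sigma_\ell(z) = \frac{2\ell+1}{4\pi}P_\ell(2z^2-1)$. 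Then~\eqref{pre jacobi anger 2}-adjacent asymptotics, or rather the large-degree asymptotics of $P_\ell$ at a fixed argument $>1$, namely $P_\ell(\xi)\sim \frac{(\xi+\sqrt{\xi^2-1})^{\ell+1/2}}{\sqrt{2\pi\ell}\,(\xi^2-1)^{1/4}}$, yield $\Sigma_\ell(1+\zeta/2\kappa)$ up to polynomial and exponential-in-$\ell$ factors depending on $\zeta$.

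Then I would substitute this into the $\zeta$-integral and apply Laplace's method / Watson's lemma. The integrand behaves like (polynomial in $\ell$)$\times\bigl(\xi(\zeta)+\sqrt{\xi(\zeta)^2-1}\bigr)^{2\ell+1}\zeta^{1/2}e^{-\zeta}$ with $\xi(\zeta)=2(1+\zeta/2\kappa)^2-1$; for large $\ell$ the exponential growth in $\zeta$ from the Legendre factor competes with $e^{-\zeta}$, but since $\xi(\zeta)+\sqrt{\xi(\zeta)^2-1}\sim 2\xi(\zeta)\sim \zeta^2/\kappa^2$ grows only polynomially in $\zeta$ while $\log$ of it enters linearly in $\ell$, the effective exponent is $2\ell\log(\zeta^2/\kappa^2)-\zeta$, whose saddle sits at $\zeta\approx 4\ell$. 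Evaluating by Laplace's method around this saddle produces the factor $(e\kappa/2)^{-2\ell}\ell^{2\ell+1}$ up to a $\kappa$-dependent constant and lower-order polynomial corrections, so that $\alpha_\ell^{-2}\sim \tilde c(\kappa)(2/e\kappa)^{2\ell}\ell^{2\ell+1}$, i.e. $\alpha_\ell\sim c(\kappa)(e\kappa/2)^\ell\ell^{-(\ell+1/2)}$ as claimed. The main obstacle I anticipate is the bookkeeping in this final Laplace-method step: one must carry the polynomial-in-$\ell$ prefactors (the $\sqrt{2\pi\ell}$ from the Legendre asymptotic, the $2\ell+1$, the $(\xi^2-1)^{-1/4}$ evaluated near the saddle, and the Gaussian width $\propto \ell^{1/2}$ from Laplace's method) consistently to land the exponent $-(\ell+1/2)$ exactly rather than $-(\ell+1/2)$ shifted by an integer, and to confirm that the saddle lies in the interior of $[0,\infty)$ so that no boundary contribution competes; replacing the sharp asymptotic by the two-sided bound~\eqref{disuguaglianza lemma} gives matching upper and lower exponential rates but loses the constant, so pinning down $c(\kappa)$ (even just its existence and $\ell$-independence) requires the genuine asymptotic of $P_\ell$ at a point exterior to $[-1,1]$ combined with dominated-convergence control of the tails of the $\zeta$-integral.
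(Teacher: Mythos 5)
Your opening reduction is exactly the paper's: integrating out $(\boldsymbol\theta,\psi)$ via the Wigner orthogonality relation leaves $\alpha_\ell^{-2}=\tfrac{8\pi^2}{2\ell+1}\int_0^\infty|\mathbf P_\ell(\zeta)|^2w(\zeta)\,\mathrm d\zeta$, and the $m$-independence indeed falls out there. From that point you genuinely diverge. The paper bounds each term $[\gamma_\ell^{m'}P_\ell^{m'}(z)]^2$ separately using the two-sided bound \eqref{disuguaglianza lemma}, applies Stirling for \emph{fixed} $m'$, and then sums over $m'$. Your closed form $\sum_{m}\tfrac{(\ell-m)!}{(\ell+m)!}P_\ell^m(z)^2=P_\ell(2z^2-1)$ is correct (it is \cite[eq.~(14.28.1)]{nist} with $z_1=z_2=z$, $\omega=\pi$; it checks for $\ell=1,2$) and is arguably the cleaner way to handle the $m$-sum, since $\xi+\sqrt{\xi^2-1}=(z+\sqrt{z^2-1})^2$ makes the subsequent $\zeta$-analysis transparent.

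The difficulty is that the bookkeeping you defer is precisely where the argument breaks, and it does not land where you assert. First, a slip: $P_\ell(\xi)$ contributes $(\ell+\tfrac12)\log(\xi+\sqrt{\xi^2-1})\approx(2\ell+1)\log(\zeta/\kappa)$ to the exponent, not $2\ell\log(\zeta^2/\kappa^2)=4\ell\log(\zeta/\kappa)$ (your expression corresponds to $P_\ell^2$), so the saddle is at $\zeta\approx2\ell$, not $4\ell$; since this point escapes to infinity you also need the Laplace--Heine asymptotic uniformly on $[1+\delta,\infty)$, not ``at a fixed argument'' (it does hold there). Second, and decisively: carrying Laplace/Watson through, with $(\xi^2-1)^{1/4}\approx\zeta/(\sqrt2\kappa)$, gives $\int_0^\infty P_\ell(2z^2-1)\,w(\zeta)\,\mathrm d\zeta\asymp\ell^{-1/2}\kappa^{-2\ell}\,\Gamma(2\ell+\tfrac32)\asymp\left(\tfrac{2}{e\kappa}\right)^{2\ell}\ell^{2\ell+1/2}$, hence $\alpha_\ell\asymp\left(\tfrac{e\kappa}{2}\right)^{\ell}\ell^{-(\ell+1/4)}$ --- off from \eqref{behaviour alpha_lm} by a factor $\ell^{1/4}$. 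You can confirm this without any Legendre asymptotics: \eqref{disuguaglianza lemma} combined with the exact identities $\sum_{m'}\tfrac{1}{(\ell+m')!(\ell-m')!}=\tfrac{2^{2\ell}}{(2\ell)!}$ and $\Gamma(\ell+\tfrac12)=\sqrt\pi\,(2\ell)!/(4^\ell\ell!)$ sandwiches $\alpha_\ell^{-2}$ between constant multiples of $\left(\tfrac{2}{e\kappa}\right)^{2\ell}\ell^{2\ell+1/2}$. The discrepancy with the paper's proof is that its final display replaces $\sum_{m'}B_\ell^{m'}$ by $(2\ell+1)$ copies of the fixed-$m'$ asymptotic, which is not uniform in $m'$: the weights $\binom{2\ell}{\ell+m'}/(2\ell)!$ concentrate on $|m'|\lesssim\sqrt\ell$, so the sum is only $\sim\sqrt{\pi\ell}$ times the $m'=0$ term (note that the paper's own approximation \eqref{eq:AlphaApprox} also behaves like $\ell^{-(\ell+1/4)}$, consistent with your route). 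So your plan, executed correctly, yields the exponent $-(\ell+1/4)$; as a proof of the lemma as stated, the asserted outcome of the final Laplace step is a genuine gap.
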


\begin{proof}
We have that
\begin{align*}
\|\tilde{a}_{\ell}^m\|^2_{\mathcal{A}}&=\int_{Y}|\mathbf{D}^{m}_{\ell}(\boldsymbol{\theta},\psi) \cdot \mathbf{P}_{\ell}(\zeta)|^2\textup{d}\nu(\mathbf{y})\\
&=\sum_{m'=-\ell}^{\ell}\int_{\Theta}\int_0^{2\pi}\left|D_{\ell}^{m',m}(\boldsymbol{\theta},\psi)\right|^2\textup{d}\sigma(\boldsymbol{\theta})\textup{d}\psi\int_0^{+\infty}\left[\gamma_{\ell}^{m'}P_{\ell}^{m'}\left(1+\zeta/2\kappa\right)\right]^2w(\zeta)\textup{d}\zeta\\
&=\frac{8\pi^2}{2\ell+1}\sum_{m'=-\ell}^{\ell}\int_0^{+\infty}\left[\gamma_{\ell}^{m'}P_{\ell}^{m'}\left(1+\zeta/2\kappa\right)\right]^2\zeta^{1/2}e^{-\zeta}\textup{d}\zeta. \numberthis \label{1 lemma 5.3}
\end{align*}
In what follows, we study the integral in (\ref{1 lemma 5.3}), henceforth denoted by $B_{\ell}^{m'}$. Thanks to (\ref{disuguaglianza lemma}):
\begin{align*}
B_{\ell}^{m'}&\geq\left(\frac{2^{\ell}\gamma_{\ell}^{m'}\Gamma(\ell+1/2)}{\sqrt{\pi}(\ell-m')!}\right)^2\int_{0}^{+\infty}\left(\frac{\zeta}{2\kappa}\right)^{2\ell}\zeta^{1/2}e^{-\zeta}\textup{d}\zeta\\
&=\frac{1}{4\pi^2\kappa^{2\ell}}\frac{(2\ell+1)\Gamma^{\,2}(\ell+1/2)}{(\ell+m')!(\ell-m')!} \,\Gamma\left(2\ell+\frac{3}{2}\right)=:C_{\ell}^{m'}, \numberthis \label{3 lemma 5.3}
\end{align*}
and analogously
\begin{align*}
B_{\ell}^{m'}&\leq\left(\frac{2^{\ell}\gamma_{\ell}^{m'}\Gamma(\ell+1/2)}{\sqrt{\pi}(\ell-m')!}\right)^2\int_{0}^{+\infty}\left(\frac{\zeta}{2\kappa}+2\right)^{2\ell}\zeta^{1/2}e^{-\zeta}\textup{d}\zeta\\
&=\left(\frac{2^{\ell}\gamma_{\ell}^{m'}\Gamma(\ell+1/2)}{\sqrt{\pi}(\ell-m')!}\right)^2\int_{4\kappa}^{+\infty}\left(\frac{\eta}{2\kappa}\right)^{2\ell}(\eta-4\kappa)^{1/2}e^{-(\eta-4\kappa)}\textup{d}\eta\\
&<\frac{2\ell+1}{4\pi^2}\frac{2^{2\ell}\Gamma^{\,2}(\ell+1/2)}{(\ell+m')!(\ell-m')!} \int_{0}^{+\infty}\left(\frac{\eta}{2\kappa}\right)^{2\ell}\eta^{1/2}e^{-\eta}e^{4\kappa}\textup{d}\eta\\
&=\frac{e^{4\kappa}}{4\pi^2\kappa^{2\ell}}\frac{(2\ell+1)\Gamma^{\,2}(\ell+1/2)}{(\ell+m')!(\ell-m')!} \,\Gamma\left(2\ell+\frac{3}{2}\right)=e^{4\kappa}C_{\ell}^{m'}, \numberthis \label{2 lemma 5.3}
\end{align*}
where we used \cite[eq.~(5.2.1)]{nist} and $\eta=\zeta+4\kappa$.
Thanks to the Stirling’s formula \cite[eq.~(5.11.7)]{nist}, it is easily checked that as $\ell \rightarrow +\infty$
\begin{equation*}
    \Gamma\left(2\ell+3/2\right)\sim \sqrt{2\pi}e^{-2\ell}\left(2\ell\right)^{2\ell+1}, \quad \Gamma^{\,2}\left(\ell+1/2\right)\sim 2\pi e^{-2\ell}\ell^{2\ell}, \quad (\ell+m')!(\ell-m')!\sim 2\pi e^{-2\ell}\ell^{2\ell+1},
\end{equation*}
where $|m'|\leq \ell$ is fixed, and hence
\begin{equation*}
\frac{(2\ell+1)\Gamma^{\,2}(\ell+1/2)}{(\ell+m')!(\ell-m')!}\Gamma\left(2\ell+\frac{3}{2}\right) \sim 2\sqrt{2\pi}e^{-2\ell}(2\ell)^{2\ell+1} \qquad \text{as }\ell \rightarrow +\infty.
\end{equation*}
By combining (\ref{3 lemma 5.3}) and (\ref{2 lemma 5.3}), it follows that, as $\ell \rightarrow +\infty$, there exists a constant $c_1(\kappa)$, only dependent on the wavenumber $\kappa$, such that
\begin{equation*}
C_{\ell}^{m'} \sim \frac{\sqrt{2}}{\pi\sqrt{\pi}}\left(\frac{2}{e\kappa}\right)^{2\ell}\ell^{2\ell+1} \qquad \Rightarrow \qquad B_{\ell}^{m'} \sim c_1(\kappa)\left(\frac{2}{e\kappa}\right)^{2\ell}\ell^{2\ell+1}.
\end{equation*} 
Moreover, also $\|\tilde{a}_{\ell}^m\|^2_{\mathcal{A}}$ has the same behavior as $B_{\ell}^{m'}$ at infinity: in fact, thanks to (\ref{1 lemma 5.3}), we have
\begin{align*}
\|\tilde{a}_{\ell}^m\|^2_{\mathcal{A}} &\sim \frac{4\pi^2}{\ell}\sum_{m'=-\ell}^{\ell}c_1(\kappa)\left(\frac{2}{e\kappa}\right)^{2\ell}\ell^{2\ell+1} \sim c_2(\kappa)\left(\frac{2}{e\kappa}\right)^{2\ell}\ell^{2\ell+1},
\end{align*}
for some constant $c_2(\kappa)$ only dependent on $\kappa$; the claimed result (\ref{behaviour alpha_lm}) follows from~\eqref{a tilde definizione}.
\end{proof}

From the asymptotics provided in Lemma \ref{Lemma 2.6} and Lemma \ref{Lemma 3.4}, we can deduce the next result.

\begin{corollary} 
The coefficients $\tau_\ell$ in \eqref{tau jacobi-anger} are uniformly bounded in $\ell$,
namely
\begin{equation}
\tau_{-}:=\inf_{\ell \geq 0}|\tau_{\ell}|>0 \qquad \text{and} \qquad \tau_{+}:=\sup_{\ell \geq 0}|\tau_{\ell}|<\infty.
\label{uniform bounds tau}
\end{equation}
\end{corollary}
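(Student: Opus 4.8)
The plan is to combine the two asymptotic statements, Lemma~\ref{Lemma 2.6} for $\beta_\ell$ and Lemma~\ref{Lemma 3.4} for $\alpha_\ell$, and observe that the super-exponential factors cancel exactly in the product $\alpha_\ell\beta_\ell$. First I would recall from \eqref{tau jacobi-anger} that $|\tau_\ell| = 4\pi(\alpha_\ell\beta_\ell)^{-1}$, so the claim is equivalent to showing that $\alpha_\ell\beta_\ell$ is bounded above and below away from zero, uniformly in $\ell\ge 0$. From Lemma~\ref{Lemma 2.6} we have $\beta_\ell \sim 2^{3/2}\kappa\,(2/e\kappa)^\ell \ell^{\ell+1/2}$ as $\ell\to\infty$, and from Lemma~\ref{Lemma 3.4} we have $\alpha_\ell \sim c(\kappa)(e\kappa/2)^\ell \ell^{-(\ell+1/2)}$. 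Multiplying these, the factors $(2/e\kappa)^\ell (e\kappa/2)^\ell = 1$ and $\ell^{\ell+1/2}\ell^{-(\ell+1/2)} = 1$ cancel, leaving $\alpha_\ell\beta_\ell \to 2^{3/2}\kappa\, c(\kappa)$, a strictly positive finite constant depending only on $\kappa$. Hence $|\tau_\ell| = 4\pi(\alpha_\ell\beta_\ell)^{-1}$ converges to the positive finite limit $4\pi/(2^{3/2}\kappa\,c(\kappa))$ as $\ell\to\infty$.

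Having the limit as $\ell\to\infty$, the remaining point is to pass from ``bounded for large $\ell$'' to ``bounded for all $\ell\ge 0$''. This is the routine tail-versus-finitely-many-terms argument: since $|\tau_\ell|$ converges to a positive limit, there exists $\ell_0$ such that for $\ell\ge\ell_0$ one has $|\tau_\ell|$ trapped in, say, $[\frac12 L, 2L]$ where $L$ is the limit; and the finitely many values $|\tau_0|,\dots,|\tau_{\ell_0-1}|$ are each positive and finite because $\alpha_\ell,\beta_\ell>0$ are well-defined finite normalization constants (the spherical waves $\tilde b_\ell^m$ and Herglotz densities $\tilde a_\ell^m$ are nonzero elements of their respective Hilbert spaces, so their norms lie in $(0,\infty)$). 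Taking the min and max over the finite set together with the tail bound gives $\tau_- = \inf_\ell|\tau_\ell| > 0$ and $\tau_+ = \sup_\ell|\tau_\ell| < \infty$.

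I do not anticipate any genuine obstacle here: the entire content of the corollary is the exact cancellation of the super-exponential growth/decay, which is already encoded in the matching exponents $(e\kappa/2)^{\pm\ell}$ and $\ell^{\pm(\ell+1/2)}$ appearing in the two lemmas — indeed, the normalization constant $2\kappa/\pi$ implicit in the first bracket of \eqref{beta_l asymptotic} and the constants $c(\kappa)$ were presumably arranged precisely so that this works out. If anything, the only mild care needed is to confirm that $|\tau_\ell|$ does not vanish anywhere in the pre-asymptotic range $\ell\lesssim\kappa$, but this is immediate since it is a ratio of strictly positive quantities, with no cancellation possible (the modulus $|i^\ell|=1$ contributes nothing). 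One could alternatively avoid even mentioning the limit and simply note that the two-sided bounds $B_\ell^{m'}\in[C_\ell^{m'}, e^{4\kappa}C_\ell^{m'}]$ from the proof of Lemma~\ref{Lemma 3.4}, combined with the explicit closed form \eqref{beta_l asymptotic} for $\beta_\ell^{-2}$, give fully explicit (non-asymptotic) two-sided bounds on $\alpha_\ell\beta_\ell$; but for the purpose of this corollary the asymptotic argument plus the finite-set argument is the cleanest route.
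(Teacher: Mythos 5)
Your proof is correct and follows exactly the route the paper intends: the corollary is stated as an immediate consequence of the asymptotics in Lemma~\ref{Lemma 2.6} and Lemma~\ref{Lemma 3.4}, with the super-exponential factors cancelling in $\alpha_\ell\beta_\ell$ so that $|\tau_\ell|=4\pi(\alpha_\ell\beta_\ell)^{-1}$ tends to a positive finite limit, and the finitely many remaining terms are positive and finite since $\alpha_\ell,\beta_\ell$ are norms of nonzero elements. Nothing to add.
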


\begin{figure}
\centering
\begin{minipage}{.44\textwidth}
  \centering
  \includegraphics[width=0.95\textwidth]{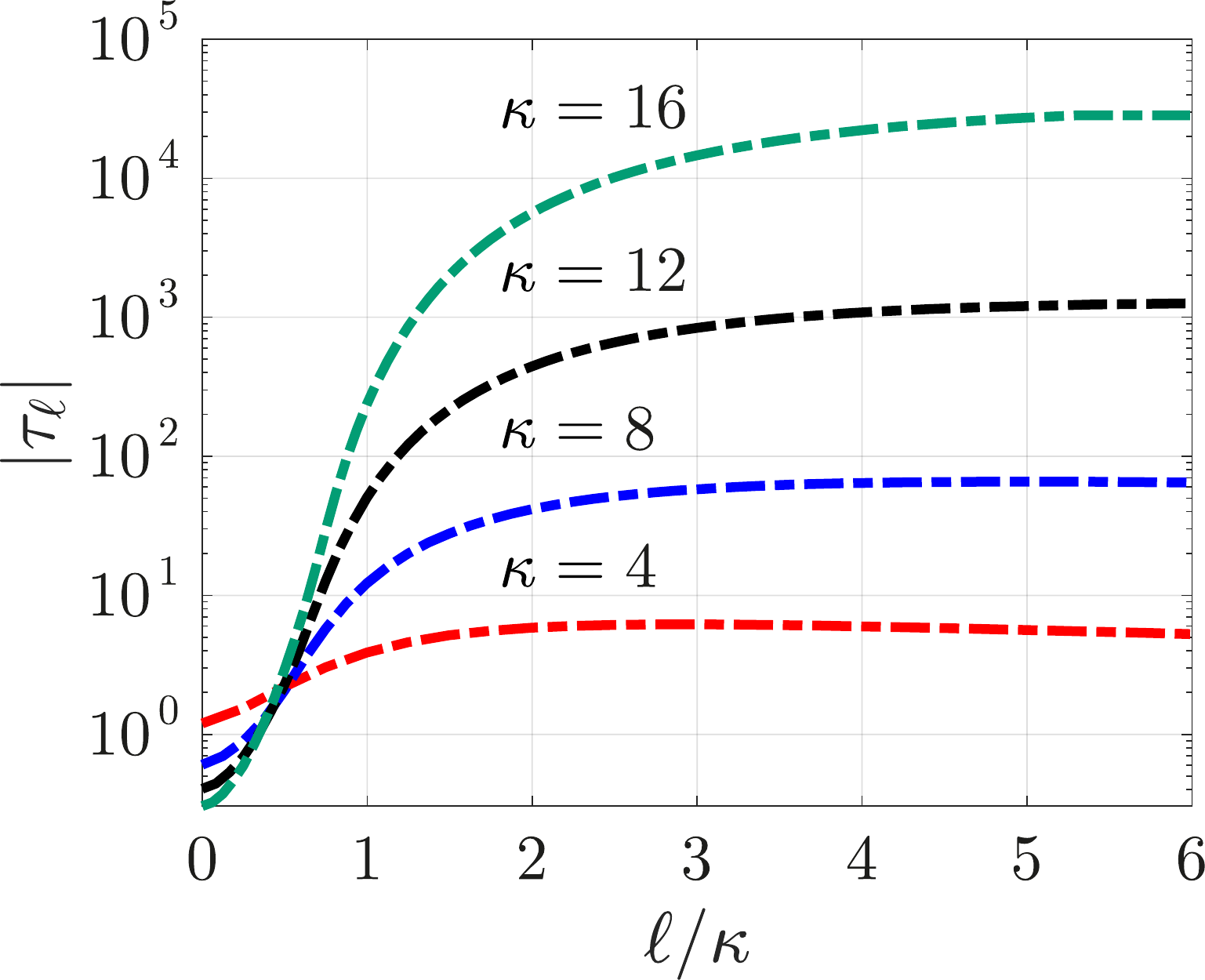}
  \captionof{figure}{Dependence of $|\tau_{\ell}|$ on the mode number $\ell$ for various wavenumber $\kappa$.}
  \label{figure 3.1}
\end{minipage}
\hfill
\begin{minipage}{.44\textwidth}
  \centering
  \includegraphics[width=0.90\textwidth]{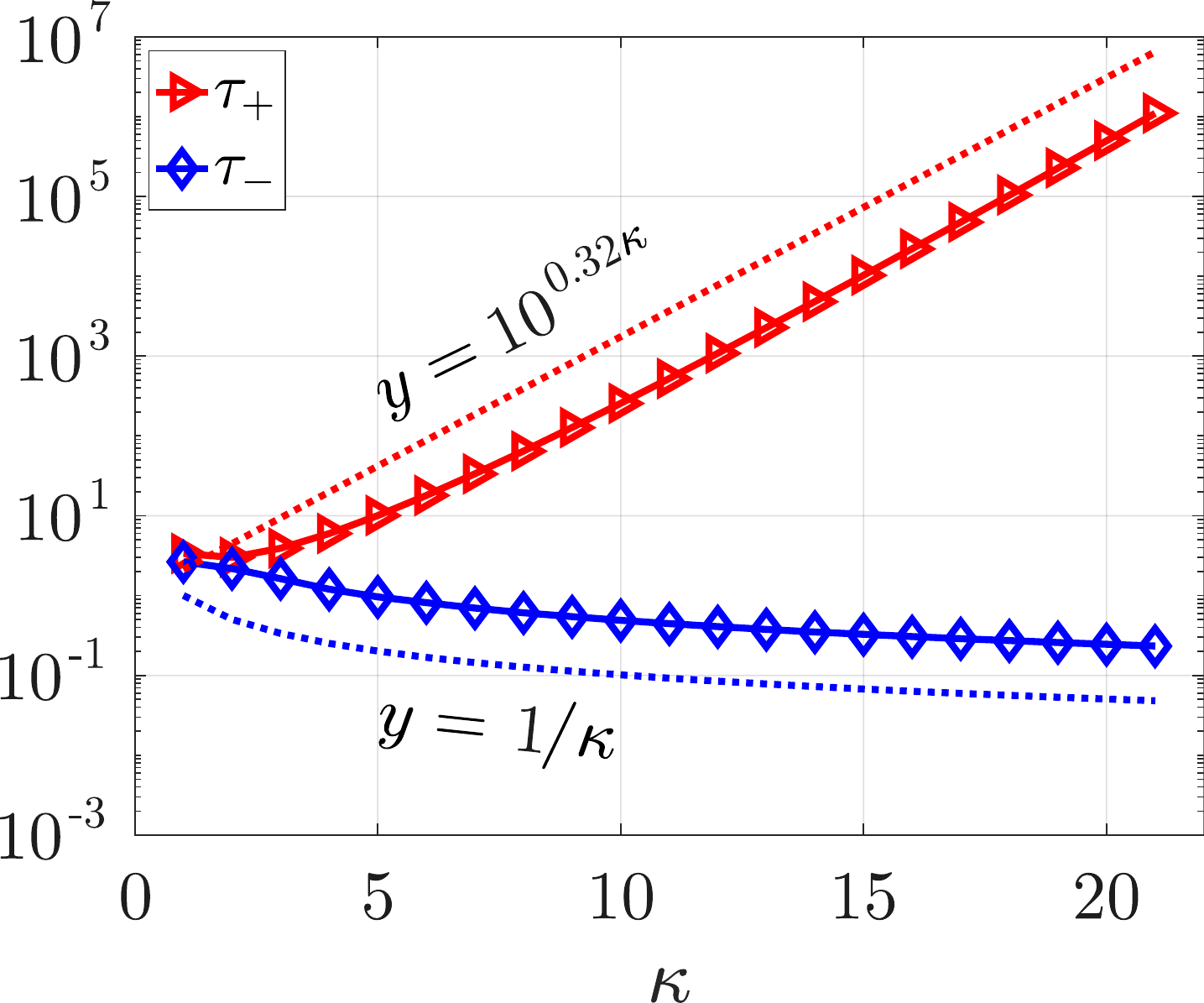}
  \captionof{figure}{Dependence of $\tau_{\pm}$ defined in (\ref{uniform bounds tau}) on the wavenumber $\kappa$.}
  \label{figure 3.2}
\end{minipage}
\end{figure}

The behavior of $|\tau_{\ell}|$ is shown  
in Figure~\ref{figure 3.1} for different wavenumbers $\kappa$. 
This plot aligns with the results in (\ref{uniform bounds tau}), displaying a flat asymptotic behavior for larger values of $\ell$.
Moreover, the values $\tau_{\pm}$ depend on the wavenumber $\kappa$, as shown in Figure~\ref{figure 3.2}.

\subsection{Herglotz integral representation}

Introducing the \emph{Herglotz transform} $\Tcont{Y}{EW}$, we can
represent any Helmholtz solution in $\mathcal{B}$ as a linear combination of
EPWs, each weighted by an element of $\mathcal{A}$.
This integral operator is well-defined on $\mathcal{A}$ thanks to the following result.

\begin{lemma}
$\{\textup{EW}_{\mathbf{y}}\}_{\mathbf{y} \in Y}$ is a Bessel family for $\mathcal{B}$, where the optimal Bessel bound is $B=\tau_{+}^2$.
\end{lemma}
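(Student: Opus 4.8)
The plan is to diagonalise the Bessel quadratic form $u\mapsto\int_Y|(u,\textup{EW}_{\mathbf y})_{\mathcal B}|^2\,\textup{d}\nu(\mathbf y)$ by exploiting that the Jacobi--Anger identity~\eqref{tau jacobi-anger} couples the Hilbert basis $\{b_\ell^m\}_{(\ell,m)\in\mathcal I}$ of $\mathcal B$ to the Hilbert basis $\{a_\ell^m\}_{(\ell,m)\in\mathcal I}$ of $\mathcal A\subset L^2_\nu(Y)$ through the scalar multipliers $\tau_\ell$. First I would fix $u\in\mathcal B$, set $u_\ell^m:=(u,b_\ell^m)_{\mathcal B}$ so that $\sum_{(\ell,m)\in\mathcal I}|u_\ell^m|^2=\|u\|^2_{\mathcal B}$, and use Parseval's identity in $\mathcal B$ together with the fact that, by~\eqref{tau jacobi-anger}, the $\{b_\ell^m\}$-Fourier coefficients of $\textup{EW}_{\mathbf y}$ are $(\textup{EW}_{\mathbf y},b_\ell^m)_{\mathcal B}=\tau_\ell\,\overline{a_\ell^m(\mathbf y)}$, to obtain, for every $\mathbf y\in Y$,
\begin{equation*}
(u,\textup{EW}_{\mathbf y})_{\mathcal B}=\sum_{(\ell,m)\in\mathcal I}u_\ell^m\,\overline{\tau_\ell}\,a_\ell^m(\mathbf y),
\end{equation*}
the partial sums converging pointwise in $\mathbf y$ by continuity of $(\cdot,\cdot)_{\mathcal B}$.

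Next I would invoke the uniform bound $|\tau_\ell|\le\tau_+<\infty$ from~\eqref{uniform bounds tau}, so that the coefficient sequence $\{u_\ell^m\,\overline{\tau_\ell}\}_{(\ell,m)\in\mathcal I}$ is square-summable with $\sum_{(\ell,m)\in\mathcal I}|u_\ell^m\,\overline{\tau_\ell}|^2\le\tau_+^2\|u\|^2_{\mathcal B}$. Since $\{a_\ell^m\}$ is orthonormal in $L^2_\nu(Y)$ (Lemma~\ref{Lemma 3.5}), the displayed series then also converges in $L^2_\nu(Y)$ to an element of $\mathcal A$, and by passing to an a.e.-convergent subsequence of its partial sums this $L^2$-limit must coincide $\nu$-a.e.\ with the pointwise-defined function $\mathbf y\mapsto(u,\textup{EW}_{\mathbf y})_{\mathcal B}$. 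Applying Parseval's identity once more, now for the Hilbert basis $\{a_\ell^m\}$ of $\mathcal A$, I would get
\begin{equation*}
\int_Y\big|(u,\textup{EW}_{\mathbf y})_{\mathcal B}\big|^2\,\textup{d}\nu(\mathbf y)=\sum_{(\ell,m)\in\mathcal I}|\tau_\ell|^2\,|u_\ell^m|^2\;\le\;\tau_+^2\sum_{(\ell,m)\in\mathcal I}|u_\ell^m|^2=\tau_+^2\,\|u\|^2_{\mathcal B},
\end{equation*}
which establishes the Bessel property with bound $B=\tau_+^2$.

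Finally, to see that $\tau_+^2$ is the optimal (smallest) Bessel bound, I would test the identity above on the normalised basis elements $u=b_\ell^0$: since $\|b_\ell^0\|_{\mathcal B}=1$ and the sum collapses to $\int_Y|(b_\ell^0,\textup{EW}_{\mathbf y})_{\mathcal B}|^2\,\textup{d}\nu=|\tau_\ell|^2$, any admissible Bessel bound must be at least $\sup_{\ell\ge0}|\tau_\ell|^2=\tau_+^2$, giving optimality. I expect the only mildly delicate point to be the justification that the pointwise series for $(u,\textup{EW}_{\mathbf y})_{\mathcal B}$ also represents this function \emph{in $L^2_\nu(Y)$}, which is what licenses chaining the two Parseval identities (one in $\mathcal B$, one in $\mathcal A$); this step rests precisely on $\tau_+<\infty$, since otherwise the synthesised density need not be square-integrable. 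Everything else is routine Hilbert-space bookkeeping.
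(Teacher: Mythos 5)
Your proposal is correct and follows essentially the same route as the paper: use the Jacobi--Anger identity \eqref{tau jacobi-anger} to express $(u,\textup{EW}_{\mathbf y})_{\mathcal B}$ in terms of the orthonormal family $\{a_\ell^m\}$ (Lemma~\ref{Lemma 3.5}), obtain the exact identity $\int_Y|(u,\textup{EW}_{\mathbf y})_{\mathcal B}|^2\,\textup{d}\nu=\sum_{(\ell,m)}|\tau_\ell|^2|(u,b_\ell^m)_{\mathcal B}|^2$, and bound it by $\tau_+^2\|u\|_{\mathcal B}^2$. Your extra care about $L^2_\nu$ versus pointwise convergence and the test on $u=b_\ell^0$ for optimality are fine elaborations of details the paper leaves implicit.
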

\begin{proof}
This can be seen directly from Lemma \ref{Lemma 3.5} and the Jacobi--Anger identity (\ref{tau jacobi-anger}):
\begin{equation*}
\int_Y|(u,\textup{EW}_{\mathbf{y}})_{\mathcal{B}}|^2\textup{d}\nu(\mathbf{y})=\sum_{(\ell,m) \in \mathcal{I}}|\tau_{\ell}|^2|(u,b_{\ell}^m)_{\mathcal{B}}|^2\leq \tau_{+}^2\|u\|^2_{\mathcal{B}} \qquad \forall u \in \mathcal{B}. \qedhere
\end{equation*}
\end{proof}

The synthesis operator $\Tcont{Y}{EW}$ associated to the EPW
family is defined, for any $v \in L_{\nu}^2(Y)$, by
\begin{equation}
\boxed{\left(\Tcont{Y}{EW} v\right)(\mathbf{x}):=\int_{Y}v(\mathbf{y})\textup{EW}_{\mathbf{y}}(\mathbf{x})\textup{d}\nu(\mathbf{y}) \qquad \forall \mathbf{x} \in B_1.}
\label{Herglotz transform}
\end{equation}
The following theorem, extension to 3D of~\cite[Th.~6.7]{parolin-huybrechs-moiola}, 
shows that for any Helmholtz solution $u \in \mathcal{B}$ there exists a unique
corresponding Herglotz density $v \in \mathcal{A}$ such that
$u=\Tcont{Y}{EW}v$ and justifies the use of the term `transform'
associated to $\Tcont{Y}{EW}$.
\begin{theorem} \label{Theorem 3.9}
The operator $\Tcont{Y}{EW}$ is bounded and invertible from $\mathcal{A}$ to $\mathcal{B}$:
\begin{equation} \label{bound Herglotz operator}
\Tcont{Y}{EW}\!:\mathcal{A} \rightarrow \mathcal{B}, \;\; v \mapsto \!\!\!\!\!\sum_{(\ell,m) \in \mathcal{I}}\!\!\!\tau_{\ell}(v,a_{\ell}^m)_{\mathcal{A}}b_{\ell}^m, \quad \text{and} \quad \tau_{-}\|v\|_{\mathcal{A}}\leq\left\|\Tcont{Y}{EW} v\right\|_{\mathcal{B}}\leq\tau_+\|v\|_{\mathcal{A}}\;\; \forall v \in \mathcal{A}.
\end{equation}
In particular, $\Tcont{Y}{EW}$ is diagonal on the space bases, namely $\Tcont{Y}{EW} a_{\ell}^m=\tau_{\ell}\,b_{\ell}^m$ for all $(\ell,m) \in \mathcal{I}$.
\end{theorem}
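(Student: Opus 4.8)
The strategy is to read off the action of $\Tcont{Y}{EW}$ directly from the Jacobi--Anger identity~\eqref{tau jacobi-anger}, which already encodes the ``matrix'' of the operator between the Hilbert basis $\{a_\ell^m\}_{(\ell,m)\in\mathcal I}$ of $\mathcal A$ and the Hilbert basis $\{b_\ell^m\}_{(\ell,m)\in\mathcal I}$ of $\mathcal B$. Concretely, I would first prove the diagonalization $\Tcont{Y}{EW}a_\ell^m=\tau_\ell b_\ell^m$, then extend to all of $\mathcal A$ by continuity, and finally obtain the two-sided bound and invertibility from Parseval's identity on both spaces together with the uniform bounds $0<\tau_-\le|\tau_\ell|\le\tau_+<\infty$ of~\eqref{uniform bounds tau}.

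For the diagonalization, I would exploit that $\{\textup{EW}_{\mathbf y}\}_{\mathbf y\in Y}$ is a Bessel family for $\mathcal B$ (the preceding lemma), so that $\Tcont{Y}{EW}$ in~\eqref{Herglotz transform} is a bounded operator $L^2_\nu(Y)\to\mathcal B$ whose adjoint is the analysis map $u\mapsto\big((u,\textup{EW}_{\mathbf y})_{\mathcal B}\big)_{\mathbf y\in Y}$; equivalently, $(\Tcont{Y}{EW}v,w)_{\mathcal B}=\int_Y v(\mathbf y)\,(\textup{EW}_{\mathbf y},w)_{\mathcal B}\,\mathrm d\nu(\mathbf y)$ for all $v\in L^2_\nu(Y)$ and $w\in\mathcal B$. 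Identity~\eqref{tau jacobi-anger} says exactly that the $b_q^n$-coefficient of $\textup{EW}_{\mathbf y}$ is $\tau_q\,\overline{a_q^n(\mathbf y)}$, i.e.\ $(\textup{EW}_{\mathbf y},b_q^n)_{\mathcal B}=\tau_q\,\overline{a_q^n(\mathbf y)}$. Testing the weak identity against $w=b_q^n$ and using the orthonormality of $\{a_\ell^m\}$ in $\mathcal A$ (Lemma~\ref{Lemma 3.5}) then gives $(\Tcont{Y}{EW}a_\ell^m,b_q^n)_{\mathcal B}=\tau_q\,(a_\ell^m,a_q^n)_{\mathcal A}=\tau_\ell\,\delta_{\ell q}\delta_{mn}$, whence $\Tcont{Y}{EW}a_\ell^m=\tau_\ell b_\ell^m$ by expanding in the basis $\{b_q^n\}$ of $\mathcal B$.

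Next, for $v\in\mathcal A$ I would write $v=\sum_{(\ell,m)\in\mathcal I}(v,a_\ell^m)_{\mathcal A}a_\ell^m$ and apply the bounded operator $\Tcont{Y}{EW}$ termwise to obtain the closed form in~\eqref{bound Herglotz operator}. Since $\{b_\ell^m\}$ is orthonormal in $\mathcal B$, Parseval gives $\|\Tcont{Y}{EW}v\|_{\mathcal B}^2=\sum_{(\ell,m)\in\mathcal I}|\tau_\ell|^2|(v,a_\ell^m)_{\mathcal A}|^2$, and sandwiching $|\tau_\ell|^2$ between $\tau_-^2$ and $\tau_+^2$, combined with Parseval in $\mathcal A$, yields $\tau_-\|v\|_{\mathcal A}\le\|\Tcont{Y}{EW}v\|_{\mathcal B}\le\tau_+\|v\|_{\mathcal A}$. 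The lower bound with $\tau_->0$ forces injectivity and closed range; for surjectivity I would, given $u=\sum_{(\ell,m)}(u,b_\ell^m)_{\mathcal B}b_\ell^m\in\mathcal B$, set $v:=\sum_{(\ell,m)}\tau_\ell^{-1}(u,b_\ell^m)_{\mathcal B}a_\ell^m$, observe that $\|v\|_{\mathcal A}^2=\sum_{(\ell,m)}|\tau_\ell|^{-2}|(u,b_\ell^m)_{\mathcal B}|^2\le\tau_-^{-2}\|u\|_{\mathcal B}^2<\infty$ so $v\in\mathcal A$, and check $\Tcont{Y}{EW}v=u$ from the closed form. Hence $\Tcont{Y}{EW}\colon\mathcal A\to\mathcal B$ is a bijection with $\|(\Tcont{Y}{EW})^{-1}\|\le\tau_-^{-1}$, and $\Tcont{Y}{EW}a_\ell^m=\tau_\ell b_\ell^m$ is the diagonalization already established; in frame-theoretic language, $\{\tau_\ell b_\ell^m\}$ is a Riesz basis of $\mathcal B$ and $\Tcont{Y}{EW}|_{\mathcal A}$ is the associated isomorphism.

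The only delicate point is the first step: making rigorous sense of $\Tcont{Y}{EW}a_\ell^m$ as a Bochner integral and relating it to the infinite sum in~\eqref{tau jacobi-anger}. I would avoid any direct interchange of integral and series by instead using the Bessel property to define $\Tcont{Y}{EW}$ as a bounded operator and verifying the identity weakly against the orthonormal basis $\{b_q^n\}$, which reduces everything to the scalar coefficient identity $(\textup{EW}_{\mathbf y},b_q^n)_{\mathcal B}=\tau_q\overline{a_q^n(\mathbf y)}$ and the orthonormality relations in $\mathcal A$; since both $a_\ell^m$ and $\mathbf y\mapsto(\textup{EW}_{\mathbf y},b_q^n)_{\mathcal B}$ lie in $L^2_\nu(Y)$, the resulting integral is absolutely convergent and no further justification is needed.
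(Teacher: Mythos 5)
Your proposal is correct and follows essentially the same route as the paper: both diagonalize $\Tcont{Y}{EW}$ via the Jacobi--Anger identity~\eqref{tau jacobi-anger}, deduce the two-sided bound from Parseval in $\mathcal A$ and $\mathcal B$ together with~\eqref{uniform bounds tau}, and exhibit the inverse explicitly as $u\mapsto\sum_{(\ell,m)}\tau_\ell^{-1}(u,b_\ell^m)_{\mathcal B}\,a_\ell^m$. The only (welcome) difference is that you justify the sum--integral interchange weakly, by testing against the orthonormal basis $\{b_q^n\}$ and invoking the Bessel property, whereas the paper performs the termwise integration directly.
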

\begin{proof}
Thanks to the Jacobi--Anger identity (\ref{tau jacobi-anger}), for any $v \in \mathcal{A}$ and $\mathbf{x} \in B_1$ we have
\begin{equation*}
(\Tcont{Y}{EW} v)(\mathbf{x})=\sum_{(\ell,m) \in \mathcal{I}}\tau_{\ell}\int_{Y}b_{\ell}^m(\mathbf{x})\overline{a_{\ell}^m(\mathbf{y})}v(\mathbf{y})w(\mathbf{y})\textup{d}\mathbf{y}=\sum_{(\ell,m) \in \mathcal{I}}\tau_{\ell}(v,a_{\ell}^m)_{\mathcal{A}}\,b_{\ell}^m(\mathbf{x}),
\end{equation*}
and so (\ref{bound Herglotz operator}) follows from \eqref{uniform bounds tau}. Moreover, it can be readily checked that the inverse is
\begin{equation*}
\left(\Tcont{Y}{EW}\right)^{-1}u=\sum_{(\ell,m) \in \mathcal{I}}\tau_{\ell}^{-1}(u,b_{\ell}^m)_{\mathcal{B}}\,a_{\ell}^m \qquad \forall u \in \mathcal{B}. \qedhere
\end{equation*}
\end{proof}

As a direct consequence of the isomorphism property of $\boldsymbol{\mathrm{T}}_{Y}$,
EPWs allow stable continuous approximation  
of Helmholtz solutions.
In fact, a stronger property holds: all Helmholtz solutions in $\mathcal B$ are continuous superpositions of EPW, and the bounding constant $C_{\textup{cs}}$ in \eqref{condition stable continuous representation} is independent of the tolerance~$\eta$.
Although stated at the continuous level, such a property lays the foundation
for stable discrete expansions, as we will see in more detail in
section~\ref{sec:numerical recipe}.

\begin{corollary}
  The Bessel family $\{\textup{EW}_{\mathbf{y}}\}_{\mathbf{y} \in Y}$ is a stable
  continuous approximation for $\mathcal{B}$. 
\end{corollary}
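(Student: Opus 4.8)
The plan is to read off the statement as an immediate consequence of the isomorphism property of the Herglotz transform proved in Theorem~\ref{Theorem 3.9}. Recall that, instantiated with $(X,\mu)=(Y,\nu)$ and $\phi_{\mathbf y}=\textup{EW}_{\mathbf y}$, the synthesis operator of Definition~\ref{def:SCA} is exactly $\Tcont{Y}{EW}$ from \eqref{Herglotz transform}, and the Bessel property has already been established. So the only thing to verify is the two conditions in \eqref{condition stable continuous representation}: a small (here, vanishing) approximation error, and a density with norm controlled by $\|u\|_{\mathcal B}$.

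First I would fix an arbitrary tolerance $\eta>0$ and an arbitrary $u\in\mathcal B$, and simply choose $v:=\big(\Tcont{Y}{EW}\big)^{-1}u$. By Theorem~\ref{Theorem 3.9} this $v$ lies in $\mathcal A\subset L^2_\nu(Y)$ and satisfies $\Tcont{Y}{EW}v=u$ exactly, so $\big\|u-\Tcont{Y}{EW}v\big\|_{\mathcal B}=0\le\eta\|u\|_{\mathcal B}$. Second, the lower bound $\tau_-\|v\|_{\mathcal A}\le\big\|\Tcont{Y}{EW}v\big\|_{\mathcal B}$ in \eqref{bound Herglotz operator}, together with $\tau_->0$ from \eqref{uniform bounds tau} and the fact that $\|\cdot\|_{\mathcal A}$ is the restriction of $\|\cdot\|_{L^2_\nu(Y)}$, yields $\|v\|_{L^2_\nu(Y)}=\|v\|_{\mathcal A}\le\tau_-^{-1}\|u\|_{\mathcal B}$. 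Hence the constant $C_{\textup{cs}}:=\tau_-^{-1}$ does the job, and—since it was never used—one notes in passing that $C_{\textup{cs}}$ can be taken independent of $\eta$, which is the stronger statement anticipated in the text preceding the corollary.

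There is essentially no obstacle at this stage: all the substantive work has already been done in proving that $\Tcont{Y}{EW}$ is a bounded invertible map onto $\mathcal B$ with a quantitative lower bound. The corollary is just the remark that the exact Herglotz density realising a given Helmholtz solution has norm at most $\tau_-^{-1}\|u\|_{\mathcal B}$, so approximation is not even needed. If one wanted to avoid invoking invertibility of $\Tcont{Y}{EW}$ directly, an equivalent route is to set $v:=\sum_{(\ell,m)\in\mathcal I}\tau_\ell^{-1}(u,b_\ell^m)_{\mathcal B}\,a_\ell^m$ and check convergence in $L^2_\nu(Y)$ via Lemma~\ref{Lemma 3.5} and \eqref{uniform bounds tau}, but this merely re-derives the explicit inverse already given in the proof of Theorem~\ref{Theorem 3.9}.
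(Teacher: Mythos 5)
Your proof is correct and is exactly the argument the paper intends: the corollary is stated as a direct consequence of Theorem~\ref{Theorem 3.9}, with $v=\big(\Tcont{Y}{EW}\big)^{-1}u$ giving an exact representation and the lower bound $\tau_-\|v\|_{\mathcal A}\le\|u\|_{\mathcal B}$ supplying $C_{\textup{cs}}=\tau_-^{-1}$ independently of $\eta$.
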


Adopting the point of view of \emph{Frame Theory} (for a reference, see
\cite{christensen}), another consequence of Theorem~\ref{Theorem 3.9} is that
EPWs form a continuous frame for the Helmholtz solution space $\mathcal{B}$.
For more details on these aspects, see~\cite[sect.~5.2]{galante} (see also
\cite[sect.~6.2]{parolin-huybrechs-moiola}). 
In particular for the proof of the next theorem, see \cite[Th.~5.13]{galante}.

\begin{theorem} \label{Theorem 3.10}
  The Bessel family $\{\textup{EW}_{\mathbf{y}}\}_{\mathbf{y} \in Y}$ is a
  continuous frame for $\mathcal{B}$, namely:
  for any $u\in\mathcal{B}$, $\mathbf{y} \mapsto (u,\textup{EW}_{\mathbf{y}})_{\mathcal{B}}$ is
  measurable in $Y$, and 
\begin{equation}
  \tau_{-}^{2} \|u\|_{\mathcal{B}}^2
  \leq \int_{Y} |(u,\textup{EW}_{\mathbf{y}})_{\mathcal{B}}|^2 \textup{d}\nu(\mathbf{y}) \leq
  \tau_{+}^{2} \|u\|_{\mathcal{B}}^2.
\end{equation}
\end{theorem}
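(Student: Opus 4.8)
The plan is to read off Theorem~\ref{Theorem 3.10} from the Jacobi--Anger identity~\eqref{tau jacobi-anger} and the diagonalization in Theorem~\ref{Theorem 3.9}: in frame-theoretic language the statement is essentially a restatement of the fact that $\Tcont{Y}{EW}$ is an isomorphism with $\Tcont{Y}{EW}a_\ell^m=\tau_\ell b_\ell^m$, so I do not anticipate a genuine difficulty. The substance of the argument is twofold: verifying the measurability condition in the definition of a continuous frame, and recording the two-sided bound, for which the key computation has already appeared in the proof that $\{\textup{EW}_{\mathbf{y}}\}_{\mathbf{y}\in Y}$ is a Bessel family. The only place where a little care is needed is the interchange of the $\mathcal{B}$-inner product with the infinite Jacobi--Anger sum, which is legitimate because that expansion converges in $\mathcal{B}$ --- a fact already used in the proof of Theorem~\ref{Theorem 3.9}.

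For measurability, fix $u\in\mathcal{B}$. For each $\mathbf{x}\in B_1$ the map $\mathbf{y}\mapsto\textup{EW}_{\mathbf{y}}(\mathbf{x})=e^{i\kappa\mathbf{d}(\mathbf{y})\cdot\mathbf{x}}$ is continuous on $Y$ because $\mathbf{y}\mapsto\mathbf{d}(\mathbf{y})$ is continuous by~\eqref{complex direction}; moreover $\|\textup{EW}_{\mathbf{y}}\|_{\mathcal{B}}^2=\sum_{(\ell,m)\in\mathcal{I}}|\tau_\ell|^2|a_\ell^m(\mathbf{y})|^2$ is finite and locally bounded in $\mathbf{y}$ by~\eqref{tau jacobi-anger} and Lemma~\ref{Lemma 3.5}. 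Dominated convergence then shows that $\mathbf{y}\mapsto\textup{EW}_{\mathbf{y}}$ is continuous as a map $Y\to\mathcal{B}$, hence $\mathbf{y}\mapsto(u,\textup{EW}_{\mathbf{y}})_{\mathcal{B}}$ is continuous and, in particular, $\nu$-measurable. (Equivalently, one expands $(u,\textup{EW}_{\mathbf{y}})_{\mathcal{B}}=\sum_{(\ell,m)\in\mathcal{I}}\overline{\tau_\ell}\,(u,b_\ell^m)_{\mathcal{B}}\,a_\ell^m(\mathbf{y})$ with convergence in $L^2_\nu(Y)$, the partial sums being continuous.)

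For the bound, I would invoke the identity established while proving that $\{\textup{EW}_{\mathbf{y}}\}_{\mathbf{y}\in Y}$ is a Bessel family, namely
\[
\int_Y|(u,\textup{EW}_{\mathbf{y}})_{\mathcal{B}}|^2\,\textup{d}\nu(\mathbf{y})=\sum_{(\ell,m)\in\mathcal{I}}|\tau_\ell|^2\,|(u,b_\ell^m)_{\mathcal{B}}|^2\qquad\forall u\in\mathcal{B},
\]
which itself follows by pairing~\eqref{tau jacobi-anger} with $u$ and applying Parseval in $L^2_\nu(Y)$ to the orthonormal family $\{a_\ell^m\}_{(\ell,m)\in\mathcal{I}}$ (Lemma~\ref{Lemma 3.5}). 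By the uniform bounds~\eqref{uniform bounds tau} --- where $\tau_-,\tau_+$ are positive and finite by the corollary following Lemma~\ref{Lemma 3.4} --- one has $\tau_-^2\le|\tau_\ell|^2\le\tau_+^2$ for every $\ell\ge0$, so the right-hand side is squeezed between $\tau_-^2\sum_{(\ell,m)\in\mathcal{I}}|(u,b_\ell^m)_{\mathcal{B}}|^2$ and $\tau_+^2\sum_{(\ell,m)\in\mathcal{I}}|(u,b_\ell^m)_{\mathcal{B}}|^2$. Since $\{b_\ell^m\}_{(\ell,m)\in\mathcal{I}}$ is a Hilbert basis of $\mathcal{B}$, Parseval gives $\sum_{(\ell,m)\in\mathcal{I}}|(u,b_\ell^m)_{\mathcal{B}}|^2=\|u\|_{\mathcal{B}}^2$, and the stated double inequality follows.
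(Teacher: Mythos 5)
Your argument is correct and is exactly the natural completion of the machinery already in the paper: the key identity $\int_Y|(u,\textup{EW}_{\mathbf{y}})_{\mathcal{B}}|^2\,\textup{d}\nu(\mathbf{y})=\sum_{(\ell,m)\in\mathcal{I}}|\tau_\ell|^2|(u,b_\ell^m)_{\mathcal{B}}|^2$ is already established (with equality) in the proof that $\{\textup{EW}_{\mathbf{y}}\}_{\mathbf{y}\in Y}$ is a Bessel family, and combining it with the uniform bounds \eqref{uniform bounds tau} and Parseval for the Hilbert basis $\{b_\ell^m\}$ gives the two frame inequalities, while measurability follows from your expansion of $(u,\textup{EW}_{\mathbf{y}})_{\mathcal{B}}$ in the orthonormal family $\{a_\ell^m\}$. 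The paper itself defers the proof to an external reference, but your route is the standard one and contains no gaps.
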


\subsection{Propagative plane waves are not a stable continuous approximation}
\label{propagative plane wave continuous instability}

We show that PPWs are not a stable continuous approximation for the Helmholtz
solution space.

\begin{lemma}
$\{\textup{PW}_{\boldsymbol{\theta}}\}_{\boldsymbol{\theta} \in \Theta}$ is a Bessel family for $\mathcal{B}$.
\end{lemma}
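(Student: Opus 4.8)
The plan is to mirror the structure of the corresponding lemma for EPWs, using the Jacobi--Anger identity~\eqref{jacobi-anger} to diagonalize the relevant quadratic form. First I would recall that, by~\eqref{jacobi-anger} together with the normalization~\eqref{b tilde definizione}, for each fixed $\boldsymbol{\theta} \in \Theta$ the spherical-wave coefficients of $\textup{PW}_{\boldsymbol{\theta}}$ are
\begin{equation*}
(u,\textup{PW}_{\boldsymbol{\theta}})_{\mathcal{B}} = \sum_{(\ell,m) \in \mathcal{I}} 4\pi i^{\ell} \beta_{\ell}^{-1}\,\overline{Y_{\ell}^m(\boldsymbol{\theta})}\;(u,b_{\ell}^m)_{\mathcal{B}} \qquad \forall u \in \mathcal{B},
\end{equation*}
since $(b_\ell^m, \textup{PW}_{\boldsymbol\theta})_{\mathcal B}$ equals $4\pi i^{\ell}\beta_\ell^{-1}\overline{Y_\ell^m(\boldsymbol\theta)}$ (the coefficient of $b_\ell^m$ in~\eqref{jacobi-anger}, rewritten via $\tilde b_\ell^m=\beta_\ell^{-1} b_\ell^m$), and conjugating gives the stated pairing. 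Then I would integrate $|(u,\textup{PW}_{\boldsymbol{\theta}})_{\mathcal{B}}|^2$ over $\boldsymbol{\theta} \in \Theta$ with respect to the standard measure $\sigma$ on $\mathbb S^2$ and use the $L^2(\mathbb S^2)$-orthonormality of the spherical harmonics $\{Y_\ell^m\}$ — exactly the property recorded after~\eqref{spherical harmonic} — to kill all cross terms. This yields
\begin{equation*}
\int_{\Theta} |(u,\textup{PW}_{\boldsymbol{\theta}})_{\mathcal{B}}|^2\,\textup{d}\sigma(\boldsymbol{\theta}) = \sum_{(\ell,m)\in\mathcal I} (4\pi)^2 \beta_\ell^{-2}\, |(u,b_\ell^m)_{\mathcal B}|^2 .
\end{equation*}

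The decisive point is then that the sequence $(4\pi)^2\beta_\ell^{-2}$ is bounded in $\ell$. This follows immediately from Lemma~\ref{Lemma 2.6}: the asymptotics $\beta_\ell \sim 2^{3/2}\kappa (2/e\kappa)^\ell \ell^{\ell+1/2}$ show that $\beta_\ell^{-1} \to 0$ super-exponentially, so in particular $\sup_{\ell\ge0}\beta_\ell^{-1} =: M(\kappa) < \infty$ (the finitely many pre-asymptotic values are finite, and the tail decays). Hence
\begin{equation*}
\int_{\Theta} |(u,\textup{PW}_{\boldsymbol{\theta}})_{\mathcal{B}}|^2\,\textup{d}\sigma(\boldsymbol{\theta}) \le (4\pi)^2 M(\kappa)^2 \sum_{(\ell,m)\in\mathcal I} |(u,b_\ell^m)_{\mathcal B}|^2 = (4\pi)^2 M(\kappa)^2 \|u\|_{\mathcal B}^2,
\end{equation*}
using the Parseval identity of the spherical-wave Hilbert basis. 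Taking $B := (4\pi)^2 M(\kappa)^2$ gives the Bessel bound and completes the proof.

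I would also note in passing that measurability of $\boldsymbol\theta \mapsto (u,\textup{PW}_{\boldsymbol\theta})_{\mathcal B}$ is clear since it is a uniformly convergent (on $\Theta$) series of continuous functions, so the integral makes sense; this is the analogue of the measurability remark in Theorem~\ref{Theorem 3.10}. The only mild subtlety — and the one I would be careful about — is justifying the interchange of the $\ell$-sum with the $\Theta$-integration when squaring: this is legitimate because, for fixed $u\in\mathcal B$, the coefficient series $\sum 4\pi i^\ell \beta_\ell^{-1}\overline{Y_\ell^m(\boldsymbol\theta)}(u,b_\ell^m)_{\mathcal B}$ converges in $L^2(\Theta,\sigma)$ (its partial sums are Cauchy there, by orthonormality of the $Y_\ell^m$ and summability of $\beta_\ell^{-2}|(u,b_\ell^m)_{\mathcal B}|^2$), so Parseval applies directly in $L^2(\Theta)$. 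Everything else is a routine transcription of the EPW argument with $\tau_\ell$ replaced by $4\pi i^\ell\beta_\ell^{-1}$ and $a_\ell^m$ replaced by the (rescaled) spherical harmonic $Y_\ell^m$; no genuine obstacle arises, in contrast to the subsequent statement that PPWs fail to be a \emph{stable} continuous approximation, where the unboundedness of $\beta_\ell$ itself is exploited.
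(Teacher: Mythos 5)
Your proof is correct, but it takes a genuinely different route from the paper's. The paper's argument is a two-line application of the Cauchy--Schwarz inequality: from \eqref{eq:PPW_definition} and \eqref{eq:B_norm} one computes $\|\textup{PW}_{\boldsymbol{\theta}}\|_{\mathcal{B}}^2=2|B_1|=8\pi/3$ (the modulus of a PPW is $1$ and the modulus of its gradient is $\kappa$), so $|(u,\textup{PW}_{\boldsymbol{\theta}})_{\mathcal{B}}|^2\leq \frac{8\pi}{3}\|u\|_{\mathcal{B}}^2$ pointwise in $\boldsymbol{\theta}$, and integrating over $\mathbb{S}^2$ gives the Bessel bound $B=32\pi^2/3$ with no need for the Jacobi--Anger identity, Lemma~\ref{Lemma 2.6}, or any interchange of sum and integral. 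Your argument instead diagonalizes the quadratic form via \eqref{jacobi-anger} and the orthonormality of the spherical harmonics, exactly mirroring the EPW lemma with $\tau_\ell$ replaced by $4\pi i^\ell\beta_\ell^{-1}$; this costs more machinery (and the care you rightly take about $L^2(\Theta)$ convergence) but buys the \emph{optimal} Bessel constant $B=(4\pi)^2\sup_{\ell}\beta_\ell^{-2}$, in the same way the EPW lemma records $B=\tau_+^2$ as optimal. Both are valid; yours is sharper, the paper's is more elementary. One trivial slip: in your displayed coefficient formula the conjugation is misplaced --- since $(b_\ell^m,\textup{PW}_{\boldsymbol{\theta}})_{\mathcal{B}}=4\pi i^\ell\beta_\ell^{-1}\overline{Y_\ell^m(\boldsymbol{\theta})}$, the pairing $(u,\textup{PW}_{\boldsymbol{\theta}})_{\mathcal{B}}$ carries the factor $4\pi(-i)^\ell\beta_\ell^{-1}Y_\ell^m(\boldsymbol{\theta})$ --- but this does not affect the moduli and hence leaves your conclusion intact.
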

\begin{proof}
From \eqref{eq:PPW_definition} and \eqref{eq:B_norm}, it can be easily seen that $\|\textup{PW}_{\boldsymbol{\theta}}\|^2_{\mathcal{B}} =2|B_{1}|=8\pi/3$. 
Hence, we have
\begin{equation*}
\int_{\Theta}|(u,\textup{PW}_{\boldsymbol{\theta}})_{\mathcal{B}}|^2\textup{d}\sigma(\boldsymbol{\theta}) \leq \int_{\Theta}\|u\|_{\mathcal{B}}^2\|\textup{PW}_{\boldsymbol{\theta}}\|_{\mathcal{B}}^2\,\textup{d}\sigma(\boldsymbol{\theta})
=|\mathbb{S}^{2}| \frac{8\pi}{3} \|u\|_{\mathcal{B}}^2
=\frac{32\pi^2}3 \|u\|^2_{\mathcal{B}}\qquad \forall u \in \mathcal{B}. \qedhere
\end{equation*}
\end{proof}

We can therefore define the synthesis operator associated with PPWs: 
for any $v \in L^2(\mathbb{S}^2)$,
\begin{equation}
\left(\Tcont{\Theta}{PW} v\right)(\mathbf{x}):=\int_{\Theta}v(\boldsymbol{\theta})\textup{PW}_{\boldsymbol{\theta}}(\mathbf{x})\textup{d}\sigma(\boldsymbol{\theta})\qquad\forall \mathbf{x} \in B_1.
\label{herglotz}
\end{equation}
Such continuous superpositions of PPWs
$\Tcont{\Theta}{PW} v \in C^{\infty}(\mathbb{R}^3)$
for some $v\in L^{2}(\mathbb{S}^2)$
are Helmholtz solutions and known as \emph{Herglotz
functions} in the literature~\cite[eq.~(3.43)]{colton-kress}.
However, not all $u \in \mathcal{B}$ can be expressed in the form
(\ref{herglotz}) for some $v \in L^2(\mathbb{S}^2)$, for instance PPWs
themselves.

The next result shows that the two requirements in (\ref{condition stable continuous representation}), i.e.\ accurate approximation and bounded density
norm, are mutually exclusive.
As soon as the spherical wave $b_{\ell}^m$ is accurately represented by PPWs,
the density norm must increase super-exponentially fast in $\ell$ in virtue of
Lemma \ref{Lemma 2.6}. Hence, the Bessel family
$\{\textup{PW}_{\boldsymbol{\theta}}\}_{\boldsymbol{\theta} \in \Theta}$ is not
a stable continuous approximation for~$\mathcal{B}$.

\begin{lemma}\label{lem:ppw-lack-continuous-stability}
Let $(\ell,m) \in \mathcal{I}$ and $0 < \eta \leq 1$ be given. 
For a given $v \in L^2(\mathbb{S}^2)$,
\begin{equation}
\text{if}\qquad
\left\|b_{\ell}^m-\Tcont{\Theta}{PW}v\right\|_{\mathcal{B}} \leq \eta \|b_{\ell}^m\|_{\mathcal{B}} 
\qquad \text{then}
\qquad \|v\|_{L^2(\mathbb{S}^2)} \geq \left(1-\eta \right) \frac{\beta_{\ell}}{4\pi}\|b_{\ell}^m\|_{\mathcal{B}}.
\label{plane wave continuous instability}
\end{equation}
\end{lemma}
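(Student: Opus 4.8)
The plan is to read off the $(\ell,m)$-th modal coefficient of $\Tcont{\Theta}{PW}v$ from the Jacobi--Anger identity~\eqref{jacobi-anger} and compare it with the coefficient of $b_\ell^m$ itself, which is $1$. Recall first that $\|b_\ell^m\|_{\mathcal B}=1$ by construction, so that the right-hand side of the claimed bound is simply $(1-\eta)\beta_\ell/(4\pi)$.

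First I would rewrite~\eqref{jacobi-anger} in terms of the orthonormal basis $\{b_\ell^m\}_{(\ell,m)\in\mathcal I}$. Since $\tilde b_\ell^m(\mathbf x)=j_\ell(\kappa|\mathbf x|)Y_\ell^m(\mathbf x/|\mathbf x|)$ and $b_\ell^m=\beta_\ell\tilde b_\ell^m$ by~\eqref{b tilde definizione}, identity~\eqref{jacobi-anger} reads $\textup{PW}_{\boldsymbol\theta}=4\pi\sum_{(\ell,m)\in\mathcal I}i^\ell\beta_\ell^{-1}\overline{Y_\ell^m(\boldsymbol\theta)}\,b_\ell^m$, the series converging in $\|\cdot\|_{\mathcal B}$ (this convergence follows from the Bessel property of $\{\textup{PW}_{\boldsymbol\theta}\}_{\boldsymbol\theta\in\Theta}$); hence $(\textup{PW}_{\boldsymbol\theta},b_\ell^m)_{\mathcal B}=4\pi i^\ell\beta_\ell^{-1}\overline{Y_\ell^m(\boldsymbol\theta)}$. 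Feeding this into the definition~\eqref{herglotz} of $\Tcont{\Theta}{PW}$ and exchanging the $L^2(\mathbb S^2)$ integration over $\Theta$ with the bounded linear functional $(\,\cdot\,,b_\ell^m)_{\mathcal B}$ gives
\[
(\Tcont{\Theta}{PW}v,b_\ell^m)_{\mathcal B}=4\pi i^\ell\beta_\ell^{-1}\,(v,Y_\ell^m)_{L^2(\mathbb S^2)}.
\]

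Next I would use the hypothesis. Since $(b_\ell^m,b_\ell^m)_{\mathcal B}=1$ and $\|b_\ell^m\|_{\mathcal B}=1$, the Cauchy--Schwarz inequality yields
\[
\bigl|\,1-(\Tcont{\Theta}{PW}v,b_\ell^m)_{\mathcal B}\bigr|=\bigl|(b_\ell^m-\Tcont{\Theta}{PW}v,b_\ell^m)_{\mathcal B}\bigr|\le\bigl\|b_\ell^m-\Tcont{\Theta}{PW}v\bigr\|_{\mathcal B}\le\eta,
\]
so $|(\Tcont{\Theta}{PW}v,b_\ell^m)_{\mathcal B}|\ge 1-\eta$. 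Combining with the identity above gives $|(v,Y_\ell^m)_{L^2(\mathbb S^2)}|\ge(1-\eta)\beta_\ell/(4\pi)$. Finally, since $\|Y_\ell^m\|_{L^2(\mathbb S^2)}=1$, Bessel's inequality (equivalently, Cauchy--Schwarz) gives $\|v\|_{L^2(\mathbb S^2)}\ge|(v,Y_\ell^m)_{L^2(\mathbb S^2)}|$, and the claim~\eqref{plane wave continuous instability} follows, rewriting $\beta_\ell/(4\pi)=(\beta_\ell/4\pi)\|b_\ell^m\|_{\mathcal B}$.

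There is no genuine obstacle in this argument: the only step deserving a line of justification is the interchange of the integral defining $\Tcont{\Theta}{PW}v$ with the inner product against $b_\ell^m$, together with the $\mathcal B$-convergence of the Jacobi--Anger series that makes reading off modal coefficients legitimate; both are immediate from the Bessel property established above and Fubini's theorem on the compact parameter domain $\Theta$. The quantitative content is then carried entirely by the super-exponential growth of $\beta_\ell$ from Lemma~\ref{Lemma 2.6}.
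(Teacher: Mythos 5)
Your proposal is correct and follows essentially the same route as the paper: both extract the $(\ell,m)$-th modal coefficient of $\Tcont{\Theta}{PW}v$ from the classical Jacobi--Anger identity \eqref{jacobi-anger}, identify it as $4\pi i^{\ell}\beta_{\ell}^{-1}(v,Y_{\ell}^m)_{L^2(\mathbb{S}^2)}$, and bound $\left|1-(\Tcont{\Theta}{PW}v,b_{\ell}^m)_{\mathcal{B}}\right|\leq\eta$ before applying $|(v,Y_{\ell}^m)_{L^2(\mathbb{S}^2)}|\leq\|v\|_{L^2(\mathbb{S}^2)}$. The only cosmetic difference is that the paper reads off the single term from the full Parseval expansion of the error norm, while you test the error directly against $b_{\ell}^m$ via Cauchy--Schwarz; these are equivalent.
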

\begin{proof}
Let $v \in L^2(\mathbb{S}^2)$.
Using the classical  
Jacobi--Anger identity (\ref{jacobi-anger}), we obtain
\begin{equation}
\left(\Tcont{\Theta}{PW} v \right)(\mathbf{x})=4 \pi\int_{\Theta}v(\boldsymbol{\theta})\sum_{q=0}^{\infty}i^q \sum_{n=-q}^q\overline{Y_q^n\left(\boldsymbol{\theta}\right)}\,\tilde{b}_q^n(\mathbf{x})\textup{d}\sigma(\boldsymbol{\theta})=\sum_{(q,n) \in \mathcal{I}} c_q^n \tilde{b}_q^n(\mathbf{x}),
\label{alpaka 1}
\end{equation}
where, since $\|Y_{q}^n\|_{L^2(\mathbb{S}^2)}=1$ for any $(q,n) \in \mathcal{I}$, the coefficients
\begin{equation}
c_q^n:=4\pi i^q\left(v,Y_{q}^n\right)_{L^2(\mathbb{S}^2)} \qquad \text{satisfy} \qquad |c_q^n|\leq 4\pi\|v\|_{L^2(\mathbb{S}^2)} \qquad  \forall (q,n) \in \mathcal{I}.
\label{alpaka_}
\end{equation}
From (\ref{alpaka 1}), it follows:
\begin{equation*}
\left\|b_{\ell}^m-\Tcont{\Theta}{PW}v\right\|^2_{\mathcal{B}}=\sum_{(q,n) \in \mathcal{I}}\left|\delta_{\ell, q}\delta_{m,n}\!-c_q^n\beta^{-1}_q\right|^2 \leq \eta^2 \quad \Rightarrow \quad \left|\delta_{\ell, q}\delta_{m,n}\!-c_q^n\beta^{-1}_{\ell}\right| \leq \eta \quad \forall (q,n) \in \mathcal{I}.
\end{equation*}
Due to (\ref{alpaka_}), for $(q,n)=(\ell,m)$, this reads
\begin{equation*}
\eta \geq \left|1-c_{\ell}^m\beta^{-1}_{\ell}\right| \geq 1-|c_{\ell}^m|\beta^{-1}_{\ell} \geq 1-4\pi\beta^{-1}_{\ell}\|v\|_{L^2(\mathbb{S}^2)},
\end{equation*}
which can be written as (\ref{plane wave continuous instability}), recalling that $\|b_{\ell}^m\|_{\mathcal{B}}=1$.
\end{proof}

\begin{theorem}\label{Thm:PPW-SCA}
The Bessel family $\{\textup{PW}_{\boldsymbol{\theta}}\}_{\boldsymbol{\theta} \in \Theta}$ is not a stable continuous approximation for $\mathcal{B}$. 
\end{theorem}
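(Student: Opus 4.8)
The plan is to argue by contradiction, feeding the quantitative lower bound of Lemma~\ref{lem:ppw-lack-continuous-stability} into the definition of stable continuous approximation and exploiting the super-exponential growth of $\beta_\ell$ from Lemma~\ref{Lemma 2.6}. Suppose that $\{\textup{PW}_{\boldsymbol{\theta}}\}_{\boldsymbol{\theta}\in\Theta}$ were a stable continuous approximation for $\mathcal{B}$. Fixing once and for all the tolerance $\eta=1/2$ in Definition~\ref{def:SCA}, there would then exist a finite stability constant $C_{\textup{cs}}\geq0$ (depending on $\eta$ but not on $u$) such that every $u\in\mathcal{B}$ admits a density $v\in L^2(\mathbb{S}^2)$ with $\|u-\Tcont{\Theta}{PW}v\|_{\mathcal{B}}\leq\tfrac12\|u\|_{\mathcal{B}}$ and $\|v\|_{L^2(\mathbb{S}^2)}\leq C_{\textup{cs}}\|u\|_{\mathcal{B}}$.

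First I would apply this property to the particular Helmholtz solutions $u=b_\ell^m$, which satisfy $\|b_\ell^m\|_{\mathcal{B}}=1$. For each $(\ell,m)\in\mathcal{I}$ we obtain a density $v=v_{\ell,m}\in L^2(\mathbb{S}^2)$ with $\|v_{\ell,m}\|_{L^2(\mathbb{S}^2)}\leq C_{\textup{cs}}$ and $\|b_\ell^m-\Tcont{\Theta}{PW}v_{\ell,m}\|_{\mathcal{B}}\leq\tfrac12\|b_\ell^m\|_{\mathcal{B}}$. The latter inequality is exactly the hypothesis of Lemma~\ref{lem:ppw-lack-continuous-stability} with $\eta=1/2$, so its conclusion gives $\|v_{\ell,m}\|_{L^2(\mathbb{S}^2)}\geq(1-\tfrac12)\tfrac{\beta_\ell}{4\pi}\|b_\ell^m\|_{\mathcal{B}}=\tfrac{\beta_\ell}{8\pi}$. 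Combining the two bounds yields $\beta_\ell\leq 8\pi C_{\textup{cs}}$ for every $\ell\geq0$.

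Finally I would invoke Lemma~\ref{Lemma 2.6}, according to which $\beta_\ell\sim 2^{3/2}\kappa\,(2/e\kappa)^\ell\,\ell^{\ell+1/2}$ as $\ell\to\infty$; in particular the sequence $(\beta_\ell)_{\ell\geq0}$ is unbounded, since the factor $\ell^{\ell+1/2}$ dominates $(e\kappa/2)^\ell$. This contradicts the uniform bound $\beta_\ell\leq 8\pi C_{\textup{cs}}$, so no finite $C_{\textup{cs}}$ can exist, and hence $\{\textup{PW}_{\boldsymbol{\theta}}\}_{\boldsymbol{\theta}\in\Theta}$ is not a stable continuous approximation for $\mathcal{B}$.

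Since each step is an immediate consequence of results already proved, there is no genuine obstacle here; the only point requiring a little care is the quantifier structure in Definition~\ref{def:SCA} — that $C_{\textup{cs}}$ may depend on $\eta$ but must be uniform over $u\in\mathcal{B}$ — which is precisely what lets the family $\{b_\ell^m\}_{(\ell,m)\in\mathcal{I}}$ force $\beta_\ell$ to be bounded and thus produce the contradiction.
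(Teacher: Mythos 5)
Your proof is correct and follows exactly the paper's intended argument: the theorem is stated as an immediate consequence of Lemma~\ref{lem:ppw-lack-continuous-stability} applied to $u=b_\ell^m$ combined with the super-exponential growth of $\beta_\ell$ from Lemma~\ref{Lemma 2.6}, which contradicts any uniform bound $C_{\textup{cs}}$. Your handling of the quantifiers (fixing $\eta=1/2\le1$ so the lemma applies, and noting $C_{\textup{cs}}$ must be uniform in $u$) is exactly the point the paper leaves implicit.
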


\section{Stable discrete approximation}\label{sec:stable numerical approximation}

After having considered integral, or continuous, approximations,
this section introduces the complementary notion of \emph{stable discrete
approximation}.
A practical numerical scheme based on sampled Dirichlet data for the
approximation of Helmholtz solutions in the ball is analysed.
Already introduced in \cite[sect.~2.2]{galante} (see also
\cite[sect.~3.2]{parolin-huybrechs-moiola}), it relies on regularized SVD and
oversampling following the recommendations of \cite{huybrechs1,huybrechs2}.
This procedure is proved to yield accurate solutions in finite-precision
arithmetic, provided the approximation set has the stable discrete
approximation property and appropriate sampling points have been chosen.
We show that PPWs are inherently unstable also in the discrete setting.
The EPW sets constructed later in section~\ref{sec:numerical recipe} are
empirically shown in section~\ref{sec:numerical results} to satisfy the
discrete stability notion presented here.

\subsection{The concept of stable discrete approximation}

Let us first review the definition of stable discrete approximation proposed in
\cite[Def.\ 2.1]{galante} and \cite[Def.\ 3.1]{parolin-huybrechs-moiola}.
We consider a sequence $\{\Phi_P\}_{P\in\mathbb{N}}$ of \emph{finite
approximation set} $\Phi_P:=\{\phi_p\}_p \subset \mathcal{B}$.
For each $P \in \mathbb{N}$, we define the synthesis operator associated with
$\Phi_{P}$ by
\begin{equation}\label{eq:discrete_synthesis_op}
  \Tdisc{\!P\,}{\,}\!:\mathbb{C}^{|\Phi_P|} \rightarrow \mathcal{B},
  \ \boldsymbol{\mu}=(\mu_p)_p \mapsto \sum\nolimits_p \mu_p \phi_p.
\end{equation}
In sections \ref{subsec:regularized boundary sampling method} and \ref{subsec:error estimates}, we consider general finite
approximation sets $\Phi_P$, while sections \ref{subsec:propagative plane wave discrete instability}
and \ref{sec:numerical recipe} are specifically devoted to PPW and EPW approximation sets, respectively.

\begin{definition}[Stable discrete approximation]\label{def:SDA}
  The sequence of approximation sets $\{\Phi_P\}_{P \in \mathbb{N}}$ is said to be a
  stable discrete approximation for $\mathcal{B}$ if, for any tolerance $\eta
  >0$, there exist a stability exponent $s_{\textup{ds}} \geq 0$, a
  stability constant $C_{\textup{ds}} \geq 0$ such that
  \begin{equation}
    \forall u \in \mathcal{B},\ 
    \exists P \in \mathbb{N},\ \boldsymbol{\mu} \in \mathbb{C}^{|\Phi_P|}: \quad
    \left\|u-\Tdisc{\!P\,}{\,}\boldsymbol{\mu}\right\|_{\mathcal{B}} \leq \eta \|u\|_{\mathcal{B}},
    \quad \text{and} \quad
    \|\boldsymbol{\mu}\|_{\ell^2} \leq C_{\textup{ds}}|\Phi_P|^{s_{\textup{ds}}}\|u\|_{\mathcal{B}}.
  \label{stable discrete approximation}
  \end{equation}
\end{definition}

This definition serves as the discrete counterpart to the concept of stable continuous approximation in (\ref{condition stable continuous representation}). With a sequence of stable discrete approximation sets, we can accurately approximate any Helmholtz solution in the form of a finite expansion $\Tdisc{\!P\,}{\,}\boldsymbol{\mu}$, where the coefficients $\boldsymbol{\mu}$ have a bounded $\ell^2$-norm, except for some algebraic growth.
Due to the Hölder inequality, the $\ell^2$-norm in (\ref{stable discrete approximation}) can be replaced by any discrete $\ell^p$-norm, possibly changing the exponent $s_{\textup{ds}}$.

\subsection{Regularized boundary sampling method}\label{subsec:regularized boundary sampling method}

We now outline a practical approach for computing the expansion coefficients using a sampling-type strategy, following \cite{huybrechs1,huybrechs2} and in line with \cite{huybrechs3}. Let us consider the Helmholtz problem with Dirichlet boundary conditions: find $u \in H^1(B_1)$ such that
\begin{equation*}
\Delta u+\kappa^2 u =0,\,\,\,\,\text{in}\,\,\,B_1, \qquad \text{and} \qquad \gamma u=g,\,\,\,\,\text{on}\,\,\,\partial B_1,
\end{equation*}
where $g \in H^{1/2}(\partial B_1)$ and $\gamma$ is the Dirichlet trace operator; this problem is known to be well-posed if $\kappa^2$ is not an eigenvalue of the Dirichlet Laplacian.
In all our numerical experiments, we aim to reconstruct a solution $u \in \mathcal{B}$ using its boundary trace $\gamma u$. Hence, for simplicity, we assume $u \in \mathcal{B} \cap C^0(\overline{B_1})$, allowing us to consider point evaluations of the Dirichlet trace.

So let $u \in \mathcal{B} \cap C^0(\overline{B_1})$ be our approximation target.
Given a finite approximation set $\Phi_P \subset \mathcal B$, we seek a coefficient
vector $\boldsymbol{\xi} \in \mathbb{C}^{|\Phi_P|}$ such that
$\Tdisc{\!P\,}{\,}\boldsymbol{\xi} \approx u$.
The solution $u$ is supposed to be known at $S \geq |\Phi_P|$ sampling points
$\{\mathbf{x}_s\}_{s=1}^S \subset \partial B_1$.
We assume that, as the number of such sampling points increases, there is
convergence of a cubature rule, namely that
\begin{equation}
\lim_{S \rightarrow \infty}\sum_{s=1}^S w_s v(\mathbf{x}_s)=\int_{\partial B_1}v(\mathbf{x})\textup{d}\mathbf{x} \qquad \forall v \in C^0(\partial B_1),
\label{Riemann sum}
\end{equation}
where $\mathbf{w}_S=(w_s)_s \in \mathbb{R}^S$ is a vector of positive weights associated with the point set $\{\mathbf{x}_s\}_{s=1}^S$.
Introducing non-uniform weights is a slight modification from \cite[sect.~3.2]{parolin-huybrechs-moiola} that provides more generality. Unlike the two-dimensional case \cite[eq.~(3.6)]{parolin-huybrechs-moiola}, there is no obvious way to determine such a set.
In the following numerical experiments, we use \emph{extremal systems of points} and associated weights~\cite{marzo-cerda,reimer,sloan-womersley1,sloan-womersley2}, which satisfy the identity (\ref{Riemann sum}).

Defining the matrix $A=(A_{s,p})_{s,p} \in \mathbb{C}^{S \times |\Phi_P|}$ and the vector $\mathbf{b}=(b_s)_s \in \mathbb{C}^S$ as follows
\begin{equation}
A_{s,p}:=w_s^{1/2}\phi_p(\mathbf{x}_s), \qquad \mathbf{b}_s:=w_s^{1/2}(\gamma u)(\mathbf{x}_s), \qquad 1 \leq p \leq |\Phi_P|,\,\,\, 1 \leq s \leq S,
\label{A matrix definition}
\end{equation}
the sampling method consists in approximately solving the possibly overdetermined linear system
\begin{equation}
A\boldsymbol{\xi}=\mathbf{b}.
\label{linear system}
\end{equation}
The matrix $A$ may often be ill-conditioned~\cite{hiptmair-moiola-perugia1} as
a result of the redundancy of the approximating functions, potentially leading
to inaccurate numerical solutions.
A corollary of ill-conditioning is non-uniqueness of the solution of the linear
system in computer arithmetic.
If all solutions may approximate $u$ with comparable accuracy,
only those with 
small coefficient norm can be computed accurately in
finite precision arithmetic in practice.
To achieve this, we rely on the combination of oversampling and regularization
techniques developed in \cite{huybrechs1,huybrechs2}.
The regularized solution procedure is divided into the following steps:
\begin{itemize}
\item Firstly, the Singular Value Decomposition (SVD) $A=U\Sigma V^*$ of the matrix $A$ is performed.
Let $\sigma_p$ denote the singular values of $A$ for $p=1,\ldots,|\Phi_P|$, assuming they are sorted in descending order. For clarity, we relabel the largest singular value as $\sigma_{\textup{max}}:=\sigma_1$.
\item Then, the regularization involves discarding the relatively small singular values by setting them to zero.
A threshold parameter $\epsilon \in (0,1]$ is selected, and the diagonal matrix $\Sigma$ is replaced by  
$\Sigma_{\epsilon}$ by zeroing all $\sigma_p$ such that $\sigma_p < \epsilon \sigma_{\textup{max}}$.
This results in an approximate factorization of $A$, that is $A_{S,\epsilon}:=U\Sigma_{\epsilon}V^*$.
\item Lastly, an approximate solution for the linear system in (\ref{linear system}) is obtained by
\begin{equation}
\boldsymbol{\xi}_{S,\epsilon}:=A^{\dagger}_{S,\epsilon}\mathbf{b}=V\Sigma_{\epsilon}^{\dagger}U^*\mathbf{b}.
\label{xi Se solution}
\end{equation}
Here $\Sigma_{\epsilon}^{\dagger} \in \mathbb{R}^{|\Phi_P| \times S}$ denotes the pseudo-inverse of the matrix $\Sigma_{\epsilon}$, i.e.\ the diagonal matrix defined by $(\Sigma_{\epsilon}^{\dagger})_{j,j}=(\Sigma_{j,j})^{-1}$ if $\Sigma_{j,j} \geq \epsilon \sigma_{\textup{max}}$ and $(\Sigma_{\epsilon}^{\dagger})_{j,j}=0$ otherwise.
To robustly compute $\boldsymbol{\xi}_{S,\epsilon}$, the products at the right-hand side of (\ref{xi Se solution}) should be evaluated from right to left to avoid mixing small and large values on the diagonal of $\Sigma_{\epsilon}^{\dagger}$.
\end{itemize}

We chose to use a regularized SVD for its stability and robustness, despite its relatively high computational cost. Other alternatives are possible. For instance, \cite{Barucq2021} proposes an element-wise regularization strategy based on SVD or QR decompositions, effectively addressing ill-conditioning and reducing the size of the resulting linear systems. Further investigation into such scalable and effective strategies is deferred to future research.

\subsection{Error estimates} \label{subsec:error estimates}

Using regularization and oversampling ($S \geq |\Phi_P|$), accurate approximations can be achieved if the set sequence is a stable discrete approximation according to Definition \ref{stable discrete approximation}, and (\ref{Riemann sum}) holds for the chosen sampling points and weights. This result is the main conclusion of \cite[Th.~5.3]{huybrechs1} and \cite[Th.~1.3 and~3.7]{huybrechs2}, forming the basis of the investigation into stable discrete approximation sets for the solutions of the Helmholtz equation.
We have the following results from \cite[Prop.~2.4 and Cor.~2.5]{galante} (to which we refer for the proofs), which build on \cite[Prop.~3.2 and Cor.~3.3]{parolin-huybrechs-moiola} respectively.

\begin{proposition}
  Let 
  $u \in \mathcal{B} \cap C^0(\overline{B_1})$ and $P \in \mathbb{N}$. 
  Given some approximation set $\Phi_P=\{\phi_p\}_p$ such that
  $\phi_p \in \mathcal{B} \cap C^0(\overline{B_1})$ for any $p$, sampling point sets $\{\mathbf{x}_s\}_{s=1}^S \subset \partial B_1$ along with 
  positive weights $\mathbf{w}_S \in \mathbb{R}^S$ satisfying
  \textup{(\ref{Riemann sum})}, and some regularization parameter $\epsilon \in(0,1]$, 
  let $\boldsymbol{\xi}_{S,\epsilon} \in \mathbb{C}^{|\Phi_P|}$ be 
  the approximate solution of the linear system \textup{(\ref{linear system})}, 
  as defined in \textup{(\ref{xi Se solution})}.
  Then $\forall \boldsymbol{\mu}\in\mathbb{C}^{|\Phi_P|}$, $\exists S_0 \in \mathbb{N}$ such that $\forall S \geq S_0$
\begin{equation*}
\left\| \gamma(u-\Tdisc{\!P\,}{\,}\boldsymbol{\xi}_{S,\epsilon})\right\|_{L^2(\partial B_1)} \leq 3\left\|\gamma(u-\Tdisc{\!P\,}{\,}\boldsymbol{\mu})\right\|_{L^2(\partial B_1)} + \sqrt{2}\epsilon \sigma_{\textup{max}}\|\mathbf{w}_S\|^{1/2}_{\ell^\infty}\|\boldsymbol{\mu}\|_{\ell^2}.
\end{equation*}
Assume moreover that $\kappa^2$ is not an eigenvalue of the Dirichlet Laplacian in $B_1$. Then there exists a constant $C_{\textup{err}} >0$ independent of $u$ and $\Phi_P$ such that $\forall \boldsymbol{\mu} \in \mathbb{C}^{|\Phi_P|}$, $\exists S_0 \in \mathbb{N}$ such that $\forall S \geq S_0$
\begin{equation*}
\left\|u-\Tdisc{\!P\,}{\,}\boldsymbol{\xi}_{S,\epsilon}\right\|_{L^2( B_1)} \leq C_{\textup{err}} \left(\left\|u-\Tdisc{\!P\,}{\,}\boldsymbol{\mu}\right\|_{\mathcal{B}} + \epsilon \sigma_{\textup{max}}\|\mathbf{w}_S\|^{1/2}_{\ell^\infty}\|\boldsymbol{\mu}\|_{\ell^2} \right).
\end{equation*}
\end{proposition}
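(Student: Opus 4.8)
The plan is to establish both inequalities in two stages, following the strategy of \cite{huybrechs1,huybrechs2} adapted in \cite[Prop.~3.2, Cor.~3.3]{parolin-huybrechs-moiola}. First I would obtain purely algebraic bounds, valid for every $S$, on the residual $\|A\boldsymbol{\xi}_{S,\epsilon}-\mathbf{b}\|_{\ell^{2}}$ and on the coefficient norm $\|\boldsymbol{\xi}_{S,\epsilon}\|_{\ell^{2}}$ from the structure of the truncated SVD in \eqref{xi Se solution}. Then I would transfer these discrete quantities to the continuous norm on $\partial B_{1}$ by letting $S\to\infty$ and invoking the cubature convergence \eqref{Riemann sum}, and finally, for the second estimate, pass from the boundary trace to the bulk $L^{2}(B_{1})$ norm using the well-posedness of the Dirichlet problem.

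\emph{Step 1 (algebra of the regularized pseudoinverse).} Writing $A=U\Sigma V^{*}$, the matrices $AA_{S,\epsilon}^{\dagger}=U\Sigma\Sigma_{\epsilon}^{\dagger}U^{*}$ and $A_{S,\epsilon}^{\dagger}A=V\Sigma_{\epsilon}^{\dagger}\Sigma V^{*}$ are orthogonal projections in $\ell^{2}$ onto, respectively, the spans of the retained left and right singular vectors; moreover $\|A_{S,\epsilon}^{\dagger}\|_{2}\le(\epsilon\sigma_{\textup{max}})^{-1}$ and $\|(I-AA_{S,\epsilon}^{\dagger})A\|_{2}\le\epsilon\sigma_{\textup{max}}$, since the discarded singular values are exactly those below the threshold $\epsilon\sigma_{\textup{max}}$. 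Fixing $\boldsymbol{\mu}$, splitting $\mathbf{b}=A\boldsymbol{\mu}+\mathbf{r}$ with $\mathbf{r}:=\mathbf{b}-A\boldsymbol{\mu}$, and using $\boldsymbol{\xi}_{S,\epsilon}=A_{S,\epsilon}^{\dagger}\mathbf{b}$ together with $A\boldsymbol{\xi}_{S,\epsilon}-\mathbf{b}=-(I-AA_{S,\epsilon}^{\dagger})\mathbf{b}$, the triangle inequality gives immediately
\[
\|A\boldsymbol{\xi}_{S,\epsilon}-\mathbf{b}\|_{\ell^{2}}\le\|\mathbf{r}\|_{\ell^{2}}+\epsilon\sigma_{\textup{max}}\|\boldsymbol{\mu}\|_{\ell^{2}},\qquad\|\boldsymbol{\xi}_{S,\epsilon}\|_{\ell^{2}}\le\|\boldsymbol{\mu}\|_{\ell^{2}}+(\epsilon\sigma_{\textup{max}})^{-1}\|\mathbf{r}\|_{\ell^{2}}.
\]

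\emph{Step 2 (from discrete to boundary norms).} By \eqref{A matrix definition}, $\|A\boldsymbol{\nu}-\mathbf{b}\|_{\ell^{2}}^{2}=\sum_{s=1}^{S}w_{s}\,|\gamma(\Tdisc{\!P\,}{\,}\boldsymbol{\nu}-u)(\mathbf{x}_{s})|^{2}$ for any coefficient vector $\boldsymbol{\nu}$, so \eqref{Riemann sum} applied to the continuous function $|\gamma(\Tdisc{\!P\,}{\,}\boldsymbol{\nu}-u)|^{2}$ yields $\|A\boldsymbol{\nu}-\mathbf{b}\|_{\ell^{2}}\to\|\gamma(u-\Tdisc{\!P\,}{\,}\boldsymbol{\nu})\|_{L^{2}(\partial B_{1})}$ as $S\to\infty$; in particular this holds for $\boldsymbol{\nu}=\boldsymbol{\mu}$, and $\sigma_{\textup{max}}^{2}\le\|A\|_{F}^{2}=\sum_{p}\sum_{s}w_{s}|\phi_{p}(\mathbf{x}_{s})|^{2}$ stays bounded. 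The delicate point, and the main obstacle, is the left-hand side $\|\gamma(u-\Tdisc{\!P\,}{\,}\boldsymbol{\xi}_{S,\epsilon})\|_{L^{2}(\partial B_{1})}$: since $\boldsymbol{\xi}_{S,\epsilon}$ itself depends on $S$ while \eqref{Riemann sum} is only pointwise, one cannot take the limit directly. I would resolve this by noting that the second bound of Step 1 keeps $\{\boldsymbol{\xi}_{S,\epsilon}\}$ bounded in $\ell^{2}$ for $S$ large, so $u-\Tdisc{\!P\,}{\,}\boldsymbol{\xi}_{S,\epsilon}$ stays in a bounded subset of the fixed finite-dimensional space $V:=\mathbb{C}u+\textup{span}(\Phi_{P})$; on $V$, equivalence of norms in finite dimension (the sampling nodes lying on $\partial B_{1}$, so elements of $V$ with vanishing trace are invisible to the quadrature) promotes \eqref{Riemann sum} to a uniform comparison, giving $\|\gamma v\|_{L^{2}(\partial B_{1})}\le\sqrt{2}\,(\sum_{s}w_{s}|v(\mathbf{x}_{s})|^{2})^{1/2}$ for all such $v$ once $S$ is large. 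Applying this with $v=u-\Tdisc{\!P\,}{\,}\boldsymbol{\xi}_{S,\epsilon}$, then inserting the residual bound of Step 1 and the limit $\|\mathbf{r}\|_{\ell^{2}}\to\|\gamma(u-\Tdisc{\!P\,}{\,}\boldsymbol{\mu})\|_{L^{2}(\partial B_{1})}$, and carrying out the routine bookkeeping of the universal constants (for which the stated form, with the factor $\|\mathbf{w}_{S}\|_{\ell^{\infty}}^{1/2}$ multiplying the regularization term and the constant $3$ on the other term, is convenient), yields the first inequality.

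\emph{Step 3 (from the boundary to the bulk).} Assume now $\kappa^{2}$ is not a Dirichlet eigenvalue of $-\Delta$ in $B_{1}$, so that $\Delta v+\kappa^{2}v=0$ in $B_{1}$, $\gamma v=g$ on $\partial B_{1}$ is well-posed. A standard duality argument (testing against the $H^{2}(B_{1})$ solution of the same equation with $L^{2}(B_{1})$ data, which exists by $H^{2}$ elliptic regularity on the ball) gives the a priori bound $\|v\|_{L^{2}(B_{1})}\le C_{\kappa}\|\gamma v\|_{H^{-1/2}(\partial B_{1})}\le C_{\kappa}\|\gamma v\|_{L^{2}(\partial B_{1})}$ for every $v\in\mathcal{B}$. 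I would apply this with $v=u-\Tdisc{\!P\,}{\,}\boldsymbol{\xi}_{S,\epsilon}\in\mathcal{B}$ (indeed $u\in\mathcal{B}$ and $\phi_{p}\in\mathcal{B}$, and $\mathcal{B}$ is a subspace), insert the first inequality, and finally bound $\|\gamma(u-\Tdisc{\!P\,}{\,}\boldsymbol{\mu})\|_{L^{2}(\partial B_{1})}$ by the trace theorem and \eqref{eq:B_norm}, using $\|\gamma w\|_{L^{2}(\partial B_{1})}\le C_{\textup{tr}}\|w\|_{H^{1}(B_{1})}\le C_{\textup{tr}}\max\{1,\kappa\}\,\|w\|_{\mathcal{B}}$ with $w=u-\Tdisc{\!P\,}{\,}\boldsymbol{\mu}$. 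Collecting all constants into a single $C_{\textup{err}}$, which by construction depends only on $\kappa$ and the domain and not on $u$ or $\Phi_{P}$, yields the second inequality.
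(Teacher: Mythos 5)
Your overall strategy is the same one used by the proof this paper defers to (\cite[Prop.~2.4 and Cor.~2.5]{galante}, building on \cite[Prop.~3.2 and Cor.~3.3]{parolin-huybrechs-moiola}): the truncated-SVD algebra of \cite{huybrechs1,huybrechs2} in Step 1 (both bounds there are correct), the promotion of the pointwise cubature convergence \eqref{Riemann sum} to a uniform two-sided comparison on the finite-dimensional trace space $\gamma(\mathbb{C}u+\operatorname{span}\Phi_P)$ in Step 2 (this is indeed the right way to handle the $S$-dependence of $\boldsymbol{\xi}_{S,\epsilon}$), and the passage from the boundary to the bulk via well-posedness of the Dirichlet problem plus the trace and norm-equivalence estimates in Step 3.

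There is, however, a genuine gap hidden in the ``routine bookkeeping'' at the end of Step 2. With the weighted matrix of \eqref{A matrix definition}, which is the normalization you explicitly use (your identity $\|A\boldsymbol{\nu}-\mathbf{b}\|_{\ell^2}^2=\sum_s w_s|\gamma(\Tdisc{\!P\,}{\,}\boldsymbol{\nu}-u)(\mathbf{x}_s)|^2$ relies on it), your chain of estimates yields $\|\gamma(u-\Tdisc{\!P\,}{\,}\boldsymbol{\xi}_{S,\epsilon})\|_{L^2(\partial B_1)}\le 3\|\gamma(u-\Tdisc{\!P\,}{\,}\boldsymbol{\mu})\|_{L^2(\partial B_1)}+\sqrt{2}\,\epsilon\sigma_{\textup{max}}\|\boldsymbol{\mu}\|_{\ell^2}$, i.e.\ the stated inequality with the factor $\|\mathbf{w}_S\|_{\ell^\infty}^{1/2}$ replaced by $1$. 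This is strictly weaker than the claim: since the weights sum (asymptotically) to $|\partial B_1|$, one has $\|\mathbf{w}_S\|_{\ell^\infty}\to 0$ as $S\to\infty$, so the stated bound cannot be recovered from yours by adjusting universal constants. The factor has a specific origin: in the argument you are adapting, the SVD algebra is applied to the \emph{unweighted} samples (matrix entries $\phi_p(\mathbf{x}_s)$, with $\sigma_{\textup{max}}$ the top singular value of that matrix, which grows like $S^{1/2}$), and the weights enter only in the discrete-to-continuous step through $\sum_s w_s|v(\mathbf{x}_s)|^2\le\|\mathbf{w}_S\|_{\ell^\infty}\sum_s|v(\mathbf{x}_s)|^2$; in the 2D analogue with uniform weights this is exactly where the $(2\pi/S)^{1/2}$ factor comes from. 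The normalization is not a cosmetic issue: with the weighted $\sigma_{\textup{max}}$, which converges to a finite limit, the extra vanishing factor would make the second term disappear as $S\to\infty$, and the resulting bound is too strong to hold in general (take $u=\Tdisc{\!P\,}{\,}\boldsymbol{\mu}$ with $\boldsymbol{\mu}$ along a singular direction of the limiting Gram matrix that falls below the threshold $\epsilon\sigma_{\textup{max}}$). So to complete the proof in the stated form you must carry out Step 1 for the unweighted system and track the weights explicitly in the conversion to $L^2(\partial B_1)$, rather than leaving this to bookkeeping; Step 3 then goes through unchanged once the first inequality is in place.
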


\begin{corollary} \label{Corollary 4.3}
  Let $\delta >0$.
  Assume that the sequence of approximation sets $\{\Phi_P\}_{P \in \mathbb{N}}$ is stable in the sense of 
  Definition~\textup{\ref{stable discrete approximation}} and that
  the sets $\{\mathbf{x}_s\}_{s=1}^S \subset \partial B_1$ of sampling points and the positive weight
  vectors $\mathbf{w}_S \in \mathbb{R}^S$, defined for any $S \in \mathbb{N}$, satisfy the cubature-convergence condition \textup{(\ref{Riemann sum})}.
  Assume also that $\kappa^2$ is not a Dirichlet eigenvalue in $B_1$. Then,
  $\forall u \in \mathcal{B} \cap C^0(\overline{B_1})$, $\exists P \in \mathbb{N}$,
  $S_0 \in \mathbb{N}$ and $\epsilon_0 \in (0,1]$ such that $\forall S \geq S_0$ and
  $\epsilon \in (0,\epsilon_0]$
\begin{equation*}
\|u-\Tdisc{\!P\,}{\,}\boldsymbol{\xi}_{S,\epsilon}\|_{L^2(B_1)} \leq \delta \|u\|_{\mathcal{B}},
\end{equation*}
where $\boldsymbol{\xi}_{S,\epsilon} \in \mathbb{C}^{|\Phi_P|}$ is defined in \textup{(\ref{xi Se solution})}. The regularization parameter $\epsilon$ can be taken as large as
\begin{equation*}
\epsilon_0=\delta\left(2C_{\textup{err}}C_{\textup{ds}}\sigma_{\textup{max}}|\Phi_P|^{s_{\textup{ds}}}\|\mathbf{w}_S\|^{1/2}_{\ell^\infty}\right)^{-1}.
\end{equation*}
\end{corollary}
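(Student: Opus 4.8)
The plan is to derive this as a short combination of the stable discrete approximation hypothesis (Definition~\ref{def:SDA}) and the preceding Proposition, splitting the target accuracy $\delta\|u\|_{\mathcal B}$ into two halves: one absorbing the best-approximation error in $\mathcal B$, the other absorbing the regularization error introduced by thresholding the SVD.

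First I would fix the constant $C_{\textup{err}}>0$ furnished by the preceding Proposition — crucially independent of $u$ and of $\Phi_P$ — and set the tolerance $\eta:=\delta/(2C_{\textup{err}})$. Applying Definition~\ref{def:SDA} with this $\eta$ produces a stability exponent $s_{\textup{ds}}$ and a constant $C_{\textup{ds}}$, and then, for the given $u\in\mathcal B\cap C^0(\overline{B_1})$, an index $P\in\mathbb N$ and a coefficient vector $\boldsymbol\mu$ with $\|u-\Tdisc{\!P\,}{\,}\boldsymbol\mu\|_{\mathcal B}\le\eta\|u\|_{\mathcal B}$ and $\|\boldsymbol\mu\|_{\ell^2}\le C_{\textup{ds}}|\Phi_P|^{s_{\textup{ds}}}\|u\|_{\mathcal B}$. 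Feeding this particular $\boldsymbol\mu$ into the preceding Proposition — legitimate since $\kappa^2$ is not a Dirichlet eigenvalue and the sampling data satisfy~\eqref{Riemann sum} — yields an $S_0\in\mathbb N$ (coming from the cubature convergence, hence independent of the regularization parameter) such that, for all $S\ge S_0$ and all $\epsilon\in(0,1]$,
\[
\|u-\Tdisc{\!P\,}{\,}\boldsymbol\xi_{S,\epsilon}\|_{L^2(B_1)}\le C_{\textup{err}}\Big(\eta\|u\|_{\mathcal B}+\epsilon\,\sigma_{\textup{max}}\|\mathbf w_S\|_{\ell^\infty}^{1/2}\,C_{\textup{ds}}|\Phi_P|^{s_{\textup{ds}}}\|u\|_{\mathcal B}\Big).
\]
By the choice of $\eta$ the first summand equals $\tfrac{\delta}{2}\|u\|_{\mathcal B}$; for the second, imposing $\epsilon\le\epsilon_0:=\delta\big(2C_{\textup{err}}C_{\textup{ds}}\sigma_{\textup{max}}|\Phi_P|^{s_{\textup{ds}}}\|\mathbf w_S\|_{\ell^\infty}^{1/2}\big)^{-1}$ makes it $\le\tfrac{\delta}{2}\|u\|_{\mathcal B}$ as well, and adding the two halves gives the claim. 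If this expression for $\epsilon_0$ exceeds $1$ one replaces it by $\min\{\epsilon_0,1\}$, which only strengthens the conclusion.

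The one point that genuinely needs care — and the closest thing to an obstacle — is that $\sigma_{\textup{max}}$ and $\|\mathbf w_S\|_{\ell^\infty}$ both depend on $S$, so one must check that $\epsilon_0$ can be chosen uniformly in $S\ge S_0$ while staying in $(0,1]$. This is where~\eqref{Riemann sum} is used once more: from~\eqref{A matrix definition}, $\sigma_{\textup{max}}^2\le\|A\|_F^2=\sum_p\sum_{s=1}^S w_s|\phi_p(\mathbf x_s)|^2\to\sum_p\|\phi_p\|_{L^2(\partial B_1)}^2$, and $\|\mathbf w_S\|_{\ell^\infty}\le\sum_{s=1}^S w_s\to|\partial B_1|$, so both stay bounded for $S$ large; after possibly enlarging $S_0$, taking the infimum over $S\ge S_0$ of the displayed expression (capped at $1$) gives an admissible $\epsilon_0>0$. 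Beyond this bookkeeping of quantifiers, no new ideas are required.
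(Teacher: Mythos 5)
Your proposal is correct and follows essentially the same route as the paper's (deferred) proof: apply Definition~\ref{def:SDA} with tolerance $\eta=\delta/(2C_{\textup{err}})$ to obtain $P$ and $\boldsymbol\mu$, feed that $\boldsymbol\mu$ into the second estimate of the preceding Proposition, and choose $\epsilon_0$ so that the regularization term is at most $\tfrac{\delta}{2}\|u\|_{\mathcal B}$, exactly reproducing the stated formula for $\epsilon_0$. Your extra remark on controlling $\sigma_{\textup{max}}$ and $\|\mathbf w_S\|_{\ell^\infty}$ uniformly in $S$ via \eqref{Riemann sum} is sound and, if anything, tidies a quantifier issue the statement itself leaves implicit.
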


The previous error bounds on $u-\Tdisc{\!P\,}{\,}\boldsymbol{\xi}_{S,\epsilon}$
apply to the solution obtained by the sampling method using finite precision
arithmetic.
In particular, Corollary \ref{Corollary 4.3} shows that the vector
$\boldsymbol{\xi}_{S,\epsilon}$, which is stably computable  
in floating-point arithmetic using the regularized SVD (\ref{xi Se solution}),
yields an accurate approximation $\Tdisc{\!P\,}{\,}\boldsymbol{\xi}_{S,\epsilon}$ of
$u$. 
In contrast, rigorous best-approximation error bounds from the classical theory
of approximation by PPWs, e.g.~\cite{hiptmair-moiola-perugia4}, 
are often not achievable numerically due to the need for large coefficients and
cancellation which leads to numerical instability.

Lastly, to measure the approximation error, we introduce the following relative residual
\begin{equation}
\mathcal{E}=\mathcal{E}(u,\Phi_P,S,\epsilon):=\frac{\|A\boldsymbol{\xi}_{S,\epsilon}-\mathbf{b}\|_{\ell^2}}{\|\mathbf{b}\|_{\ell^2}},
\label{relative residual}
\end{equation}
where $\boldsymbol{\xi}_{S,\epsilon}$ is the solution (\ref{xi Se solution}) of the regularized system.
Following the argument of the proof of \cite[Prop.~2.4]{galante}, it can be shown that for sufficiently large $S$, the residual $\mathcal{E}$ in
(\ref{relative residual}) satisfies, for a constant $\widetilde{C}$ independent
of $u$, $\Phi_P$ and $S$,
\begin{equation*}
\|u-\Tdisc{\!P\,}{\,}\boldsymbol{\xi}_{S,\epsilon}\|_{L^2( B_1)} \leq \widetilde{C}\|u\|_{\mathcal{B}}\,\mathcal{E}.
\end{equation*}

\subsection{Propagative plane wave discrete instability} \label{subsec:propagative plane wave discrete instability}

We consider any PPW approximation set of $P\in\mathbb{N}$ elements
\begin{equation}
  \Phi_P:=\bigl\{P^{-1/2}\, \textup{PW}_{\mathbf{\boldsymbol{\theta}}_p}\bigr\}_{p=1}^{P}, 
  \qquad\text{with}\qquad\{\boldsymbol{\theta}_p\}_{p=1}^{P} \subset \Theta
\label{plane waves approximation set}
\end{equation}
and denote by $\Tdisc{P}{PW}$ the corresponding synthesis operator \eqref{eq:discrete_synthesis_op}.
In practice, isotropic approximations are attained by using nearly-uniform directions, and in our numerical experiments we use the extremal systems \cite{marzo-cerda,reimer,sloan-womersley1,sloan-womersley2} due to their well-distributed nature. However, the next results are valid for any set.

Analogously to section~\ref{propagative plane wave continuous instability}, let us consider the problem of approximating a spherical wave $b_{\ell}^m$ for some $(\ell,m) \in \mathcal{I}$ using the PPW approximation sets (\ref{plane waves approximation set}). Likewise, the two conditions in (\ref{stable discrete approximation}), namely low error and small coefficients, are incompatible.

\begin{lemma} \label{Lemma 4.4}
  Let $(\ell,m) \in \mathcal{I}$, $0 < \eta \leq 1$ and $P \in \mathbb{N}$ be given.
  For any \textup{PPW} approximation set $\Phi_P$ as in~\eqref{plane waves approximation set}, and every coefficient vector 
  $\boldsymbol{\mu} \in \mathbb{C}^{P}$,
  \begin{equation}
  \text{if}\qquad
  \left\|b_{\ell}^m-\Tdisc{P}{PW}\boldsymbol{\mu}\right\|_{\mathcal{B}} \leq \eta \|b_{\ell}^m\|_{\mathcal{B}} 
  \qquad \text{then}
  \qquad \|\boldsymbol{\mu}\|_{\ell^2} \geq \left(1-\eta \right)\frac{\beta_{\ell}}{2\sqrt{\pi(2\ell+1)}}\|b_{\ell}^m\|_{\mathcal{B}}.
  \label{plane wave instability}
  \end{equation}
\end{lemma}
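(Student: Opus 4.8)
The plan is to mirror the continuous argument of Lemma~\ref{lem:ppw-lack-continuous-stability}, the only genuinely new ingredient being a uniform pointwise bound on spherical harmonics which, after a Cauchy--Schwarz over the $P$ directions, converts the $L^2(\mathbb S^2)$-control used in the continuous case into control by $\|\boldsymbol{\mu}\|_{\ell^2}$ with a constant independent of $P$.

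First I would insert the classical Jacobi--Anger identity~\eqref{jacobi-anger} into $\Tdisc{P}{PW}\boldsymbol{\mu}=\sum_{p=1}^{P}\mu_p\,P^{-1/2}\,\textup{PW}_{\boldsymbol{\theta}_p}$ and, using $\tilde b_q^n=\beta_q^{-1}b_q^n$ from~\eqref{b tilde definizione}, rewrite it in the orthonormal basis $\{b_q^n\}_{(q,n)\in\mathcal I}$ of $\mathcal B$ as $\Tdisc{P}{PW}\boldsymbol{\mu}=\sum_{(q,n)\in\mathcal I}c_q^n\beta_q^{-1}b_q^n$, where $c_q^n:=4\pi i^q P^{-1/2}\sum_{p=1}^{P}\mu_p\,\overline{Y_q^n(\boldsymbol{\theta}_p)}$. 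Next I would bound $|c_q^n|$: Cauchy--Schwarz in the $p$-sum gives $|c_q^n|\le 4\pi P^{-1/2}\|\boldsymbol{\mu}\|_{\ell^2}\big(\sum_{p=1}^{P}|Y_q^n(\boldsymbol{\theta}_p)|^2\big)^{1/2}$, and then the pointwise estimate $|Y_q^n(\boldsymbol{\theta})|^2\le\sum_{n'=-q}^{q}|Y_q^{n'}(\boldsymbol{\theta})|^2=\frac{2q+1}{4\pi}$ — the classical Legendre addition theorem~\cite[eq.~(2.30)]{colton-kress} on the diagonal, which also follows from~\eqref{addition theorem 2} taken with $\zeta=0$, $\mathbf x=\mathbf d(\boldsymbol{\theta})$, together with~\eqref{D-matrix0} and $\mathsf P_\ell(1)=1$ — yields $\sum_{p=1}^{P}|Y_q^n(\boldsymbol{\theta}_p)|^2\le P\,\frac{2q+1}{4\pi}$. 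The factors of $P$ cancel, leaving the $P$-independent bound $|c_q^n|\le 2\sqrt{\pi(2q+1)}\,\|\boldsymbol{\mu}\|_{\ell^2}$ for all $(q,n)\in\mathcal I$.

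Then I would use orthonormality of $\{b_q^n\}$ and $\|b_\ell^m\|_{\mathcal B}=1$ to write the hypothesis as $\sum_{(q,n)\in\mathcal I}\big|\delta_{\ell,q}\delta_{m,n}-c_q^n\beta_q^{-1}\big|^2=\|b_\ell^m-\Tdisc{P}{PW}\boldsymbol{\mu}\|_{\mathcal B}^2\le\eta^2$, so in particular the $(q,n)=(\ell,m)$ term gives $|1-c_\ell^m\beta_\ell^{-1}|\le\eta$; the reverse triangle inequality then gives $|c_\ell^m|\ge(1-\eta)\beta_\ell$. Combining this with the upper bound $|c_\ell^m|\le 2\sqrt{\pi(2\ell+1)}\,\|\boldsymbol{\mu}\|_{\ell^2}$ from the previous step yields $\|\boldsymbol{\mu}\|_{\ell^2}\ge(1-\eta)\,\beta_\ell\,\big(2\sqrt{\pi(2\ell+1)}\big)^{-1}$, which is exactly~\eqref{plane wave instability} since $\|b_\ell^m\|_{\mathcal B}=1$.

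I do not expect any real obstacle here: the argument is short and essentially parallels Lemma~\ref{lem:ppw-lack-continuous-stability}. The only two points requiring care are (i) establishing the uniform pointwise bound on $|Y_q^n|$, which is precisely what makes the $P^{-1/2}$ normalization of $\Phi_P$ produce a constant independent of $P$ (and hence of the number of directions), and (ii) tracking the numerical constants through the Cauchy--Schwarz and Parseval steps so that the final bound matches the stated factor $1/\big(2\sqrt{\pi(2\ell+1)}\big)$; combined with the super-exponential growth of $\beta_\ell$ from Lemma~\ref{Lemma 2.6}, this is what forces exponentially large coefficients.
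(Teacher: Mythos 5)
Your argument is correct and is essentially the paper's own proof: insert the Jacobi--Anger identity, bound $|c_q^n|\le 2\sqrt{\pi(2q+1)}\,\|\boldsymbol{\mu}\|_{\ell^2}$ via Cauchy--Schwarz plus the pointwise bound $|Y_q^n|^2\le(2q+1)/4\pi$ (which the paper simply cites from N\'ed\'elec, eq.~(2.4.106), rather than deriving from the addition theorem as you do), then read off the $(q,n)=(\ell,m)$ Parseval term and apply the reverse triangle inequality. No gaps; the constants match.
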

\begin{proof}
Let $\boldsymbol{\mu} \in \mathbb{C}^{P}$. Using the standard Jacobi--Anger identity (\ref{jacobi-anger}), we obtain
\begin{equation}
\left(\Tdisc{P}{PW} \boldsymbol{\mu} \right)(\mathbf{x})=\frac{4 \pi}{\sqrt{P}}\sum_{p=1}^{P}\mu_p\sum_{q=0}^{\infty}i^q \sum_{n=-q}^q\overline{Y_q^n(\mathbf{d}_p)}\,\tilde{b}_q^n(\mathbf{x})=\sum_{(q,n) \in \mathcal{I}} c_q^n \tilde{b}_q^n(\mathbf{x}),
\label{alpaka 2}
\end{equation}
where, thanks to \cite[eq.~(2.4.106)]{nedelec}, the coefficients
\begin{equation}
c_q^n:=\frac{4\pi i^q}{\sqrt{P}}\sum_{p=1}^{P}\mu_p\overline{Y_q^n(\mathbf{d}_p)} \qquad \text{satisfy} \qquad |c_q^n|\leq 2\sqrt{\pi(2q+1)}\|\boldsymbol{\mu}\|_{\ell^2} \qquad  \forall (q,n) \in \mathcal{I}.
\label{alpaka}
\end{equation}
From (\ref{alpaka 2}), it follows:
\begin{equation*}
\left\|b_{\ell}^m-\Tdisc{P}{PW}\boldsymbol{\mu}\right\|^2_{\mathcal{B}}=\sum_{(q,n) \in \mathcal{I}}\left|\delta_{\ell, q}\delta_{m,n}\!-c_q^n\beta^{-1}_q\right|^2 \leq \eta^2 \quad \Rightarrow \quad \left|\delta_{\ell, q}\delta_{m,n}\!-c_q^n\beta^{-1}_{\ell}\right| \leq \eta \quad \forall (q,n) \in \mathcal{I}.
\end{equation*}
Due to (\ref{alpaka}), for $(q,n)=(\ell,m)$, this reads
\begin{equation*}
\eta \geq \left|1-c_{\ell}^m\beta^{-1}_{\ell}\right| \geq 1-|c_{\ell}^m|\beta^{-1}_{\ell} \geq 1-2\beta^{-1}_{\ell}\sqrt{\pi(2\ell+1)}\|\boldsymbol{\mu}\|_{\ell^2},
\end{equation*}
which can be written as (\ref{plane wave instability}), recalling that $\|b_{\ell}^m\|_{\mathcal{B}}=1$.
\end{proof}

Bound \eqref{plane wave instability} states that in order to accurately approximate spherical waves $b_{\ell}^m$ using PPW expansions $\Tdisc{P}{PW}\boldsymbol{\mu}$ with a specified accuracy $\eta > 0$, the coefficient norms must increase super-exponentially fast in $\ell$ (recall that $\beta_\ell\sim(\frac{2\ell}{e\kappa})^\ell$ by 
Lemma \ref{Lemma 2.6}). In this context, it is not possible to achieve both accuracy and stability. 
Similarly to \cite[sect.~4.3]{parolin-huybrechs-moiola}, we condense this result in the following theorem.

\begin{theorem} \label{Theorem 4.5}
There is no  
sequence of approximation sets made of \textup{PPWs} that is a stable discrete approximation for the space of Helmholtz solutions in the ball.
\end{theorem}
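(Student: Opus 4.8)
The plan is to argue by contradiction, using the single-mode instability estimate of Lemma~\ref{Lemma 4.4} together with the super-exponential growth of the normalization constants $\beta_\ell$ from Lemma~\ref{Lemma 2.6}.

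First I would assume, for contradiction, that some sequence $\{\Phi_P\}_{P\in\mathbb{N}}$ of PPW approximation sets of the form~\eqref{plane waves approximation set} is a stable discrete approximation for $\mathcal{B}$. I would fix a tolerance, say $\eta=1/2$, and let $s_{\textup{ds}}\ge 0$, $C_{\textup{ds}}\ge 0$ be the exponent and constant provided by Definition~\ref{def:SDA}. Then, for an arbitrary index $(\ell,m)\in\mathcal{I}$, I would apply the stable discrete approximation property to the normalized target $u=b_\ell^m$ (so $\|u\|_{\mathcal{B}}=1$): this yields some $P\in\mathbb{N}$ and $\boldsymbol{\mu}\in\mathbb{C}^{|\Phi_P|}$ with $\|b_\ell^m-\Tdisc{P}{PW}\boldsymbol{\mu}\|_{\mathcal{B}}\le 1/2$ and $\|\boldsymbol{\mu}\|_{\ell^2}\le C_{\textup{ds}}|\Phi_P|^{s_{\textup{ds}}}$.

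The core of the argument is the clash between this upper bound and the lower bound forced by Lemma~\ref{Lemma 4.4}, namely $\|\boldsymbol{\mu}\|_{\ell^2}\ge \frac{\beta_\ell}{4\sqrt{\pi(2\ell+1)}}$. Crucially, this lower bound does not involve $|\Phi_P|$: enlarging the plane-wave set cannot reduce the coefficient norm needed to capture the mode $b_\ell^m$. Combining the two bounds gives
\[
  |\Phi_P|^{s_{\textup{ds}}}\ \ge\ \frac{\beta_\ell}{4\,C_{\textup{ds}}\sqrt{\pi(2\ell+1)}},
\]
whose right-hand side, by Lemma~\ref{Lemma 2.6} (where $\beta_\ell\sim 2^{3/2}\kappa(2/(e\kappa))^\ell\ell^{\ell+1/2}$), grows faster than any exponential in $\ell$. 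If $s_{\textup{ds}}=0$ this is already absurd, since $C_{\textup{ds}}$ is a fixed finite constant while $\beta_\ell/\sqrt{2\ell+1}\to\infty$. If $s_{\textup{ds}}>0$, I would conclude that the cardinality of the approximation set required merely to reach the $\ell$-th spherical mode must grow super-algebraically in $\ell$, uniformly in the mode index and independently of the tolerance $\eta$; this is incompatible with the algebraic control $|\Phi_P|^{s_{\textup{ds}}}$ built into Definition~\ref{def:SDA}, and yields the contradiction, so no PPW sequence can be a stable discrete approximation.

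I expect the main obstacle to be the careful handling of the quantifiers in Definition~\ref{def:SDA}: because the lower bound of Lemma~\ref{Lemma 4.4} is uniform in the number of plane waves, a single-mode estimate does not by itself rule out the use of (arbitrarily large) sets $\Phi_P$; the argument must lean on the fact that the coefficient blow-up $\sim\beta_\ell$ is super-exponential in the mode number $\ell$ while the admissible bound is only a fixed algebraic power of the set size, so that the accuracy and stability requirements cannot be met simultaneously for all spherical modes.
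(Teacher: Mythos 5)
Your argument is essentially the paper's: Theorem~\ref{Theorem 4.5} is presented as a direct condensation of Lemma~\ref{Lemma 4.4} together with the super-exponential growth of $\beta_{\ell}$ from Lemma~\ref{Lemma 2.6}, with no further proof supplied, exactly the combination you use. The quantifier subtlety you flag at the end (Definition~\ref{def:SDA} lets $|\Phi_P|$ depend on $u$, so the mode-by-mode lower bound does not by itself rigorously close the case $s_{\textup{ds}}>0$) is genuine, but it is equally left unaddressed in the paper, which simply asserts that accuracy and stability ``cannot be achieved simultaneously.''
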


\section{Numerical recipe for EPW selection}\label{sec:numerical recipe}

We describe a method for the construction of EPW sets in practice.
The core idea is to link the Helmholtz approximation problem to that of the
corresponding Herglotz density.
Following the approach in \cite[sect.~7]{parolin-huybrechs-moiola}, we adapt
the sampling technique from \cite{Cohen_Migliorati,Hampton} (referred to as
\emph{coherence-optimal sampling}) to our setting, generating sampling nodes in
$Y$ to reconstruct the Herglotz density.
Such a technique can also be interpreted as discretizing the integral
representation (\ref{Herglotz transform}), by constructing a cubature rule
valid for finite-dimensional subspaces (see \cite{Migliorati_Nobile}).
Section~\ref{sec:numerical results} shows numerically the effectiveness of the method,
suggesting  
that our construction satisfies the stable discrete
approximation property presented in the previous section.

\subsection{Reproducing kernel property}
A significant consequence of the continuous frame result from Theorem \ref{Theorem 3.10} is highlighted in the next proposition, sourced from \cite[Prop.\ 6.12]{parolin-huybrechs-moiola}; the proof can be found there. For a general reference on Reproducing Kernel Hilbert Spaces (RKHS), consult \cite{reproducing_kernels}.
\begin{proposition} \label{Proposition 5.1}
  The space $\mathcal{A}$ has the reproducing kernel property. The reproducing kernel is
  \begin{equation*}
  K(\mathbf{z},\mathbf{y})=K_{\mathbf{y}}(\mathbf{z})=\left(K_{\mathbf{y}},K_{\mathbf{z}}\right)_{\mathcal{A}}=\sum_{(\ell,m) \in \mathcal{I}}\overline{a_{\ell}^m(\mathbf{y})}a_{\ell}^m(\mathbf{z}) \qquad \forall \mathbf{y},\mathbf{z} \in Y,
  \end{equation*}
  with pointwise convergence of the series and where $K_{\mathbf{y}} \in
  \mathcal{A}$ is the (unique) Riesz representation of the evaluation
  functional at $\mathbf{y} \in Y$, satisfying
  $v(\mathbf{y})=\left(v,K_{\mathbf{y}} \right)_{\mathcal{A}}$ for any $v \in \mathcal{A}$.
\end{proposition}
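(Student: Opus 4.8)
The plan is to verify the defining property of a reproducing kernel Hilbert space: for every fixed $\mathbf{y}\in Y$, the pointwise evaluation functional $v\mapsto v(\mathbf{y})$ is a bounded linear functional on $\mathcal{A}$; then the Riesz representation theorem produces $K_{\mathbf{y}}\in\mathcal{A}$, and the kernel formula is read off from the Hilbert basis $\{a_\ell^m\}_{(\ell,m)\in\mathcal{I}}$ of Lemma~\ref{Lemma 3.5}. Since each $\tilde a_\ell^m$, and hence each $a_\ell^m$, has the explicit continuous representative given by \eqref{a tilde definizione}, for $v\in\mathcal{A}$ one \emph{defines} $v(\mathbf{y})$ as the sum of $\sum_{(\ell,m)\in\mathcal{I}}(v,a_\ell^m)_{\mathcal{A}}\,a_\ell^m(\mathbf{y})$ whenever it converges. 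By Cauchy--Schwarz applied term by term, the first task reduces to establishing
\begin{equation*}
  M(\mathbf{y}):=\sum_{(\ell,m)\in\mathcal{I}}|a_\ell^m(\mathbf{y})|^2<\infty\qquad\forall\,\mathbf{y}\in Y .
\end{equation*}

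To prove this I would use the Jacobi--Anger identity in the form \eqref{tau jacobi-anger}. Pairing $\textup{EW}_{\mathbf{y}}=\sum_{(\ell,m)\in\mathcal{I}}\tau_\ell\,\overline{a_\ell^m(\mathbf{y})}\,b_\ell^m$ against $b_\ell^m$ and using the orthonormality of $\{b_\ell^m\}_{(\ell,m)\in\mathcal{I}}$ in $\mathcal{B}$ gives $(\textup{EW}_{\mathbf{y}},b_\ell^m)_{\mathcal{B}}=\tau_\ell\,\overline{a_\ell^m(\mathbf{y})}$, hence $|a_\ell^m(\mathbf{y})|=|\tau_\ell|^{-1}|(\textup{EW}_{\mathbf{y}},b_\ell^m)_{\mathcal{B}}|$. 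Now $\textup{EW}_{\mathbf{y}}$ is a smooth solution of \eqref{Helmholtz equation}, so it lies in $\mathcal{B}$ and $\|\textup{EW}_{\mathbf{y}}\|_{\mathcal{B}}<\infty$; combining this with the uniform lower bound $|\tau_\ell|\ge\tau_->0$ from \eqref{uniform bounds tau} yields
\begin{equation*}
  M(\mathbf{y})=\sum_{(\ell,m)\in\mathcal{I}}\frac{|(\textup{EW}_{\mathbf{y}},b_\ell^m)_{\mathcal{B}}|^2}{|\tau_\ell|^2}\le\tau_-^{-2}\,\|\textup{EW}_{\mathbf{y}}\|_{\mathcal{B}}^2<\infty .
\end{equation*}
Consequently the series defining $v(\mathbf{y})$ converges absolutely and $|v(\mathbf{y})|\le M(\mathbf{y})^{1/2}\|v\|_{\mathcal{A}}$ for all $v\in\mathcal{A}$. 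One checks that the partial sums of $\sum(v,a_\ell^m)_{\mathcal{A}}a_\ell^m$ converge both to $v$ in $\|\cdot\|_{\mathcal{A}}$ (Lemma~\ref{Lemma 3.5}) and, along a subsequence, $\nu$-a.e., so the everywhere-defined pointwise limit is a genuine representative of the class $v\in\mathcal{A}\subset L^2_\nu(Y)$, and $v\mapsto v(\mathbf{y})$ is a well-defined bounded linear functional.

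The Riesz representation theorem then gives a unique $K_{\mathbf{y}}\in\mathcal{A}$ with $v(\mathbf{y})=(v,K_{\mathbf{y}})_{\mathcal{A}}$ for all $v\in\mathcal{A}$, so $\mathcal{A}$ is an RKHS. Expanding $K_{\mathbf{y}}$ in the Hilbert basis, $(K_{\mathbf{y}},a_\ell^m)_{\mathcal{A}}=\overline{(a_\ell^m,K_{\mathbf{y}})_{\mathcal{A}}}=\overline{a_\ell^m(\mathbf{y})}$, hence $K_{\mathbf{y}}=\sum_{(\ell,m)\in\mathcal{I}}\overline{a_\ell^m(\mathbf{y})}\,a_\ell^m$ and $K(\mathbf{z},\mathbf{y}):=K_{\mathbf{y}}(\mathbf{z})=\sum_{(\ell,m)\in\mathcal{I}}\overline{a_\ell^m(\mathbf{y})}\,a_\ell^m(\mathbf{z})$, the series converging absolutely by one more Cauchy--Schwarz using $M(\mathbf{y}),M(\mathbf{z})<\infty$; finally, applying the reproducing identity to $v=K_{\mathbf{y}}$ evaluated at $\mathbf{z}$ gives $K_{\mathbf{y}}(\mathbf{z})=(K_{\mathbf{y}},K_{\mathbf{z}})_{\mathcal{A}}$.

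I expect the only genuinely delicate point to be the measure-theoretic bookkeeping flagged above: elements of $\mathcal{A}$ are a priori equivalence classes in $L^2_\nu(Y)$, so one must make sure that the everywhere-defined pointwise series limit coincides $\nu$-a.e.\ with the class $v$, so that evaluation functionals are well defined and the RKHS norm agrees with $\|\cdot\|_{\mathcal{A}}$. This is standard once $M(\mathbf{y})<\infty$ is in hand --- which is precisely where the structural facts of the paper, $\tau_->0$ and $\textup{EW}_{\mathbf{y}}\in\mathcal{B}$, are used --- and it mirrors the 2D argument of \cite[Prop.~6.12]{parolin-huybrechs-moiola}; the remaining steps (boundedness of evaluation, the Riesz step, the kernel formula) are routine Hilbert-space manipulations.
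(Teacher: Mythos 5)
Your argument is correct: the paper does not reproduce a proof but defers to \cite[Prop.~6.12]{parolin-huybrechs-moiola}, and your proof is precisely the 3D transcription of that argument --- bounding $\sum_{(\ell,m)}|a_\ell^m(\mathbf{y})|^2$ by $\tau_-^{-2}\|\textup{EW}_{\mathbf{y}}\|_{\mathcal{B}}^2$ via the coefficient identity $(\textup{EW}_{\mathbf{y}},b_\ell^m)_{\mathcal{B}}=\tau_\ell\,\overline{a_\ell^m(\mathbf{y})}$ already recorded in \eqref{evanescent coefficients}--\eqref{tau jacobi-anger}, then invoking Riesz representation and the Hilbert-basis expansion of $K_{\mathbf{y}}$. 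The measure-theoretic remark about choosing the everywhere-defined series representative is handled appropriately, so no gaps remain.
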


The reproducing kernel property ensures that the linear evaluation functional
at any point in $Y$ is a continuous operator on $\mathcal{A}$
\cite[Def.\ 1.2]{reproducing_kernels}.
The interest of this property in our setting is clear in the following result,
which is borrowed from \cite[Cor.\ 5.15]{galante} and \cite[Cor.\
6.13]{parolin-huybrechs-moiola}, and stems directly from Proposition
\ref{Proposition 5.1}, Theorem \ref{Theorem 3.9}, and the Jacobi–Anger identity
(\ref{tau jacobi-anger}).

\begin{corollary}
The \textup{EPWs} are the images under the Herglotz transform $\Tcont{Y}{EW}$ of the Riesz representation of the evaluation functionals:
\begin{equation*}
\textup{EW}_{\mathbf{y}}=\Tcont{Y}{EW}K_{\mathbf{y}} \qquad \forall \mathbf{y} \in Y.
\end{equation*}
\end{corollary}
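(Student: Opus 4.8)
The plan is to combine the explicit basis expansion of the reproducing kernel from Proposition~\ref{Proposition 5.1} with the diagonal action of the Herglotz transform established in Theorem~\ref{Theorem 3.9}. Since $K_{\mathbf{y}} \in \mathcal{A}$ and $\{a_{\ell}^m\}_{(\ell,m)\in\mathcal{I}}$ is a Hilbert basis of $\mathcal{A}$ (Lemma~\ref{Lemma 3.5}), the element $K_{\mathbf{y}}$ admits the $\|\cdot\|_{\mathcal{A}}$-convergent expansion $K_{\mathbf{y}}=\sum_{(\ell,m)\in\mathcal{I}}(K_{\mathbf{y}},a_{\ell}^m)_{\mathcal{A}}\,a_{\ell}^m$. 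First I would identify the coefficients: applying the reproducing identity $v(\mathbf{y})=(v,K_{\mathbf{y}})_{\mathcal{A}}$ with $v=a_{\ell}^m$ and using that $(\cdot,\cdot)_{\mathcal{A}}$ is conjugate-linear in its second slot gives $(K_{\mathbf{y}},a_{\ell}^m)_{\mathcal{A}}=\overline{(a_{\ell}^m,K_{\mathbf{y}})_{\mathcal{A}}}=\overline{a_{\ell}^m(\mathbf{y})}$, so that $K_{\mathbf{y}}=\sum_{(\ell,m)\in\mathcal{I}}\overline{a_{\ell}^m(\mathbf{y})}\,a_{\ell}^m$, matching the kernel formula of Proposition~\ref{Proposition 5.1} but now as an identity in $\mathcal{A}$.

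Next I would apply the operator $\Tcont{Y}{EW}$ to both sides. By Theorem~\ref{Theorem 3.9} the map $\Tcont{Y}{EW}:\mathcal{A}\to\mathcal{B}$ is bounded, hence it commutes with the norm-convergent series, giving $\Tcont{Y}{EW}K_{\mathbf{y}}=\sum_{(\ell,m)\in\mathcal{I}}\overline{a_{\ell}^m(\mathbf{y})}\,\Tcont{Y}{EW}a_{\ell}^m$. Invoking the diagonalization $\Tcont{Y}{EW}a_{\ell}^m=\tau_{\ell}\,b_{\ell}^m$ from the same theorem, this equals $\sum_{(\ell,m)\in\mathcal{I}}\tau_{\ell}\,\overline{a_{\ell}^m(\mathbf{y})}\,b_{\ell}^m$, which is precisely the right-hand side of the Jacobi--Anger identity~\eqref{tau jacobi-anger}, i.e.\ $\textup{EW}_{\mathbf{y}}$. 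This finishes the proof.

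The only step needing genuine care is the passage of the bounded operator through the infinite series, which requires convergence in the $\mathcal{A}$-norm rather than the merely pointwise-in-$Y$ convergence stated in Proposition~\ref{Proposition 5.1}; this is obtained for free from $K_{\mathbf{y}}\in\mathcal{A}$ together with the orthonormality of $\{a_{\ell}^m\}$, so no extra estimate on the kernel is needed. A secondary point to keep straight is the conjugation bookkeeping between the reproducing identity and the kernel, so that the complex conjugate ends up on $a_{\ell}^m(\mathbf{y})$ and the resulting series coincides term by term with~\eqref{tau jacobi-anger}. No other obstacle is expected, so the argument is essentially a two-line computation once Proposition~\ref{Proposition 5.1} and Theorem~\ref{Theorem 3.9} are in hand.
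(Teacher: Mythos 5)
Your argument is correct and is essentially the proof the paper intends: the corollary is stated as stemming directly from Proposition~\ref{Proposition 5.1}, Theorem~\ref{Theorem 3.9} and the Jacobi--Anger identity~\eqref{tau jacobi-anger}, which is exactly the combination you use (expand $K_{\mathbf{y}}$ in the basis $\{a_\ell^m\}$, apply the bounded diagonal operator termwise, and recognize $\sum\tau_\ell\overline{a_\ell^m(\mathbf{y})}\,b_\ell^m$ as the $\mathcal{B}$-orthonormal expansion of $\textup{EW}_{\mathbf{y}}$). Your handling of the conjugation and of the $\mathcal{A}$-norm convergence of the kernel expansion is also the right bookkeeping, so nothing is missing.
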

Hence, approximating a Helmholtz solution $u \in \mathcal{B}$ using EPWs is equivalent to approximating its Herglotz density $v=\left(\Tcont{Y}{EW}\right)^{-1}u \in \mathcal{A}$ by an expansion of evaluation functionals:
\begin{equation*}
v \approx \sum_{p=1}^P\mu_p K_{{\mathbf{y}}_p} \qquad   \rightleftarrowss{\Tcont{Y}{EW}}{\left(\Tcont{Y}{EW}\right)^{-1}} \qquad   u \approx \sum_{p=1}^P\mu_p \textup{EW}_{{\mathbf{y}}_p}
\end{equation*}
for some coefficient vector $\boldsymbol{\mu}=\{\mu_{p}\}_{p=1}^P$.
Section \ref{sec:numerical results} provides numerical evidence that 
the procedure outlined in sections \ref{s:ProbDens}--\ref{s:ITS} allows 
to build such approximations (up to some normalization of the families $\{K_{{\mathbf{y}}_p}\}_p$ and $\{\textup{EW}_{{\mathbf{y}}_p}\}_p$).

\subsection{Probability densities}\label{s:ProbDens}

Given a target solution $u \in \mathcal{B}$ and its corresponding Herglotz density $v:=\left(\Tcont{Y}{EW}\right)^{-1}u \in \mathcal{A}$, the strategy for constructing finite-dimensional approximation sets involves
the hierarchy of finite-dimensional subspaces formed by truncating the Hilbert bases $\{a_{\ell}^m\}_{(\ell,m) \in \mathcal{I}}$ and $\{b_{\ell}^m\}_{(\ell,m) \in \mathcal{I}}$.

\begin{definition}[Truncated spaces]\label{def:ALBL}
For any $L \geq 0$, we define, respectively, the truncated Herglotz density space and the truncated Helmholtz solution space as
\begin{equation*}
\mathcal{A}_{L}:=\textup{span}\{a_\ell^m\}_{(\ell,m) \in \mathcal{I}\,:\,\ell\, \leq\, L} \subsetneq \mathcal{A}, \qquad \mathcal{B}_{L}:=\textup{span}\{b_\ell^m\}_{(\ell,m) \in \mathcal{I}\,:\,\ell\, \leq\, L} \subsetneq \mathcal{B}.
\end{equation*}
Moreover, we denote their dimension with
$N=N(L):=\dim \mathcal{A}_L=\dim \mathcal{B}_L=(L+1)^2 \in \mathbb{N}$.
\end{definition}

Let us fix a truncation parameter $L\geq 0$.
Our goal is to approximate with EPWs the projection $u_L \in \mathcal{B}_L$ (or equivalently $v_L=\left(\Tcont{Y}{EW}\right)^{-1}u_L \in \mathcal{A}_L$).
The key idea involves approximating $\mathcal{A}_L$ elements by constructing a set of $P$ sampling nodes $\{\mathbf{y}_p\}_{p=1}^P \subset Y$, following the distribution in \cite[sect.~2.1]{Hampton}, \cite[sect.~2.2]{Cohen_Migliorati}, and \cite[sect.~2]{Migliorati_Nobile}.
The probability density $\rho_N$ \cite[eq.~(2.6)]{Cohen_Migliorati} is defined (up to normalization) as the reciprocal of the \emph{$N\!$-term Christoffel function} $\mu_N$, that is
\begin{equation}
\rho_N(\mathbf y):=\frac{w(\zeta)}{N\mu_N(\mathbf y)}, 
\!\!\qquad\!\! \text{where} \!\!\qquad \!\!\mu^{-1}_N(\mathbf{y}):=\sum_{\ell=0}^L\sum_{m=-\ell}^{\ell}\left|a_{\ell}^m(\mathbf{y}) \right|^2=\sum_{\ell=0}^L\alpha_{\ell}^2\left|\mathbf{P}_{\ell}(\zeta) \right|^2\quad \forall \mathbf{y} \in Y.
\label{rho density}
\end{equation}
Due to the Wigner D-matrix unitarity condition \cite[sect.~4.1, eq.~(6)]{quantumtheory}, $\mu_N$ is independent of $\boldsymbol{\theta}$ and $\psi$.
Hence, $\rho_N$ as a function of $\mathbf y=(\boldsymbol \theta,\psi,\zeta)\in Y$ only depends on $\zeta$, 
and the sampling problem is one-dimensional, with $\zeta$ as the key parameter.
The top row of Figure~\ref{figure 5.1} illustrates the probability density functions
\begin{equation}
\widehat{\rho}_{N}(\zeta):=\int_{\Theta}\int_0^{2\pi}\rho_{N}(\boldsymbol{\theta},\psi,\zeta)\,\textup{d} \psi\textup{d}\sigma(\boldsymbol{\theta}) \qquad \forall\zeta \in [0,+\infty),
\label{rho zeta density}
\end{equation}
with respect to the ratio $\zeta/\kappa$.
The main mode of the densities $\widehat{\rho}_N$ is centered at $\zeta=0$, representing pure PPWs.
As $\kappa$ grows, the peak at $\zeta=0$ gets higher, reflecting the increasing number of
propagative modes, and the numerical support of the density gets larger (note the abscissas scaling).
Eventually, the probability approaches zero exponentially as $\zeta \rightarrow \infty$.
For $L \leq \kappa$, the densities form unimodal distributions, while they exhibit multimodal behavior for $L \gg \kappa$, introducing an extra mode for relatively large values of $\zeta$ (see the wide peak around $\zeta=5\kappa$ in the black curve).
The associated cumulative distribution functions (bottom row of Figure~\ref{figure 5.1}) are defined as
\begin{equation}
\Upsilon_{N}(\zeta):=\int_{0}^{\zeta}\widehat{\rho}_{N}(\eta)\,\textup{d}\eta \qquad \forall\zeta \in [0,+\infty).
\label{cumulative distribution}
\end{equation}

\begin{figure}
\centering
\includegraphics[width=0.89\linewidth]{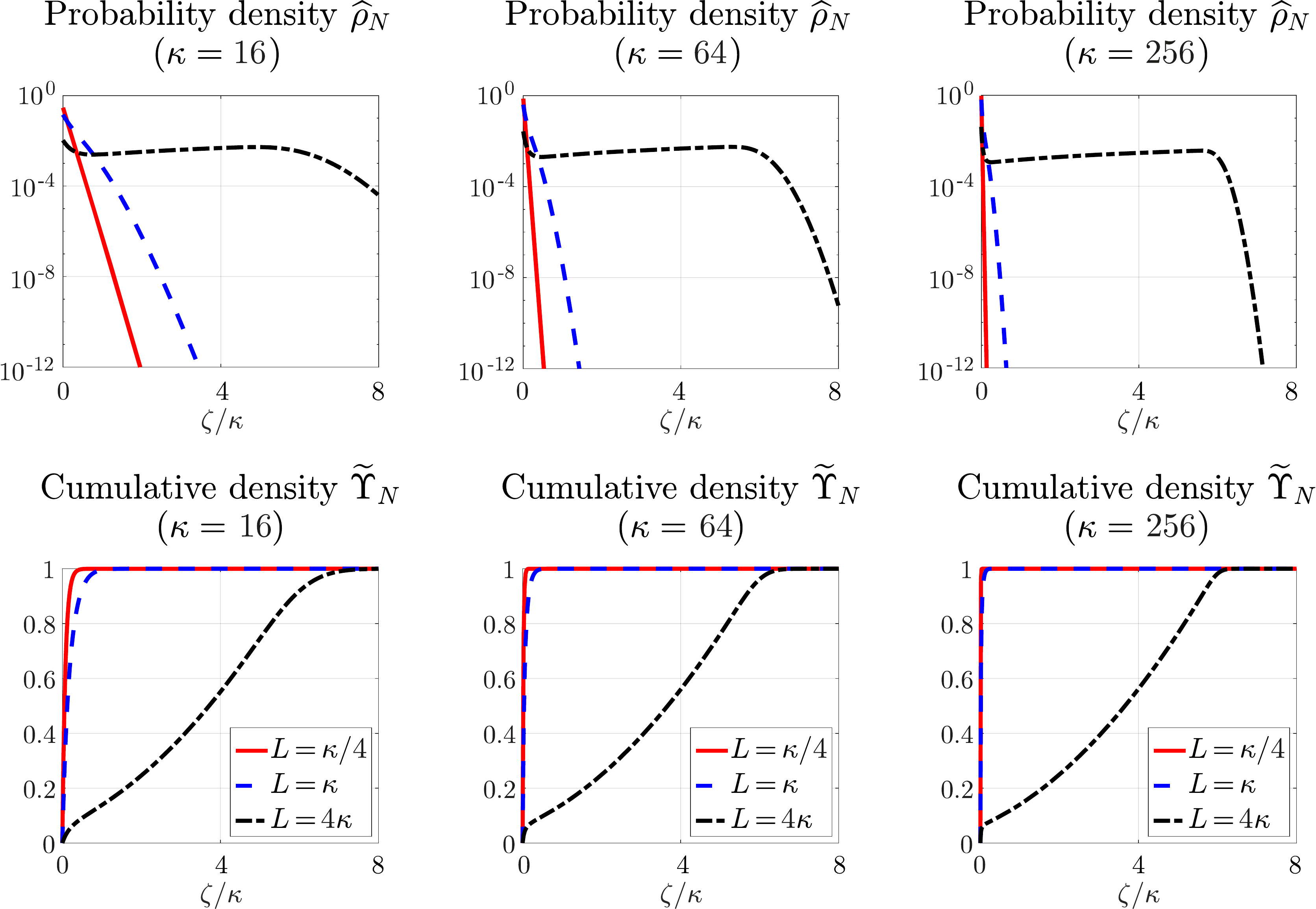}
\caption{Sampling density functions $\widehat{\rho}_N$ in (\ref{rho zeta density}) (top) and $\widetilde{\Upsilon}_N$ in (\ref{approx zeta cumulative}) (bottom) with respect to the $\kappa$-scaled evanescence parameter $\zeta$.}
\label{figure 5.1}
\end{figure}

\subsection{Inversion transform sampling}\label{s:ITS}

Similar to \cite[sect.~8.1]{parolin-huybrechs-moiola}, $P \propto N(L)$ samples in $Y$ are generated via the \emph{Inversion Transform Sampling} (ITS) technique suggested by \cite[sect.~5.2]{Cohen_Migliorati}.
We propose two alternative versions:
\begin{itemize}
\item The first one involves generating sampling sets in $[0,1]^4$ converging (in a suitable sense) to the uniform distribution $\mathcal{U}_{[0,1]^4}$ as $P$ goes to infinity, namely
\begin{equation*}
\{\mathbf{z}_p\}_{p=1}^P, \qquad \text{with} \qquad \mathbf{z}_p=(z_{p,\theta_1},z_{p,\theta_2},z_{p,\psi},z_{p,\zeta}) \in [0,1]^4, \qquad p=1,...,P,
\label{points in [0,1]}
\end{equation*}
and mapping them back to $Y$, to obtain sampling sets that converge to $\rho_N$ as $P \rightarrow \infty$, i.e.\
\begin{equation*}
\{\mathbf{y}_p\}_{p=1}^P, \qquad \text{with} \qquad \mathbf{y}_p=\left(\arccos{(1-2z_{p,\theta_1})},2\pi z_{p,\theta_2},2\pi z_{p,\psi},\Upsilon_N^{-1}(z_{p,\zeta})\right) \in Y.
\label{points back in Y}
\end{equation*}

\item The second one exploits the fact that the parameters $\boldsymbol{\theta}=(\theta_1,\theta_2)$ should be distributed in such a way that  the resulting points on the sphere converge to the uniform distribution $\mathcal{U}_{\mathbb{S}^2}$ as $P$ goes to infinity.
This enables us to employ the spherical coordinates $\{(\widehat{\theta}_{p,1},\widehat{\theta}_{p,2})\}_{p=1}^P$ of nearly-uniform point sets on $\mathbb{S}^2$ (e.g.\ extremal systems \cite{marzo-cerda,reimer,sloan-womersley1,sloan-womersley2} in our numerical experiments) to determine the wave propagation directions, restricting the ITS technique to the evanescence parameters $(\psi,\zeta)$.
Thus, we only need to generate sampling sets in $[0,1]^2$ that converge to the uniform distribution $\mathcal{U}_{[0,1]^2}$ as $P \rightarrow \infty$, i.e.\
\begin{equation}
\{\mathbf{z}_p\}_{p=1}^P, \qquad \text{with} \qquad \mathbf{z}_p=(z_{p,\psi},z_{p,\zeta}) \in [0,1]^2, \qquad p=1,...,P.
\label{points in [0,1] 2}
\end{equation}\
Then, we map them back to the evanescence domain $[0,2\pi)\times[0,+\infty)$, obtaining
\begin{equation}
\{\mathbf{y}_p\}_{p=1}^P, \qquad \text{with} \qquad \mathbf{y}_p=\left(\widehat{\theta}_{p,1},\widehat{\theta}_{p,2},2\pi z_{p,\psi},\Upsilon_N^{-1}(z_{p,\zeta})\right) \in Y.
\label{points back in Y 2}
\end{equation}
\end{itemize}
Computing the inverse $\Upsilon_{N}^{-1}$ can be achieved through elementary root-finding methods.
In our numerical experiments, we use the bisection method due to its simplicity and reliability.

\begin{figure}[t!]
\centering
\begin{tabular}{ccccc}
\multicolumn{2}{c}{$L=\kappa$} & & \multicolumn{2}{c}{$L=4\kappa$}\\
\includegraphics[trim=125 220 125 220,clip,width=.2\linewidth]{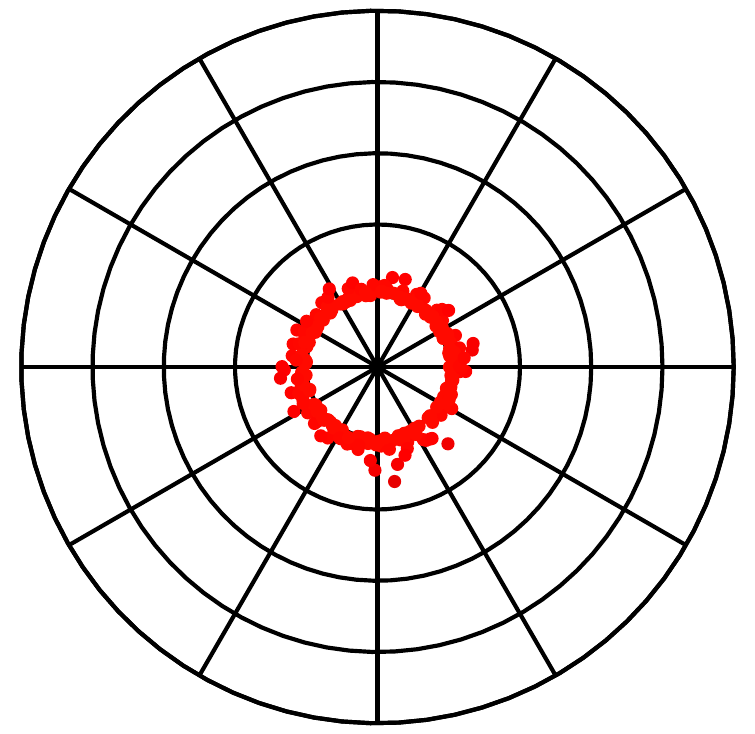} & \includegraphics[trim=125 220 125 220,clip,width=.2\linewidth]{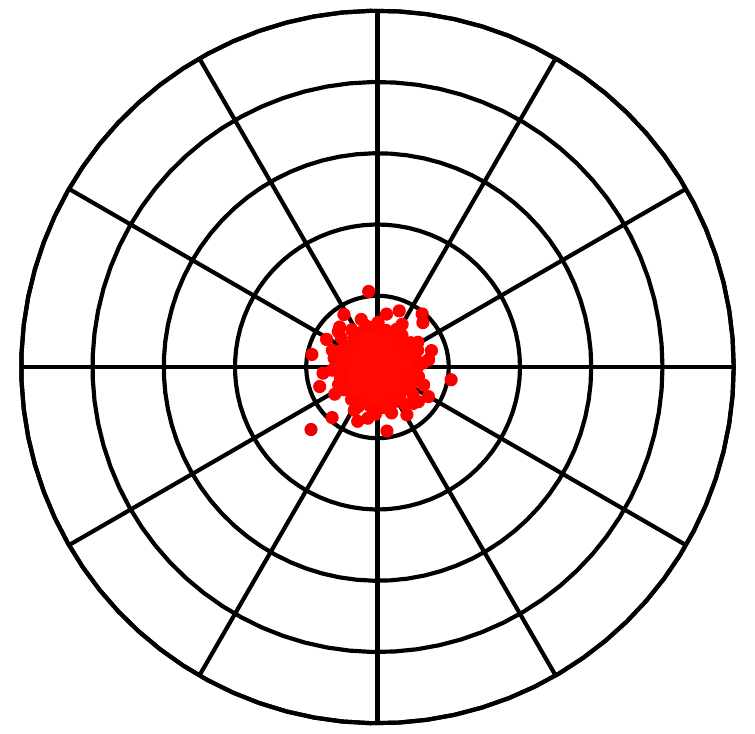} & \qquad \qquad & \includegraphics[trim=125 220 125 220,clip,width=.2\linewidth]{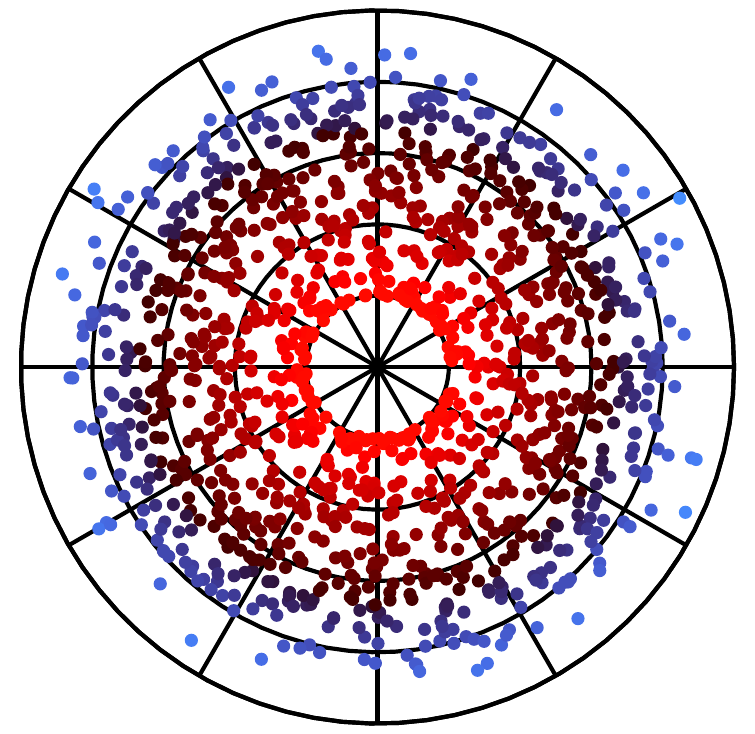} & 
\includegraphics[trim=125 220 125 220,clip,width=.2\linewidth]{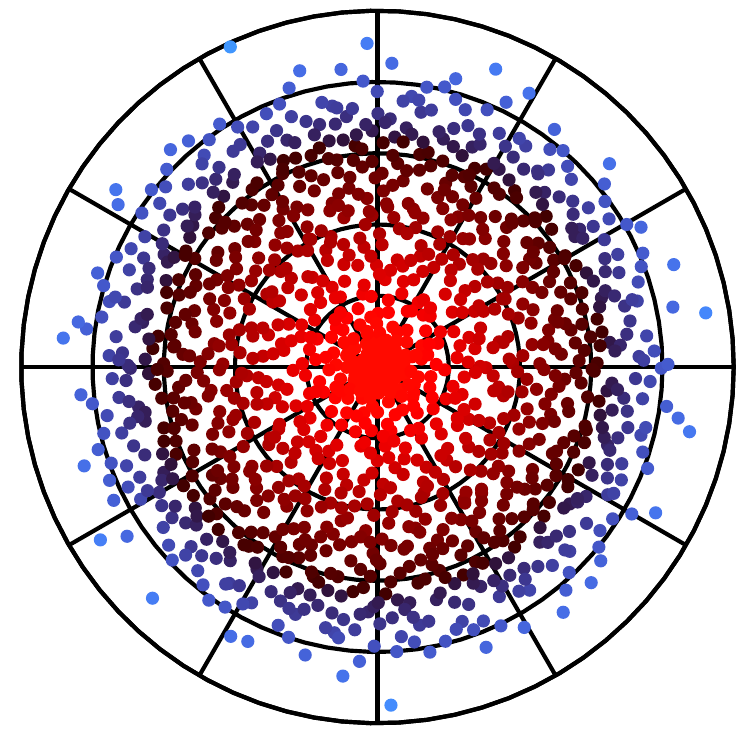}\\
\end{tabular}
\caption{$P=N(L)$ samples for $L$ equal to $\kappa$ (left) and $4\kappa$ (right).
For each value of $L$, on the left side we plot $\{(|\Re(\mathbf{d}(\mathbf{y}_p))|,\widehat{\theta}_{p,2})\}_{p=1}^P$, and on the right one $\{(|\Im(\mathbf{d}(\mathbf{y}_p))|,2\pi z_{p,\psi})\}_{p=1}^P$; see (\ref{clearer behavior}).
The points are colored according to the square root of $\mu_N$ in (\ref{rho density}). Wavenumber $\kappa=16$.}
\label{figure 5.2}
\end{figure}

\paragraph{Sampling strategies}
In analogy with \cite[sect.~8.1]{parolin-huybrechs-moiola}, here we briefly review some sampling methods, which differ by how we generate the distribution $\{\mathbf{z}_p\}_{p=1}^P$ in $[0,1]^n$, for $n \in \{2,4\}$:
\begin{itemize}
\item \emph{Deterministic sampling}: the samples are a Cartesian product of $n$ sets of equispaced points with equal number of points in each directions. 
\item \emph{Quasi-random sampling}: the samples correspond to quasi-random low-discrepancy sequences, such as Sobol sequences \cite{Bratley,Joe}, for instance.
\item \emph{Random sampling}: the samples are generated randomly according to the product of $n$ uniform distributions $\mathcal{U}_{[0,1]}$.
\end{itemize}
In the following numerical experiments, we use a sampling strategy that combines quasi-random Sobol sequences \cite{Bratley,Joe} and extremal point systems \cite{marzo-cerda,reimer,sloan-womersley1,sloan-womersley2}, according to (\ref{points in [0,1] 2}) and (\ref{points back in Y 2}).
Examples of EPW approximation sets constructed in this way are depicted in Figure~\ref{figure 5.2}.
As anticipated, for smaller values of $L$, which correspond to the regime where
PPWs provide a sufficient approximation, propagation vectors $\Re(\mathbf{d})$ group around
$\mathbb{S}^2$ and evanescent vectors $\Im(\mathbf{d})$ cluster near the origin.
When $L > \kappa$,
the target space $\mathcal B_L$ includes Fourier modes with finer oscillations along $\partial B_1$ and strong radial decay away from $\partial B_1$, whose approximation requires EPWs with comparable properties
so both $|\Re(\mathbf{d})|$ and
$|\Im(\mathbf{d})|$ increase.
This aligns with Figure~\ref{figure 5.1} results.

\paragraph{Approximation sets}
Two approximation sets can now be constructed, one consisting of sampling functionals in $\mathcal{A}$ and the other of EPWs in $\mathcal{B}$, namely
\begin{equation}
\Psi_{L,P}:=\,\,\Biggl\{\sqrt{\frac{\mu_N(\mathbf{y}_{p})}{P} }K_{\mathbf{y}_{p}}\Biggr\}_{p=1}^P \subset \mathcal{A} \qquad   \rightleftarrowss{\Tcont{Y}{EW}}{\left(\Tcont{Y}{EW}\right)^{-1}} \qquad \Phi_{L,P}:=\Biggl\{\sqrt{\frac{\mu_N(\mathbf{y}_{p})}{P} }\textup{EW}_{\mathbf{y}_{p}}\Biggr\}_{p=1}^P \subset \mathcal{B}.
\label{isomorphism approximation sets}
\end{equation}
Similarly to \cite[Conj.\ 7.1]{parolin-huybrechs-moiola}, we conjecture that
the evaluation functional sequence $\{\Psi_{L,P}\}_{L \geq 0, P \in \mathbb{N}}$ is a
stable discrete approximation for the space of Herglotz densities
$\mathcal{A}$, hence the \textup{EPW} sequence $\{\Phi_{L,P}\}_{L\geq 0,P \in
\mathbb{N}}$ is a stable discrete approximation for the space of Helmholtz solutions in
the ball
$\mathcal{B}$. This assertion is supported by the numerical results
in section~\ref{sec:numerical results}.

\paragraph{Cumulative density function approximation}
The ITS technique requires to invert the cumulative density function $\Upsilon_{N}$.
Although this can be easily done for every $(\boldsymbol{\theta},\psi)$, the numerical evaluation of the cumulative probability distribution (\ref{cumulative distribution}) is cumbersome 
to implement, costly to run and numerically unstable. In fact, due to (\ref{vector P}), (\ref{weight}), (\ref{1 lemma 5.3}), and (\ref{rho density}), we should compute:
\begin{align*}
\Upsilon_{N}(\zeta)&=\int_{0}^{\zeta}\int_{\Theta}\int_0^{2\pi}\frac{w(\eta)}{N \mu_N(\boldsymbol{\theta},\psi,\eta)}\,\textup{d}\psi\textup{d}\sigma(\boldsymbol{\theta})\textup{d}\eta=2\pi|\mathbb{S}^2|\frac{1}{N}\sum_{\ell=0}^L\alpha_{\ell}^2\int_{0}^{\zeta}\left|\mathbf{P}_{\ell}(\eta)\right|^2w(\eta)\,\textup{d}\eta\\
&=\frac{1}{N}\sum_{\ell=0}^L(2\ell+1)\frac{\sum_m\int_0^{\zeta}\left[\gamma_{\ell}^m P_{\ell}^m(\eta/2\kappa+1)\right]^2\eta^{1/2}e^{-\eta}\,\textup{d}\eta}{\sum_m\int_0^{\infty}\left[\gamma_{\ell}^m P_{\ell}^m(\eta/2\kappa+1)\right]^2\eta^{1/2}e^{-\eta}\,\textup{d}\eta}. \numberthis \label{upsilon}
\end{align*}
Our proposal is thus to rely on the following approximation:
\begin{equation}
\widetilde{\Upsilon}_N(\zeta):=1-\frac{1}{N}\sum_{\ell=0}^L(2\ell+1)\frac{Q\left(2\ell+3/2,2\kappa+\zeta \right)}{Q\left(2\ell+3/2,2\kappa\right)} \qquad \forall \zeta \in [0,+\infty),
\label{approx zeta cumulative}
\end{equation}
where $Q$ is the \emph{normalized upper incomplete Gamma function} \cite[eq.~(8.2.4)]{nist}.
The approximation $\widetilde\Upsilon_N$ is obtained from \eqref{upsilon} by reasoning similarly to \eqref{3 lemma 5.3}--\eqref{2 lemma 5.3}: approximating $P_\ell^m$ with a monomial and controlling $\eta^{1/2}$ with $(\eta+2\kappa)^{1/2}$; the details are expounded in \cite[sect.~6.3]{galante}.
Compared to (\ref{upsilon}), this concise explicit expression is better suited for numerical evaluation.
The function $\widetilde{\Upsilon}_N$ maintains the following essential properties:
  $0 \leq \widetilde{\Upsilon}_N(\zeta) \leq 1$,
  $\widetilde{\Upsilon}_N(0) = 0$ and
  $\lim_{\zeta \to \infty}\widetilde{\Upsilon}_N(\zeta) = 1$.
Some cumulative density functions $\widetilde{\Upsilon}_N$ are shown in the bottom row of Figure
\ref{figure 5.1}. When $\mathcal{A}_L$ only consists of elements
related to the propagative regime ($L\leq\kappa$), the cumulative distributions
$\widetilde{\Upsilon}_N$ resemble step functions, especially for large wavenumbers.
However, for $L > \kappa$, these functions become more complex. Thus, while it
is safe to choose only PPWs for $L \leq \kappa$, selecting EPWs becomes a
non-trivial task for $L > \kappa$.

\paragraph{Normalization coefficient approximation}
The normalization of the EPWs in $\Phi_{L,P}$ involves computing the $N\!$-term Christoffel function $\mu_N$, which, according to (\ref{rho density}), depends on both the normalization coefficients $\alpha_{\ell}$ in (\ref{a tilde definizione}) and $\mathbf{P}_{\ell}(\zeta)$ in (\ref{vector P}). While the latter can be computed through recurrence relations \cite[eqs.~(14.7.15) and (14.10.3)]{nist}, the former presents numerical challenges due to the integral in (\ref{1 lemma 5.3}). Once again, we can address this issue by relying on \cite[eq.~(6.19)]{galante} and employing the approximation:
\begin{equation}\label{eq:AlphaApprox}
\alpha_{\ell} \approx \frac{\kappa^{\ell}}{e^{\kappa}}\left[\frac{2\sqrt{\pi}}{\ell!}\Gamma\left(\ell+\frac{1}{2}\right)\Gamma\left(2\ell+\frac{3}{2},2\kappa \right)\right]^{-1/2} \qquad \forall \ell \geq 0,
\end{equation}
where we introduced the \emph{upper incomplete Gamma function} \cite[eq.~(8.2.2)]{nist}.
Alternatively, simpler normalization options are possible, such as using the $L^{\infty}$-norm on the unit ball.

\paragraph{Parameter tuning}
The construction of the sets $\Phi_{L,P}$ requires choosing just two parameters, $L$ and $P$:
\begin{itemize}
\item $L$ is the Fourier truncation level. As $L$ increases, the accuracy of the approximation of $u$ by $u_L$, or similarly of $v=\left(\Tcont{Y}{EW}\right)^{-1}u$ by $v_L$, improves.
\item $P$ is the EPW approximation space dimension.
When $L$ is fixed, increasing $P$ allows to enhance the accuracy of the approximation of  
$u_L$ (or $v_L$) by elements of $\Phi_{L,P}$ (or $\Psi_{L,P}$).
The empirical evidence detailed in section~\ref{sec:numerical results} validates this conjecture, showing experimentally that $P$ should scale linearly with $N(L)$, with a moderate proportionality constant.
\end{itemize}
Regarding the additional parameters discussed in section~\ref{subsec:regularized boundary sampling method}, namely the number $S$ of sampling points on $\partial B_1$ and the SVD regularization parameter $\epsilon$, in our numerical experiments we choose $S=\lceil \sqrt{2P} \rceil^2$ and $\epsilon=10^{-14}$ respectively, in accordance with \cite[sect.~6.1]{galante}.

\section{Numerical results}\label{sec:numerical results}

The numerical experiments presented in this section (see \cite[Ch.\ 7]{galante}
for more results) show the stability and accuracy achieved by the EPW sets
constructed in the previous section.
First, we consider the problem of the approximation of a spherical wave by either PPWs or EPWs,
confirming in particular the instability result of Lemma \ref{Lemma 4.4} and showing the radical improvement offered by EPWs.
Then, we explore the near-optimality of the EPW set by reconstructing
random-expansion solutions and analyzing the error convergence.
Further numerical results show that our recipe is very effective also for non-spherical domains.
For the sake of simplicity, we opted for the boundary sampling strategy described above to investigate numerically the EPW approximation properties.
However, alternative numerical methods (TDG, UWVF, \dots) can be employed together with EPWs.

\subsection{Plane wave stability}

Let us examine the approximation of spherical waves by the PPW and EPW approximation sets introduced in (\ref{plane waves approximation set}) and (\ref{isomorphism approximation sets}), respectively.
We will focus only on the case $m=0$, since the numerical results do not differ significantly varying the order $|m|\leq \ell$, as shown in \cite[Fig.\ 3.4 and Fig.\ 7.1]{galante}.
Once the approximation set is fixed, the same sampling matrix $A$, defined in (\ref{A matrix definition}), is used to approximate all the $b_{\ell}^0$ for $0\leq \ell \leq 5\kappa$.

\begin{figure}[t!]
\centering
\begin{subfigure}{0.46\linewidth}
\includegraphics[width=\linewidth]{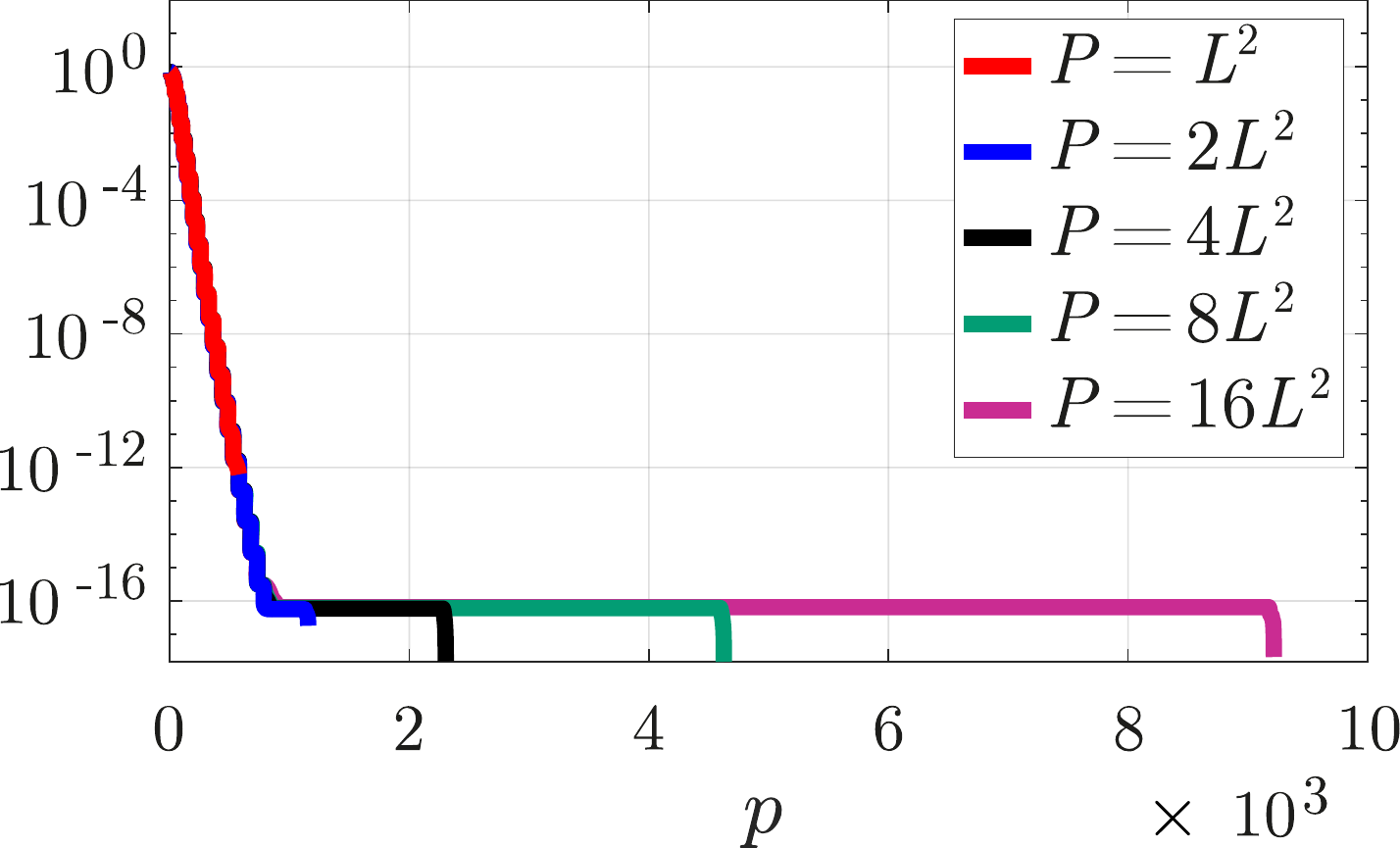}
\end{subfigure}
\hfill
\begin{subfigure}{0.46\linewidth}
\includegraphics[width=\linewidth]{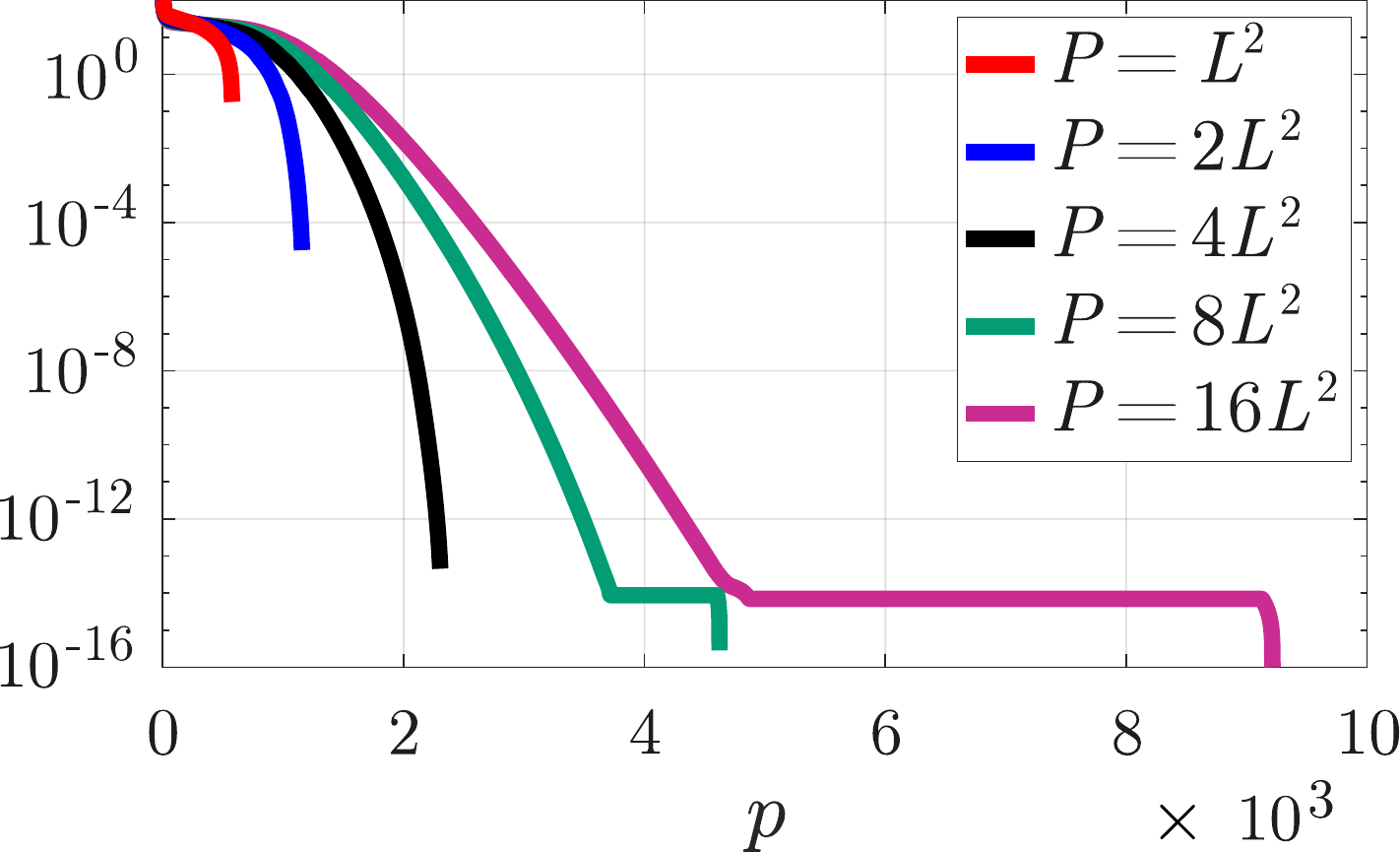}
\end{subfigure}
\caption{Singular values $\{\sigma_p\}_{p}$ of $A$ using PPWs (left) and EPWs (right). Wavenumber $\kappa=6$.
Raising the number $P$ of waves, 
the $\epsilon$-rank of the matrix increases for EPWs but not for PPWs.}
\label{figure 6.1}
\end{figure}

The matrix $A$ is known to be ill-conditioned: its condition number increases exponentially with the number of plane waves, a trend that can be inferred from Figure~\ref{figure 6.1}. This phenomenon is not unique to the sampling method and is observed in other experiments, see \cite[sect.~4.3]{hiptmair-moiola-perugia1}.

\begin{figure}[t!]
\centering
\begin{subfigure}{0.94\linewidth}
\includegraphics[width=\linewidth]{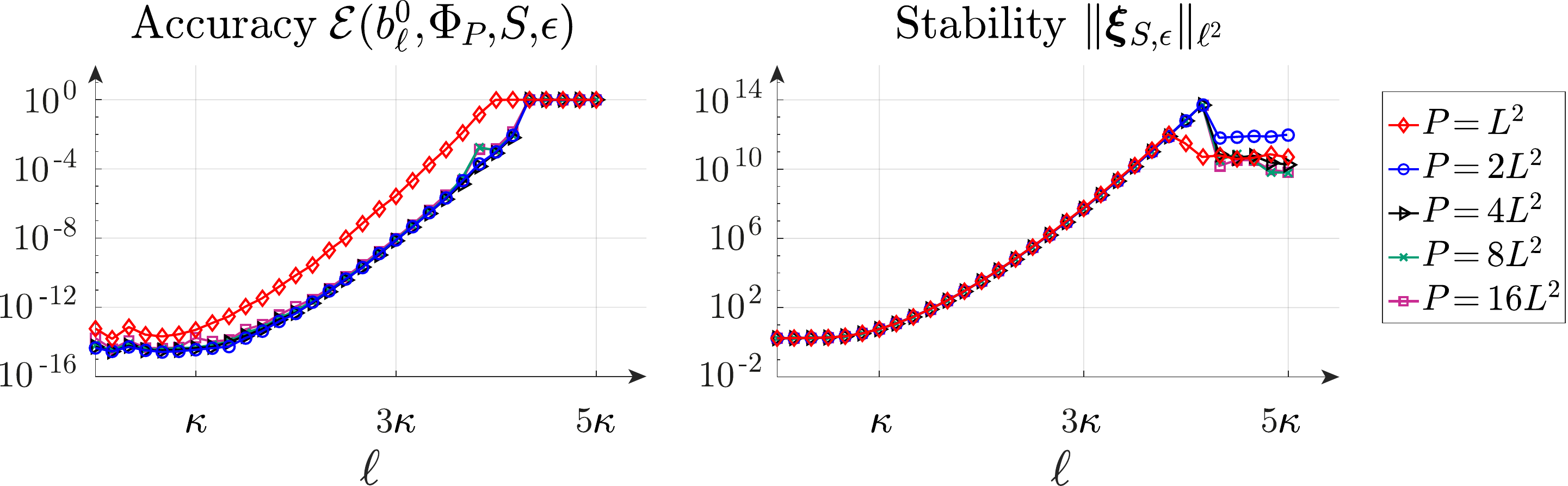}
\vspace{-1mm}
\end{subfigure}
\begin{subfigure}{0.94\linewidth}
\includegraphics[width=\linewidth]{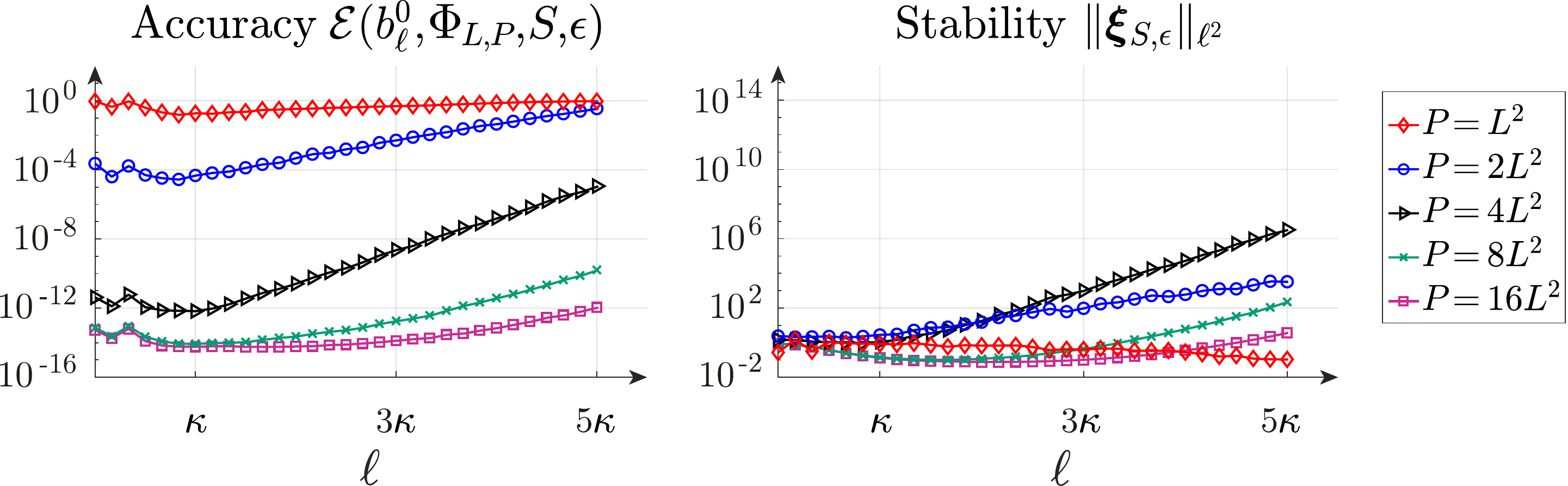}
\end{subfigure}
\caption{Accuracy $\mathcal{E}$ (\ref{relative residual}) (left) and stability $\|\boldsymbol{\xi}_{S,\epsilon}\|_{\ell^2}$ (right) of the approximation of spherical waves $b_{\ell}^0$ by both PPWs (top) and EPWs (bottom). Truncation at $L=4\kappa$ and wavenumber $\kappa=6$.}
\label{figure 6.2}
\end{figure}

In this setting, more useful than the condition number is 
the concept of $\epsilon$-rank, i.e.\ the number of singular values of $A$ larger than $\epsilon\sigma_{\max}$, which corresponds to the dimension of the numerically achievable approximation space:
\begin{align*}
\epsilon\text{-rank of }A\, : &=\# \{\sigma_p\ge \epsilon\sigma_{\max}\}
=\mathrm{dim}\,
  \big\{\mathbf{b} \in \mathbb{C}^{S} \;|\; 
    \exists \boldsymbol{\mu} \in \mathbb{C}^{|\Phi_{P}|},\ 
    \mathbf{b} = A\boldsymbol{\mu},\ 
    \|\boldsymbol{\mu}\|_{\ell^{2}} \leq
    \textstyle\frac1{\epsilon\sigma_{\max}}\|\mathbf{b}\|_{\ell^{2}}
  \big\}  \\
&=\mathrm{dim}\, \big\{
    u \in \operatorname{span} \Phi_{P} \;|\; 
    \exists \boldsymbol{\mu} \in \mathbb{C}^{|\Phi_{P}|},\ 
    u = \Tdisc{P}{PW}\boldsymbol{\mu},\ 
    \|\boldsymbol{\mu}\|_{\ell^{2}} \leq 
    \textstyle\frac1{\epsilon\sigma_{\max}} \|\mathbf{b}(u)\|_{\ell^{2}}
  \big\},
\end{align*}
where \(\mathbf{b}(u)\in \mathbb C^S\) is the boundary sampling vector of $u$, as in~\eqref{A matrix definition}.

The approximation results are shown in Figure~\ref{figure 6.2}. The left panels show the relative residual $\mathcal{E}$ in (\ref{relative residual}) as a measure of the approximation accuracy. On the right panels, the coefficient size $\|\boldsymbol{\xi}_{S,\epsilon}\|_{\ell^2}$ indicates the stability of the approximations.

\paragraph{Propagative plane waves}
Let us focus on the PPW approximation sets $\Phi_P$ in (\ref{plane waves approximation set}).

Figure~\ref{figure 6.1} shows that the $\epsilon$-\emph{rank} of the matrix $A$ does not increase as $P$ is raised: more PPWs do not lead to the stable approximation of more Helmholtz solutions.
\vspace{.5cm}

In Figure~\ref{figure 6.2} (top), three distinct regimes are observed:
\begin{itemize}
\item For the propagative modes, i.e.\ for spherical waves with mode number $\ell \leq \kappa$, the approximation is accurate ($\mathcal{E}<10^{-13}$) and the size of the coefficients is moderate ($\|\boldsymbol{\xi}_{S,\epsilon}\|_{\ell^2} < 10$).
\item For mode numbers $\ell$ larger than the wavenumber $\kappa$, the norm of the coefficient vector grows exponentially in $\ell$ and the accuracy decreases proportionally.
\item At a certain point (roughly between $\ell=4\kappa$ and $\ell=5\kappa$ in this numerical experiment), the exponential growth of the coefficients completely destroys the stability of the approximation and we are unable to approximate the target $b_{\ell}^0$ with any significant accuracy.
\end{itemize}
As in \cite[sect.~4.4]{parolin-huybrechs-moiola}, increasing $P$ does not enhance accuracy beyond a certain threshold. 
Despite the matrix $A$ being extremely ill-conditioned, accuracy for propagative modes $\ell \leq \kappa$ reaches machine precision. 
On the other hand, evanescent modes with larger mode numbers $\ell \geq 4\kappa$ maintain an error of $\mathcal{O}(1)$, thanks to the simple regularization technique outlined in section~\ref{subsec:regularized boundary sampling method}.
In line with Theorem \ref{Theorem 4.5}, any regularization technique can mitigate but not eliminate the inherent instability of Trefftz methods employing PPWs. Even with regularization, achieving accurate approximation of evanescent modes within a given floating-point precision remains unattainable.

\paragraph{Evanescent plane waves}
Now, let us consider the EPW approximation sets $\Phi_{L,P}$ in (\ref{isomorphism approximation sets}) instead. In Figure~\ref{figure 6.2} (bottom), we fix the truncation parameter at $L=4\kappa$. 
With enough waves, i.e.\ $P$ large enough, all modes $\ell \leq L=4\kappa$ are approximated to near machine precision. This encompasses both propagative modes $\ell \leq \kappa$, which were already well-approximated using only PPWs, and evanescent modes $\kappa < \ell \leq L=4\kappa$, for which purely PPWs provided poor or no approximation.
Moreover, higher-degree modes $L=4\kappa < \ell \leq 5\kappa$ are also accurately approximated.
The coefficient norms $\|\boldsymbol{\xi}_{S,\epsilon}\|_{\ell^2}$ in the approximate expansions are moderate, differing from the propagative case.
From Figure~\ref{figure 6.1}, one understands that if $P$ is large enough, the
condition number of the matrix $A$ is comparable for both PPWs and EPWs.
Improved accuracy for evanescent modes does not arise from better conditioning
but from a higher $\epsilon$-rank: from less than $10^3$ for PPWs to around $5
\times 10^3$ for EPWs in the case $P=16L^2$. Raising the truncation parameter
$L$ allows to increase the $\epsilon$-rank of $A$: more solutions can be approximated with bounded coefficients by the EPWs.

\subsection{Approximation of random-expansion solutions}\label{subsec:approximation of random-expansion solution}

\begin{figure}
\centering
\includegraphics[width=0.86\linewidth]{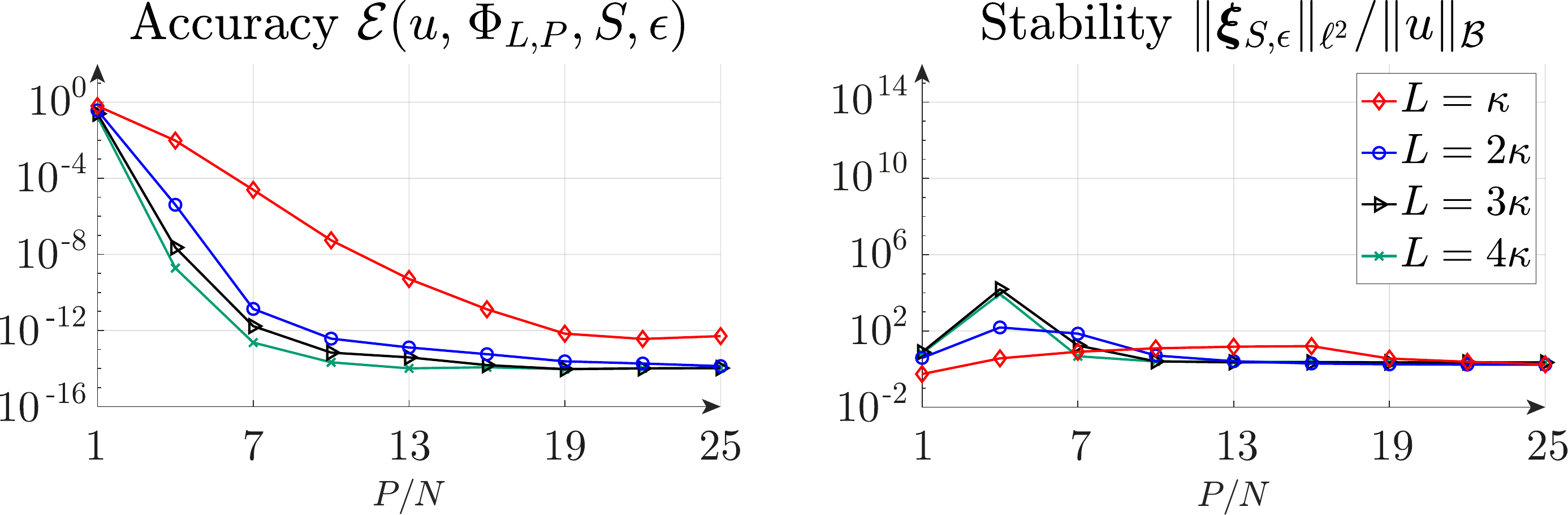}
\caption{Accuracy $\mathcal{E}$ (\ref{relative residual}) (left) and stability $\|\boldsymbol{\xi}_{S,\epsilon}\|_{\ell^2}/\|u\|_{\mathcal{B}}$ (right) of the approximation of solution $u$ in (\ref{random-expansion solution}) by $P$ EPWs. The horizontal axis represents the ratio $P/N(L)$, where $N(L)=(L+1)^2$ is the dimension of the space $\mathcal{B}_L$, to which $u$ belongs. Wavenumber $\kappa=6$.}
\label{figure 6.3}
\end{figure}

We test the numerical procedure presented in section~\ref{sec:numerical recipe} by reconstructing a solution of the form
\begin{equation}
  u:=\sum_{\ell=0}^{L}\sum_{m=-\ell}^{\ell}\widehat{u}_{\ell}^m[\max\{1,\ell-\kappa\}]^{-1}b_{\ell}^m \in \mathcal{B}_L,
\label{random-expansion solution}
\end{equation}
where the coefficients $\widehat{u}_{\ell}^m$ are independent, normally-distributed random numbers.
This is a challenging scenario, as the coefficients of any element in $\mathcal{B}$ decay in modulus as $o(\ell^{-1})$ for large $\ell$.

\begin{figure}[htb]
\centering
\begin{tabular}{cc}
\includegraphics[trim=120 120 120 120,clip,width=.28\textwidth]{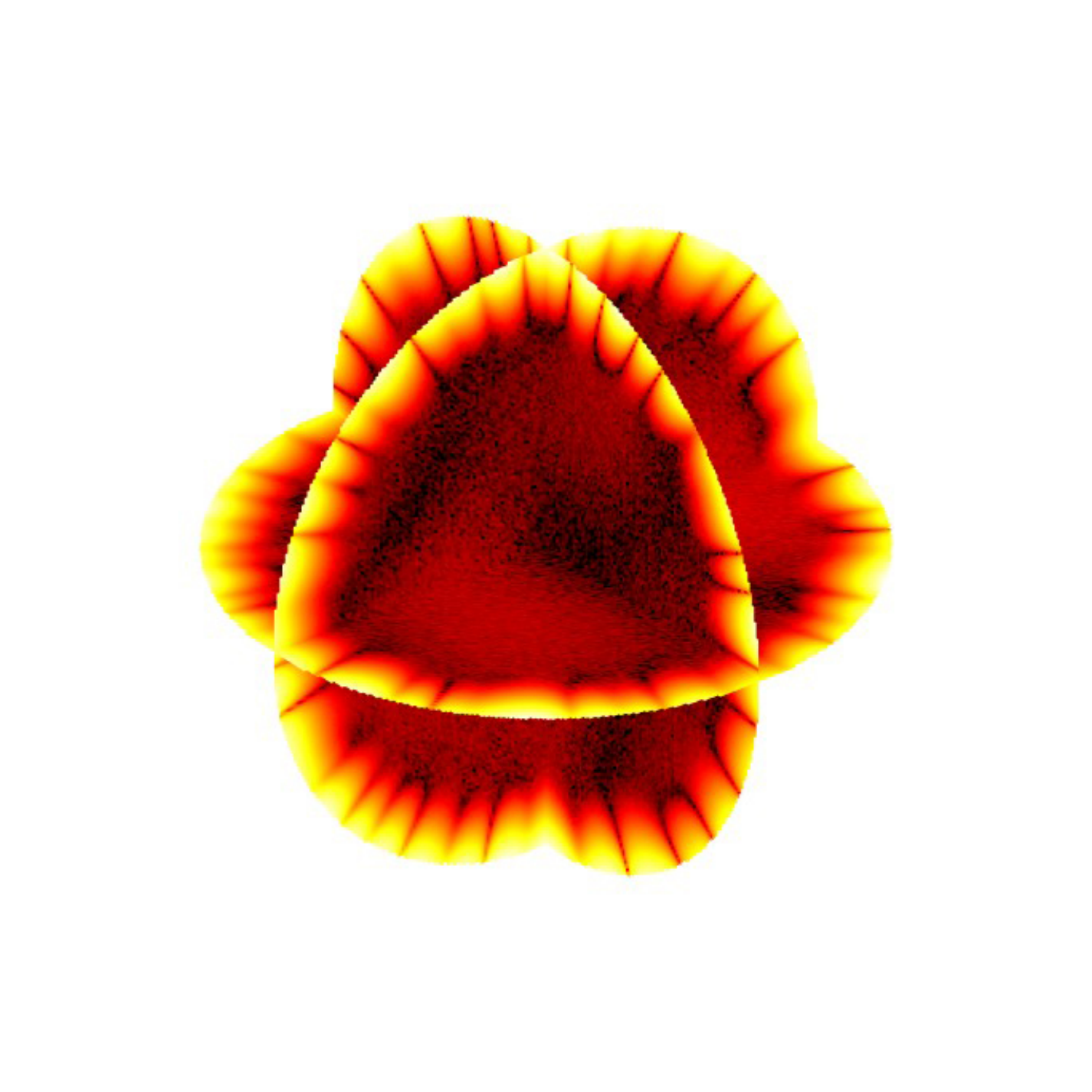} \qquad & \qquad
\includegraphics[trim=120 120 120 120,clip,width=.28\textwidth]{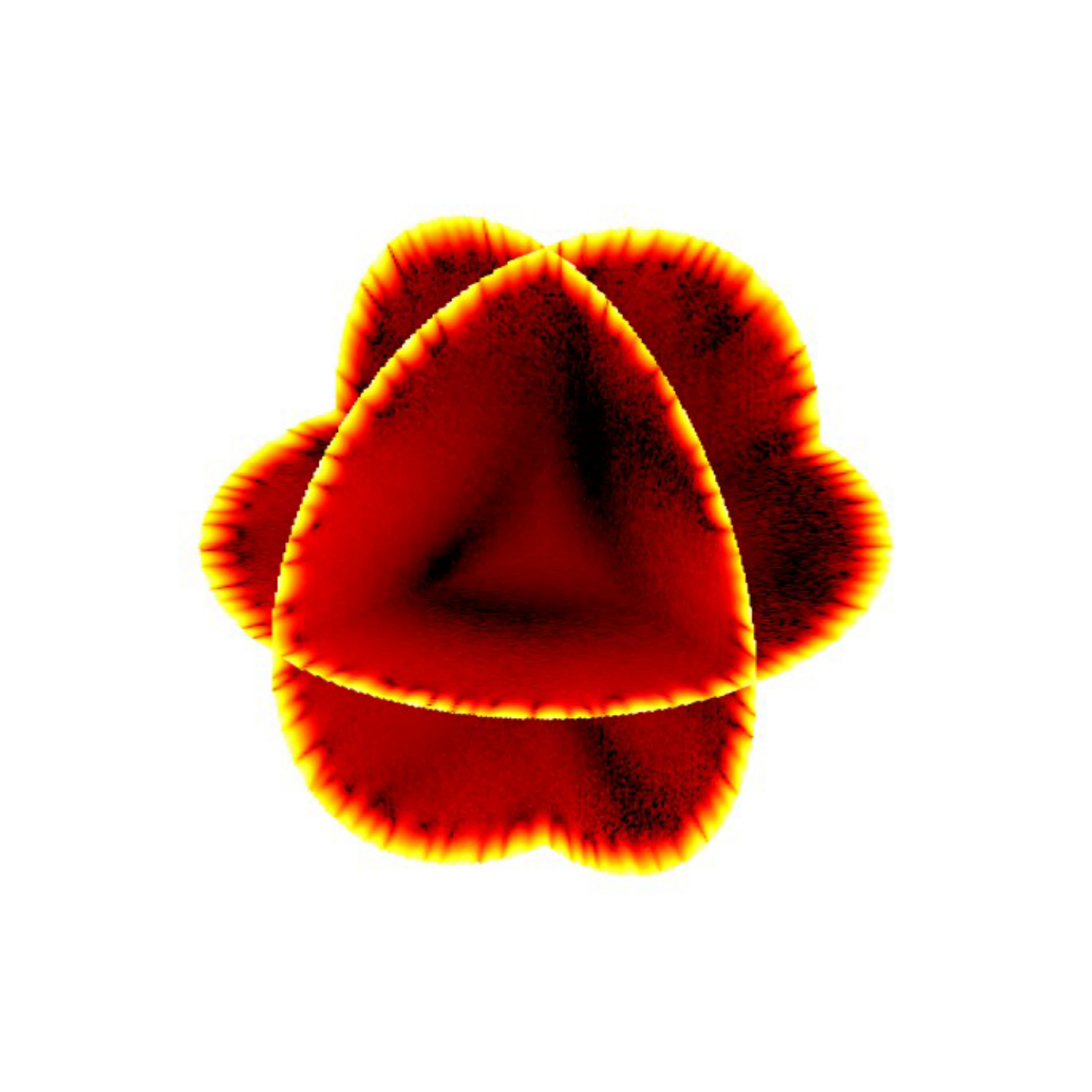}\\
\includegraphics[width=.35\textwidth]{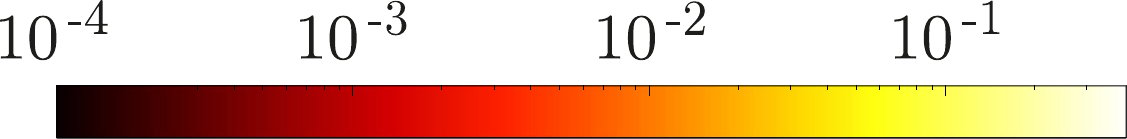} \qquad & \qquad
\includegraphics[width=.35\textwidth]{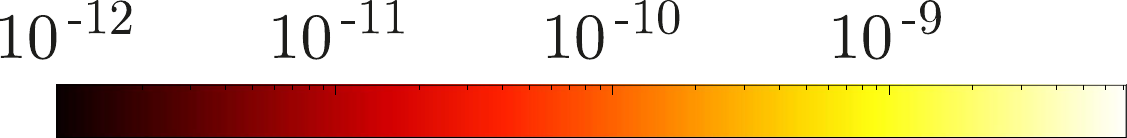} \\
\includegraphics[trim=120 120 120 120,clip,width=.28\textwidth]{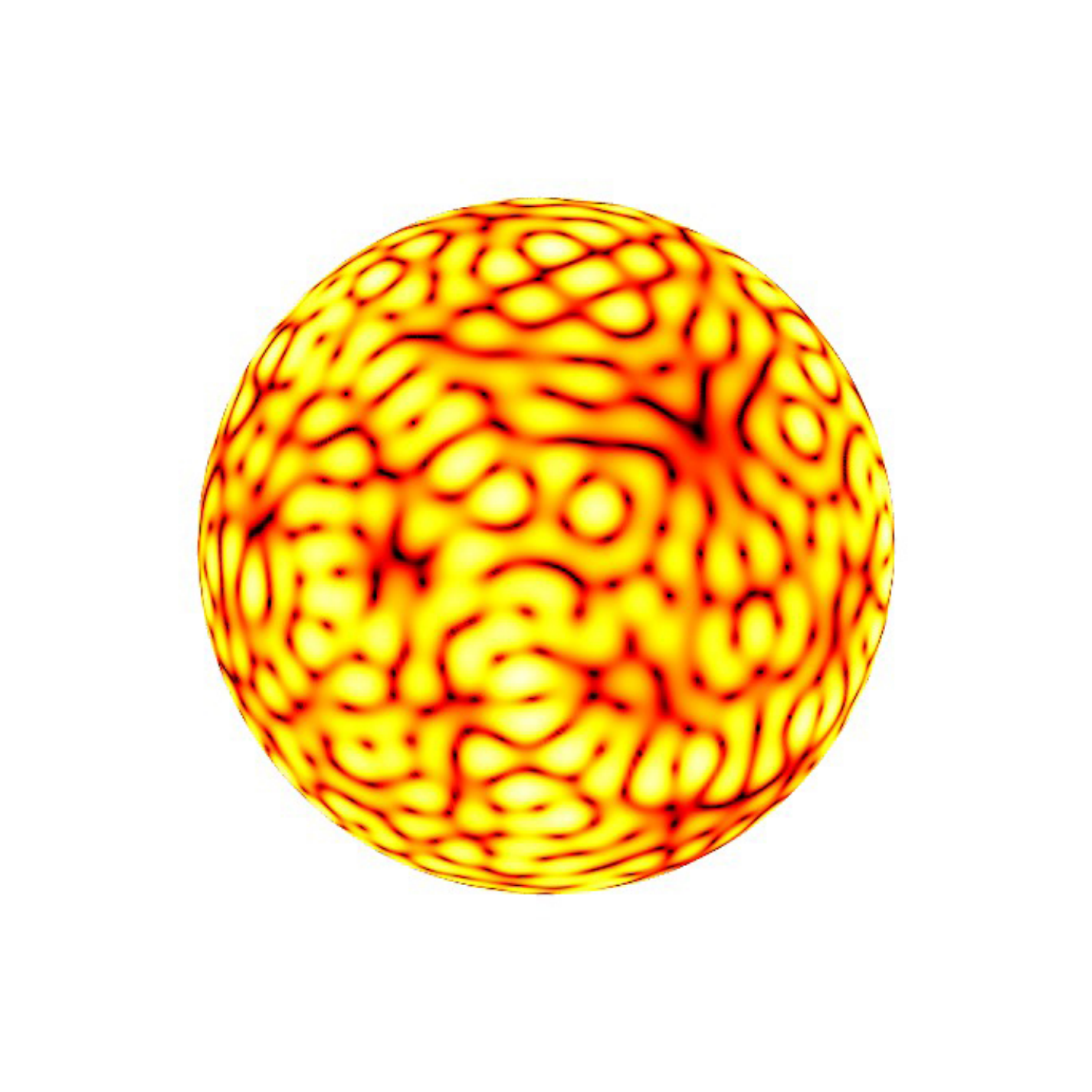} \qquad & \qquad
\includegraphics[trim=120 120 120 120,clip,width=.28\textwidth]{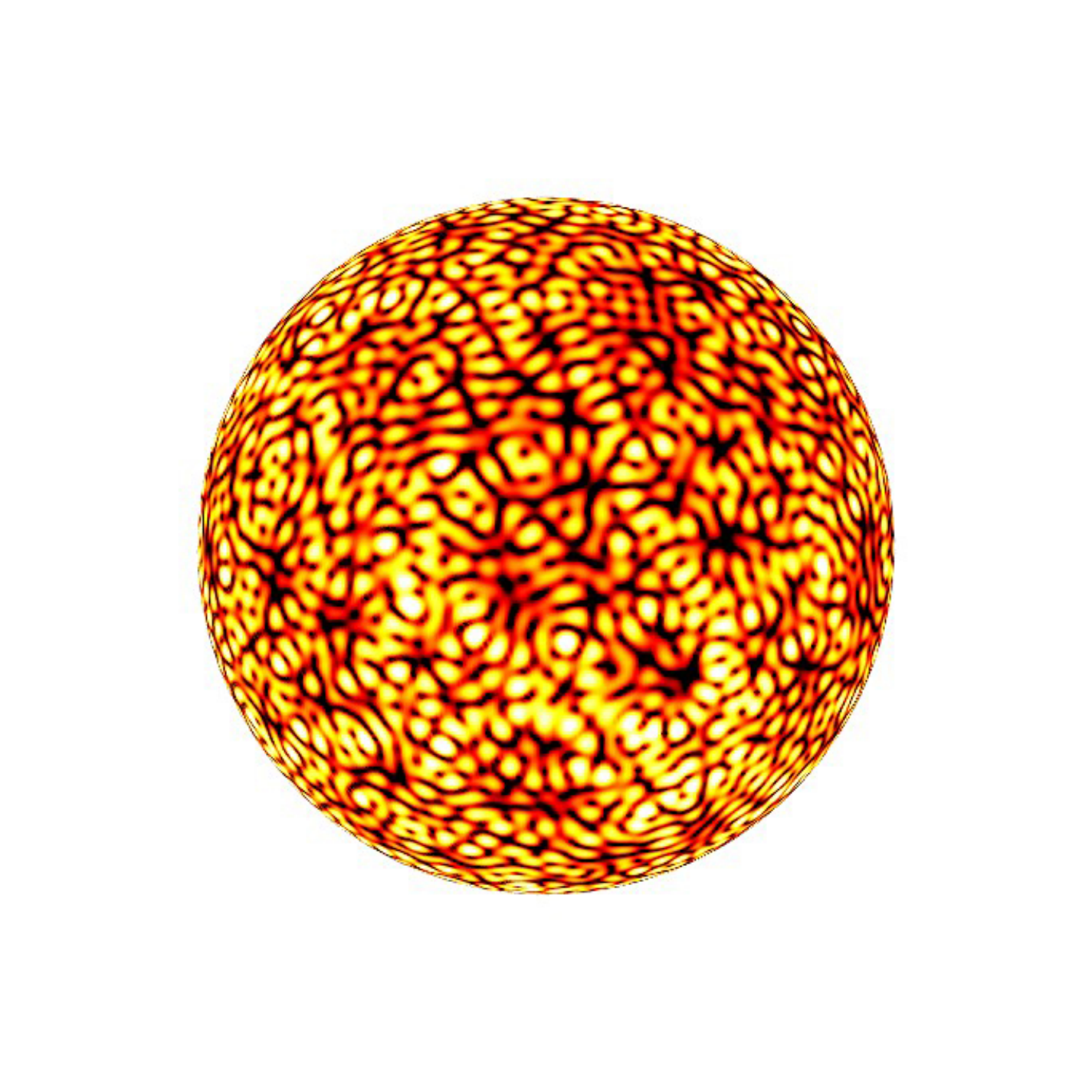}\\
\includegraphics[width=.35\textwidth]{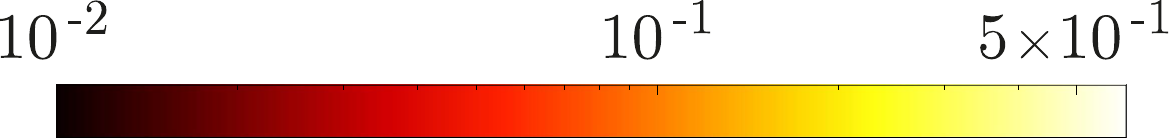} \qquad & \qquad
\includegraphics[width=.35\textwidth]{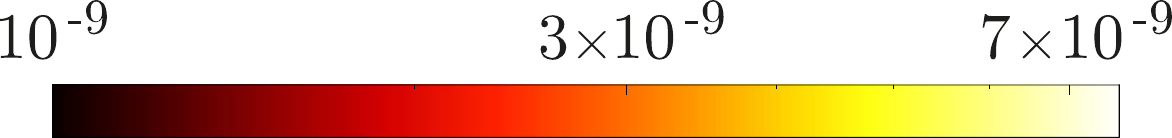}\\
\end{tabular}
\caption{Absolute errors of the approximation of a solution $u$ in (\ref{random-expansion solution}) with $\kappa=5$ and $L=5\kappa=25$, so that $u$ is defined by $N(L)=676$ random parameters.
The error corresponds to the use of  
$P=4(L+1)^2=2704$ plane waves, either PPWs in $\Phi_P$ from (\ref{plane waves approximation set}) (left) or EPWs in $\Phi_{L,P}$ from (\ref{isomorphism approximation sets}) (right).
Note the different ranges in the color scales.
The absolute errors are plotted both on $B_1 \cap \{\mathbf{x}=(x,y,z) : xyz=0\}$ (top) and on the unit sphere $\partial B_1$ (bottom).}
\label{figure 6.4}
\end{figure}

In Figure~\ref{figure 6.3} we display the relative residual $\mathcal{E}$ and the coefficient size $\|\boldsymbol{\xi}_{S,\epsilon}\|_{\ell^2}/\|u\|_{\mathcal{B}}$ with respect to the ratio $P/N(L)$, that is the approximation set dimension divided by the dimension of the space of the possible solutions in (\ref{random-expansion solution}).
The numerical results suggest that the size of the approximation set $P$ should vary linearly with respect to $N(L)$: when $L$ is large enough (e.g.\ $L \geq 2\kappa$), the decays are largely independent of $L$.
The $\Phi_{L,P}$ approximation sets (\ref{isomorphism approximation sets}) appear close to optimal, requiring only $\mathcal{O}(N)$ DOFs with a moderate proportionality constant to approximate $N$ spherical modes with reasonable accuracy. Here, for $L\ge2\kappa$, $P=10N$ suffices to obtain $\mathcal E\le10^{-12}$.

Figure~\ref{figure 6.4} shows the absolute errors resulting from approximating a solution of the form~\eqref{random-expansion solution}, with wavenumber $\kappa=5$ and truncation parameter $L=5\kappa=25$, by $P=4(L+1)^2=2704$ plane waves, whether they are PPWs or EPWs. For other plots of this kind see \cite[sect.~7.2]{galante}.

The error from PPWs is much larger than that from EPWs (around $8$ orders of magnitude in $L^{\infty}$-norm) and is mainly concentrated near the boundary.
This happens because EPWs can effectively capture the higher Fourier modes of Helmholtz solutions, which PPWs cannot achieve.

The number of DOFs per wavelength $\lambda=2\pi/\kappa$ employed in each direction can be estimated by $\lambda \sqrt[3]{3P/4\pi}$, which is approximately $11$ in Figure~\ref{figure 6.4}.
In low-order methods, a common rule of thumb is around $6 \sim 10$ DOFs per wavelength for $1 \sim 2$ digits of accuracy. 
Remarkably, thanks to the selected EPWs, merely a fraction above this count yields more than $8$ digits of accuracy.

\subsection{Other geometries}\label{ss:cube}

To conclude, we present some numerical results in a cubic domain, as well as in two more complex shapes -- a cow and a submarine -- to show that the approximation set we developed, based on the analysis of the unit ball $B_1$, performs well on other geometries as well.
All domains have been uniformly scaled so that the unit sphere circumscribes them.
Additional results involving tetrahedrons can be found in \cite[sect.~7.4]{galante}.

In all cases, the goal is to approximate the Helmholtz fundamental solution
\begin{equation}
\mathbf{x} \mapsto \frac{1}{4\pi}\frac{e^{i\kappa|\mathbf{x}-\mathbf{s}|}}{|\mathbf{x}-\mathbf{s}|} \qquad \forall \mathbf{x} \in \Omega, \qquad \text{where} \qquad \mathbf{s} \in \mathbb{R}^3 \setminus \overline{\Omega},
\label{fundamental solution}
\end{equation}
using the recipe of section \ref{subsec:regularized boundary sampling method} for the unit ball, i.e.\ sampling Dirichlet data points on $\partial \Omega$ and solving an oversampled system via regularized SVD.
Specifically, using evenly spaced sampling points on $\partial \Omega$ allows us to select uniform weights in (\ref{A matrix definition}).
The truncation parameter $L$ is computed from $P$ as $L:=\max\{\lceil \kappa\rceil,\lfloor\sqrt{P/10}\rfloor\}$, based on the numerical results of section~\ref{subsec:approximation of random-expansion solution}.
Moreover, the EPWs in (\ref{isomorphism approximation sets}) are normalized to have unit $L^{\infty}$-norm on $\partial \Omega$, this being 
the sole deviation from the sets used for spherical geometry.

\paragraph{Cube.}

\begin{figure}
\centering
\includegraphics[width=0.95
\linewidth]{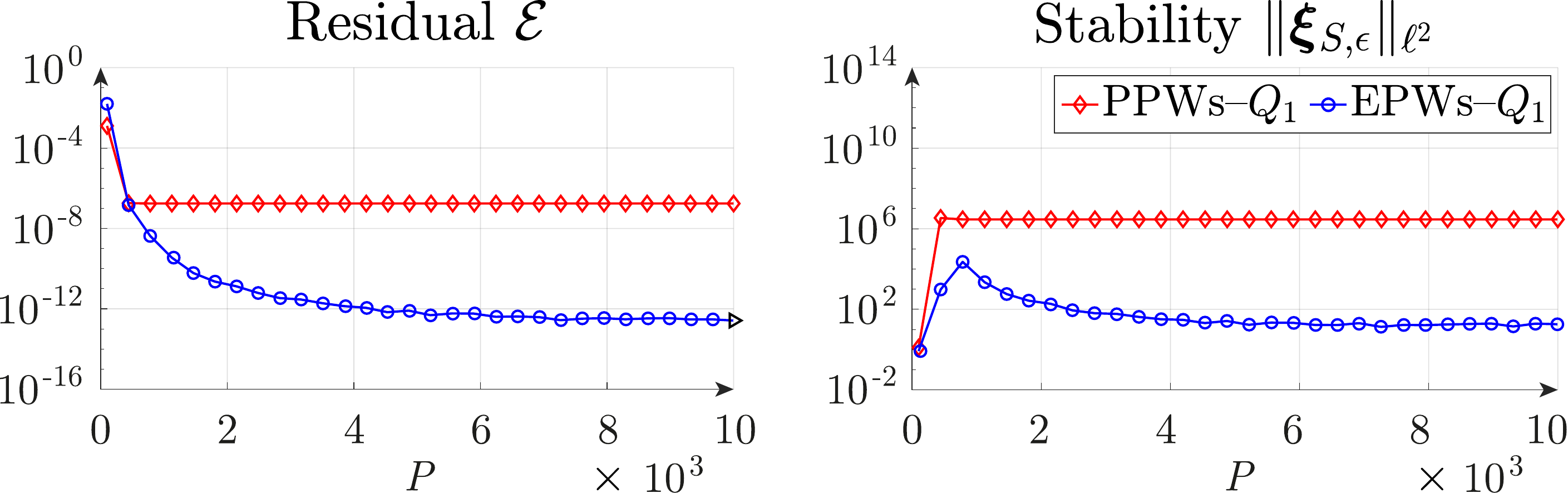}
\caption{Residual $\mathcal{E}$ (\ref{relative residual}) (left) and stability $\|\boldsymbol{\xi}_{S,\epsilon}\|_{\ell^2}$ (right) of the approximation of the fundamental solution (\ref{fundamental solution}) in the cube $Q_1$ with $\mathbf{s}=(1/\sqrt{3}+2\lambda/3,0,0)$, so that $\textup{dist}(\mathbf{s}, Q_1)=2\lambda/3$.
The PPW and EPW approximation sets $\Phi_P$ and $\Phi_{L,P}$ are compared.
We report the convergence for increasing size of the approximation set $P$.
Wavenumber $\kappa=5$.}
\label{figure 6.5}
\end{figure}

Let $Q_1$ denote the cube with edges aligned to the Cartesian axes and inscribed within the unit sphere.
In Figure~\ref{figure 6.5} we report the convergence of the plane wave approximations, either PPW or EPW, for increasing size of the approximation set $P$.
When PPWs are employed, the residual of the linear system initially reduces swiftly with increasing $P$, but eventually plateaus, well before reaching machine precision, due to the rapid growth of the coefficients. 
Conversely, when using EPW approximation sets, the residual converges to machine precision and the coefficient size remains reasonable.
In fact, by using EPWs, the truncation parameter $L$, and consequently the number of approximated modes, grows concurrently with $P$, providing an increasingly accurate approximation.
In contrast, PPWs are only able to stably approximate propagative modes.
Once this content is correctly captured, further increasing the discrete space dimension only brings instability, due to the impossibility of approximating high Fourier modes.

Figure~\ref{figure 6.6} shows the absolute errors in approximating a fundamental solution (\ref{fundamental solution}) by $P=2704$ plane waves, either PPWs or EPWs.

\begin{figure}[htb]
\centering
\begin{tabular}{cc}
\includegraphics[trim=135 260 170 175,clip,width=.28\textwidth]{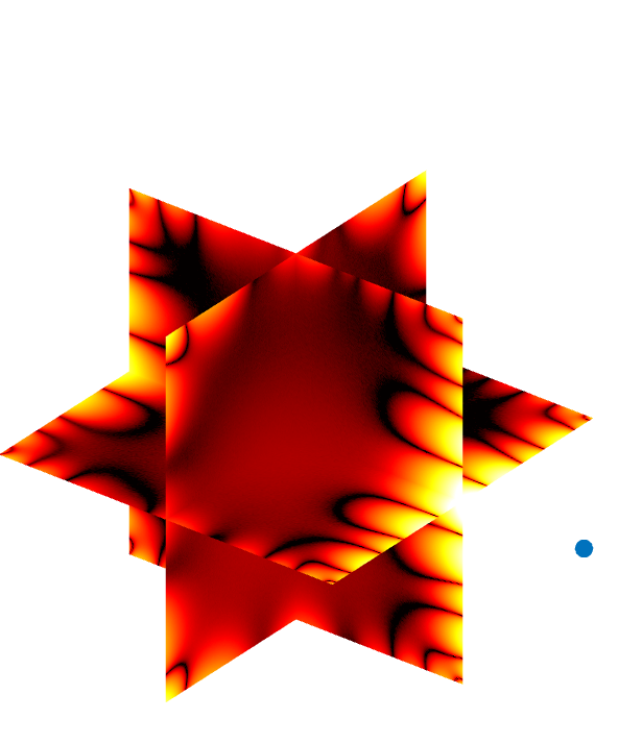} \qquad & \qquad
\includegraphics[trim=135 259 170 175,clip,width=.28\textwidth]{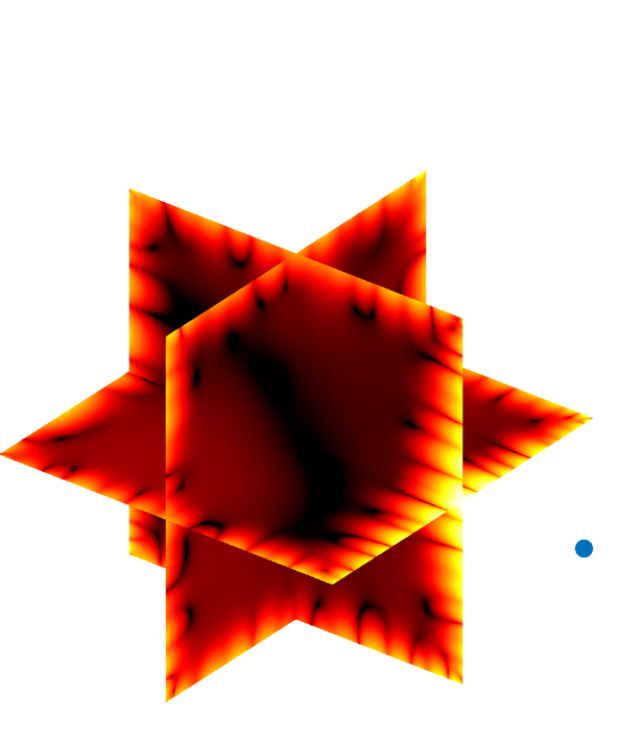}\\
\includegraphics[width=.35\textwidth]{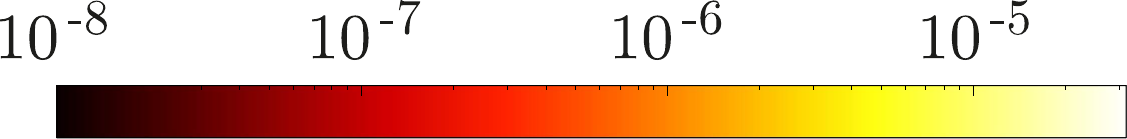} \qquad & \qquad
\includegraphics[width=.35\textwidth]{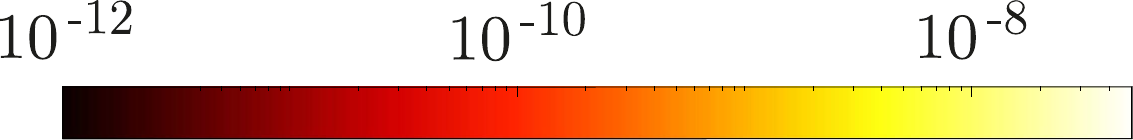} \\
\includegraphics[trim=135 245 165 175,clip,width=.27\textwidth]{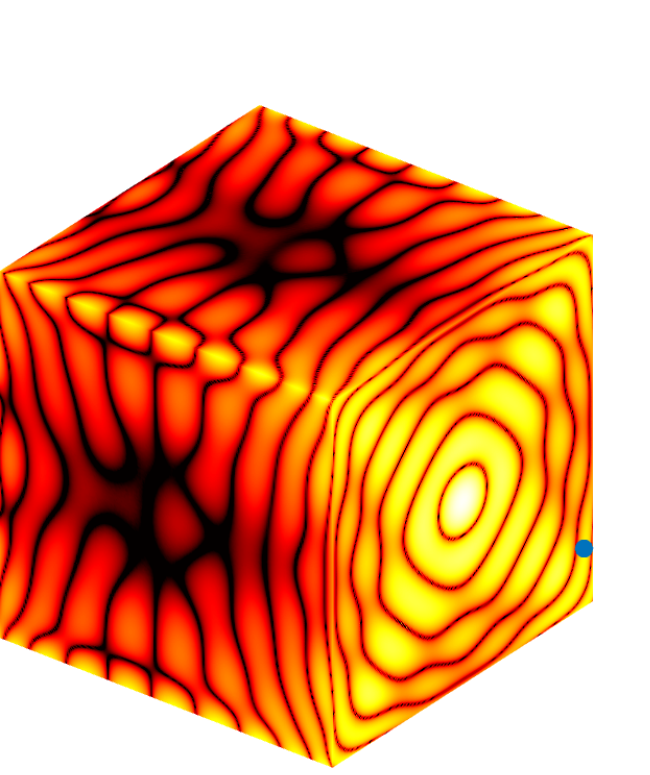} \qquad & \qquad
\includegraphics[trim=135 245 165 175,clip,width=.27\textwidth]{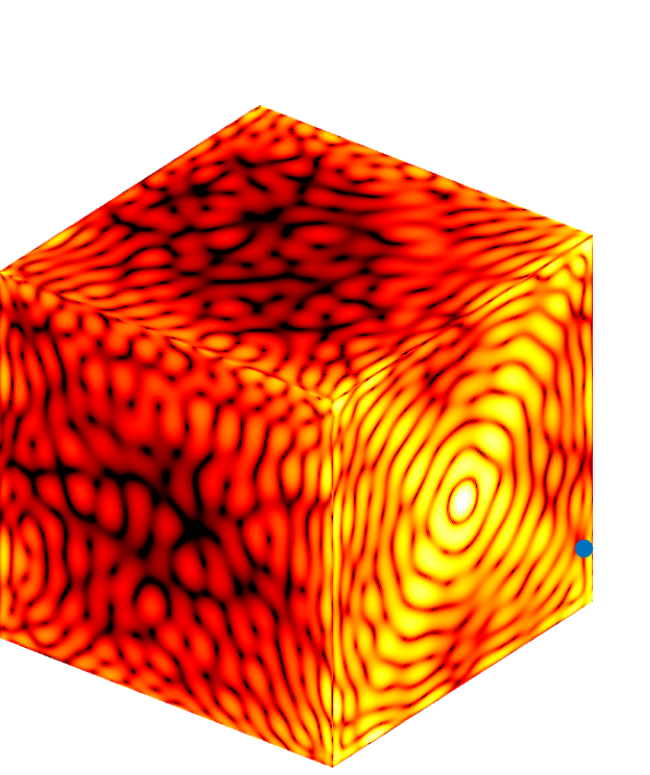}\\
\includegraphics[width=.35\textwidth]{images//bar_propagative_cube.pdf} \qquad & \qquad
\includegraphics[width=.35\textwidth]{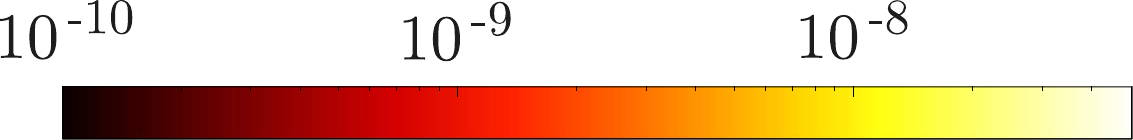} 
\end{tabular}
\caption{Absolute errors of the approximation of the fundamental solution (\ref{fundamental solution}) with $\kappa=5$ and $\mathbf{s} \in \mathbb{R}^3\setminus\overline{Q_1}$, marked by the blue dot in the plots, so that $\textup{dist}(\mathbf{s}, Q_1)=\lambda/3$.
The solution is approximated  
using $P=2704$ plane waves, either PPWs in $\Phi_P$ from (\ref{plane waves approximation set}) (left) or EPWs in $\Phi_{L,P}$ from (\ref{isomorphism approximation sets}) (right).
The absolute errors are plotted on both $Q_1 \cap \{\mathbf{x}=(x,y,z) : xyz=0\}$ (top) and the boundary $\partial Q_1$ (bottom).}
\label{figure 6.6}
\end{figure}

\begin{figure}
\centering
\begin{tabular}{cc}
\includegraphics[trim=135 240 165 230,clip,width=.27\textwidth]{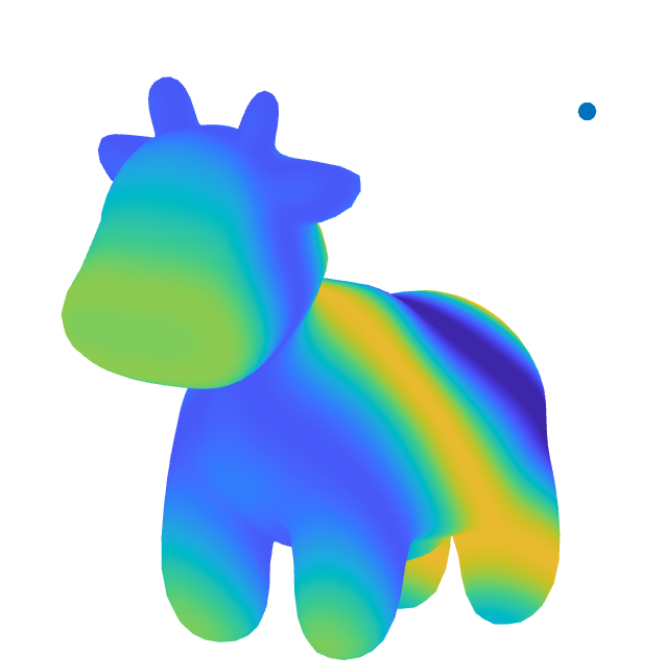} \qquad & \qquad
\includegraphics[trim=145 210 140 230,clip,width=.27\textwidth]{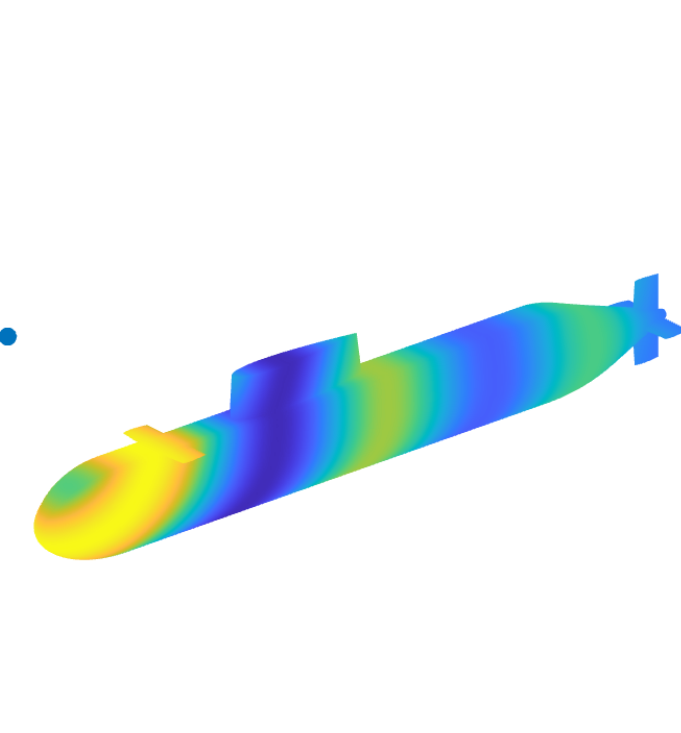}\\
\includegraphics[width=.35\textwidth]{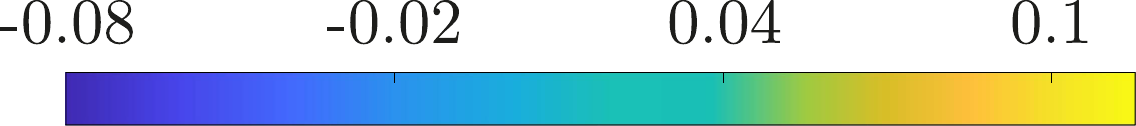} \qquad & \qquad
\includegraphics[width=.35\textwidth]{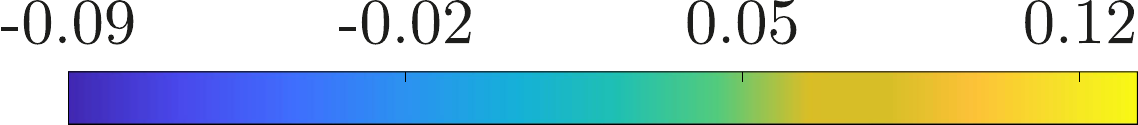} \\
\includegraphics[trim=135 240 165 230,clip,width=.27\textwidth]{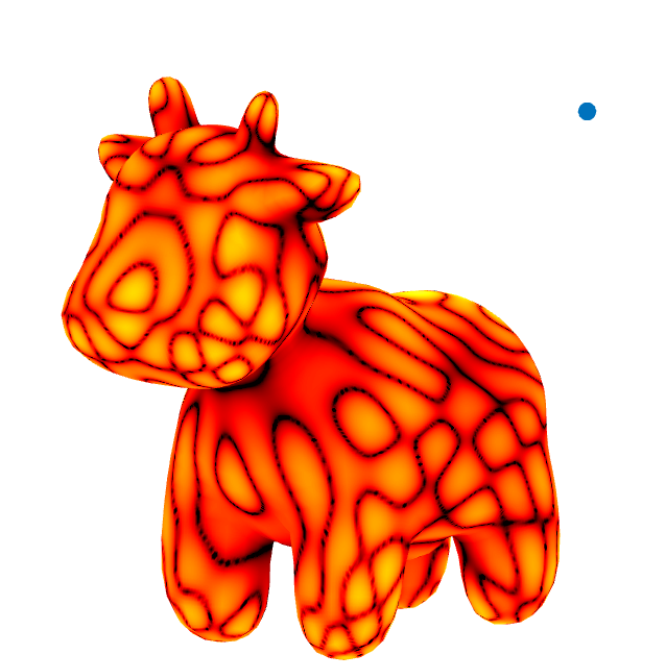} \qquad & \qquad
\includegraphics[trim=145 210 140 230,clip,width=.27\textwidth]{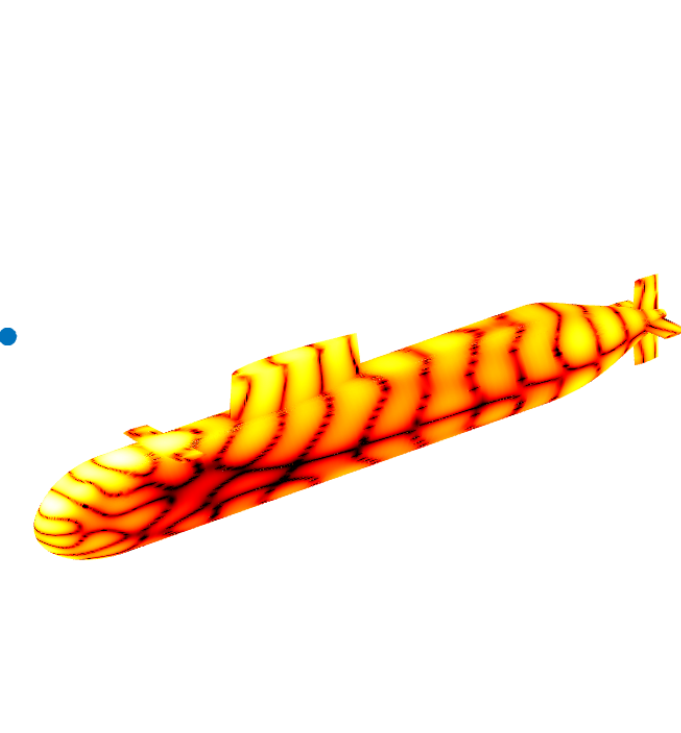}\\
\,\,\includegraphics[width=.35\textwidth]{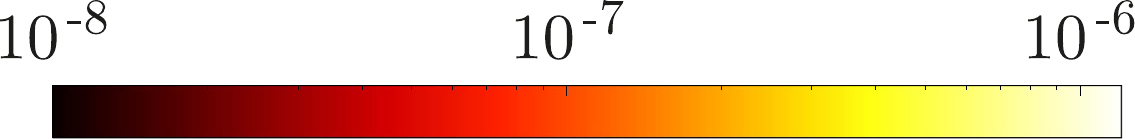} \qquad & \qquad \,\,
\includegraphics[width=.35\textwidth]{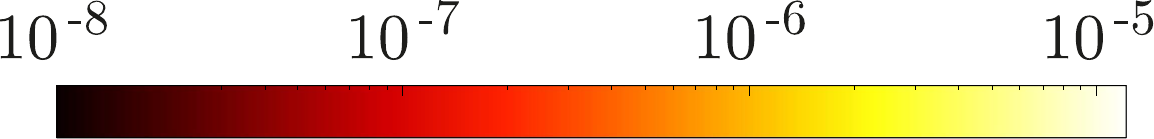} \\
\includegraphics[trim=135 240 165 230,clip,width=.27\textwidth]{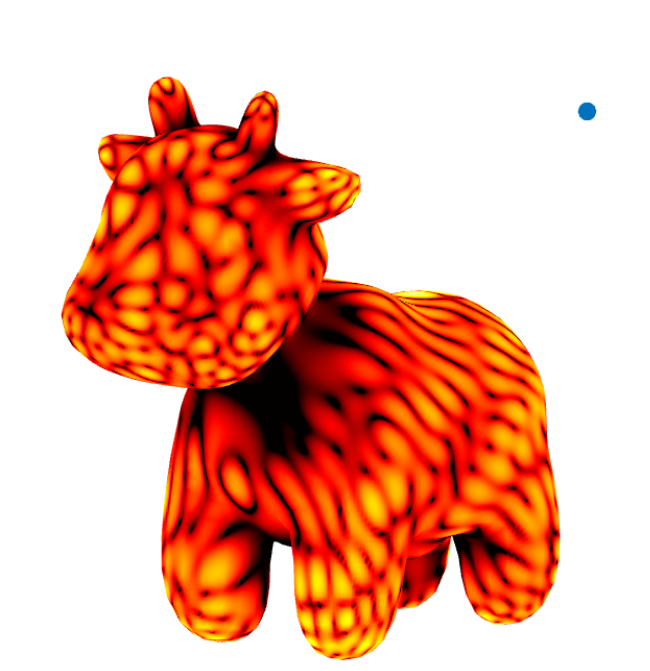} \qquad & \qquad
\includegraphics[trim=145 210 140 230,clip,width=.27\textwidth]{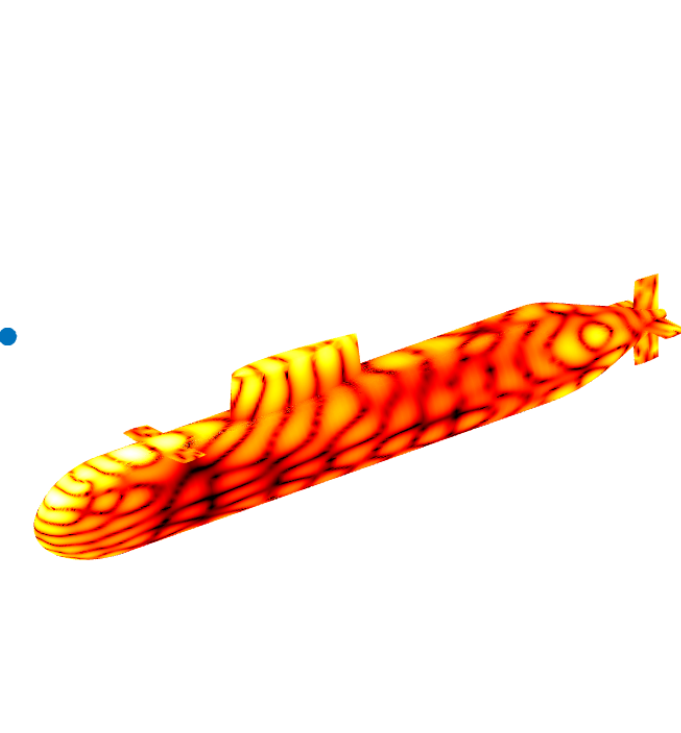}\\
\includegraphics[width=.35\textwidth]{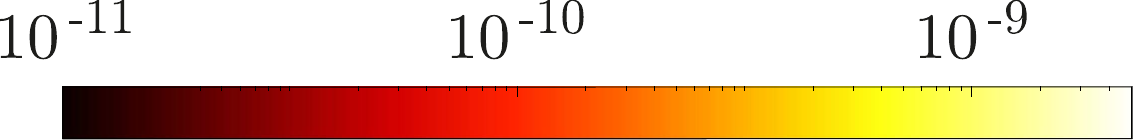} \qquad & \qquad
\includegraphics[width=.35\textwidth]{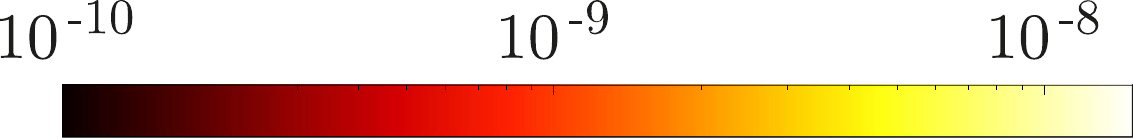}
\end{tabular}
\caption{Real part of the fundamental solution (\ref{fundamental solution}) with $\kappa=10$ and $\mathbf{s} \in \mathbb{R}^3 \setminus \overline{\Omega}$, marked by the blue dot in the plots, such that $\text{dist}(\mathbf{s}, \Omega) \approx \lambda$, along with approximation errors on $\partial \Omega$ using $P=13 \times 10^3$ plane waves, either PPWs in $\Phi_P$ from (\ref{plane waves approximation set}) or EPWs in $\Phi_{L,P}$ from (\ref{isomorphism approximation sets}). The left column shows results for the cow geometry, the right for the submarine. In each column, the first row shows the real part of the solution on $\partial \Omega$, the second the PPW approximation error, and the third the EPW approximation error.}
\label{figure 6.7}
\end{figure}

\paragraph{Cow.}
We now consider a domain $\Omega$ defined by a closed triangulated surface in the shape of a cow. The mesh, consisting of $2930$ triangular elements, was taken from \cite{Crane2013}. Since the mesh triangles are irregular and vary in size, particular care is needed in placing approximately equispaced sampling points on the surface.
To this end, we adopt the following strategy: each triangle is subdivided adaptively based on its diameter, so as to enforce a roughly uniform density of points over the entire surface. Specifically, a reference subdivision pattern is applied to each triangle after estimating how many points are required to match a target density, which depends on the global area of $\partial \Omega$ and the desired total number of sampling points $S$.

We then proceed exactly as in the cubic case: we evaluate the fundamental solution \eqref{fundamental solution} at the selected boundary points and compute an approximation using either EPWs or PPWs, solving the resulting oversampled system via a regularized SVD.
In this setting, with a fixed number of plane waves $P=13\times 10^3$, the runtime is nearly identical for PPW and EPW approximations, i.e.\ $395$s for PPWs and $409$s for EPWs, respectively. This indicates that constructing the EPW approximation set is as fast as constructing the PPW one. Moreover, over 92$\%$ of the total runtime is spent computing the SVD in both cases, confirming that the cost of building the approximation sets is negligible in comparison.

In Figure \ref{figure 6.7}, the left column shows the real part of the fundamental solution \eqref{fundamental solution} restricted to the boundary of the domain, along with the corresponding absolute error for both PPW and EPW approximations in the cow geometry.

\paragraph{Submarine.}
Finally, we consider a submarine-shaped domain, discretized via a triangular surface mesh with $44596$ elements, obtained from \cite{Venas2019}. The boundary $\partial \Omega$ exhibits fine-scale geometric features, such as tail fin, requiring even finer sampling to ensure accurate approximation.
As in the cow case, approximately equispaced boundary sampling is obtained by subdividing each triangle adaptively, according to its surface area and the desired global point density.
The approximation again employs \(P=13\times10^3\) plane waves, and the resulting runtimes -- $1037$s for PPWs and $1162$s for EPWs -- are comparable. This confirms that enriching the basis with evanescent waves has no appreciable impact on the overall cost. Moreover, in both cases, the SVD-based solver dominates the total runtime, reaffirming that the construction of the approximation sets requires only a small fraction of the overall computational effort.

In the right column of Figure \ref{figure 6.7}, the real part of the fundamental solution \eqref{fundamental solution}, as well as the absolute errors of the PPW and EPW approximations, are displayed on the boundary of the submarine geometry.

These results highlight the potential of the proposed EPW sampling algorithm for plane wave approximations and Trefftz schemes, particularly since it is not optimized for non-spherical geometries (except for the $L^{\infty}$ normalization at the boundary).
We are confident that our numerical recipe could be refined by defining rules tailored to the specific underlying geometries, thereby paving the way for even more effective approximation strategies.

\section{Conclusions}

This paper extends the analysis of plane wave approximation properties
from 2D to 3D.
As expected, also in 3D PPWs are not suited for stably approximating high-frequency
Fourier modes, whose integral representation as a continuous superposition of
PPWs features a density function that blows up with the mode number. 
This is reflected in large expansion coefficients associated with finite PPW
sets and constitutes the fundamental source of numerical instability in
standard plane-wave based Trefftz schemes.
Conversely, any Helmholtz solution in the unit ball can be exactly represented as a continuous superposition of EPWs with a unique bounded density function.
We propose a numerical strategy based on a sampling approach for the construction of finite EPW approximation sets. Given a fixed number of plane waves, the computational effort to construct both PPW and EPW approximation sets is similar. Nevertheless, EPW sets offer a clear advantage in terms of accuracy, yielding much more precise approximations.

The current form of the numerical recipe is tailored to a single (spherical) cell, and should be regarded as a preliminary step toward broader applications.
While numerical experiments are encouraging, further developments are needed to address more general geometries and to incorporate EPWs into full Trefftz Discontinuous Galerkin formulations.
Additional evidence supporting this strategy in 2D is provided in~\cite{Robert2024}, where the EPW-recipe~\cite{parolin-huybrechs-moiola} is successfully integrated into the Ultra Weak Variational Formulation (UWVF)~\cite{Cessenat1998}, showing its effectiveness within a complete Trefftz framework.

\section*{Acknowledgements}
AM acknowledges support from PRIN projects ``ASTICE'' and ``NA-FROM-PDEs'', GNCS--INDAM, and PNRR-M4C2-I1.4-NC-HPC-Spoke6, funded by the European Union - Next Generation EU.

\pagebreak
\addcontentsline{toc}{section}{References}
\printbibliography

\end{document}